\definecolor{darkgreen}{rgb}{0,0.45,0} 
\let\ea\expandafter
\def\mdef#1#2{\ea\ea\ea\gdef\ea\ea\noexpand#1\ea%
  {\ea\ensuremath\ea{#2}\xspace}}
\def\alwaysmath#1{\ea\ea\ea\global\ea\ea\ea\let\ea\ea%
  \csname your@#1\endcsname\csname #1\endcsname
  \ea\def\csname #1\endcsname%
  {\ensuremath{\csname your@#1\endcsname}\xspace}}
\def\foreachLetter#1#2#3{\foreachcount=#1
  \ea\loop\ea\ea\ea#3\@Alph\foreachcount
  \advance\foreachcount by 1
  \ifnum\foreachcount<#2\repeat}
\def\definescr#1{\ea\gdef\csname s#1\endcsname%
  {\ensuremath{\mathscr{#1}}\xspace}}
\def\definecal#1{\ea\gdef\csname c#1\endcsname%
  {\ensuremath{\mathcal{#1}}\xspace}}
\def\definebold#1{\ea\gdef\csname b#1\endcsname%
  {\ensuremath{\mathbf{#1}}\xspace}}
\def\definebb#1{\ea\gdef\csname l#1\endcsname%
  {\ensuremath{\mathbb{#1}}\xspace}}
\def\autofmt@n#1\autofmt@end{\mathrm{#1}}
\def\autofmt@b#1\autofmt@end{\mathbf{#1}}
\def\autofmt@l#1#2\autofmt@end{\mathbb{#1}\mathsf{#2}}
\def\autofmt@c#1#2\autofmt@end{\mathcal{#1}\mathit{#2}}
\def\autofmt@f#1\autofmt@end{\mathfrak{#1}}
\def\auto@drop#1{}
\def\autodef#1{\ea\ea\ea\@autodef\ea\ea\ea#1\ea%
  \auto@drop\string#1\autodef@end}
\def\@autodef#1#2#3\autodef@end{%
  \ea\def\ea#1\ea{\ea\ensuremath\ea%
    {\csname autofmt@#2\endcsname#3\autofmt@end}\xspace}}
\def\autodefs@end{blarg!}
\def\autodefs#1{\@autodefs#1\autodefs@end}
\def\@autodefs#1{\ifx#1\autodefs@end%
  \def\autodefs@next{}%
  \else%
  \def\autodefs@next{\autodef#1\@autodefs}%
  \fi\autodefs@next}
\let\meet\wedge
\newcommand{\op}{^{\mathrm{op}}}
\DeclareMathOperator\lan{Lan}
\DeclareMathOperator*\colim{colim}
\newcommand{\too}[1][]{\ensuremath{\overset{#1}{\longrightarrow}}}
\newcommand{\ot}{\ensuremath{\leftarrow}}
\let\toto\rightrightarrows
\let\into\hookrightarrow
\let\xto\xrightarrow
\let\xot\xleftarrow
\newif\ifhyperref
\let\your@state\state
\def\state#1{\gdef\currthmtype{#1}\your@state{#1}}
\let\your@staterm\staterm
\def\staterm#1{\gdef\currthmtype{#1}\your@staterm{#1}}
\let\defthm\newtheorem
\def\currthmtype{}
\def\autoref#1{\ref*{label@name@#1}~\ref{#1}}
\def\autoref#1{\ref{label@name@#1}~\ref{#1}}
  \let\old@label\label%
  \def\label#1{%
    {\let\your@currentlabel\@currentlabel%
      \edef\@currentlabel{\currthmtype}%
      \old@label{label@name@#1}}%
    \old@label{#1}}
\newtheorem{thm}{Theorem}[section]
\theoremstyle{definition}
\let\qed\endproof
\def\qedhere{\quad\hbox{\endproofbox}}
\let\c@equation\c@subsection
\numberwithin{equation}{section}
\mdef\ep{\varepsilon}
\mdef\ph{\varphi}
\let\al\alpha
\let\si\sigma
\let\om\omega
\let\ka\kappa
\let\la\lambda
\let\La\Lambda
\let\ze\zeta
\title{Exact completions and small sheaves}
\author{Michael Shulman}
\address{Department of Mathematics\\University of California, San
  Diego\\9500 Gilman Dr. \#0112\\La Jolla, CA 92093}
\keywords{exact completion, site, sheaf, exact category, pretopos, topos}
\thanks{The author was supported by a National Science Foundation
  postdoctoral fellowship during the writing of this paper.}
\mdef\KA{\mathfrak{K}\xspace}
\def\ff#1#2{\ensuremath{\left\{#1\right\}_{#2}}}
\mdef\un{\{1\}}
\mdef\bytil{\widetilde{\mathbf{y}}\xspace}
\mdef\SITEk{\nSITE_{\ka}}
\mdef\LSITEk{\nLSITE_{\ka}}
\mdef\Relk{\nRel_{\ka}}
\mdef\RelK{\nRel_{\KA}}
\mdef\lrelk{\lRel_{\ka}}
\mdef\exk{\nEx_{\ka}}
\mdef\lexk{\nLEx_{\ka}}
\mdef\exK{\nEx_{\KA}}
\mdef\regk{\nReg_{\ka}}
\mdef\exu{\nEx_{\un}}
\mdef\regu{\nReg_{\un}}
\mdef\REGk{\nREG_\ka}
\mdef\EXk{\nEX_\ka}
\mdef\Modk{\nMod_\ka}
\mdef\lmodk{\lMod_\ka}
\mdef\ALLk{\nALL_{\ka}}
\mdef\FALLk{\nFALL_{\ka}}
\mdef\SUPk{\bSUP_{\ka}}
\mdef\Fk{\sF_{\ka}}
\mdef\shk{\nSh_{\ka}}
\mdef\f{\mathbf{f}\xspace}
\mdef\rto{\looparrowright}
\mdef\Rto{\mathrel{\overline{\looparrowright}}}
\mdef\lto{\rightsquigarrow}
\mdef\Astar{\sA^\star}
\def\o{^{\circ}}
\def\sb{_{\bullet}}
\def\pb{^{\bullet}}
\def\sbb{_{\scriptscriptstyle\blacklozenge}}
\def\pbb{^{\scriptscriptstyle\blacklozenge}}
\def\ssing#1{\ensuremath{\llbracket #1 \rrbracket}\xspace}
\let\To\Rightarrow
\let\lle\preceq
\begin{document}
\maketitle

\begin{abstract}
  We prove a general theorem which includes most notions of ``exact
  completion'' as special cases.  The theorem is that ``\ka-ary exact
  categories'' are a reflective sub-2-category of ``\ka-ary sites'',
  for any regular cardinal \ka.  A \ka-ary exact category is an exact
  category with disjoint and universal \ka-small coproducts, and a
  \ka-ary site is a site whose covering sieves are generated by
  \ka-small families and which satisfies a solution-set condition for
  finite limits relative to \ka.

  In the unary case, this includes the exact completions of a regular
  category, of a category with (weak) finite limits, and of a category
  with a factorization system.  When $\ka=\om$, it includes the
  pretopos completion of a coherent category.  And when $\ka=\KA$ is
  the size of the universe, it includes the category of sheaves on a
  small site, and the category of small presheaves on a locally small
  and finitely complete category.  The \KA-ary exact completion of a
  large nontrivial site gives a well-behaved ``category of small
  sheaves''.

  Along the way, we define a slightly generalized notion of ``morphism
  of sites'' and show that \ka-ary sites are equivalent to a type of
  ``enhanced allegory''.  This enables us to construct the exact
  completion in two ways, which can be regarded as decategorifications
  of ``representable profunctors'' (i.e.\ entire functional relations)
  and ``anafunctors'', respectively.
\end{abstract}

\setcounter{tocdepth}{1}
\tableofcontents

\section{Introduction}
\label{sec:introduction}

In this paper we show that the following ``completion'' operations are
all instances of a single general construction.
\begin{enumerate}[noitemsep]
\item The free exact category on a category with (weak) finite limits, as
  in~\cite{cm:ex-lex,carboni:free-constr,cv:reg-exact-cplt,ht:free-regex}.
  \label{item:op1}
\item The exact completion of a regular category, as
  in~\cite{c:exreg,fs:catall,carboni:free-constr,
    cv:reg-exact-cplt,lack:exreg-inf}.
\item The pretopos completion of a coherent category, as
  in~\cite{fs:catall,ptj:elephant}, and its infinitary analogue.
\item The category of sheaves on a small site.\label{item:op2}
\item The category of small presheaves on a locally small category
  satisfying the solution-set condition for finite diagrams, as
  in~\cite{dl:lim-smallfr} (the solution-set condition makes the
  category of small presheaves finitely complete).
\end{enumerate}

The existence of a relationship between the above constructions is not
surprising.  On the one hand, Giraud's theorem characterizes
categories of sheaves as the infinitary pretoposes with a small
generating set.  It is also folklore that adding disjoint universal
coproducts is the natural ``higher-ary'' generalization of exactness;
this is perhaps most explicit in~\cite{street:family}.  Furthermore,
the category of sheaves on a small infinitary-coherent category agrees
with its infinitary-pretopos completion, as remarked
in~\cite{ptj:elephant}.  On the other hand,~\cite{ht:free-regex}
showed that the free exact category on a category with weak finite
limits can be identified with a full subcategory of its presheaf
category, and~\cite{lack:exreg-inf} showed that the exact completion
of a regular category can similarly be identified with a full
subcategory of the sheaves for its regular topology.

However, in other ways the above-listed constructions appear
different.  For instance, each has a universal property, but the
universal properties are not all the same.
\begin{enumerate}[noitemsep]
\item The free exact category on a category with finite limits is a
  left adjoint to the forgetful functor.  However, the free exact
  category on a category with \emph{weak} finite limits represents
  ``left covering'' functors.
\item The exact completion of a regular category is a reflection.
\item The pretopos completion of a coherent category is also a
  reflection.
\item The category of sheaves on a small site is the classifying topos
  for flat cover-preserving functors.
\item The category of small presheaves on a locally small category is
  its free cocompletion under small colimits.
\end{enumerate}
In searching for a common generalization of these constructions, which
also unifies their universal properties, we are led to introduce the
following new definitions, relative to a regular cardinal \ka.

\begin{defn}
  A \textbf{\ka-ary site} is a site whose covering sieves are
  generated by \ka-small families, and which satisfies a certain weak
  solution-set condition for finite cones relative to \ka (see
  \S\ref{sec:sites}).
\end{defn}

This includes the inputs to all the above constructions, as follows:

\begin{enumerate}[noitemsep]
\item The trivial topology on a category is unary precisely when the
  category has weak finite limits.
\item The regular topology on a regular category is also unary.
\item The coherent topology on a coherent category is \om-ary
  (``finitary''), and its infinitary analogue is \KA-ary, where \KA is
  the size of the universe.
\item The topology of any small site is \KA-ary.
\item The trivial topology on a large category is \KA-ary just when
  that category satisfies the solution-set condition for finite
  diagrams.
\end{enumerate}

\begin{defn}
  A \textbf{\ka-ary exact category} is a category with universally
  effective equivalence relations and disjoint universal \ka-small
  coproducts.
\end{defn}

This includes the \emph{outputs} of all the above constructions, as
follows.

\begin{enumerate}[noitemsep,leftmargin=*,widest=i)-(ii]
\item[(i)-(ii)] A unary exact category is an exact category in the
  usual sense.\setcounter{enumi}{2}
\item An \om-ary exact category is a pretopos.
\item A \KA-ary exact category is an infinitary pretopos (a category
  satisfying the exactness conditions of Giraud's theorem).
\end{enumerate}

\begin{defn}
  A \textbf{morphism of sites} is a functor $\bC\to\bD$ which
  preserves covering families and is ``flat relative to the topology
  of \bD'' in the sense of~\cite{kock:postulated,karazeris:flatness}.
\end{defn}

This is a slight generalization of the usual notion of ``morphism of
sites''.  The latter requires the functor to be ``representably
flat'', which is equivalent to flatness relative to the trivial
topology of the codomain.  The two are equivalent if the sites have
actual finite limits and subcanonical topologies.  Our more general
notion also includes all ``dense inclusions'' of sub-sites, and has
other pleasing properties which the usual one does not (see
\S\ref{sec:morphisms-sites} and \S\ref{sec:dense}).

Generalized morphisms of sites include all the relevant morphisms
between all the above inputs, as follows:

\begin{enumerate}[noitemsep]
\item A morphism between sites with trivial topology is a flat
  functor.  A morphism from a unary trivial site to an exact category
  is a left covering functor.
\item A morphism of sites between regular categories is a regular
  functor.
\item A morphism of sites between coherent categories is a coherent
  functor.
\item A morphism of sites from a small site to a Grothendieck topos
  (with its canonical topology) is a flat cover-preserving functor.  A
  morphism of sites \emph{between} Grothendieck toposes is the inverse
  image functor of a geometric morphism.
\end{enumerate}

We can now state the general theorem which unifies all the above
constructions.

\begin{theorem}\label{thm:intro-excplt}
  \ka-ary exact categories form a reflective sub-2-category of \ka-ary
  sites.  The reflector is called \textbf{\ka-ary exact completion}.
\end{theorem}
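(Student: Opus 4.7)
The plan is to exhibit a 2-adjunction whose unit component at each \ka-ary exact category is an equivalence. For the inclusion of \ka-ary exact categories into \ka-ary sites, I would equip every \ka-ary exact category \sE with its canonical topology --- the one whose covers are the \ka-small effective-epimorphic families --- and verify that this gives a \ka-ary site. The weak solution-set condition is automatic because \sE already has finite limits, and \ka-ary generation of the topology is built into the definition of \ka-ary exactness. The nontrivial part of making this an inclusion of 2-categories is showing that generalized morphisms of sites between two \ka-ary exact categories (with their canonical topologies) are precisely \ka-ary-exact functors; this reduces to arguing that flatness relative to the canonical topology of the codomain, together with preservation of \ka-small effective-epimorphic families, forces preservation of finite limits, disjoint coproducts, and effective quotients. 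This step already exercises the refined definition of morphism of sites, since the traditional ``representably flat'' notion would not suffice.

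For the reflector $\exk$, I would construct $\exk(\sC)$ along the ``entire functional relations'' route promised in the abstract: objects are internal (pseudo\nobreakdash-)\-equivalence relations in \sC, and morphisms are equivalence classes of entire functional relations, where ``entire'' and ``functional'' are interpreted with respect to the topology of \sC. Smallness of hom-sets and well-definedness of composition rely directly on the two defining properties of a \ka-ary site. I would then verify that $\exk(\sC)$ is a \ka-ary exact category: finite limits and effective equivalence relations follow the familiar allegorical template, while disjoint universal \ka-small coproducts are assembled from \ka-small families in \sC using the topology. The Yoneda-like map $\by\colon\sC\to\exk(\sC)$ sending each object to the diagonal equivalence relation is then shown to be a morphism of sites. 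An alternative ``anafunctor'' description of $\exk(\sC)$, equivalent via the passage to enhanced allegories that the abstract advertises, supplies a useful second viewpoint and can be used to double-check the structural verifications.

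The main obstacle, and the technical core of the proof, is the universal property. Given any morphism of sites $F\colon\sC\to\sE$ with \sE a \ka-ary exact category, I would extend $F$ essentially uniquely along \by to a \ka-ary-exact functor $\tilde F\colon\exk(\sC)\to\sE$: on objects, $\tilde F$ sends each equivalence relation in \sC to its quotient in \sE, and on morphisms it sends a functional relation to the induced map between quotients. The delicate point is that this assignment must be well-defined on equivalence classes of functional relations, and this is precisely what flatness of $F$ relative to the topology of \sE guarantees --- exactly the reason our generalized notion of morphism of sites is needed, as ordinary representable flatness would fail here. Verifying that $\tilde F$ preserves the \ka-ary-exact structure is then mostly bookkeeping with the constructions of the preceding paragraph. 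To promote the 1-adjunction to a 2-adjunction one extends the constructions coherently to natural transformations between morphisms of sites; this follows without essential difficulty from the pointwise nature of the definition of $\tilde F$.
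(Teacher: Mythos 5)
Your outline follows the same broad strategy as the paper (build the completion out of congruences with ``entire functional relations'' as morphisms, then obtain the universal property by sending congruences to their quotients), but there is one point where the proposal as written would fail. You take the objects of $\exk(\bC)$ to be internal (pseudo-)equivalence relations in \bC, i.e.\ equivalence relations on a \emph{single} object. For $\ka\neq\un$ this is not enough: the objects must be \ka-ary \emph{congruences}, i.e.\ many-object equivalence relations indexed by a \ka-ary \emph{family} $X$ of objects of \bC, with a component $\Phi(x_1,x_2)$ for each pair. This is exactly what supplies the disjoint universal \ka-small coproducts (the coproduct of a family $X$ is the discrete congruence $\Delta_X$ on $X$); with single-object equivalence relations there is no candidate object to serve as the coproduct, since a general \ka-ary site has no coproducts onto which to push the relation, and your remark that coproducts are ``assembled from \ka-small families using the topology'' does not repair this. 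Concretely, for $\ka=\KA$ and \bC a small site your category would contain only those sheaves admitting an effective epimorphism from a single representable, not all of $\nSh(\bC)$. The paper makes the point explicit by factoring \exk as a \ka-ary coproduct completion followed by a unary exact completion; the single-object description is only correct after the first step.

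Two smaller corrections. First, in identifying morphisms of sites between two \ka-ary exact categories (with \ka-canonical topologies) with \ka-ary regular functors, the generalized notion of morphism of sites is \emph{not} what is being exercised: between finitely complete subcanonical sites, covering-flat and representably flat coincide. The refined notion earns its keep in the domain of the universal property, i.e.\ for morphisms out of a general \ka-ary site that may lack genuine finite limits (and in making the embedding $\EXk\into\SITEk$ use the canonical rather than the trivial topology). Second, the reflection is a left \emph{bi}adjoint rather than a strict 2-adjoint; the induced comparison of hom-categories is an equivalence, not an isomorphism. Beyond these points, your plan is essentially construction (a) of the paper carried out by hand; the paper instead routes the universal property through an equivalence of \ka-ary sites with weakly \ka-tabular framed allegories and the theory of absolute enriched colimits (collages of congruences), which trades your direct well-definedness checks for citations to standard cocompletion results.
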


Besides its intrinsic interest, this has several useful consequences.
For instance, if \bC satisfies the solution-set condition for finite
limits, then its category of small presheaves is an infinitary
pretopos.  More generally, if \bC is a \emph{large} site which is
\KA-ary (this includes most large sites arising in practice), then its
\KA-ary exact completion is a category of ``small sheaves''.  For
instance, any scheme can be regarded as a small sheaf on the large
site of rings.  The category of small sheaves shares many properties
of the sheaf topos of a small site: it is an infinitary pretopos, it
has a similar universal property, and it satisfies a ``size-free''
version of Giraud's theorem.

Additionally, by completing with successively larger \ka, we can
obtain information about ordinary sheaf toposes with ``cardinality
limits''.  For instance, the sheaves on any small \om-ary site form a
coherent topos.

We can also find ``\ka-ary regular completions'' sitting inside the
\ka-ary exact completion, in the usual way.  This includes the
classical regular completion of a category with (weak) finite limits
as in~\cite{cm:ex-lex,cv:reg-exact-cplt,ht:free-regex}, as well as
variants such as the regular completion of a category with a
factorization system from~\cite{kelly:rel-factsys} and the relative
regular completion from~\cite{hofstra:relcpltn}.  More generally, we
can obtain the exact completions of~\cite{gl:lex-colimits} relative to
a class of lex-colimits.

Finally, our approach to proving \autoref{thm:intro-excplt} also
unifies many existing proofs.  There are three general methods used to
construct the known exact completions.
\begin{enumerate}[label=(\alph*),noitemsep]
\item Construct a bicategory of binary relations from the input,
  complete it under certain colimits, then consider the category of
  ``maps'' (left adjoints) in the result.\label{item:allpf}
\item As objects take ``\ka-ary congruences'' (many-object equivalence
  relations), and as morphisms take ``congruence functors'', perhaps
  with ``weak equivalences'' inverted.
\item Find the exact completion as a subcategory of the category of
  (pre)sheaves.
\end{enumerate}

The bicategories used in~\ref{item:allpf} are called
\emph{allegories}~\cite{fs:catall}.  In order to generalize this
construction to \ka-ary sites, we are led to the following
``enhanced'' notion of allegory.

\begin{defn}
  A \textbf{framed allegory} is an allegory equipped with a category
  of ``tight maps'', each of which has an underlying map in the
  underlying allegory.
\end{defn}

Framed allegories are a ``decategorification'' of proarrow
equipments~\cite{wood:proarrows-i}, framed
bicategories~\cite{shulman:frbi}, and \sF-categories~\cite{ls:limlax}.
We can then prove:

\begin{theorem}\label{thm:intro-frall}
  The 2-category of \ka-ary sites is equivalent to a suitable
  2-category of framed allegories.
\end{theorem}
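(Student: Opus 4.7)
The plan is to construct 2-functors in both directions and verify they form a biequivalence. From a \ka-ary site $(\bC, J)$, we build a framed allegory $\nRel(\bC, J)$ whose tight category is $\bC$ itself, and whose loose hom from $X$ to $Y$ consists of \ka-small ``bisieves'' of spans $X \ot A \to Y$ whose left leg appears in a $J$-covering family, modulo the equivalence generated by span-morphisms over $X$ and $Y$. Loose composition is span composition followed by $J$-closure, and the solution-set condition built into the definition of a \ka-ary site is exactly what ensures the result is \ka-generated. Reciprocation swaps the legs, and the underlying loose 1-cell of a tight map $f \colon X \to Y$ is the principal bisieve generated by the span $(1_X, f)$.

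In the reverse direction, given a framed allegory $\sA$ in the intended class, let $\bC_{\sA}$ be its category of tight maps, and equip $\bC_{\sA}$ with the topology declaring $\{f_i \colon A_i \to X\}_{i \in I}$ to be covering when $|I| < \ka$ and the join $\bigvee_i \underline{f_i}\,\underline{f_i}^\dagger$ equals $1_X$ in the loose endo-hom $\sA(X, X)$. The required axioms on $\sA$ -- unitarity, existence of \ka-small joins of ``tabular'' loose arrows, tabularity (every loose arrow is a join of spans $\underline{f}\,\underline{g}^\dagger$ of tight maps), and a flatness condition on the inclusion of tight into loose -- translate directly into the axioms of a \ka-ary site, with the solution-set clause corresponding to the \ka-smallness of the relevant joins.

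Once the axioms are in place, checking that these 2-functors are pseudo-inverse is mostly a matter of unwinding definitions. A loose arrow in $\nRel(\bC_{\sA}, J_{\sA})$ is recovered from its generating tight spans by tabularity, while a bisieve in $\nRel(\bC, J)$ is determined by its generating spans, which are precisely pairs of tight maps. For 2-cells, natural transformations between morphisms of sites translate into transformations between the induced allegorical morphisms, with the correspondence forced once the 1-cell translation is understood.

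The principal obstacle is twofold. First, one must isolate the correct axioms on framed allegories to make the equivalence hold on the nose; in particular, \ka-ary tabularity and the flatness of tight maps inside loose arrows must be formulated in purely allegorical terms, without reference to a pre-existing site. Second, handling morphisms is delicate: a generalized morphism of sites is only flat \emph{relative to the target topology}, so it need not preserve limits in the source and the induced operation on loose hom-objects is only laxly functorial. Verifying that flatness in the allegorical setting is equivalent to flatness relative to the topology, and that every morphism of framed allegories preserving the tight/loose distinction arises from such a site morphism, is where most of the technical work resides.
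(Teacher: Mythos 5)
Your outline follows the same overall strategy as the paper: keep $\bC$ as the category of tight maps, take loose morphisms to be equivalence classes of $\ka$-small families of spans, declare a cocone $P$ covering on the allegory side when $\bigvee(P\sb P\pb)=1$, and check the two constructions are mutually inverse. However, two points in your construction of the allegory are wrong as stated. First, the loose hom from $x$ to $y$ must consist of \emph{all} $\ka$-sourced arrays over $\{x,y\}$, with no requirement that the left legs lie in a covering family: that requirement is precisely the characterization of \emph{maps} (left adjoints), and imposing it on all loose morphisms would exclude the empty relation and fail to be stable under binary meets, so the hom-posets would not satisfy the allegory axioms. Second, the identification of spans must be \emph{local} equivalence --- $F\lle G$ when $F P\le G$ for some covering family $P$ --- not merely the equivalence generated by span-morphisms; without passing to covers, a covering family $P$ does not become an ``identity-like'' morphism satisfying $\bigvee(P\sb P\pb)=1$, and the topology cannot be recovered from the allegory at all.

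The substantive gap is that the content of this theorem lies exactly in the part you defer: isolating the allegorical axioms and proving the detection lemmas. Beyond $\ka$-tabularity (every loose morphism is $\bigvee(F\sb G\pb)$ for tight $F,G$), the load-bearing axiom is the one you call ``flatness of tight into loose'' but never formulate: if $f\sb=g\sb$ for tight $f,g$, then $fP=gP$ for some covering family $P$. This clause is what makes the recovered covering families pullback-stable and what allows local $\ka$-pre-pullbacks and pre-equalizers to be detected inside the allegory; without it the tight category need not be a site at all. You also never address finite products and terminal objects: these correspond to top elements in the hom-posets and to ``$\ka$-units'' (families $U$ with a cone $\Phi\colon x\Rightarrow U$ satisfying $1_x\le\bigvee(\Phi\o\Phi)$), and they are exactly what separates $\ka$-ary sites from sites having only connected local prelimits. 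Finally, your concern that a generalized morphism of sites induces only a laxly functorial action on loose homs is a red herring: covering-flatness together with cover-preservation is equivalent to preservation of finite local $\ka$-prelimits up to local equivalence, and since composition of loose morphisms is defined via local pre-pullbacks modulo local equivalence, the induced allegory functor is automatically strict. The genuine work on the morphism level is instead the converse direction --- showing that every framed allegory functor between objects in the image restricts on tight maps to a morphism of sites --- which again rests on the detection lemmas you have not supplied.
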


Besides further justifying the notion of ``\ka-ary site'', this
theorem allows us to construct the exact completion of \ka-ary sites
using analogues of all three of the above methods.
\begin{enumerate}[label=(\alph*),noitemsep]
\item We can build the corresponding framed allegory, forget the
  framing to obtain an ordinary allegory, then perform the usual
  completion under appropriate colimits and consider the category of
  maps.  This construction is most convenient for obtaining the
  universal property of the exact completion.\label{item:kameth1}
\item Alternatively, we can \emph{first} complete a framed allegory
  under a corresponding type of ``framed colimit'', then forget the
  framing and consider the category of maps.  The objects of this
  framed cocompletion are \ka-ary congruences and its tight maps are
  ``congruence functors''.  In this case, the last step is equivalent
  to constructing a category of fractions of the tight
  maps.\label{item:kameth2}
\item The universal property of the exact completion, obtained
  from~\ref{item:kameth1}, induces a functor into the category of
  sheaves.  Using description~\ref{item:kameth2} of the exact
  completion, we show that this functor is fully faithful and identify
  its image.\label{item:kameth3}
\end{enumerate}

We find it convenient to describe the completion operations
in~\ref{item:kameth1} and~\ref{item:kameth2} in terms of enriched
category theory, using ideas
from~\cite{ls:ftm2,bls:weak-aspects,ls:limlax}.  This also makes clear
that~\ref{item:kameth1} is a decategorification of the ``enriched
categories and modules'' construction
from~\cite{street:cauchy-enr,ckw:axiom-mod}, while the ``framed
colimits'' in~\ref{item:kameth2} are a decategorification of those
appearing in~\cite{wood:proarrows-ii,ls:limlax}, and that all of these
are an aspect of Cauchy or \mbox{absolute}
cocompletion~\cite{lawvere:metric-spaces,street:absolute}.  The idea
of ``categorified sheaves'', and the connection to Cauchy completion,
is also explicit in~\cite{street:cauchy-enr,ckw:axiom-mod}, building
on~\cite{walters:sheaves-cauchy-1,walters:sheaves-cauchy-2}.

We hope that making these connections explicit will facilitate the
study of exact completions of higher categories.  Of particular
interest is the fact that the ``framed colimits''
in~\ref{item:kameth2} naturally produce decategorified versions of
\emph{functors}, in addition to the \emph{profunctors} resulting from
the colimits in~\ref{item:kameth1}.

\begin{remark}
  There are a few other viewpoints on exact completion in the
  literature, such as that of~\cite{cw:regcplt,cw:factreg}, which seem
  not to be closely related to this paper.
\end{remark}

\subsection{Organization}
\label{sec:organization}

We begin in \S\ref{sec:preliminaries} with some preliminary
definitions.  Then we define the basic notions mentioned above: in
\S\ref{sec:sites} we define \ka-ary sites, in
\S\ref{sec:morphisms-sites} we define morphisms of sites, and in
\S\ref{sec:regularity} we define \ka-ary regularity and exactness.  In
\S\ref{sec:fr-alleg-new} we recall the notion of allegory, define
framed allegories, and prove \autoref{thm:intro-frall}.  Then in
\S\ref{sec:exact-completion} we deduce \autoref{thm:intro-excplt}
using construction~\ref{item:kameth1}.  In \S\ref{sec:frac} and
\S\ref{sec:exact-compl-sheav} we show the equivalence with
constructions~\ref{item:kameth2} and~\ref{item:kameth3}, respectively.

We will explain the relationship of our theory to each existing sort
of exact completion as we develop the requisite technology in
\S\S\ref{sec:exact-completion}--\ref{sec:exact-compl-sheav}.  In
\S\ref{sec:post-lex-colim}, we discuss separately a couple of related
notions which require a somewhat more in-depth treatment: the
\emph{postulated colimits} of~\cite{kock:postulated} and the
\emph{lex-colimits} of~\cite{gl:lex-colimits}.  In particular, we show
that the relative exact completions of~\cite{gl:lex-colimits} can also
be generalized to (possibly large) \ka-ary sites, and we derive the
\ka-ary regular completion as a special case.

In \S\ref{sec:dense} we study \emph{dense} morphisms of sites.  There
we further justify our generalized notion of ``morphism of sites'' by
showing that every dense inclusion is a morphism of sites, and that
every geometric morphism which lifts to a pair of sites of definition
is determined by a morphism between those sites.  Neither of these
statements is true for the classical notion of ``morphism of sites''.
Moreover, the latter is merely a special case of a fact about \ka-ary
exact completions for any \ka; in particular it applies just as well
to categories of small sheaves.

\subsection{Foundational remarks}
\label{sec:foundational-remarks}

We assume two ``universes'' $\mathbb{U}_1\in \mathbb{U}_2$, and denote
by \KA the least cardinal number not in $\lU_1$, or equivalently the
cardinality of $\mathbb{U}_1$.  These universes might be Grothendieck
universes (i.e.\ \KA might be inaccessible), but they might also be
the class of all sets and the hyperclass of all classes (in a set
theory such as NBG), or they might be small reflective models of ZFC
(as in~\cite{feferman:fdns-of-ct}).  In fact, \KA might be any regular
cardinal at all; all of our constructions will apply equally well to
all regular cardinals \ka, with \KA as merely a special case.
However, for comparison with standard notions, it is helpful to have
one regular cardinal singled out to call ``the size of the universe''.

Regardless of foundational choices, we refer to objects (such as
categories) in $\mathbb{U}_1$ as \emph{small} and others as
\emph{large}, and to objects in $\mathbb{U}_2$ as \emph{moderate}
(following~\cite{street:topos}) and others as \emph{very large}.  In
particular, small objects are also moderate.  We write \bSet for the
moderate category of small sets.  \emph{All categories, functors, and
  transformations in this paper will be moderate}, with a few
exceptions such as the very large category \bSET of moderate sets.
(We do not assume categories to be locally small, however.)  But most
of our 2-categories will be very large, such as the 2-category \nCAT
of moderate categories.

\subsection{Acknowledgments}
\label{sec:acknowledgments}

I would like to thank David Roberts for discussions about anafunctors,
James Borger for the suggestion to consider small sheaves and a
prediction of their universal property, and Panagis Karazeris for
discussions about flat functors and coherent toposes.  I would also
like to thank the organizers of the CT2011 conference at which this
work was presented, as well as the anonymous referee for many helpful
suggestions.  Some of these results (the case of \un-canonical
topologies on finitely complete categories) were independently
obtained by Tomas Everaert.

\section{Preliminary notions}
\label{sec:preliminaries}

\subsection{Arity classes}

In our notions of \ka-ary site, \ka-ary exactness, etc., \ka does not
denote exactly a regular cardinal, but rather something of the
following sort.

\begin{defn}
  An \textbf{arity class} is a class \ka of small cardinal numbers
  such that:
  \begin{enumerate}[nolistsep]
  \item $1\in\ka$.\label{item:arity1}
  \item \ka is closed under indexed sums: if $\la\in\ka$ and
    $\al\colon \la \to\ka$, then $\sum_{i\in \la} \al(i)$ is also in
    \ka.\label{item:arity2}
  \item \ka is closed under indexed decompositions: if $\la\in\ka$ and
    $\sum_{i\in \la} \al(i)\in \ka$, then each $\al(i)$ is also in
    \ka.\label{item:arity3}
  \end{enumerate}
  We say that a set is \textbf{\ka-small} if its cardinality is in \ka.
\end{defn}

\begin{remark}\label{rmk:arity}
  Conditions~\ref{item:arity2} and~\ref{item:arity3} can be combined
  to say that if $\phi\colon I\to J$ is any function where $J$ is
  \ka-small, then $I$ is \ka-small if and only if all fibers of $\phi$
  are \ka-small.  Also, if we assume~\ref{item:arity3}, then
  condition~\ref{item:arity1} is equivalent to nonemptiness of $\ka$,
  since for any $\la\in\ka$ we can write $\la = \sum_{i\in\la} 1$.

  By induction,~\ref{item:arity2} implies closure under iterated
  indexed sums: for any $n\ge 2$,
  \[\sum_{i_1\in\la_1} \; \sum_{i_2\in\la_2(i_1)} \cdots
  \sum_{i_{n-1} \in\la_{n-1}(i_1,\dots,i_{n-2})} \la_n(i_1,\dots,i_{n-1})
  \]
  is in \ka if all the $\la$'s are.  Condition~\ref{item:arity1} can
  be regarded as the case $n=0$ of this (the case $n=1$ being just
  ``$\la\in\ka$ if $\la\in\ka$'').  I am indebted to Toby Bartels and
  Sridhar Ramesh for a helpful discussion of this point.
\end{remark}

\noindent
The most obvious examples are the following.
\begin{itemize}[noitemsep]
\item The set $\{1\}$ is an arity class.\label{item:cb1}
\item The set $\{0,1\}$ is an arity class.
\item For any regular cardinal $\ka\le\KA$, the set of all cardinals
  strictly less than \ka is an arity class, which we abusively denote
  also by \ka.\label{item:cb2} (We can include $\{0,1\}$ in this
  clause if we allow $2$ as a regular cardinal.)  The cases of most
  interest are $\ka=\om$ and $\ka=\KA$, which consist respectively of
  the \emph{finite} or \emph{small} cardinal numbers.
\end{itemize}

In fact, these are the only examples.  For if \ka contains any
$\la>1$, then it must be down-closed, since if $\mu\le\nu$ and $\la>1$
we can write $\nu$ as a \la-indexed sum containing \mu.  And clearly
any down-closed arity class must arise from a regular cardinal
(including possibly $2$).  So we could equally well have defined an
arity class to be ``either the set of all cardinals less than some
regular cardinal, or the set $\{1\}$''; but the definition we have
given seems less arbitrary.

\begin{remark}\label{rmk:indarity}
  For any \ka, the full subcategory $\bSet_\ka\subseteq \bSet$
  consisting of the \ka-small sets is closed under finite limits.  A
  reader familiar with ``indexed categories'' will see that all our
  constructions can be phrased using ``naively'' $\bSet_\ka$-indexed
  categories, and suspect a generalization to \bK-indexed categories
  for any finitely complete \bK.  We leave such a generalization to a
  later paper, along with potential examples such
  as~\cite{rr:colim-eff,frey:mfpo-pca}.
\end{remark}

From now on, all definitions and constructions will be relative to an
arity class \ka, whether or not this is explicitly indicated in the
notation.  We sometimes say \textbf{unary}, \textbf{finitary}, and
\textbf{infinitary} instead of \un-ary, \om-ary, and \KA-ary
respectively.

\begin{remark}\label{rmk:subsing}
  Let $x$ and $y$ be elements of some set $I$.  The
  \emph{subsingleton} $\ssing{x=y}$ is a set that contains one element
  if $x=y$ and is empty otherwise.  Then if $I$ is \ka-small, then so
  is $\ssing{x=y}$.  This is trivial unless $\ka=\un$, so we can prove
  this by cases.  Alternatively, we can observe that \ssing{x=y} is a
  fiber of the diagonal map $I\to I\times I$, and both $I$ and
  $I\times I$ are \ka-small.
\end{remark}

\subsection{Matrices and families}

We now introduce some terminology and notation for families of
morphisms.  This level of abstraction is not strictly necessary, but
otherwise the notation later on would become quite cumbersome.

By a \emph{family} (of objects or morphisms) we always mean a
\emph{small-set-indexed} family.  We will always use uppercase letters
for families and lowercase letters for their elements, such as $X =
\ff{x_i}{i\in I}$.  We use braces to denote families, although they
are not sets and in particular can contain duplicates.  For example,
the family $\{x,x\}$ has two elements.  We further abuse notation by
writing $x\in X$ to mean that there is a specified $i\in I$ such that
$x=x_i$.  We say $X = \ff{x_i}{i\in I}$ is \textbf{\ka-ary} if $I$ is
\ka-small.

If $\ff{X_i}{i\in I}$ is a family of families, we have a ``disjoint
union'' family $\bigsqcup X_i$, which is \ka-ary if $I$ is \ka-small
and each $X_i$ is \ka-ary.

\begin{defn}
  Let \bC be a category and $X$ and $Y$ families of objects of \bC.  A
  \textbf{matrix} from $X$ to $Y$, written $F\colon X\To Y$, is a
  family $F = \ff{f_{x y}}{x\in X,y\in Y}$, where each $f_{x y}$ is a
  set of morphisms from $x$ to $y$ in \bC.  If $G\colon Y\To Z$ is
  another matrix, then their composite $G F\colon X\To Z$ is
  \[ G F = \Big\{ \Set{ g f | y\in Y, g \in g_{yz}, f\in f_{x y}}
  \Big\}_{x\in X, z\in Z}. \]
\end{defn}

Composition of matrices is associative and unital.  Also, for any
family of matrices $\ff{F_i\colon X_i\To Y_i}{i\in I}$, we have a
disjoint union matrix $\bigsqcup F_i \colon \bigsqcup X_i \To
\bigsqcup Y_i$, defined by
\[ \left(\bigsqcup F_i\right)_{x y} = 
\begin{cases}
  (f_i)_{x y} &\quad x\in X_i, y\in Y_i\\
  \emptyset &\quad x\in X_i, y\in Y_j, i\neq j.
\end{cases}
\]

\begin{defn}
  A matrix $F\colon X\To Y$ is \textbf{\ka-sourced} if $X$ is \ka-ary,
  and \textbf{\ka-targeted} if $Y$ is \ka-ary.  It is
  \textbf{\ka-to-finite} if it is \ka-sourced and \om-targeted.
\end{defn}

If $F\colon X\To Y$ is a matrix and $X'$ is a subfamily of $X$, we
have an induced matrix $F|^{X'}\colon X' \To Y$.  Similarly, for a
subfamily $Y'$ of $Y$, we have $F|_{Y'}\colon X\To Y'$.

\begin{defn}
  An \textbf{array} in \bC is a matrix each of whose entries is a
  singleton.  A \textbf{sparse array} in \bC is a matrix each of whose
  entries is a subsingleton (i.e.\ contains at most one element).
\end{defn}

The composite of two (sparse) arrays $F\colon X\To Y$ and $G\colon
Y\To Z$ is always defined as a matrix.  It is a (sparse) array just
when for all $x\in X$ and $z\in Z$, the composite $g_{yz}f_{xy}$ is
independent of $y$.

We can identify objects of \bC with singleton families, and arrays
between such families with single morphisms.

\begin{defn}
  A \textbf{cone} is an array with singleton domain, and a
  \textbf{cocone} is one with singleton codomain.
\end{defn}

Cones and cocones are sometimes called \emph{sources} and \emph{sinks}
respectively, but this use of ``source'' has potential for confusion
with the source (= domain) of a morphism.  Another important sort of
sparse array is the following.

\begin{defn}
  A \textbf{functional array} is a sparse array $F\colon X\To Y$ such
  that for each $x\in X$, there is exactly one $y\in Y$ such that
  $f_{x y}$ is nonempty.
\end{defn}

Thus, if $X=\ff{x_i}{i\in I}$ and $Y=\ff{y_j}{j\in J}$, a functional
array $F\colon X\To Y$ consists of a function $f\colon I\to J$ and
morphisms $f_i\colon x_i \to y_{f(j)}$.  We abuse notation further by
writing $f(x_i)$ for $y_{f(i)}$ and $f_{x_i}$ for $f_i$, so that $F$
consists of morphisms $f_x\colon x \to f(x)$.  For instance, the
\emph{identity} functional array $X\To X$ has $f(x)=x$ and $f_x =
1_{x}$ for each $x$.

Any cocone is functional, as is any disjoint union $\bigsqcup F_i$ of
functional arrays.  Conversely, any functional array $F\colon X\To Y$
can be decomposed as
\[F = \bigsqcup F|_y\colon \bigsqcup X|_y \Longrightarrow \bigsqcup
\{y\} = Y,\] where $X|_y = \ff{x\in X}{f(x)=y}$ and each $F|_y\colon
X|_y \To y$ is the induced cocone.  (This is a slight abuse of
notation, as $F|_y$ might also refer to the sparse array $X \To y$
which is empty at those $x\in X$ with $f(x)\neq y$, but the context
will always disambiguate.)

Also, if $F\colon X\To Y$ is functional and $G\colon Y\To Z$ is any
sparse array, then the composite matrix $G F$ is also a sparse array.
If $G$ is also functional, then so is $G F$.

\begin{remark}\label{rmk:funarr-coprod}
  The category of \ka-ary families of objects in \bC and functional
  arrays is the free completion of \bC under \ka-ary coproducts.
\end{remark}

Any functor $\bg\colon \bD\to \bC$ gives rise to a family $\bg(\bD)
\coloneqq \ff{\bg(d)}{d\in \bD}$ in \bC.

\begin{defn}
  An array $F\colon X \To \bg(\bD)$ is \textbf{over $\bg$} if
  $\bg(\delta) \circ f_{d x} = f_{d' x}$ for all $\delta\colon d\to
  d'$ in \bD.
\end{defn}

This generalizes the standard notion of ``cone over a functor.''

\begin{defn}\label{def:refines}
  Let $F\colon X\To Z$ and $G\colon Y\To Z$ be arrays with the same
  target.
  \begin{enumerate}[leftmargin=*,nolistsep]
  \item We say that $F$ \textbf{factors through} $G$ or
    \textbf{refines} $G$ if for every $x\in X$ there exists a $y\in Y$
    and a morphism $h\colon x\to y$ such that $f_{z x} = g_{z y} h$
    for all $z\in Z$.  In this case we write $F\le G$.
  \item If $F\le G$ and $G\le F$, we say $F$ and $G$ are
    \textbf{equivalent}.
  \end{enumerate}
\end{defn}

Note that $F\le G$ just when there exists a functional array $H\colon
X\To Y$ such that $F = G H$.  We have a (possibly large) preorder of
\ka-sourced arrays with a fixed target, under the relation $\le$.

\subsection{\ka-prelimits}

We mentioned in the introduction that a \ka-ary site must satisfy a
solution-set condition.  We will define the actual condition in
\S\ref{sec:sites}; here we define a preliminary, closely related
notion.

\begin{defn}\label{def:prelimit}
  A \textbf{\ka-prelimit} of a functor $\bg\colon \bD\to \bC$ is a
  \ka-sourced array $T$ over $\bg$ such that every cone over $\bg$
  factors through $T$.
\end{defn}

Note that if $P\colon X\To Z$ and $Q\colon Y\To Z$ are arrays over
$Z$, then $P$ factors through $Q$ if and only if each cone
$P|^{x}\colon x\To Z$ does.  Thus, \autoref{def:prelimit} could
equally well ask that every \emph{array} over \bg factor through $T$.
That is, a \ka-prelimit of \bg is a \ka-sourced array over $\bg$ which
is $\le$-greatest among arrays over $\bg$.  Similar remarks apply to
subsequent related definitions, such as \autoref{def:lwmlim}.

In~\cite{fs:catall}, the term \emph{prelimit} refers to our \KA-prelimits.

\begin{examples}\ 
\begin{itemize}[nolistsep]
\item Since $1\in\ka$, any \textbf{limit} of $\bg$ is \emph{a
    fortiori} a \ka-prelimit.
\item Recall that a \textbf{multilimit} of $\bg$ is a set $T$ of cones
  such that for any cone $x$, there exists a unique $t\in T$ such that
  $x$ factors through $t$, and for this $t$ the factorization is
  unique.  Any \ka-small multilimit is also a \ka-prelimit.
\item Recall that a \textbf{weak limit} of $\bg$ is a cone such that
  any other cone factors through it, not necessarily uniquely.  Since
  $1\in\ka$, any weak limit is a \ka-prelimit.  Conversely,
  \un-prelimits are precisely weak limits.
\end{itemize}
\end{examples}

Note that even if a limit, multilimit, or weak limit exists, it will
not in general be the \emph{only} \ka-prelimit.  In particular, if
$\bg$ has a limit $T$, then a \ka-small family of cones over $\bg$ is
a \ka-prelimit if and only if it contains some cone whose comparison
map to $T$ is split epic (in the category of cones).

\begin{example}\label{eg:small-wmlim}
  If there is a \ka-small family which includes \emph{all} cones over
  $\bg$ (in particular, if $\ka=\KA$ and \bC and \bD are small), then
  this family is a \ka-prelimit.
\end{example}

\begin{remark}\label{rmk:gaft}
  Given \bD, a category \bC has \KA-prelimits of all \bD-shaped
  diagrams precisely when the diagonal functor $\bC\to \bC^\bD$ has a
  \emph{(co-)solution set}, as in (the dual form of) Freyd's General
  Adjoint Functor Theorem.  In fact, the crucial lemma for the GAFT
  can be phrased as ``if \bC is cocomplete and locally small, and
  $\bg\colon \bD\to \bC$ has a \KA-prelimit, then it also has a
  limit.''  See also~\cite[\S1.8]{fs:catall}
  and~\cite[Ch.~4]{ar:loc-pres}.
\end{remark}

\begin{remark}
  \KA-prelimits also appear in~\cite{dl:lim-smallfr}, though not by
  that name.  There it is proven that \bC has \KA-prelimits if and
  only if its category of small presheaves is complete.  We will
  deduce this by an alternative method in
  \S\ref{sec:exact-compl-sheav} and \S\ref{sec:dense}.
\end{remark}

A \textbf{finite \ka-prelimit} is a \ka-prelimit of a finite diagram.
Another important notion is the following.  Given a cocone $P\colon
V\To u$ and a morphism $f\colon x\to u$, for each $v\in V$ let $Q^v
\colon Y^v \To \{x,v\}$ be a \ka-prelimit of the cospan $x \xto{f} u
\xot{p_v} v$, and let $Y=\bigsqcup Y^v$.  Putting together the cocones
$Q^v|_x \colon Y^v \To x$, we obtain a cocone $Y \To x$, which we
denote $f^* P$ and call a \textbf{\ka-pre-pullback} of $P$ along $f$.
This is unique up to equivalence of cocones over $x$ (in the sense of
\autoref{def:refines}).

\subsection{Classes of epimorphisms}

We now define several types of epimorphic cocones.

\begin{defn}\label{def:eff-epi}
  Let $R\colon V \To u$ be a cocone.
  \begin{enumerate}[nolistsep]
  \item $R$ is \textbf{epic} if $f R = g R$ implies $f=g$.  (Of
    course, a cone is \textbf{monic} if it is an epic cocone in the
    opposite category.)
  \item $R$ is \textbf{extremal-epic} if it is epic, and whenever $R =
    q P$ with $q\colon z \to u$ monic, it follows that $q$ is an
    isomorphism.
  \item $R$ is \textbf{strong-epic} if it is epic, and whenever $F R =
    Q P$, for $F\colon u\To W$ a finite cone, $Q\colon z \To W$ a
    finite monic cone, and $P\colon V\To z$ any cocone, there exists
    $h\colon u \to z$ (necessarily unique) such that $h R = P$ and $Q
    h = F$.
  \item $R$ is \textbf{effective-epic} if whenever $Q\colon V\To x$ is
    a cocone such that $r_{v_1} a = r_{v_2} b$ implies $q_{v_1} a =
    q_{v_2} b$, then $Q$ factors as $h R$ for a unique $h\colon u\to
    x$.
  \end{enumerate}
\end{defn}

It is standard to show that
\begin{center}
  effective-epic $\To$ strong-epic $\To$ extremal-epic $\To$ epic.
\end{center}
Note that if \bC lacks finite products, our notion of strong-epic is
stronger than the usual one which only involves orthogonality to
\emph{single} monomorphisms.  If \bC has all finite limits, then
strong-epic and extremal-epic coincide.

Of particular importance are cocones with these properties that are
``stable under pullback''.  Since we are not assuming the existence of
actual pullbacks, defining this appropriately requires a little care.

\begin{defn}\label{def:univ}
  Let \sA be a collection of \ka-ary cocones in \bC.  We define \Astar
  to be the largest possible collection of \ka-ary cocones $P\colon
  V\To u$ such that
  \begin{enumerate}[leftmargin=*,nolistsep]
  \item if $P\in\Astar$, then $P\in\sA$, and
  \item if $P\in\Astar$, then for any $f\colon x\to u$, there exists a
    $Q\in \Astar$ such that $f Q \le P$.
  \end{enumerate}
  If \sA is the collection of cocones with some property X, we speak
  of the cocones in \Astar as being \textbf{\ka-universally X}.
\end{defn}

This is a coinductive definition.  The resulting \emph{coinduction
  principle} says that to prove $\sB\subseteq \Astar$, for some
collection of \ka-ary cocones \sB, it suffices to show that
\begin{enumerate}[label=(\alph*),leftmargin=2.5em,nolistsep]
\item if $P\in\sB$, then $P\in\sA$, and\label{item:coind1}
\item if $P\in\sB$, then for any $f\colon x\to u$, there exists a
  $Q\in \sB$ such that $f Q \le P$.\label{item:coind2}
\end{enumerate}

\begin{defn}\label{def:saturated}
  A collection \sA of \ka-ary cocones is \textbf{saturated} if
  whenever $P\in\sA$ and $P\le Q$ for a \ka-ary cocone $Q$, then also
  $Q\in\sA$.
\end{defn}

\begin{lem}
  If \sA is saturated, so is \Astar.
\end{lem}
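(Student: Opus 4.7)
The plan is to apply the coinduction principle that follows \autoref{def:univ}. Define the collection
\[
\sB = \{\,Q : Q \text{ is a \ka-ary cocone and } P \le Q \text{ for some } P \in \Astar\,\}.
\]
Every $Q\in\Astar$ satisfies $Q\le Q$, so $\Astar\subseteq\sB$; conversely, if I can show $\sB\subseteq\Astar$, then any \ka-ary cocone refining some element of $\Astar$ lies in $\Astar$, which is exactly the saturation condition for $\Astar$.

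To apply coinduction I need to verify the two clauses \ref{item:coind1} and \ref{item:coind2} with $\sB$ in place of the hypothesized subcollection. For \ref{item:coind1}: given $Q\in\sB$ with a witnessing $P\in\Astar$, we have $P\in\sA$ by the first defining property of $\Astar$, and then saturation of $\sA$ together with $P\le Q$ gives $Q\in\sA$. For \ref{item:coind2}: given $Q\in\sB$ with vertex $u$ and an arbitrary morphism $f\colon x\to u$, pick $P\in\Astar$ with $P\le Q$ (note that $P$ and $Q$ share the vertex $u$, since refinement is defined only between arrays with the same target). By the second defining property of $\Astar$, there exists $Q'\in\Astar$ with $fQ'\le P$, and transitivity of $\le$ (which is immediate from the fact that the composite of two functional arrays exhibiting factorizations is itself a functional array exhibiting a factorization) yields $fQ'\le Q$. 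Since $Q'\in\Astar\subseteq\sB$, this provides the required witness in $\sB$.

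Hence $\sB\subseteq\Astar$ by coinduction, which completes the proof. There is no real obstacle here: once the correct candidate $\sB$ is identified, the argument is a one-line verification, and the only substantive input beyond the definitions is the transitivity of $\le$, which in turn rests on closure of functional arrays under composition (noted just after \autoref{def:refines}).
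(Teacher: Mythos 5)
Your proof is correct and follows essentially the same route as the paper's: the same candidate collection $\sB$, the same coinductive verification of the two clauses, with clause~(i) using saturation of $\sA$ and clause~(ii) using transitivity of $\le$ (which the paper applies implicitly in the step ``whence $fR\le Q$''). No gaps.
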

\begin{proof}
  Let \sB be the collection of cocones $Q$ such that $P\le Q$ for some
  $P\in\Astar$.  We want to show $\sB \subseteq \Astar$, and by
  coinduction it suffices to verify~\ref{item:coind1}
  and~\ref{item:coind2} above.  Thus, suppose $Q\in\sB$, i.e.\ $P\le
  Q$ for some $P\in\Astar$.  Since \sA is saturated, and $P\in\sA$, we
  have $Q\in\sA$, so~\ref{item:coind1} holds.  And given $f$, since
  $P\in\Astar$ we have an $R\in\Astar$ (hence $R\in\sB$) with $f R \le
  P$, whence $f R \le Q$.  Thus~\ref{item:coind2} also holds.
\end{proof}

\begin{lem}\label{lem:univ}
  Suppose that \Astar is saturated (for instance, if \sA is saturated)
  and that \bC has finite \ka-prelimits.  Then a \ka-ary cocone
  $P\colon V\To u$ lies in \Astar if and only if for any $f\colon x\to
  u$, some (hence any) \ka-pre-pullback $f^* P$ lies in \sA.
\end{lem}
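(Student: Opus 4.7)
The proof rests on first establishing a universal property for $\ka$-pre-pullbacks, then deploying it in both directions via the coinduction principle together with saturation of $\Astar$.

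First, I would prove that $f^*P$ is, up to equivalence, the $\le$-greatest $\ka$-sourced cocone $R\colon W \To x$ satisfying $fR \le P$. The relation $f(f^*P) \le P$ is built into the construction, since each component $q^v_{x,y}$ of $f^*P$ satisfies $f \circ q^v_{x,y} = p_v \circ q^v_{v,y}$, giving a refining functional array $\bigsqcup Y^v \to V$. Conversely, if $fR \le P$, each $r_w\colon w \to x$ comes with a $v \in V$ and $h\colon w \to v$ such that $f r_w = p_v h$, so $(r_w, h)$ is a cone over $x \xto{f} u \xot{p_v} v$ that factors through the $\ka$-prelimit $Q^v$, yielding $R \le f^*P$. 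The hypothesis that $\bC$ has finite $\ka$-prelimits is used only here, and an immediate corollary is the ``pasting'' identity that $g^*(f^*P)$ and $(fg)^*P$ are equivalent.

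For the forward direction, given $P \in \Astar$ and $f\colon x \to u$, clause (ii) of the coinductive definition of $\Astar$ supplies $Q \in \Astar$ with $fQ \le P$; by the universal property $Q \le f^*P$, so saturation of $\Astar$ forces $f^*P \in \Astar \subseteq \sA$. Any two choices of pre-pullback are equivalent, so saturation of $\Astar$ also delivers the ``some hence any'' clause.

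For the backward direction, suppose every $f\colon x \to u$ admits some $f^*P \in \sA$. Using $f = 1_u$ with the canonical choice $Y^v = \{v\}$ and $Q^v = (p_v, 1_v)$ realizes $P$ itself as a pre-pullback $1_u^*P$, so picking an equivalent representative $P' \in \sA$ from the hypothesis, define $\sB$ to be the collection of cocones $R \in \sA$ such that for every $g$ some $g^*R$ lies in $\sA$. Clause (a) of the coinduction principle, $\sB \subseteq \sA$, is by definition. For clause (b), given $R \in \sB$ and $g$, put $S = g^*R \in \sA$; then $gS \le R$ by step 1, and $S \in \sB$ because each $h^*S$ is equivalent to $(gh)^*R$ via the pasting identity, an $\sA$-representative of which exists since $R \in \sB$. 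Hence $\sB \subseteq \Astar$. Finally, $P'$ lies in $\sB$ (its pre-pullbacks $g^*P'$ are equivalent to $g^*P$, which have $\sA$-representatives by assumption), so $P' \in \Astar$; saturation of $\Astar$ applied to $P' \le P$ then yields $P \in \Astar$.

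The main obstacle is step 1; once the universal property and the pasting identity are in hand, both directions of the equivalence unfold essentially mechanically via coinduction and saturation. A secondary care point is the ``some hence any'' phrasing, handled cleanly by passing through the equivalent representative $P'$ in the backward direction rather than trying to place $P$ itself directly in $\sA$.
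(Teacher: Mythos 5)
Your proof is correct and takes essentially the same route as the paper's: both directions run on the universal property of \ka-pre-pullbacks ($fR\le P$ iff $R\le f^*P$), saturation of \Astar, and the coinduction principle together with the pasting identity for iterated pre-pullbacks, the only real differences being that you make the universal property and the detour through an equivalent representative $P'$ explicit where the paper leaves them implicit. One small point to tighten: in verifying clause (b) you should use that every composite $h^*(g^*R)$ literally \emph{is} a valid choice of $(gh)^*R$ (which the construction of pre-pullbacks from pasted prelimits actually gives), rather than merely equivalent to one, since knowing only that \emph{some} $(gh)^*R$ lies in \sA does not by itself produce a choice of $h^*S$ lying in \sA when \sA is not saturated --- though the paper's own proof is no more careful on this point.
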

\begin{proof}
  Suppose $P\in\Astar$.  Then given $f$, we have some $Q\in\Astar$
  with $f Q \le P$.  Thus $Q \le f^* P$, so (by saturation)
  $f^*P\in\Astar$.  Hence, in particular, $f^*P\in\sA$.

  For the converse, let \sB be the collection of \ka-ary cocones $P$
  such that $f^* P\in\sA$ for any $f$.  By coinduction, to show that
  $\sB\subseteq \Astar$, it suffices to show~\ref{item:coind1}
  and~\ref{item:coind2} above.  Since $P$ is a \ka-pre-pullback of
  itself along $1_u$, we have~\ref{item:coind1} easily.
  For~\ref{item:coind2}, we can take $Q = f^* P$.  Then for any
  further $g\colon z\to x$, the \ka-pre-pullback $g^* (f^* P)$ is also
  a \ka-pre-pullback $(f g)^*P$, hence lies in \sA; thus $f^* P
  \in\sB$ as desired.
\end{proof}

It is easy to see that epic, extremal-epic, and strong-epic cocones
are saturated.  It seems that effective-epic cocones are not saturated
in general, but \ka-universally effective-epic cocones are, so that
\autoref{lem:univ} still applies; cf.\ for
instance~\cite[C2.1.6]{ptj:elephant}.

\section{\ka-ary sites}
\label{sec:sites}

As suggested in the introduction, a \emph{\ka-ary site} is one whose
covers are determined by \ka-small families and which satisfies a
solution-set condition.
%
%
We begin with weakly \ka-ary sites, which omit the solution-set
condition.  Recall that all categories we consider will be moderate.

\begin{defn}\label{def:site}
  A \textbf{weakly \ka-ary topology} on a category \bC consists of a
  class of \ka-ary cocones $P\colon V\To u$, called \textbf{covering
    families}, such that
  \begin{enumerate}[nolistsep]
  \item For each object $u\in \bC$, the singleton family $\{1_u\colon
    u\to u\}$ is covering.\label{item:site-sing}
  \item For any covering family $P\colon V\To u$ and any morphism
    $f\colon x\to u$, there exists a covering family $Q\colon Y \To x$
    such that $f Q \le P$.\label{item:site-pb}
  \item If $P\colon V \To u$ is a covering family and for each $v\in
    V$ we have a covering family $Q_v\colon W_v\To v$, then
    $P(\bigsqcup Q_v)\colon W \To u$ is a covering
    family.\label{item:site-comp}
  \item If $P\colon V \To u$ is a covering family and $Q\colon W\To u$
    is a \ka-ary cocone with $P\le Q$, then $Q$ is also a covering
    family.\label{item:site-sat}
  \end{enumerate}
  If \bC is equipped with a weakly \ka-ary topology, we call it a
  \textbf{weakly \ka-ary site}.
\end{defn}

\begin{remark}
  Conditions~\ref{item:site-pb} and~\ref{item:site-comp} imply that
  for any covering families $P\colon V\To u$ and $Q\colon W\To u$,
  there exists a covering family $R\colon Z\To u$ with $R\le P$ and
  $R\le Q$.
\end{remark}


\begin{remark}
  If we strengthen \ref{def:site}\ref{item:site-pb} to require
  covering families to have \emph{actual} pullbacks, as is common in
  the definition of ``Grothendieck pretopology'', then our
  \emph{weakly unary topologies} become the \emph{saturated singleton
    pretopologies} of~\cite{roberts:ana} and the
  \emph{quasi-topologies} of~\cite{hofstra:relcpltn}.
\end{remark}

We should first of all relate this definition to the usual notion of
Grothendieck topology, which consists of a collection of
\emph{covering sieves} such that
\begin{enumerate}[nolistsep,label=(\alph*)]
\item For any $u$, the maximal sieve on $u$, which consists of all
  morphisms with target $u$, is covering.\label{item:gtop1}
\item If $P$ is a covering sieve on $u$ and $f\colon v\to u$, then the
  sieve $f^{-1} P = \Set{ g\colon w\to v | f g \in P }$ is also
  covering.\label{item:gtop2}
\item If $P$ is a sieve on $u$ such that the sieve $\Set{f\colon v\to
    u | f^{-1} P \text{ is covering }}$ is covering, then $P$ is also
  covering.\label{item:gtop3}
\end{enumerate}

The general relationship between covering sieves and covering families
is well-known (see, for instance,~\cite[C2.1]{ptj:elephant}), but it
is worth making explicit here to show how the arity class \ka enters.
Recall that any cocone $P\colon V\To u$ \emph{generates} a sieve
$\overline{P} = \Set{ f\colon w\to u | f \le P }$.  We have $P
\subseteq \overline{Q}$ if and only if $\overline{P} \subseteq
\overline{Q}$, if and only if $P\le Q$.

\begin{prop}\label{thm:sieves}
  For any category \bC, there is a bijection between
  \begin{itemize}[nolistsep]
  \item Weakly \ka-ary topologies on \bC, and
  \item Grothendieck topologies on \bC, in the usual sense, such that
    every covering sieve contains a \ka-small family which generates a
    covering sieve.
  \end{itemize}
\end{prop}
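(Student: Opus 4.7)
The plan is to exhibit the bijection by the two natural constructions and then verify compatibility. Given a weakly $\ka$-ary topology, declare a sieve $S$ on $u$ to be \emph{covering} iff there exists a $\ka$-ary covering family $P \colon V \To u$ with $\overline{P}\subseteq S$ (equivalently $P \le S$). Conversely, given a Grothendieck topology satisfying the stated smallness hypothesis, declare a $\ka$-ary cocone $P$ to be \emph{covering} iff $\overline{P}$ is a covering sieve. The goal is to check that each construction lands in the correct class of structures, and that the two are mutually inverse.

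For the forward direction, I would verify the three Grothendieck axioms. Axiom~\ref{item:gtop1} is immediate from \ref{def:site}\ref{item:site-sing}, since the singleton family $\{1_u\}$ is contained in the maximal sieve. For~\ref{item:gtop2}, given a covering sieve $S\supseteq \overline{P}$ and $f\colon v\to u$, use \ref{def:site}\ref{item:site-pb} to obtain $Q$ covering with $fQ\le P$; then $Q\subseteq f^{-1}S$, so $f^{-1}S$ is covering. For local character~\ref{item:gtop3}, if $T = \{f: f^{-1}S \text{ covering}\}$ is covering, it contains some $\ka$-ary covering family $P$, and for each $p_v\in P$ the sieve $p_v^{-1}S$ contains a $\ka$-ary covering family $Q_v$; \ref{def:site}\ref{item:site-comp} then makes $P(\bigsqcup Q_v)$ a covering family (using closure of $\ka$ under indexed sums to see that $\bigsqcup Q_v$ is $\ka$-ary), and by construction this refines $S$, so $S$ is covering. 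The smallness hypothesis on the resulting Grothendieck topology is automatic: every covering sieve contains the generating $\ka$-ary family by definition.

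For the reverse direction, I would verify the four axioms of \autoref{def:site}. Axiom~\ref{item:site-sing} is trivial, and~\ref{item:site-sat} follows because $P\le Q$ gives $\overline{P}\subseteq\overline{Q}$, and any sieve containing a covering sieve is covering. For~\ref{item:site-pb}, given a $\ka$-ary covering $P$ and $f\colon x\to u$, the pulled-back sieve $f^{-1}\overline{P}$ is covering, so by the smallness hypothesis it contains a $\ka$-ary family $Q$ generating a covering sieve; unpacking, $fQ\subseteq\overline{P}$, i.e.\ $fQ\le P$. For~\ref{item:site-comp}, given $P$ and the $Q_v$ with $\overline{P}$ and each $\overline{Q_v}$ covering, the sieve $T$ generated by $P(\bigsqcup Q_v)$ satisfies $p_v^{-1}T\supseteq \overline{Q_v}$, so $T$ passes the local-character test along $\overline{P}$ and is covering; again $\bigsqcup Q_v$ is $\ka$-ary by closure under indexed sums.

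Finally, for mutual inverseness: starting from a weakly $\ka$-ary topology, passing to sieves and back yields exactly the original class of covering families, because if $\overline{P}$ is covering it contains some $\overline{P'}$ with $P'$ covering and $P'\le P$, whence $P$ is covering by saturation~\ref{item:site-sat}; the converse is immediate. Starting from a Grothendieck topology with the smallness property, passing to $\ka$-ary families and back recovers the original covering sieves, since the new ``covering'' sieves are precisely those containing a $\ka$-ary family generating a covering sieve, and the hypothesis guarantees that this is equivalent to being a covering sieve in the original sense. I expect the main subtlety is simply keeping the direction of refinement and sieve-containment straight while invoking the closure axioms of $\ka$ at the right moments, particularly in the composition/local-character step where the $\ka$-arity of $\bigsqcup Q_v$ is needed.
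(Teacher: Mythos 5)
Your proposal is correct and follows essentially the same route as the paper's proof: the same two constructions (a sieve covers iff it contains a covering family; a $\ka$-ary cocone covers iff it generates a covering sieve), the same axiom-by-axiom verifications using \ref{def:site}\ref{item:site-pb} and~\ref{item:site-comp} against the Grothendieck axioms, and the same mutual-inverseness argument via saturation and the smallness hypothesis. The only difference is cosmetic: you make explicit the closure of $\ka$ under indexed sums when forming $\bigsqcup Q_v$, which the paper leaves implicit.
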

\begin{proof}
  First let \bC have a weakly \ka-ary topology, and define a sieve to
  be covering if it contains a covering family.  We show this is a
  Grothendieck topology in the usual sense.

  For~\ref{item:gtop1}, the maximal sieve on $u$ contains $1_u$, hence
  is covering.

  For~\ref{item:gtop2}, if $P$ is a sieve on $u$ containing a covering
  family $P'$ and $f\colon v\to u$, then by
  \ref{def:site}\ref{item:site-pb} there exists a covering family $Q$
  of $v$ such that $f Q\le P'$, and hence $Q \le f^{-1} P$; thus the
  sieve $f^{-1} P$ is covering.

  For~\ref{item:gtop3}, if $\Set{f\colon v\to u | f^{-1} P \text{ is
      covering}}$ is covering, then it contains a covering family
  $F\colon V\To u$.  Moreover, since for each $v\in V$, the sieve
  $f_v^{-1} P$ is covering, it contains a covering family $G_v\colon
  W_v \To v$.  But then $P$ contains $F \big(\bigsqcup G_v\big)$,
  hence is also covering.

  Finally, it is clear that in this Grothendieck topology, any
  covering sieve contains a \ka-small family which generates a
  covering sieve.
  
  Now let \bC be given a Grothendieck topology satisfying the
  condition above, and define a \ka-small cocone to be covering if it
  generates a covering sieve.  We show that this defines a weakly
  \ka-ary topology.

  For~\ref{item:site-sing}, we note that the identity morphism
  generates the maximal sieve.

  For~\ref{item:site-pb}, suppose that the \ka-small family $P\colon V
  \To u$ generates a covering sieve $\overline{P}$, and let $f\colon
  x\to u$.  Then the sieve $f^{-1} \overline{P}$ on $x$ is covering,
  hence contains a \ka-small family $Q\colon Y \To x$ such that
  $\overline{Q}$ is covering.  Since $Q \subseteq f^{-1}
  \overline{P}$, we have $f Q \le P$.

  For~\ref{item:site-comp}, let $R = \overline{P (\bigsqcup Q_v)}$,
  where $P\colon V \To u$ and each $Q_v\colon W_v \To v$ are covering
  families.  Then each sieve $p_v^{-1} R$ contains the sieve
  $\overline{Q_v}$, which is covering, so it is also covering.
  Therefore, the sieve $\Set{f\colon v\to u | f^{-1} R \text{ is
      covering}}$ contains the sieve $\overline{P}$, which is
  covering, so it is also covering.  Thus, by~\ref{item:gtop3}, $R$ is
  covering.

  For~\ref{item:site-sat}, if $P\le Q$, then $\overline{P}\subseteq
  \overline{Q}$, so if $\overline{P}$ is covering then so is
  $\overline{Q}$.

  Finally, we prove the two constructions are inverse.

  If we start with a weakly \ka-ary topology, then any covering family
  $P$ generates a covering sieve $\overline{P}$ since $P \subseteq
  \overline{P}$.  Conversely, if $Q$ is a \ka-small family such that
  $\overline{Q}$ is a covering sieve, then by definition there exists
  a covering family $P$ with $P\subseteq \overline{Q}$.  That means
  that $P\le Q$, so by \ref{def:site}\ref{item:site-sat}, $Q$ is
  covering.

  In the other direction, if we start with a Grothendieck topology in
  terms of sieves, then any sieve $R$ which contains a covering family
  $P$ contains the sieve $\overline{P}$, which is covering; hence $R$
  is itself covering in the original topology.  Conversely, if $R$ is
  covering, then by assumption it contains a \ka-small family $P$ that
  generates a covering sieve, so that $P$ is a covering family
  contained by $R$.
\end{proof}

Thus, we may unambiguously ask about a topology whether it ``is weakly
\ka-ary.''  When interpreted in this sense, a weakly \ka-ary topology
is also weakly $\ka'$-ary whenever $\ka\subseteq \ka'$.  (When
expressed with covering families, to pass from \ka to $\ka'$ we need
to ``saturate''.)

We have chosen to define \ka-ary topologies in terms of covering
families rather than sieves for several reasons.  Firstly, in
constructing the exact completion, there seems no way around working
with \ka-ary covering families to some extent, and constantly
rephrasing things in terms of sieves would become tiresome.  Secondly,
covering families tend to make the constructions somewhat more
explicit, especially for small values of \ka.  And thirdly, there may
be foundational issues: a sieve on a large category is a large object,
so that a collection of such sieves is an illegitimate object in ZFC.

We will consider some examples momentarily, but first we explain the
solution-set condition that eliminates the adjective ``weakly'' from
the notion of \ka-ary site.  For this we need a few more definitions.
First of all, it is convenient to generalize the notion of covering
family as follows.

\begin{defn}\label{def:covfam}
  If $U$ is a family of objects in a weakly \ka-ary site, a
  \textbf{covering family} of $U$ is a functional array $P\colon V\To
  U$ such that each $P|_u\colon V|_u \To u$ is a covering family.
\end{defn}

For instance, \autoref{def:site}\ref{item:site-comp} can then be
rephrased as ``the composite of two covering families is covering.''
We can also generalize \ref{def:site}\ref{item:site-pb} as follows.

\begin{lem}\label{thm:fam-pb}
  If $P\colon V\To U$ is a covering family and $F\colon X\To U$ is a
  functional array, then there exists a covering family $Q\colon Y\To
  X$ such that $F Q \le P$.
\end{lem}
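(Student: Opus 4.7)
The plan is to apply the one-object pullback axiom \autoref{def:site}\ref{item:site-pb} componentwise in $x \in X$ and then glue the results together as a disjoint union of functional arrays.

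First I would unpack the data. Because $F\colon X\To U$ is a functional array, for each $x\in X$ it provides a single element $f(x)\in U$ together with a morphism $f_x\colon x\to f(x)$. Because $P\colon V\To U$ is a covering family of $U$ in the sense of \autoref{def:covfam}, it is itself functional, and for every $u\in U$ the restriction $P|_u\colon V|_u\To u$ is a covering family of the single object $u$ in the original sense of \autoref{def:site}.

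Next, for each $x\in X$ I would apply \autoref{def:site}\ref{item:site-pb} to the morphism $f_x\colon x\to f(x)$ and the single-object covering family $P|_{f(x)}\colon V|_{f(x)}\To f(x)$, obtaining a covering family $Q_x\colon Y_x\To x$ with $f_x\, Q_x \le P|_{f(x)}$. I would then set $Y = \bigsqcup_{x\in X} Y_x$ and $Q = \bigsqcup_{x\in X} Q_x\colon Y\To X$, where the codomain is regarded as the disjoint union of singleton families. Each $Q_x$ is a cocone, hence functional, and disjoint unions of functional arrays are functional; so $Q$ is a functional array, and by construction each restriction $Q|_x = Q_x$ is a covering family of the single object $x$. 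Thus $Q$ qualifies as a covering family of $X$ in the sense of \autoref{def:covfam}.

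Finally, I would verify $FQ\le P$ directly from \autoref{def:refines}. Fix $y\in Y_x$. The relation $f_x\, Q_x \le P|_{f(x)}$ supplies some $v\in V|_{f(x)}$ and a morphism $h\colon y\to v$ with $f_x\, q_y = p_v\, h$. Since $v$ sits in the $f(x)$-slot of the functional array $P$ and $FQ$ sends $y$ to $f(x)$ via $f_x\, q_y$, this equation is precisely the factorization required by \autoref{def:refines}. I do not anticipate any serious obstacle; the argument is simply a coordinated application of the one-object axiom, and the only thing to be careful about is keeping track of which $u$-slot each index lives in so that the refinement $FQ\le P$ can be read off.
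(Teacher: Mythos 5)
Your proposal is correct and is essentially identical to the paper's own proof, which also applies \autoref{def:site}\ref{item:site-pb} to each $f_x\colon x\to f(x)$ against $P|_{f(x)}$ and then takes $Q=\bigsqcup_x Q_x$. The extra bookkeeping you supply (functionality of $Q$ and the explicit unwinding of $FQ\le P$) is exactly the routine verification the paper leaves implicit.
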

\begin{proof}
  For each $x\in X$, there exists a covering family $Q_x\colon Y_x \To
  x$ such that $f_x Q_x \le P_{f(x)}$; take $Q = \bigsqcup Q_x$.
\end{proof}

Thus, the category of \ka-ary families and functional arrays in a
(weakly) \ka-ary site \bC inherits a weakly unary topology whose
covers are those of \autoref{def:covfam}.  We will see in
\autoref{thm:superext-fam} that this topology can be used to
``factor'' the \ka-ary exact completion into a coproduct completion
(recall \autoref{rmk:funarr-coprod}) followed by unary exact
completion.

\begin{defn}\label{def:lwmlim}
  Let \bC be a weakly \ka-ary site.
  \begin{enumerate}[nolistsep]
  \item If $F\colon X\To Z$ and $G\colon Y\To Z$ are arrays in \bC
    with the same target, we say that $F$ \textbf{factors locally
      through} $G$ or \textbf{locally refines} $G$, and write $F\lle
    G$, if there exists a covering family $P\colon V\To X$ such that
    $F P \le G$.\label{item:locref}
  \item If $F\lle G$ and $G\lle F$, we say $F$ and $G$ are
    \textbf{locally equivalent}.
  \item A \textbf{local \ka-prelimit} of $\bg\colon \bD\to\bC$ is a
    \ka-sourced array $T$ over $\bg$ such that every cone over $\bg$
    factors locally through $T$.
  \end{enumerate}
\end{defn}

\begin{remark}\label{rmk:wm-lwm}
  Since identities cover, any \ka-prelimit is also a local
  \ka-prelimit.  The converse holds if every covering family contains
  a split epic (see \autoref{eg:trivial}).
\end{remark}

\begin{remark}\label{thm:monic-lle}
  If covering families in \bC are strong-epic and $G$ is a monic cone,
  then $F\lle G$ implies $F\le G$.  In particular, in such a \bC,
  locally equivalent monic cones are actually isomorphic, and any
  monic cone that is a local \ka-prelimit is in fact a limit.
\end{remark}

Local \ka-prelimits are ``closed under passage to covers.''

\begin{prop}\label{thm:lwml-covers}
  In a weakly \ka-ary site, if $T\colon L\To X$ is a local
  \ka-prelimit of a functor $\bg$, and $P\colon M\To L$ is a covering
  family, then $T P$ is also a local \ka-prelimit of $\bg$.
\end{prop}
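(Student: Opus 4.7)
The plan is to take an arbitrary cone $C\colon c \To \bg(\bD)$ over $\bg$ and explicitly construct a covering family of $c$ witnessing that $C \lle TP$. The idea is a two-step refinement: first use the local \ka-prelimit property of $T$ to produce a covering family $Q\colon V\To c$ with $CQ \le T$, which by the characterization of $\le$ just after \autoref{def:refines} means $CQ = TH$ for some functional array $H\colon V\To L$; then further refine by pulling $P$ back along $H$ via \autoref{thm:fam-pb}.

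Two routine preliminaries need to be dispatched first. \textbf{(i)} $TP$ is \ka-sourced: writing $M = \bigsqcup_{\ell\in L} M|_\ell$, where each $M|_\ell$ is the \ka-ary source of the constituent covering family $P|_\ell$, closure of the arity class under indexed sums (\autoref{rmk:arity}) gives that $M$ is \ka-ary. \textbf{(ii)} $TP$ is an array over $\bg$: since $P$ is functional (covering families of families are, by \autoref{def:covfam}), each entry of $TP$ has the form $t_{d,p(m)}\circ p_m$, and the compatibility $\bg(\delta)\circ t_{d,p(m)} = t_{d',p(m)}$ witnessing that $T$ is over $\bg$ passes immediately to $TP$.

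For the main step, apply \autoref{thm:fam-pb} to the covering family $P\colon M\To L$ and the functional array $H\colon V\To L$ to obtain a covering family $R\colon Y\To V$ and a functional array $K\colon Y\To M$ with $HR = PK$. By the site axiom \autoref{def:site}\ref{item:site-comp} (applied to $Q$ together with the constituent covering families $R|_v$ of each $v\in V$), the composite $QR\colon Y\To c$ is again a covering family. Associativity of matrix composition then gives
\[ C(QR) = (CQ)R = (TH)R = T(HR) = T(PK) = (TP)K, \]
so $C(QR) \le TP$ and hence $C \lle TP$, as required.

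I do not anticipate a serious obstacle: the proof is essentially a two-stage refinement that packages together the definition of local \ka-prelimit, the stability of covering families under the family-version of composition, and \autoref{thm:fam-pb}. The only points requiring care are the translation between the relation $\le$ and an explicit functional-array factorization, and the bookkeeping needed to keep composites functional and associative so that the final equation $C(QR) = (TP)K$ is legitimate — both already handled explicitly in the preceding matrix-algebra section.
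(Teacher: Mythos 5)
Your proof is correct and follows essentially the same route as the paper's: use the local \ka-prelimit property of $T$ to get $CQ = TH$, then apply \autoref{thm:fam-pb} to refine $H$ against $P$ and compose the two covering families. The paper's proof is just a terser version of the same argument (leaving the preliminaries about $TP$ being \ka-sourced and over $\bg$ implicit).
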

\begin{proof}
  If $F\colon U\To X$ is any array over $\bg$, then by assumption we
  have a covering family $Q\colon V\To U$ with $F Q \le T$.  Thus,
  there is a functional array $H\colon V\To L$ with $F Q = T H$.  But
  by \autoref{thm:fam-pb}, there is a covering family $R\colon W\To V$
  with $H R \le P$, whence $F Q R = T H R \le T P$, and $Q R$ is also
  a covering family.
\end{proof}

Conversely, assuming actual limits, any local \ka-prelimit can be
obtained in this way.

\begin{prop}\label{thm:lwfmlim}
  Suppose that $T\colon y\To \bg(\bD)$ is a limiting cone over $\bg$
  in a weakly \ka-ary site, and $S\colon Z \To \bg(\bD)$ is a
  \ka-sourced array over $\bg$.  Let $H\colon Z\To y$ be the unique
  cocone such that $S = T H$.  Then $S$ is a local \ka-prelimit of $D$
  if and only if $H$ is covering.
\end{prop}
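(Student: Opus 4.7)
The plan is to exploit the universal property of the limit $T$ to translate between the relevant factorization conditions, and then invoke the saturation and ``pullback'' axioms of the topology (\autoref{def:site}\ref{item:site-pb}, \ref{item:site-sat}). Both directions are essentially immediate once one sets up the translation, so I don't anticipate a serious obstacle; the subtle point is only the observation that the limit cone $T$ is ``monic'' in the sense of cone-equality (if $Tf = Tg$ as cones, then $f = g$), which is just the uniqueness part of the universal property.

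For the ``if'' direction, I would take an arbitrary cone $F\colon x\To \bg(\bD)$ over $\bg$. Since $T$ is a limit, $F$ factors uniquely as $F = Tf$ for some morphism $f\colon x\to y$. Now $H\colon Z\To y$ is assumed to be a covering family on the single object $y$, so applying \autoref{def:site}\ref{item:site-pb} to $H$ and $f$ yields a covering family $Q\colon W\To x$ together with a functional array $K\colon W\To Z$ such that $f Q = H K$. Postcomposing with $T$ gives $F Q = T f Q = T H K = S K$, so $F Q \le S$, witnessing $F\lle S$.

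For the ``only if'' direction, I would apply the local \ka-prelimit property to the cone $T$ itself (viewed as a cone over $\bg$ from the object $y$). This gives a covering family $P\colon V\To y$ and a functional array $K\colon V\To Z$ with $T P = S K$, hence $T P = T (H K)$. Comparing componentwise, for each $v\in V$ we have $T\circ p_v = T\circ (HK)_v$ as cones $v\To \bg(\bD)$, and the uniqueness half of the universal property of the limit $T$ forces $p_v = (HK)_v$. Thus $P = HK$, so $P\le H$ via $K$. Since $P$ is a covering family of $y$ and $H$ dominates it, saturation (\autoref{def:site}\ref{item:site-sat}) implies $H$ is itself a covering family of $y$.

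The only thing to double-check is that \autoref{def:site}\ref{item:site-sat} applies to $H$: $H$ is a \ka-sourced cocone with target $y$ (since $S$ and hence $Z$ is \ka-sourced), so it qualifies as a ``\ka-ary cocone'' in the sense of that axiom.
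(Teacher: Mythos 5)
Your proof is correct and follows essentially the same route as the paper: the ``only if'' direction is identical (apply the local prelimit property to $T$ itself, cancel $T$ by uniqueness to get $P = HK \le H$, then saturate), and your ``if'' direction simply inlines, in this special case, the argument the paper delegates to \autoref{thm:lwml-covers} (local $\kappa$-prelimits are stable under composition with covering families).
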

\begin{proof}
  ``If'' is a special case of \autoref{thm:lwml-covers}.  Conversely,
  if $S$ is a local \ka-prelimit, then $T P = S K$ for some covering
  $P\colon V \To y$ and functional $K\colon V \To Z$.  Thus $T H K = T
  P$, whence $H K = P$ since $T$ is a limiting cone.  This means $P
  \le H$, so $H$ is covering.
\end{proof}

\begin{cor}\label{thm:lwml-obj}
  A local \ka-prelimit of a single object is the same as a covering
  family of that object.
\end{cor}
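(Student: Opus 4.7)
The plan is to recognize that ``a functor whose domain is the terminal category'' simply picks out a single object $u$, so a cone over such a functor is precisely a morphism $f \colon x \to u$, and a $\ka$-sourced array over it is precisely a $\ka$-ary cocone $T \colon Z \To u$. Under this identification, the statement reduces to showing that such a $T$ is a local $\ka$-prelimit in the sense of \autoref{def:lwmlim} if and only if $T$ is a covering family of $u$ in the sense of \autoref{def:site}.

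For the direct proof, the forward direction is immediate: if $T$ is a local $\ka$-prelimit, then the identity cone $1_u \colon u \to u$ must factor locally through $T$, which gives a covering family $P \colon V \To u$ with $1_u \cdot P = P \le T$; then saturation (\ref{def:site}\ref{item:site-sat}) upgrades $T$ itself to a covering family. Conversely, if $T$ is covering, then for any $f \colon x \to u$, the pullback axiom (\ref{def:site}\ref{item:site-pb}) yields a covering family $Q \colon Y \To x$ with $f Q \le T$, which is exactly what it means for the cone $f$ to factor locally through $T$.

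Alternatively, and perhaps more elegantly, I would invoke \autoref{thm:lwfmlim} directly: take $\bD$ to be the terminal category, so that $u$ is its own limiting cone, with $1_u$ playing the role of $T$ in that proposition. Any $\ka$-sourced array $S \colon Z \To u$ factors through the identity uniquely as $S = 1_u \cdot S$, so the ``comparison'' cocone $H$ coincides with $S$ itself, and the proposition says $S$ is a local $\ka$-prelimit iff $S$ is covering. There is no real obstacle here: the only subtlety is unwinding the definitions to confirm that a ``$\ka$-sourced array over $\bg$'' for this trivial $\bg$ imposes no compatibility condition beyond being a cocone to $u$, and that a ``cone over $\bg$'' is simply a morphism into $u$.
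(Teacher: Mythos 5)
Your proposal is correct, and your second ("more elegant") route is precisely the paper's intended derivation: the corollary is stated with no separate proof as an immediate consequence of \autoref{thm:lwfmlim} applied to the terminal diagram, where the limiting cone is $1_u$ and the comparison cocone $H$ is $S$ itself. Your direct argument via \ref{def:site}\ref{item:site-sat} and \ref{def:site}\ref{item:site-pb} is just a correct unwinding of the same content.
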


Finally, we note two ways to construct local \ka-prelimits from more
basic ones.

\begin{prop}\label{thm:prodeq}
  If a weakly \ka-ary site has local binary \ka-pre-products and local
  \ka-pre-equalizers, then it has all finite nonempty local
  \ka-prelimits.
\end{prop}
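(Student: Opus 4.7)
The plan is to adapt the classical proof---that binary products and equalizers suffice for finite nonempty limits---to the ``local \ka-prelimit'' setting, handling the objects of the diagram first and then enforcing each of its morphisms in turn.

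First I would show by induction on $n\ge 1$ that local binary \ka-pre-products yield local $n$-ary \ka-pre-products. The case $n=1$ is trivial. For the step, given an $n$-ary local \ka-pre-product $T\colon Y\To\ff{x_i}{i\le n}$ and a further object $x_{n+1}$, for each $y\in Y$ I take a local binary \ka-pre-product $S^y\colon Z^y\To\{y,x_{n+1}\}$ and paste the results into an array $\bigsqcup_{y\in Y}Z^y\To\ff{x_i}{i\le n+1}$. The source is \ka-ary by closure of \ka under indexed sums (\autoref{rmk:arity}), and the local refinement property follows by factoring any cone first through $T$, refining each resulting component through the corresponding $S^y$, and composing the two covers via \autoref{def:site}\ref{item:site-comp}.

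Next, given a finite nonempty $\bg\colon\bD\to\bC$, I enumerate the finitely many morphisms of $\bD$ as $\delta_1,\dots,\delta_m$ with $\delta_s\colon d_s\to d_s'$, and start with a local \ka-pre-product $T^{(0)}\colon Y^{(0)}\To\bg(\bD)$ of the underlying family of objects. Inductively, assuming $T^{(s-1)}$ already satisfies the $\bg(\delta_r)$-equation for every $r<s$ and every $y\in Y^{(s-1)}$, for each such $y$ I take a local \ka-pre-equalizer $E^y\colon W^y\To y$ of the pair $\bigl(\bg(\delta_s)\circ t^{(s-1)}_{y,d_s},\ t^{(s-1)}_{y,d_s'}\bigr)$, and set $Y^{(s)}=\bigsqcup_y W^y$ with $t^{(s)}_{w,d}=t^{(s-1)}_{y,d}\circ e^y_w$ for $w\in W^y$. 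Then $Y^{(s)}$ is \ka-ary, the $\delta_s$-equation holds by construction, and the previously enforced equations persist because each update merely post-composes with existing morphisms; after $m$ stages, $T^{(m)}$ is an array over $\bg$.

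Finally, to verify that $T^{(m)}$ is a local \ka-prelimit, I take an arbitrary cone $F\colon u\To\bg(\bD)$ over $\bg$. As a cone over the discrete family it factors locally through $T^{(0)}$: a covering $P^{(0)}\colon V^{(0)}\To u$ and a functional $H^{(0)}\colon V^{(0)}\To Y^{(0)}$ give $F\cdot P^{(0)}=T^{(0)}\cdot H^{(0)}$. Since $F$ is a cone over $\bg$, each factoring morphism $h_v\colon v\to y_v$ equalizes the $\delta_1$-pair at $y_v$, hence factors locally through $E^{y_v}$; composing the resulting covers via \autoref{def:site}\ref{item:site-comp} produces a cover $P^{(1)}$ with $F\cdot P^{(1)}\le T^{(1)}$. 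Iterating $m$ times yields the required covering. The hard part is pure bookkeeping: keeping every intermediate family \ka-ary (closure of \ka under indexed sums), confirming that successive equalizer steps preserve previously enforced equations (automatic, as updates only post-compose), and ensuring that the iteratively refined covers remain covering families (via \autoref{def:site}\ref{item:site-comp} and \autoref{thm:fam-pb}).
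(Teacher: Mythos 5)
Your proposal is correct and follows essentially the same route as the paper's proof: build finite nonempty local \ka-pre-products by induction from binary ones, then enumerate the arrows of $\bD$ and successively refine by local \ka-pre-equalizers, one arrow at a time. The only difference is that you spell out the final verification (which the paper dismisses as ``straightforward''), and your bookkeeping there is sound.
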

\begin{proof}
  This is basically like the same property for weak
  limits,~\cite[Prop.~1]{cv:reg-exact-cplt}.  By induction, we can
  construct nonempty finite local \ka-pre-products.  Now, given a
  finite nonempty diagram $\bg\colon \bD\to\bC$, let $P \colon X_0 \To
  \bg(\bD)$ be an array exhibiting $X_0$ as a local \ka-pre-product of
  the finite family $\bg(\bD)$.  Enumerate the arrows of \bD as
  $h_1,\dots,h_m$; we will define a sequence of \ka-ary families $X_i$
  in \bC and functional arrays
  \begin{equation}
    X_m \To \dots \To X_1 \To X_0. \label{eq:prodeq-seq}
  \end{equation}
  Suppose we have constructed the sequence $X_i \To \dots \To X_0$,
  and write $u$ and $v$ for the source and target of $h_{i+1}$
  respectively.  Then we have an induced array $X_i \To \bg(\bD)$, and
  therefore in particular we have cocones $X_i \To \bg(u)$ and $X_i
  \To \bg(v)$.  For each $x\in X_i$, let $E^x \To x$ be a local
  \ka-pre-equalizer of $x\to \bg(u)\xto{\bg(h_{i+1})} \bg(v)$ and
  $x\to \bg(v)$.  Finally, define $X_{i+1}= \bigsqcup_x E^x$.  This
  completes the inductive definition of~\eqref{eq:prodeq-seq}.  It is
  straightforward to verify that $X_m$ is then a local \ka-prelimit of
  \bg.
\end{proof}

Unlike the case of ordinary limits, but like that of weak limits, it
seems that local \ka-pre-pullbacks and a local \ka-pre-terminal-object
do not suffice to construct all finite local \ka-prelimits.  We do,
however, have the following.

\begin{prop}\label{thm:pbeq-connlim}
  If a weakly \ka-ary site has local \ka-pre-pullbacks and local
  \ka-pre-equalizers, then it has all finite connected local
  \ka-prelimits.
\end{prop}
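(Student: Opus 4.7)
The plan is to adapt the classical construction of finite connected limits from pullbacks and equalizers to our local $\ka$-prelimit setting, working element-wise across $\ka$-ary families as in the proof of \autoref{thm:prodeq}. Let $\bg\colon \bD \to \bC$ be a finite connected diagram. I would choose a spanning tree of the underlying undirected graph of $\bD$ and enumerate the objects as $d_0, d_1, \ldots, d_n$ so that for each $i \ge 1$, $d_i$ is tree-adjacent to some $d_j$ with $j < i$ via some arrow $h_i$ of $\bD$ (in one direction or the other); connectedness of $\bD$ is what makes such an enumeration possible.

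Starting from $X_0 = \{\bg(d_0)\}$, I would inductively build $X_i$ as a local $\ka$-prelimit of $\bg$ restricted to the subdiagram on $\{d_0, \dots, d_i\}$ and the spanning-tree arrows among them, so that each element $x \in X_i$ carries a projection $\pi_d^x\colon x\to\bg(d)$ for each $d$ already incorporated. At stage $i+1$, if $h_{i+1}\colon d_j \to d_{i+1}$ points toward the new object, the projection to $\bg(d_{i+1})$ is forced to be $\bg(h_{i+1}) \pi_{d_j}^x$ and we take $X_{i+1} = X_i$ with this extra projection attached. If instead $h_{i+1}\colon d_{i+1} \to d_j$ points away, then for each $x \in X_i$ I take a local $\ka$-pre-pullback $E^x \To \{x,\bg(d_{i+1})\}$ of the cospan $x \xto{\pi_{d_j}^x} \bg(d_j) \xot{\bg(h_{i+1})} \bg(d_{i+1})$, and set $X_{i+1} = \bigsqcup_{x\in X_i} E^x$, with its remaining projections inherited through the canonical maps $E^x \to x$. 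After $n$ steps, all objects are incorporated and all spanning-tree arrows commute by construction.

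I would then enumerate the arrows $k_1, \dots, k_r$ of $\bD$ not in the spanning tree and, exactly as in the second half of the proof of \autoref{thm:prodeq}, use local $\ka$-pre-equalizers element-by-element to impose the commutativity relations they demand, producing a sequence $X_n \To X_{n+1} \To \dots \To X_{n+r}$ whose last term is the claimed local $\ka$-prelimit of $\bg$.

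The main work is in verifying the inductive local-prelimit property at each pre-pullback stage: given a cone $F\colon U \To \bg(\bD)$ over the extended subdiagram, the inductive hypothesis provides a covering family $Q\colon V\To U$ and a functional array $V \To X_i$ locally factoring the restriction of $F$ to the previously incorporated part, and one must further refine $Q$ by applying the defining property of the local $\ka$-pre-pullback $E^x$ (using \autoref{thm:fam-pb} to pull covers back through functional arrays, and composing covers via \autoref{def:site}\ref{item:site-comp}) to also factor the component of $F$ at $\bg(d_{i+1})$. The equalizer stage is then handled identically to \autoref{thm:prodeq}. Connectedness of $\bD$ is essential: it is precisely what allows every object to be reached via spanning-tree pullbacks without ever needing a local $\ka$-pre-terminal-object, i.e.\ the ``zero-ary product'' which would otherwise be required to combine disjoint components.
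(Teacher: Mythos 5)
Your proposal is correct and follows essentially the same route as the paper's proof: the paper orders the objects by shortest zigzag distance from a chosen base object (which amounts to a breadth-first spanning tree), attaches each new object via composition or an element-wise local \ka-pre-pullback according to the direction of the connecting arrow, and then imposes the remaining arrows with local \ka-pre-equalizers exactly as in \autoref{thm:prodeq}. Your spanning-tree enumeration and the verification sketch for the inductive local-prelimit property match this construction in all essentials.
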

\begin{proof}
  Suppose $\bg\colon \bD\to\bC$ is a finite connected diagram, and
  pick some object $u_0 \in \bD$.  For each $v\in\bD$, let $\ell(v)$
  denote the length of the shortest zigzag from $u_0$ to $v$.  Now
  order the objects of \bD as
  \[ u_0, u_1, \dots, u_n
  \]
  in such a way that $\ell(u_i) \le \ell(u_{i+1})$ for all $i$.  We
  will inductively define, for each $i$, a \ka-ary family $Y^i$ and an
  array
  \begin{equation}
    P^i\colon Y^i \To \Set{ \bg(u_j) | j \le i}. \label{eq:pbeq-seq}
  \end{equation}
  Let $Y^0 = \{\bg(u_0)\}$ and $P^0 = \{ 1_{\bg(u_0)} \}$.  For the
  inductive step, suppose given $Y^i$ and $P^i$, choose a zigzag from
  $u_0$ to $u_{i+1}$ of minimal length, and consider the final
  morphism in this zigzag, which connects some object $v$ to
  $u_{i+1}$.  By our choice of ordering, we have $v = u_j$ for some
  $j\le i$.  If this morphism is directed $k\colon v\to u_{i+1}$, we
  let $Y^{i+1} = Y^i$ and define $P^{i+1}$ by
  \[p^{i+1}_{y,u_j} =
  \begin{cases}
    p^i_{y,u_j} & \qquad j \le i\\
    \bg(k)\circ p^i_{y,v} & \qquad j = i+1.
  \end{cases}
  \]
  If, on the other hand, this morphism is directed $k\colon u_{i+1}\to
  v$, then for each $y\in Y^i$ consider a local \ka-pre-pullback
  \[\vcenter{\xymatrix{
      Z^y \ar@{=>}[r]^G \ar@{=>}[d]_H &
      y\ar[d]^{p^i_{y,v}}\\
      \bg(u_{i+1}) \ar[r]_{\bg(k)} &
      \bg(v).}}
  \]
  Let $Y^{i+1} = \bigsqcup_{y\in Y^i} Z^y$, with
  \[p^{i+1}_{z,u_j} =
  \begin{cases}
    p^i_{g(z),u_j} \circ g_z & \qquad j \le i\\
    h_z & \qquad j = i+1.
  \end{cases}
  \]
  This completes the inductive definition of~\eqref{eq:pbeq-seq}.  We
  can use local \ka-pre-equalizers as in the proof of
  \autoref{thm:pbeq-connlim}, starting from $P^n\colon Y^n \To
  \bg(\bD)$ instead of a local \ka-pre-product $X_0$, to construct a
  local \ka-prelimit of \bg.
\end{proof}


Finally, we can define (strongly) \ka-ary sites.

\begin{defn}\label{def:site2}
  A \textbf{\ka-ary topology} on a category \bC is a weakly \ka-ary
  topology for which \bC admits all finite local \ka-prelimits.  When
  \bC is equipped with a \ka-ary topology, we call it a
  \textbf{\ka-ary site}.
\end{defn}

The existence of finite local \ka-prelimits may seem like a somewhat
technical assumption.  Its importance will become clearer with use,
but we can say at this point that it is at least a generalization of
the existence of weak limits, which is known to be necessary for the
construction of the ordinary exact completion.

\begin{remark}\label{rmk:locsite}
  By a \textbf{locally \ka-ary site} we will mean a weakly \ka-ary
  site which admits finite \emph{connected} local \ka-prelimits.  By
  \autoref{thm:prodeq}, any slice category of a locally \ka-ary site
  is a \ka-ary site.  We are interested in these not because we have
  many examples of them, but because they will clarify the
  constructions in \S\ref{sec:fr-alleg-new}.
\end{remark}

If $\ka\subseteq \ka'$, then any local \ka-prelimit is also a local
$\ka'$-prelimit, and hence any \ka-ary site is also $\ka'$-ary.  In
particular, any \ka-ary site is also \KA-ary.  We now consider some
examples.

\begin{example}
  If \bC is a small category, then it automatically has \KA-prelimits,
  and every covering sieve is generated by a \KA-small family
  (itself).  Therefore, every Grothendieck topology on a small
  category is \KA-ary.  More generally, every topology on a \ka-small
  category is \ka-ary.
\end{example}

\begin{example}
  If \bC has finite limits, or even finite \ka-prelimits, then any
  weakly \ka-ary topology is \ka-ary.  Thus, for finitely complete
  sites, the only condition to be \ka-ary is that the topology be
  determined by \ka-small covering families.

  In particular, for a large category with finite limits, a topology
  is \KA-ary if and only if it is determined by small covering
  families.  This is the case for many large sites arising in
  practice, such as topological spaces with the open cover topology,
  or $\bRing\op$ with its Zariski or \'etale topologies.
\end{example}

\begin{example}\label{eg:trivial}
  Consider the \textbf{trivial topology} on a category \bC, in which a
  sieve is covering just when it contains a split epic.  In this
  topology every sieve contains a single covering morphism (the split
  epic), so it is \ka-ary just when \bC has local \ka-prelimits.  But
  as noted in \autoref{rmk:wm-lwm}, in this case local \ka-prelimits
  reduce to plain \ka-prelimits.

  In particular, \bC admits a trivial unary topology if and only if it
  has weak finite limits.  On the other hand, every small category
  admits a trivial \KA-ary topology, and any category with finite
  limits admits a trivial \ka-ary topology for any \ka.
\end{example}

\begin{example}\label{eg:gentop}
  The intersection of any collection of weakly \ka-ary topologies is
  again weakly \ka-ary, so any collection of \ka-ary cocones generates
  a smallest weakly \ka-ary topology for which they are covering.  If
  the category has finite \ka-prelimits, then such an intersection is
  of course \ka-ary.
\end{example}

\begin{example}
  A topology is called \textbf{subcanonical} if every covering family
  is effective-epic, in the sense of \autoref{def:eff-epi}.  By
  \autoref{def:site}\ref{item:site-pb}, every covering family in a
  subcanonical and weakly \ka-ary topology must in fact be
  \emph{\ka-universally effective-epic} in the sense of
  \autoref{def:univ}.  The collection of all \ka-universally
  effective-epic cocones forms a weakly \ka-ary topology on \bC, which
  we call the \textbf{\ka-canonical topology}.  It may not be \ka-ary,
  but it will be if (for instance) \bC has finite \ka-prelimits, as is
  usually the case in practice.  Note that unlike the situation for
  trivial topologies, the \ka-canonical and $\ka'$-canonical
  topologies rarely coincide for $\ka\neq \ka'$.

  More generally, if \sA is a class of \ka-ary cocones satisfying
  \ref{def:site}\ref{item:site-sing} and~\ref{item:site-comp}, and
  $\Astar$ is saturated, then $\Astar$ is a weakly \ka-ary topology.

  If \bC has pullbacks, its \un-canonical topology coincides with the
  ``canonical singleton pretopology'' of~\cite{roberts:ana}, and
  consists of the pullback-stable regular epimorphisms.
  If \bC is small, its \KA-canonical topology agrees with its
  canonical topology as usually defined (consisting of all universally
  effective-epic sieves).  This is not necessarily the case if \bC is
  large, but it is if the canonical topology is small-generated, as in
  the next example.
\end{example}

\begin{example}\label{eg:ess-small}
  Suppose a \bC is a \emph{small-generated
    site}\footnote{Traditionally called an \emph{essentially small
      site}, but this can be confusing since \bC itself need not be
    essentially small as a category (i.e.\ equivalent to a small
    category).}, meaning that it is locally small, is equipped with a
  Grothendieck topology in the usual sense, and has a small (full)
  subcategory \bD such that every object of \bC admits a covering
  sieve generated by morphisms out of objects in \bD.  For instance,
  \bC might be a Grothendieck topos with its canonical topology.  Then
  we claim that the topology of \bC is \KA-ary.

  Firstly, given a finite diagram $\bg\colon \bE\to \bC$ (or in fact
  any small diagram), consider the family $P$ of all cones over \bg
  with vertex in \bD.  Since \bD is small and \bC is locally small,
  there are only a small number of such cones.  Thus, we may put them
  together into a single array over \bg, whose domain is a small
  family of objects of \bD.  This array is a local \ka-prelimit of
  \bg, since for any cone $T$ over \bg, we can cover its vertex by
  objects of \bD, and each resulting cone will automatically factor
  through $P$.  Thus \bC has local \ka-prelimits.

  Now suppose $R$ is a covering sieve of an object $u\in \bC$; we must
  show it contains a small family generating a covering sieve.  Let
  $P$ be the family of all morphisms $v\to u$ in $R$ with $v\in \bD$.
  Since \bD is small and \bC is locally small, $P$ is small, and it is
  clearly contained in $R$.  Thus, it remains to show $\overline{P}$
  is a covering sieve.
  
  Consider any morphism $r\colon w\to u$ in $R$.  Then the sieve
  $r^{-1} \overline{P}$ contains all maps from objects of \bD to $w$,
  hence is covering.  Thus the sieve $\Set{r \colon w \to u | r^{-1}
    \overline{P} \text{ is covering}}$ contains $R$ and hence is
  covering; thus $\overline{P}$ itself is covering.
\end{example}

\begin{example}\label{eg:factsys}
  Suppose \bC has finite limits and a stable factorization system
  $(\cE,\cM)$, where \cM consists of monos; I claim \cE is then a
  unary topology on \bC.  It clearly satisfies
  \ref{def:site}\ref{item:site-sing}--\ref{item:site-comp} for
  $\ka=\un$.  For \ref{def:site}\ref{item:site-sat}, suppose $f g\in
  \cE$ and factor $f = m e$ with $m\in \cM$ and $e\in \cE$.  Then
  unique lifting gives an $h$ with $h f g = e g$ and $m h = 1$.  So
  $m$ is split epic (by $h$) and monic (since it is in \cM) and thus
  an isomorphism.  Hence $f$, like $e$, is in \cE.
\end{example}

\section{Morphisms of sites}
\label{sec:morphisms-sites}

While the search for a general construction of exact completion has
led us to \emph{restrict} the notion of \emph{site} by requiring
\ka-arity, it simultaneously leads us to \emph{generalize} the notion
of \emph{morphism of sites}.  Classically, a morphism of sites is
defined to be a functor $\f\colon \bC\to\bD$ which preserves covering
families and is \emph{representably flat}.  By the latter condition we
mean that each functor $\bD(d,\f-)$ is flat, which is to say that for
any finite diagram $\bg$ in \bC, any cone over $\f\bg$ in $\bD$
factors through the \f-image of some cone over \bg.

A representably flat functor preserves all finite limits, and indeed
all finite prelimits, existing in its domain.  Conversely, if \bC has
finite limits, or even finite prelimits, and \f preserves them, it is
representably flat.  For our purposes, it is clearly natural to seek a
notion analogously related to \emph{local} finite prelimits.  This
leads us to the following definition which was studied
in~\cite{kock:postulated} (using the internal logic)
and~\cite{karazeris:flatness} (who called it being \emph{flat relative
  to the topology of \bD}\!).

\begin{defn}
  Let \bC be any category and \bD any site.  A functor $\f\colon
  \bC\to \bD$ is \textbf{covering-flat} if for any finite diagram
  $\bg$ in \bC, every cone over $\f\bg$ in $\bD$ factors
  \emph{locally} through the \f-image of some array over \bg.
\end{defn}

\begin{lem}
  $\f\colon \bC\to \bD$ is covering-flat if and only if for any finite
  diagram $\bg\colon \bE\to\bC$ and any cone $T$ over $\f\bg$ with
  vertex $u$, the sieve
  \begin{equation}\label{eq:ms}
    \Set{ h\colon v\to u | \text{there exists a cone } S \text{ over }
      \bg \text{ such that } T h \le \f(S) }
  \end{equation}
  is a covering sieve of $u$ in \bD.\qed
\end{lem}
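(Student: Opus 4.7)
The plan is to unfold both sides of the equivalence into the language of covering families (via \autoref{thm:sieves}) and observe that they become visibly the same condition.

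First I would recall the definition of covering-flatness: $\f$ is covering-flat iff for every finite $\bg\colon \bE\to\bC$ and every cone $T\colon \{u\}\To \f\bg(\bE)$, there exists an array $R\colon X \To \bg(\bE)$ over $\bg$ with $T \lle \f(R)$. By \autoref{def:lwmlim}\ref{item:locref}, $T \lle \f(R)$ means there is a covering family $P\colon V\To u$ in $\bD$ such that $T P \le \f(R)$. The key translation step is to unpack what $T P \le \f(R)$ says element-by-element: by \autoref{def:refines}, for each $p_v\colon v\to u$ in $P$ there is some $x\in X$ and a morphism $h\colon v\to \f(x)$ such that $T p_v = \f(R|^x)\circ h$, i.e.\ the single-cone $T p_v$ refines the single cone $\f(S)$ where $S = R|^x$ is a cone over $\bg$. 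In other words, $TP\le \f(R)$ is equivalent to saying that every element of the family $P$ lies in the sieve \eqref{eq:ms}.

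From this equivalence both directions drop out. For the forward direction, assume $\f$ is covering-flat; pick $R$ and a covering family $P\colon V\To u$ with $TP\le \f(R)$. Then by the translation above, every $p_v$ lies in the sieve \eqref{eq:ms}, so this sieve contains the covering family $P$. By the construction of \autoref{thm:sieves} (a sieve is covering in the Grothendieck sense iff it contains a \ka-ary covering family), the sieve \eqref{eq:ms} is covering.

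For the converse, suppose the sieve \eqref{eq:ms} is covering for every $T$. Then by \autoref{thm:sieves} it contains a \ka-ary covering family $P = \ff{p_v\colon v\to u}{v\in V}$. By the defining property of \eqref{eq:ms}, for each $v$ we may choose a cone $S_v$ over $\bg$ with $T p_v \le \f(S_v)$; assemble these into a single \ka-ary array $R$ over $\bg$ indexed by $V$ with $R|^v = S_v$. The elementwise criterion then gives $T P \le \f(R)$, hence $T \lle \f(R)$, so $\f$ is covering-flat. I expect no real obstacle: the content is essentially the equivalence between ``a covering family exists inside a sieve'' and ``the sieve is covering'', together with the elementwise unpacking of $\le$ between arrays; the only mild care needed is in collecting the chosen cones $S_v$ into a legitimate array over $\bg$, which is unproblematic since each $S_v$ is itself a cone over $\bg$ and so the assembled matrix is automatically over $\bg$.
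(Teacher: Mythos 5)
Your proposal is correct, and it is precisely the direct definitional unwinding that the paper intends: the lemma is stated with an immediate \qed and no written proof, because ``$T$ factors locally through $\f(R)$ for some array $R$ over $\bg$'' unfolds, exactly as you say, to ``the sieve \eqref{eq:ms} contains a covering family,'' with the only (routine) point being the assembly of the individually chosen cones $S_v$ into a single array over $\bg$ in the converse direction. Your write-up supplies the details the paper leaves implicit; there is no divergence in method.
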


\begin{example}
  Any representably flat functor is covering-flat.
  The converse holds if \bD has a trivial topology.
\end{example}

\begin{lem}\label{thm:morsite-fact}
  If \bD has finite limits, then $\f\colon \bC\to \bD$ is
  covering-flat if and only if for any finite diagram $\bg\colon
  \bE\to\bC$, the family of factorizations through $\lim \f\bg$ of the
  \f-images of all cones over \bg generates a covering sieve.
\end{lem}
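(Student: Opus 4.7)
The plan is to show that both conditions can be reformulated as saying that a single canonical sieve on $\lim\f\bg$ is covering. Since $\bD$ has finite limits, write $\ell\colon \lim\f\bg \To \f\bg$ for the limiting cone. Every cone $T$ over $\f\bg$ with vertex $u$ factors uniquely as $T = \ell\circ\tau$ for some $\tau\colon u \to \lim\f\bg$, and for every cone $S$ over $\bg$ in $\bC$ with vertex $c_S$, the image cone $\f(S)$ factors uniquely as $\f(S) = \ell\circ\sigma_S$ for some $\sigma_S\colon \f(c_S)\to\lim\f\bg$. Let $\Sigma$ denote the sieve on $\lim\f\bg$ generated by the family $\{\sigma_S\}_S$; this is precisely the sieve mentioned in the second condition of the lemma.

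The first step is to establish the key identification: for any cone $T$ over $\f\bg$ corresponding to $\tau\colon u\to\lim\f\bg$, and any $h\colon v\to u$, we have $Th\le\f(S)$ if and only if $\tau h$ factors through $\sigma_S$. One direction is immediate: if $\tau h = \sigma_S\circ k$ then $Th = \ell\tau h = \ell\sigma_S k = \f(S)\circ k$. Conversely, if $Th = \f(S)\circ k$ for some $k\colon v\to \f(c_S)$, then $\ell\circ\tau h = \ell\circ\sigma_S k$, and by uniqueness of factorization through the limit $\ell$, we conclude $\tau h = \sigma_S k$. Therefore the sieve $\eqref{eq:ms}$ associated to $T$ coincides exactly with the pullback sieve $\tau^{-1}(\Sigma)$.

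For the forward direction, suppose $\f$ is covering-flat. Apply the definition to the limit cone itself, $T = \ell$, for which the corresponding $\tau$ is $1_{\lim\f\bg}$; then $\eqref{eq:ms}$ becomes $\Sigma$ itself, which must therefore be covering. For the backward direction, suppose $\Sigma$ is covering. Given any cone $T$ over $\f\bg$ with associated $\tau$, the sieve $\eqref{eq:ms}$ equals $\tau^{-1}(\Sigma)$ by Step~1, and this is covering by axiom~\ref{item:gtop2} of \autoref{thm:sieves} (pullback-stability of covering sieves). Hence $\f$ is covering-flat.

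There is no real obstacle here; the entire content of the proof is packed into the identification $\eqref{eq:ms} = \tau^{-1}(\Sigma)$ in Step~1, which is a straightforward consequence of uniqueness of factorization through a limit. The rest is an application of the Grothendieck topology axioms already translated in \autoref{thm:sieves}.
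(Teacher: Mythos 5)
Your proof is correct and follows essentially the same route as the paper's: identify the sieve \eqref{eq:ms} for the limit cone with the sieve generated by the factorizations $\sigma_S$, and observe that for a general cone with comparison map $\tau$ the sieve \eqref{eq:ms} is the pullback $\tau^{-1}(\Sigma)$, so that pullback-stability of covering sieves gives the converse. The only difference is that you spell out the uniqueness-of-factorization argument that the paper leaves implicit.
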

\begin{proof}
  When $u=\lim \f\bg$, the family in question generates the
  sieve~\eqref{eq:ms}, which is covering if \f is covering-flat.
  Conversely, for any $u$, the sieve~\eqref{eq:ms} is the pullback to
  $u$ of the corresponding one for $\lim \f\bg$, so if the latter is
  covering, so is the former.
\end{proof}

\begin{example}
  If \bD is a Grothendieck topos with its canonical topology, and \bC
  is small, then $\f\colon \bC\to \bD$ is covering-flat if and only if
  ``\f is representably flat'' is true in the internal logic of \bD.
  This is the sort of ``flat functor'' which Diaconescu's theorem says
  is classified by geometric morphisms $\bD\to [\bC\op,\bSet]$ (it is
  not the same as being representably flat).
\end{example}

If \bC has finite \ka-prelimits for some \ka, then in
\autoref{thm:morsite-fact} it suffices to consider the family of
factorizations through $\lim \f\bg$ of the cones in some \ka-prelimit
of \bg.

\begin{example}\label{eg:left-covering}
  If \bC has weak finite limits and \bD is a regular category with its
  regular topology, then $\f\colon \bC\to \bD$ is covering-flat if and
  only if for any weak limit $t$ of a finite diagram \bg in \bC, the
  induced map $\f(t)\to \lim \f\bg$ in \bD is regular epic.  As
  observed in~\cite{karazeris:flatness}, this is precisely the
  definition of \emph{left covering} functors used
  in~\cite{cv:reg-exact-cplt} (called \emph{$\aleph_0$-flat}
  in~\cite{ht:free-regex}) to describe the universal property of
  regular and exact completions.
\end{example}

\begin{prop}\label{thm:morsite}
  Let \bC be a \ka-ary site, \bD any site, and $\f\colon \bC\to\bD$ a
  functor; the following are equivalent.
  \begin{enumerate}[leftmargin=*,nolistsep]
  \item \f is covering-flat and preserves covering
    families.\label{item:lf1}
  \item For any finite diagram $\bg\colon \bE\to \bC$ and any local
    finite \ka-prelimit $T$ of \bg, the image $\f(T)$ is a local
    \ka-prelimit of $\f\bg$.\label{item:lf2}
  \item \f preserves covering families, and for any finite diagram
    $\bg\colon \bE\to \bC$ there exists a local \ka-prelimit $T$ of
    \bg such that $\f(T)$ is a local \ka-prelimit of
    $\f\bg$.\label{item:lf3}
  \end{enumerate}
\end{prop}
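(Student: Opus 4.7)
The plan is to prove the cycle (ii)$\Rightarrow$(iii)$\Rightarrow$(i)$\Rightarrow$(ii); the first two implications are essentially formal, and all the real content lies in (i)$\Rightarrow$(ii).

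For (ii)$\Rightarrow$(iii): since \bC is \ka-ary it has all finite local \ka-prelimits, which produces the witness $T$ required in (iii), and (ii) ensures $\f(T)$ is a local \ka-prelimit of $\f\bg$. Preservation of covering families follows by specializing (ii) to the one-object diagram $\bg=u$ and invoking \autoref{thm:lwml-obj}, which identifies local \ka-prelimits of a single object with covering families of that object. For (iii)$\Rightarrow$(i): the $T$ produced by (iii) is already an array over \bg whose \f-image is a local \ka-prelimit of $\f\bg$, which is precisely what covering-flatness asserts (every cone over $\f\bg$ factors locally through the \f-image of some array over \bg).

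For the main implication (i)$\Rightarrow$(ii), fix a local finite \ka-prelimit $T\colon L\To\bg(\bE)$ of $\bg$ in \bC and an arbitrary cone $S\colon x\To\f\bg(\bE)$ over $\f\bg$ in \bD. Covering-flatness produces an array $R\colon Y\To\bg(\bE)$ over \bg, a covering family $P\colon V\To x$ in \bD, and a functional array $H\colon V\To\f(Y)$ in \bD with $SP=\f(R)H$. The local \ka-prelimit property of $T$, applied columnwise to each cone $R|^{y}$ and then assembled in the sense of \autoref{def:covfam}, yields a covering family $Q\colon Z\To Y$ of $Y$ in \bC with $RQ\le T$. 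Since \f preserves covering families entrywise, $\f(Q)\colon\f(Z)\To\f(Y)$ is a covering family of $\f(Y)$ in \bD, and $\f(R)\f(Q)\le\f(T)$. Finally, \autoref{thm:fam-pb} in \bD pulls $\f(Q)$ back along $H$, producing a covering family $Q'\colon W\To V$ with $HQ'\le\f(Q)$. Composing then gives $SPQ' = \f(R)HQ' \le \f(R)\f(Q) \le \f(T)$, and $PQ'$ is a composite of covering families and hence covering. Thus $S$ factors locally through $\f(T)$, as required.

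The main obstacle is the bookkeeping in (i)$\Rightarrow$(ii): one must chain together local factorizations living alternately in \bC and in \bD, extend the local \ka-prelimit property of $T$ from cones to the multi-object array $R$, verify that \f's preservation of covering families promotes to the covering-families-of-families of \autoref{def:covfam}, and insert \autoref{thm:fam-pb} in \bD to absorb the functional array $H$ handed back by covering-flatness. Once it is recognized that covering-flatness naturally furnishes both an intermediate array $R$ in \bC and a factoring array $H$ in \bD, the rest is a direct chase.
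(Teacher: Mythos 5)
Your proof is correct and takes essentially the same route as the paper: the same cycle of implications, with (ii)$\Rightarrow$(iii) and (iii)$\Rightarrow$(i) handled identically, and your detailed covering-family chase in (i)$\Rightarrow$(ii) is just an explicit unpacking of the paper's compressed argument that $\lle$ is transitive and is preserved by any covering-family-preserving functor, so that $S\lle\f(R)\lle\f(T)$.
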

\begin{proof}
  Suppose~\ref{item:lf1} and let $T$ be a local \ka-prelimit of a
  finite diagram $\bg\colon \bE\to\bC$.  Then any cone $S$ over
  $\f\bg$ factors locally through $\f(R)$ for some array $R$ over \bg.
  But since $T$ is a local \ka-prelimit, $R\lle T$, and since \f
  preserves covering families, it preserves $\lle$.  Thus $S\lle \f(R)
  \lle \f(T)$, so $\f(T)$ is a local prelimit of $\f\bg$;
  hence~\ref{item:lf1}$\Rightarrow$\ref{item:lf2}.

  Now suppose~\ref{item:lf2}.  Since a local \ka-prelimit of a single
  object is just a covering family of that object, \f preserves
  covering families.  Since \bC has finite local
  \ka-prelimits,~\ref{item:lf3} follows.

  Finally, suppose~\ref{item:lf3}, and let $\bg\colon \bE\to\bC$ be a
  finite diagram and $S$ a cone over $\f\bg$.  Let $T$ be a local
  \ka-prelimit of \bg such that $\f(T)$ is a local \ka-prelimit of
  $\f\bg$.  Then $S\lle \f(T)$; hence \f is covering-flat.
\end{proof}

\begin{remark}
  If \bC and \bD have finite \ka-prelimits and trivial \ka-ary
  topologies, then \autoref{thm:morsite} reduces to the fact that a
  functor is representably flat if and only if it preserves these
  finite \ka-prelimits.
\end{remark}

\begin{defn}\label{def:morsite}
  For sites \bC and \bD, we say $\f\colon \bC\to\bD$ is a
  \textbf{morphism of sites} if it is covering-flat and preserves
  covering families.
\end{defn}

\begin{cor}\label{thm:prodeq-pres}
  A functor $\f\colon \bC\to\bD$ between \ka-ary sites is a morphism
  of sites if and only if it preserves covering families, local binary
  \ka-pre-products, local \ka-pre-equalizers, and local
  \ka-pre-terminal-objects.
\end{cor}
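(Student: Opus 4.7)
My plan is to apply Proposition~\ref{thm:morsite} to reformulate the conclusion as preservation of local finite \ka-prelimits of all finite diagrams, and then use the explicit construction in the proof of Proposition~\ref{thm:prodeq} to reduce such preservation to the four basic preservation properties listed in the corollary.

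For the forward direction, suppose $\f$ is a morphism of sites. By Proposition~\ref{thm:morsite}(ii), $\f$ sends any local finite \ka-prelimit of a finite diagram in \bC to a local \ka-prelimit of the image diagram. Specializing the diagram to a single object, a two-object discrete diagram, a parallel-pair diagram, or the empty diagram yields preservation of covering families (via \autoref{thm:lwml-obj}), of local binary \ka-pre-products, of local \ka-pre-equalizers, and of local \ka-pre-terminal-objects, respectively.

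For the backward direction, suppose $\f$ preserves the four listed constructions; I will verify condition~(iii) of Proposition~\ref{thm:morsite}. Given a finite diagram $\bg\colon \bE\to\bC$, I must exhibit a local \ka-prelimit $T$ of \bg whose image $\f(T)$ is a local \ka-prelimit of $\f\bg$. If \bE is empty, take $T$ to be any local \ka-pre-terminal-object of \bC and apply the hypothesis directly. If \bE is nonempty, I follow the construction in the proof of Proposition~\ref{thm:prodeq}: first iterate local binary \ka-pre-products to obtain a local finite \ka-pre-product $X_0$ of the family $\bg(\bE)$, then for each morphism of \bE in turn form a local \ka-pre-equalizer, passing from $X_i$ to $X_{i+1}$, and finally produce a local \ka-prelimit $T = X_m$ of \bg. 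Since $\f$ preserves local binary \ka-pre-products, local \ka-pre-equalizers, and covering families, applying $\f$ to each stage of the construction yields precisely the same construction applied to $\f\bg$ in \bD. Thus $\f(T)$ is a local \ka-prelimit of $\f\bg$ by Proposition~\ref{thm:prodeq} interpreted inside \bD, as required.

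The main technical point is the inductive verification that preservation of \emph{binary} local \ka-pre-products, together with preservation of covering families, implies preservation of the iteratively constructed \emph{$n$-ary} local \ka-pre-products: the local factorizations from one stage must be composed with those from the next via composition of covering families (which is a covering family by \autoref{def:site}\ref{item:site-comp}), and this is exactly what is needed for $\f$ to transport the construction. Once this step and the analogous induction for the equalizer stages are in hand, the rest is just bookkeeping along the construction of Proposition~\ref{thm:prodeq}.
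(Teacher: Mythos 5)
Your proposal is correct and follows essentially the same route as the paper: the forward direction is the specialization of \autoref{thm:morsite}\ref{item:lf2} to the four basic diagram shapes, and the converse verifies \autoref{thm:morsite}\ref{item:lf3} by observing that $\f$ transports the inductive construction of nonempty finite local \ka-prelimits from \autoref{thm:prodeq} (with the empty diagram handled by the pre-terminal hypothesis). The paper states this in one line; your added detail about composing covering families across stages is the correct justification of what the paper leaves implicit.
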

\begin{proof}
  ``Only if'' is clear, so suppose \f preserves the aforementioned
  things.  But then it preserves the construction of nonempty finite
  local \ka-prelimits in \autoref{thm:prodeq}, and hence satisfies
  \autoref{thm:morsite}\ref{item:lf3}.
\end{proof}

We define the very large 2-category \SITEk to consist of \ka-ary
sites, morphisms of sites, and arbitrary natural transformations.
Note that for $\ka\subseteq \ka'$, we have $\SITEk \subseteq
\nSITE_{\ka'}$ as a full sub-2-category.
Since representably flat implies covering-flat, any morphism of sites
in the classical sense is also one in our sense.  We now show,
following~\cite[1.829]{fs:catall}
and~\cite[Prop.~20]{cv:reg-exact-cplt}, that the converse is often
true.

\begin{lemma}\label{thm:morsite-monic}
  If \bD is a site in which all covering families are epic, then any
  covering-flat functor $\f\colon \bC\to\bD$ preserves finite monic
  cones.
\end{lemma}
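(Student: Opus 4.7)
Given a finite monic cone $M = \ff{m_y\colon x\to y}{y\in Y}$ in $\bC$ and morphisms $f, g\colon z\to\f(x)$ in $\bD$ with $\f(m_y)f = \f(m_y)g$ for every $y\in Y$, I want to show $f = g$. The plan is to encode monicity of $M$ as a cone-property of a specific finite diagram in $\bC$, transport this property across $\f$ via covering-flatness, and close the argument using epicness of covering families in $\bD$.

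Let $\bE$ be the finite category whose objects are $a_1$, $a_2$, and $b_y$ for each $y\in Y$, with a single non-identity morphism $a_i\to b_y$ for each pair $(i,y)\in\{1,2\}\times Y$ (and no other non-identity morphisms, so composition is trivial). Define $\bg\colon\bE\to\bC$ by $\bg(a_i) = x$, $\bg(b_y) = y$, and $\bg(a_i\to b_y) = m_y$. A cone over $\bg$ from $w\in\bC$ consists of morphisms $p_1, p_2\colon w\to x$ and $r_y\colon w\to y$ satisfying $m_y p_i = r_y$ for all $i, y$, so that monicity of $M$ in $\bC$ forces $p_1 = p_2$ for every such cone. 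On the $\bD$-side, the triple $(f, g, \ff{\f(m_y)f}{y\in Y})$ defines a cone $T\colon z\To\f\bg(\bE)$ over $\f\bg$, precisely because the hypothesis $\f(m_y)f = \f(m_y)g$ makes the $a_1$- and $a_2$-components compatible with every $b_y$-component.

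By covering-flatness, $T$ factors locally through the $\f$-image of some array $S\colon U\To\bg(\bE)$ over $\bg$ in $\bC$: there exist a covering family $P = \ff{\pi_v\colon v\to z}{v\in V}$ in $\bD$ and a functional array $H\colon V\to\f(U)$ with $TP = \f(S)H$. Each slice $S|^{u_k}$ is a cone over $\bg$ in $\bC$, so its $a_1$- and $a_2$-components $s_k^1, s_k^2\colon u_k\to x$ coincide by monicity of $M$. If $H$ assigns to $v\in V$ the element $u_{k(v)}\in U$ together with a morphism $\eta_v\colon v\to\f(u_{k(v)})$ in $\bD$, then reading off the equation $TP = \f(S)H$ at the $a_1$ and $a_2$ places yields
\[ f\pi_v = \f(s_{k(v)}^1)\eta_v = \f(s_{k(v)}^2)\eta_v = g\pi_v \]
for every $v\in V$. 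Thus $f$ and $g$ agree after precomposition with every $\pi_v\in P$, and since covering families in $\bD$ are epic by assumption, $f = g$. The one nontrivial step is the choice of $\bg$: it is tailored so that monicity of $M$ in $\bC$ becomes equality of two components of any cone, after which covering-flatness plus epicness of covers delivers the conclusion by routine unwinding.
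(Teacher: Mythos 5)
Your proposal is correct and follows essentially the same route as the paper: the same auxiliary category $\bE$ (a family of objects with two cones over it), the same diagram $\bg$ built from two copies of the monic cone, the same induced cone from $f,g$, and the same appeal to covering-flatness followed by epicness of the covering family. The only cosmetic difference is that the paper notes every cone over $\bg$ factors through the limit cone $(1_x,1_x,M)$, whereas you argue directly that the two $x$-components of each cone in the array agree; these are the same observation.
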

\begin{proof}
  Suppose $T\colon x\To U$ is a finite monic cone in \bC, where $U$
  has cardinality $n$.  Let \bE be the finite category such that a
  diagram of shape \bE consists of a family of $n$ objects and two
  cones over it.  Let $\bg\colon \bE \to \bC$ be the diagram both of
  whose cones are $T$.  Monicity of $T$ says exactly that $T$ together
  with two copies of $1_x$ is a limit of $\bg$; call this cone $T'$.

  Now suppose $h,k\colon z\toto \f(x)$ satisfy $\f(T) \circ h =
  \f(T)\circ k$.  Then $h$ and $k$ induce a cone $S$ over $\f\bg$.
  Since \f is covering-flat, this cone factors locally through the
  \f-image of some array over \bg, and hence through $\f(T')$.  This
  just means that $z$ admits a covering family $P$ such that $h P = k
  P$; but $P$ is epic, so $h=k$.
\end{proof}

\begin{prop}\label{thm:morsite-flim}
  Suppose \bC has finite limits and all covering families in \bD are
  strong-epic.  Then any covering-flat functor $\f\colon \bC\to\bD$
  preserves finite limits.
\end{prop}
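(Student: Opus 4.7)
The plan is to combine \autoref{thm:morsite-monic} with the strong-epic orthogonality property. Let $\bg\colon \bE\to\bC$ be any finite diagram with limit cone $T\colon L\To\bg(\bE)$ in \bC; I will show that $\f(T)\colon \f(L)\To\f\bg(\bE)$ is a limit cone of $\f\bg$ in \bD. Since $T$ is a limit it is automatically a monic cone, and $\bg(\bE)$ is finite; because strong-epic cocones are in particular epic, \autoref{thm:morsite-monic} applies and shows that $\f(T)$ is a finite monic cone in \bD. This immediately handles uniqueness of any factorization through $\f(T)$.

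For existence, I start from an arbitrary cone $S\colon z\To \f\bg(\bE)$ in \bD. Covering-flatness of $\f$ yields a covering family $C\colon V\To z$ in \bD and an array $R\colon Y\To \bg(\bE)$ over $\bg$ in \bC with $S C \le \f(R)$; unpacking, there is a functional array $H\colon V\To \f(Y)$ with $S C = \f(R)\cdot H$. Because $T$ is a limit in \bC, each cone $R|^y\colon y\To\bg(\bE)$ factors uniquely through $T$, yielding a cocone $K\colon Y\To L$ with $T K = R$. Combining these data produces a cocone $J = \f(K)\cdot H\colon V\To\f(L)$ in \bD satisfying $\f(T)\cdot J = S C$.

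The final step is to invoke the strong-epic property of $C$ with the finite cone $F = S$, the finite monic cone $Q = \f(T)$, and the cocone $P = J$: the equation $S C = \f(T)\cdot J$ just verified is precisely the required hypothesis, and the conclusion delivers a morphism $m\colon z\to\f(L)$ with $\f(T)\cdot m = S$, completing the proof of existence.

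The essential conceptual input is \autoref{thm:morsite-monic}, which is what forces $\f(T)$ to be monic and thereby allows the strong-epic orthogonality to be applied to it; without monicity, the local factorization through $\f(T)$ produced by covering-flatness could not be descended along the cover $C$. I do not anticipate any substantial obstacles beyond this: the remainder of the argument is bookkeeping that translates the local factorization furnished by covering-flatness, via the universal property of $T$ in \bC, into exactly the shape of data that the strong-epic definition consumes.
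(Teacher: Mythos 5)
Your proof is correct and follows essentially the same route as the paper's: establish monicity of $\f(T)$ via \autoref{thm:morsite-monic}, observe that covering-flatness makes $\f(T)$ a local prelimit of $\f\bg$, and then descend the local factorization along the strong-epic cover. The paper packages the last step as a citation of \autoref{thm:monic-lle}, whereas you unfold the strong-epic orthogonality argument explicitly; the content is identical.
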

\begin{proof}
  Let $\bg\colon \bE\to\bC$ be a finite diagram and $T\colon x\To
  \bg(\bE)$ a limit cone.  Then $T$ is monic, so by
  \autoref{thm:morsite-monic}, $\f(T)$ is also monic.  But by
  \autoref{thm:morsite}, $\f(T)$ is a local \ka-prelimit of $\f\bg$;
  thus by \autoref{thm:monic-lle} it is a limit.
\end{proof}

Recall that if finite limits exist, every extremal-epic family is
strong-epic.

\begin{cor}\label{thm:fc-morsite}
  If \bC is a finitely complete site and \bD a site in which covering
  families are strong-epic, then $\f\colon \bC\to\bD$ is a morphism of
  sites if and only if it preserves finite limits and covering
  families.
\end{cor}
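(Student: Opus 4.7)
The plan is to reduce the biconditional to two easy observations using results already established. Since ``preserves covering families'' appears on both sides of the equivalence, the content of the corollary is that, under the stated hypotheses, a functor $\f\colon\bC\to\bD$ is covering-flat if and only if it preserves finite limits. So I would split the proof into the two directions and handle them separately.

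For the direction ``morphism of sites $\Rightarrow$ preserves finite limits,'' I would simply invoke \autoref{thm:morsite-flim}. The hypothesis there is exactly what is being assumed here: \bC has finite limits, covering families in \bD are strong-epic, and \f is covering-flat. The conclusion is preservation of finite limits, which is what we need.

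For the direction ``preserves finite limits and covering families $\Rightarrow$ morphism of sites,'' I would argue directly. Given a finite diagram $\bg\colon\bE\to\bC$, let $T\colon x\To\bg(\bE)$ be its limit cone, which exists since \bC is finitely complete. Because \f preserves finite limits, $\f(T)$ is a limit cone of $\f\bg$, so every cone $S$ over $\f\bg$ factors through $\f(T)$. That is, $S\le \f(T)$, which trivially implies $S\lle \f(T)$ (using the identity covering family on the vertex of $S$). Hence $\f(T)$ is a local \ka-prelimit of $\f\bg$, exhibiting \f as covering-flat via the definition.

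The only substantive ingredient is \autoref{thm:morsite-flim}, and the rest is bookkeeping; consequently I anticipate no genuine obstacle. The only subtlety worth noting is to be careful that ``morphism of sites'' in \autoref{def:morsite} is defined to be covering-flat \emph{and} cover-preserving, so one must not forget to transport the cover-preservation hypothesis through both directions of the equivalence.
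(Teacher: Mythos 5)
Your proof is correct and matches the paper's (implicit) argument: the paper states this as an immediate corollary of \autoref{thm:morsite-flim}, with the converse direction being exactly your observation that a finite-limit-preserving functor out of a finitely complete category sends the limit cone to a limit cone, through which every cone factors (hence factors locally), giving covering-flatness. No issues.
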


However, for morphisms between arbitrary sites, our notion is more
general than the usual one.  It is easy to give boring examples of
this.

\begin{example}
  Let \bC be the terminal category, let $\bD$ be the category $(0\to
  1)$ in which the morphism $0\to 1$ is a cover, and let $\f\colon
  \bC\to\bD$ pick out the object $0$.  Then \f is covering-flat, but
  not representably flat, since \bC has a terminal object but \f does
  not preserve it.  Note that \bC and \bD have finite limits and all
  covers in \bD are epic.
\end{example}

Our main reason for introducing the more general notion of morphism of
sites is to state the universal property of exact completion.  It is
further justified, however, by the following observation.

\begin{prop}\label{thm:morsite-sheaves}
  For a small category \bC and a small site \bD, a functor $\f\colon
  \bC\to\bD$ is covering-flat if and only if the composite
  \begin{equation}\label{eq:mslan}
    [\bC\op,\bSet] \xto{\lan_\f} [\bD\op,\bSet] \xto{\ba} \nSh(\bD)
  \end{equation}
  preserves finite limits, where \ba denotes sheafification.
  If \bC is moreover a site and \f a morphism of sites, then
  \[\f^*\colon [\bD\op,\bSet] \to [\bC\op,\bSet]\]
  takes $\nSh(\bD)$ into $\nSh(\bC)$, so \f induces
  a geometric morphism $\nSh(\bD) \to \nSh(\bC)$.
\end{prop}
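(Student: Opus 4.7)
The plan is to handle the two claims in sequence. For the equivalence in the first part, the composite $\ba\circ\lan_\f\colon [\bC\op,\bSet]\to \nSh(\bD)$ is the essentially unique cocontinuous extension along the Yoneda embedding $\bC\hookrightarrow [\bC\op,\bSet]$ of the functor $\bC \xto{\f} \bD \to \nSh(\bD)$ (the second arrow being the Yoneda embedding followed by sheafification). I would then invoke the version of Diaconescu's theorem for Grothendieck toposes developed from the site-theoretic viewpoint in \cite{kock:postulated,karazeris:flatness}: a cocontinuous functor $[\bC\op,\bSet]\to\cE$ into a Grothendieck topos $\cE$ is left exact if and only if the corresponding functor $\bC\to\cE$ is \emph{internally} flat in $\cE$. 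Applied with $\cE = \nSh(\bD)$, internal flatness asks exactly that every cone in $\nSh(\bD)$ over the image of a finite diagram from $\bC$ factor locally through the image of some cone over that diagram in $\bC$. Since sheafifications of representables generate $\nSh(\bD)$ and covering families on them correspond exactly to covering families in the site $\bD$, internal flatness then unwinds precisely to covering-flatness of $\f$.

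For the second part, assume $\f$ is a morphism of sites and let $G\in\nSh(\bD)$. By the Yoneda lemma and the adjunction $\lan_\f\dashv \f^*$, the sheaf condition for $\f^* G$ at a covering sieve $R$ on $u\in\bC$ translates, since $G$ is itself a sheaf, into the requirement that $\ba_\bD\lan_\f(R\hookrightarrow \by u)$ be an isomorphism in $\nSh(\bD)$ for every such $R$. I would verify this by checking that it is both monic and epic.

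Monic follows immediately from the first part: $R\hookrightarrow \by u$ is a monomorphism in $[\bC\op,\bSet]$, and the functor $\ba_\bD\lan_\f$ is left exact by the first part, so preserves monos. For epic, a direct coend computation shows that the image of $\lan_\f R \to \lan_\f \by u = \by(\f u)$ in $[\bD\op,\bSet]$ is the sieve on $\f u$ generated by the $\f$-images of the morphisms in $R$. Since $R$ contains some covering family $P$ of $u$ in $\bC$ and $\f$ preserves covers, this sieve contains the covering family $\f P$ of $\f u$ and is therefore itself a covering sieve in $\bD$; its sheafification then coincides with $\ba_\bD \by(\f u)$. Consequently $\ba_\bD\lan_\f R\to \ba_\bD\by(\f u)$ is epic, and a morphism in a topos that is both mono and epi is iso.

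The main obstacle I anticipate is the first part: the form of Diaconescu's theorem needed here, with flatness relative to the topology of the codomain topos rather than the classical representable flatness, is not the textbook statement, and correctly matching internal flatness in $\nSh(\bD)$ with the externally formulated covering-flatness of $\f\colon \bC\to\bD$ is the technical heart of \cite{kock:postulated,karazeris:flatness}. Granted that identification, the second part reduces to the first by essentially formal sheaf-theoretic manipulations.
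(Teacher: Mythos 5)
Your proof is correct in outline, but it takes a genuinely different route from the paper's. The paper deliberately avoids the explicit sheaf-theoretic computation (it remarks just after the statement that the proposition ``can be proved explicitly, but we will deduce it from general facts about exact completion''): it identifies $[\bC\op,\bSet]\simeq\exK(\bC)$ (trivial topology) and $\nSh(\bD)\simeq\exK(\bD)$ via \autoref{thm:topos}, invokes \autoref{thm:msinduce} --- which rests on density of $\fy$ (\autoref{thm:fy-dense}) and \autoref{thm:dense-cancel} --- to show that covering-flatness of $\f$ is equivalent to the existence of a morphism of sites $\bg$ filling the evident square, and then observes that cocontinuity plus the free-cocompletion property of $[\bC\op,\bSet]$ forces $\bg$ to be $\ba\circ\lan_\f$; the second claim falls out because $\exK(\f)$ is cocontinuous, hence has a right adjoint, which is easily seen to be $\f^*$. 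You instead give the explicit argument: the relative Diaconescu theorem of \cite{kock:postulated,karazeris:flatness} for the first part, and a direct verification of the sheaf condition for $\f^*G$ (mono from left exactness, epi from preservation of covers, balancedness of the topos) for the second. Both work; the paper's approach buys uniformity with the rest of its $\ka$-ary machinery and reuses results already proved, while yours is more self-contained relative to the standard topos-theory literature but leans on the nontrivial external identification of internal flatness with covering-flatness. One point you should make explicit in the first part: matching ``internal flatness of $\bC\to\nSh(\bD)$'' with ``covering-flatness of $\f\colon\bC\to\bD$'' requires transferring cones and covers along the (not necessarily fully faithful, if $\bD$ is not subcanonical) dense functor $\bD\to\nSh(\bD)$; your remark that covering families correspond is the right idea, but the full statement needed is that this functor induces an equivalence of the relevant preorders of arrays under local refinement, which is exactly \autoref{thm:dense-redundant}\ref{item:dr4} in the paper.
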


Note that representable-flatness of \f is equivalent to $\lan_\f$
preserving finite limits.  (This certainly \emph{implies}
that~\eqref{eq:mslan} does so, since \ba always preserves finite
limits.)  This proposition can be proved explicitly, but we will
deduce it from general facts about exact completion in
\S\ref{sec:dense}.

\begin{remark}\label{rmk:premor}
  When \bC and \bD are \emph{locally} \ka-ary sites
  (\autoref{rmk:locsite}), we will consider the notion of a
  \textbf{pre-morphism of sites}, which we define to be a functor
  $\f\colon \bC\to\bD$ preserving finite \emph{connected} local
  \ka-prelimits.  (The name is chosen by analogy with ``pre-geometric
  morphisms'', which are adjunctions between toposes whose left
  adjoint preserves finite connected limits.  The prefix ``pre-'' here
  has unfortunately nothing to do with the ``pre-'' in ``prelimit''.)
  We write \LSITEk for the 2-category of locally \ka-ary sites,
  pre-morphisms of sites, and arbitrary natural transformations.

  Arguing as in \autoref{thm:prodeq-pres} but using
  \autoref{thm:pbeq-connlim}, we see that \f is a pre-morphism of
  sites just when it preserves covering families, local
  \ka-pre-pullbacks, and local \ka-pre-equalizers.  Similarly, we have
  versions of \autoref{thm:morsite-monic} for \emph{nonempty} finite
  monic cones, and of \autoref{thm:morsite-flim} and
  \autoref{thm:fc-morsite} for \emph{connected} finite limits.
\end{remark}

\section{Regularity and exactness}
\label{sec:regularity}

In this section we define the notions of \emph{\ka-ary regular} and
\emph{\ka-ary exact} categories, which are the outputs of our
completion operations.  These are essentially relativizations to \ka
of the notions of ``familially regular'' and ``familially exact''
from~\cite{street:family}.


\subsection{Some operations on arrays}
\label{arrayops}

We begin by defining some special arrays, and operations on
arrays. 
Firstly, for any object $x$, we write $\Delta_x = \{1_x,1_x\} \colon
x\To \{x,x\}$.  Secondly, for any $u,v$ we have a functional array
$\sigma\colon \{u,v\} \To \{v,u\}$ consisting of identities.

Thirdly, suppose $P\colon X\To U$ is a \ka-to-finite array in a
\ka-ary site, $V$ is another finite family of objects, and $F\colon
V\To U$ is a functional array.  For each $x\in X$, let $(F^*X)_x$ be a
local \ka-prelimit of the (finite) diagram consisting of $V$, $U$, all
the morphisms in $F$, and the cone $P|^x$.  Write $(F^*P)_x\colon
(F^*X)_x \To V$ for the cocone built from the projections to $V$.  We
define $F^* X= \bigsqcup_x (F^*X)_x$ and let $F^* P$ be the induced
array $F^* X \To V$.

Fourthly, suppose given a finite family of \ka-to-finite arrays $\{
P_i\colon X_i \To U\}_{1\le i\le n}$.  For each family $\{x_i\}_{1\le
  i\le n}$ with $x_i\in X_i$ for each $i$, consider a local
\ka-prelimit of the (finite) diagram consisting of $U$, all the
objects $x_i$, and all the morphisms $(p_i)_{x_i,u}$.  We write
$\bigwedge_i P_i\colon \bigwedge_i X_i \To U$ for the disjoint union
of these local \ka-prelimits over all families $\{x_i\}_{1\le i\le
  n}$, with its induced array to $U$.

Finally, suppose given arrays $P\colon X\To \{u,v\}$ and $Q\colon Y\To
\{v,w\}$.  For each $x\in X$ and $y\in Y$, let $R^{x y}\colon Z^{x y}
\To \{x,y\}$ be a local \ka-prelimit of the cospan $x \xto{p_{x v}} v
\xot{q_{y v}} y$.  Let $Z = \bigsqcup Z^{x y}$, with induced
functional arrays $R\colon Z \To X$ and $S\colon Z\To Y$, and define
$T\colon Z \To \{u,w\}$ by $t_{z u} = p_{r(z),u} r_z$ and $t_{z w} =
q_{s(z),w} s_z$.  We write $P \times_v Q$ for $T$.

Of course, all of these ``operations'' depend on a choice of local
\ka-prelimits, but the result is unique up to local equivalence in the
sense of \autoref{def:lwmlim}.

\begin{defn}\label{def:cong}
  Let \bC be a \ka-ary site, and $X$ a \ka-ary family of objects of
  \bC.  A \textbf{\ka-ary congruence on $X$} consists of
  \begin{enumerate}[nolistsep]
  \item For each $x_1,x_2\in X$, a \ka-sourced array
    $\Phi(x_1,x_2)\colon \Phi[x_1,x_2]\To
    \{x_1,x_2\}$.\label{item:cong2}
  \item For each $x\in X$ we have $\Delta_x \lle
    \Phi(x,x)$.\label{item:cong3}
  \item For each $x_1,x_2\in X$ we have $\sigma \circ \Phi(x_1,x_2)
    \lle \Phi(x_2,x_1)$.\label{item:cong4}
  \item For each $x_1,x_2,x_3\in X$ we have $\Phi(x_1,x_2)
    \times_{x_2} \Phi(x_2,x_3) \lle \Phi(x_1,x_3)$.\label{item:cong5}
  \end{enumerate}
  We say $\Phi$ is \textbf{strict} if each array $\Phi(x_1,x_2)$ is a
  monic cone.
\end{defn}

\begin{remark}\label{thm:eqv-cong}
  If $\Phi$ is a \ka-ary congruence and we replace each
  $\Phi(x_1,x_2)$ by a locally equivalent \ka-sourced array
  $\Psi(x_1,x_2)$, then $\Psi$ is again a \ka-ary congruence on the
  same underlying family $X$, since all the operations used in
  \autoref{def:cong} respect $\lle$.  In this case we say that $\Phi$
  and $\Psi$ are \textbf{equivalent} congruences.  By
  \autoref{thm:monic-lle}, if covering families are strong-epic, then
  any two equivalent \emph{strict} congruences are in fact isomorphic.
\end{remark}

\begin{lemma}\label{thm:basic-cong}
  Let $X$ and $Y$ be \ka-ary families of objects in a \ka-ary site.
  \begin{enumerate}[nolistsep]
  \item There is a \ka-ary congruence ${\Delta_X}$ on $X$ with
    \[
      {\Delta_X}(x_1,x_2) =
    \begin{cases}
      \Delta_x & x_1 = x_2\\
      \emptyset & x_1 \neq x_2.
    \end{cases}
    \]
  \item If $F\colon X\To Y$ is a functional array and $\Psi$ is a
    \ka-ary congruence on $Y$, we have a \ka-ary congruence $F^* \Psi$
    on $X$ defined by $(F^*\Psi)(x_1,x_2) =
    \{f_{x_1},f_{x_2}\}^*(\Psi(x_1,x_2))$.
  \item If $\{\Phi_i\}_{1\le i \le n}$ is a finite family of \ka-ary
    congruences on $X$, we have a \ka-ary congruence $\bigwedge_i
    \Phi_i$ on $X$ defined by $(\bigwedge_i \Phi_i)(x_1,x_2) =
    \bigwedge_i (\Phi_i(x_1,x_2))$.
  \end{enumerate}
\end{lemma}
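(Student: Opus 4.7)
The plan is to verify each of the four axioms of \autoref{def:cong} in each of the three cases, relying on the fact that the array operations defined in \S\ref{arrayops} respect local equivalence and interact naturally with each other. The heart of the argument is not any single axiom but the bookkeeping that shows the basic operations $\Delta_x$, $\sigma\circ(-)$, $(-)\times_v(-)$, $F^*(-)$, and $\bigwedge(-)$ commute up to $\lle$ with one another in the appropriate sense; once this is established the three parts will follow by direct substitution into the axioms of a congruence.

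For part (i), I would observe that $\Delta_X(x_1,x_2)$ is nonempty only when $x_1=x_2$, and in that case it is the diagonal array $\Delta_x$. The reflexivity condition $\Delta_x\lle\Delta_X(x,x)$ is then literally an equality. Symmetry follows since $\sigma\circ\Delta_x = \Delta_x$. For transitivity, when $x_1=x_2=x_3$, a local \ka-prelimit of the cospan $x\xto{1_x} x\xot{1_x} x$ is given by $x$ itself with identity projections, whence $\Delta_x\times_x\Delta_x \lle \Delta_x$; in the degenerate cases where some $x_i$ differ the composite $\Delta_X(x_1,x_2)\times_{x_2}\Delta_X(x_2,x_3)$ is empty, and refines anything vacuously.

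For part (ii), the verification reduces to four sub-lemmas: (a) $\Delta_x\lle\{f_x,f_x\}^*(\Psi(f(x),f(x)))$, which uses $\Delta_{f(x)}\lle\Psi(f(x),f(x))$ together with the fact that $f_x$ supplies a canonical factorization through a local \ka-prelimit of the cospan defining $F^*$; (b) $\sigma\circ\{f_{x_1},f_{x_2}\}^*(\Psi(f(x_1),f(x_2))) \lle \{f_{x_2},f_{x_1}\}^*(\Psi(f(x_2),f(x_1)))$, which follows from symmetry of $\Psi$ together with the obvious compatibility of $\sigma$-composition with $F^*$; and (c) the transitivity compatibility
\[ F^*\Psi(x_1,x_2)\times_{x_2} F^*\Psi(x_2,x_3) \lle F^*(\Psi(f(x_1),f(x_2))\times_{f(x_2)}\Psi(f(x_2),f(x_3))), \]
which is the main content: it says that pulling back along a functional array distributes over the fibered-product construction $\times_v$, up to $\lle$. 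Combined with transitivity of $\Psi$ and the fact that $F^*$ preserves $\lle$ (since local \ka-prelimits refine each other whenever the input diagrams are refined), this yields the transitivity axiom for $F^*\Psi$.

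For part (iii), the point is the ``universal'' nature of $\bigwedge$: a local \ka-prelimit of a conjunction of cones refines each individual cone. Reflexivity follows because $\Delta_x\lle\Phi_i(x,x)$ for each $i$ implies $\Delta_x\lle\bigwedge_i\Phi_i(x,x)$, using that $\Delta_x$ itself is already a cone compatible with all the $\Phi_i(x,x)$. Symmetry uses that $\sigma\circ\bigwedge_i(-) \lle \bigwedge_i(\sigma\circ -)$, which is immediate from the definition of $\bigwedge$ since reordering the two endpoints of each $\Phi_i(x_1,x_2)$ does not change the diagram whose local \ka-prelimit we are taking. Transitivity is the subtlest point and reduces to showing
\[ \Bigl(\bigwedge_i\Phi_i(x_1,x_2)\Bigr)\times_{x_2}\Bigl(\bigwedge_i\Phi_i(x_2,x_3)\Bigr) \lle \bigwedge_i\bigl(\Phi_i(x_1,x_2)\times_{x_2}\Phi_i(x_2,x_3)\bigr), \]
since then transitivity of each $\Phi_i$ gives the result. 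This is again a comparison between two local \ka-prelimits of diagrams where one diagram refines the other, so follows from the universal property of local \ka-prelimits modulo \autoref{thm:lwml-covers}.

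The main obstacle will be part (ii)(c) and the analogous distributivity in part (iii), since these require one to check that the pullback and meet operations really do commute with $\times_v$ up to local equivalence. Each such comparison amounts to exhibiting a cover relating two local \ka-prelimits of comparable diagrams, for which \autoref{thm:lwml-covers} and \autoref{thm:lwfmlim} give the required tool. Once these distributivity statements are recorded as a preliminary pair of sub-lemmas, each of (i)--(iii) reduces to a mechanical axiom-check.
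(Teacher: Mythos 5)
Your verification of the congruence axioms is sound and, if anything, considerably more detailed than what the paper records: the paper's own proof dismisses the axiom-checking as ``mostly straightforward verification,'' and your distributivity sub-lemmas comparing $F^*$ and $\bigwedge$ with $\times_v$ are exactly the kind of bookkeeping it has in mind. However, you have omitted the one point the paper singles out as non-obvious, namely that $\Delta_X$ really is a \emph{\ka-ary} congruence --- that is, that each $\Delta_X(x_1,x_2)$ is a \ka-\emph{sourced} array as \autoref{def:cong} requires. In the case $x_1\neq x_2$ you treat $\Delta_X(x_1,x_2)=\emptyset$ as an empty array that ``refines anything vacuously''; the refinement is indeed vacuous, but for $\ka=\un$ the empty family is not \ka-ary (since $0\notin\un$), so an empty array is not a priori an admissible component of a \ka-ary congruence, and part (i) is not established in the unary case without further argument.

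The resolution is \autoref{rmk:subsing}: the indexing set of $\Delta_X(x_1,x_2)$ is the subsingleton $\ssing{x_1=x_2}$, which is \ka-small because it is a fiber of the diagonal $I\to I\times I$ of the \ka-small index set $I$ of $X$. (Concretely, when $\ka=\un$ the family $X$ is a singleton, so the empty case never actually arises.) You should record this, and for completeness note that the corresponding arity checks in parts (ii) and (iii) are genuinely routine: local \ka-prelimits are \ka-sourced by definition, and \ka-ary disjoint unions of \ka-ary families are \ka-ary by the closure properties of arity classes. With that addition your argument is complete and matches the paper's intent.
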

\begin{proof}
  This is mostly straightforward verification.  The only possibly
  non-obvious fact is that ${\Delta_X}$ is \ka-ary; its indexing set
  is the subsingleton $\ssing{x_1=x_2}$ from \autoref{rmk:subsing}.
\end{proof}

\begin{defn}
  A \textbf{kernel} of a \ka-to-finite array $P\colon X \To U$ in a
  \ka-ary site is a \ka-ary congruence $\Phi$ on $X$ such that each
  $\Phi(x_1,x_2)$ is a local \ka-prelimit of the diagram
  \[ P|^{x_1} : x_1 \To U \Leftarrow x_2 : P|^{x_2}. \] A
  \textbf{strict kernel} is one constructed using actual limits rather
  than local prelimits.
\end{defn}

The condition to be a kernel is equivalent to saying that $\Phi$ can
be expressed as
\[ \bigwedge_{u\in U} (P|_u)^* {\Delta_{\{u\}}}.
\]
This implies, in particular, that a kernel \emph{is} a \ka-ary
congruence.

Any two kernels of the same array are equivalent in the sense of
\autoref{thm:eqv-cong}.  Conversely, two equivalent congruences are
kernels of exactly the same arrays.  If $\Phi$ is a kernel of $P\colon
X \To U$, then its defining property is that for any $a\colon v\to
x_1$ and $b\colon v\to x_2$ such that $p_{x_1,u} a = p_{x_2,u} b$ for
all $u\in U$, we have $\{a,b\} \lle \Phi(x_1,x_2)$.

\begin{lem}\label{thm:lwker-stker}
  Suppose covering families in \bC are strong-epic.  Then if a strict
  congruence is a kernel of $P$, it is also a strict kernel of $P$.
\end{lem}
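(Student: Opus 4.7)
The plan is to apply Remark~\ref{thm:monic-lle} pointwise to each component $\Phi(x_1,x_2)$ of the congruence. Recall that Remark~\ref{thm:monic-lle} asserts that, under the hypothesis that covering families in $\bC$ are strong-epic, any monic cone that is a local $\ka$-prelimit is in fact an honest limit.

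First I would unpack the hypotheses. The statement ``$\Phi$ is a strict congruence'' means that for each pair $x_1,x_2\in X$, the array $\Phi(x_1,x_2)\colon \Phi[x_1,x_2]\To\{x_1,x_2\}$ is a monic cone; in particular $\Phi[x_1,x_2]$ is a singleton, so $\Phi(x_1,x_2)$ is really a single cone from one object to the cospan codomain. The statement ``$\Phi$ is a kernel of $P$'' says that each such $\Phi(x_1,x_2)$ is a local $\ka$-prelimit of the cospan
\[
P|^{x_1}\colon x_1 \To U \Leftarrow x_2 \,:\, P|^{x_2}.
\]

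Fix $x_1,x_2\in X$. The cone $\Phi(x_1,x_2)$ is monic (by strictness) and is a local $\ka$-prelimit of the above cospan (by the kernel condition). Since covering families in $\bC$ are strong-epic, Remark~\ref{thm:monic-lle} applies and shows that $\Phi(x_1,x_2)$ is an actual limit cone of this cospan. As this holds for every pair $x_1,x_2$, the congruence $\Phi$ is built from genuine limits rather than merely local $\ka$-prelimits, which is exactly the defining property of a strict kernel.

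There is no real obstacle here: the whole content of the lemma is concentrated in Remark~\ref{thm:monic-lle}, and the proof is little more than a pointwise invocation of that remark together with unwinding the definitions of ``strict congruence'' and ``strict kernel''. The only minor care needed is to observe that strictness forces each $\Phi[x_1,x_2]$ to be a singleton so that the array is indeed a single cone, making Remark~\ref{thm:monic-lle} directly applicable.
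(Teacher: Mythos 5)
Your proposal is correct and is exactly the paper's argument: the paper's entire proof is ``By Remark~\ref{thm:monic-lle}'', and your pointwise application of that remark to each monic cone $\Phi(x_1,x_2)$, which is a local $\ka$-prelimit of the relevant finite diagram by the kernel condition, is precisely the intended unwinding. (The only cosmetic quibble is that the diagram $x_1 \To U \Leftarrow x_2$ is a finite diagram indexed by the family $U$ rather than literally a cospan, but this changes nothing since the remark applies to local $\ka$-prelimits of arbitrary diagrams.)
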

\begin{proof}
  By \autoref{thm:monic-lle}.
\end{proof}

We may regard a congruence as a diagram consisting of all the objects
and morphisms occurring in $X$ and all the $\Phi(x_1,x_2)$.  In
particular, we can talk about \emph{colimits} of congruences, which we
also call \emph{quotients}.

\begin{lem}\label{thm:ee-lwkcolim}
  If covering families in \bC are epic, then an effective-epic \ka-ary
  cocone is the colimit of any of its kernels.
\end{lem}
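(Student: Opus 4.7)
The plan is to unpack what it means for a cocone $Q\colon V \To x$ to be compatible with the kernel $\Phi$, and then verify the hypothesis of the effective-epic property of $R$. Write $R\colon V\To u$ for the given effective-epic cocone.

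First, a cocone under the diagram formed by $V$ together with all the $\Phi(v_1,v_2)$ is precisely a family of morphisms $q_v\colon v\to x$ indexed by $v\in V$ such that for all $v_1,v_2\in V$ and all $\phi\in \Phi[v_1,v_2]$, we have $q_{v_1}\phi_{v_1} = q_{v_2}\phi_{v_2}$. Given such a $Q$, I would aim to show that for any $a\colon w\to v_1$ and $b\colon w\to v_2$ with $r_{v_1}a = r_{v_2}b$, we have $q_{v_1}a = q_{v_2}b$; then effective-epicness of $R$ yields a unique $h\colon u\to x$ with $hR = Q$, which is exactly the universal property of $u$ as colimit of the congruence.

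To verify the equation $q_{v_1}a = q_{v_2}b$: the pair $\{a,b\}\colon w\To \{v_1,v_2\}$ is a cone over the cospan $v_1 \to u \leftarrow v_2$. Since $\Phi(v_1,v_2)$ is a local \ka-prelimit of this cospan, there is a covering family $P\colon W'\To w$ together with a functional array $H\colon W'\To \Phi[v_1,v_2]$ such that $a\circ p_{w'} = \phi_{v_1}\circ h_{w'}$ and $b\circ p_{w'} = \phi_{v_2}\circ h_{w'}$ for each $w'\in W'$ (writing $\phi = h(w')$). Applying $Q$'s compatibility with $\Phi$ at each $w'$ gives
\[ q_{v_1}\circ a \circ p_{w'} = q_{v_1}\phi_{v_1}h_{w'} = q_{v_2}\phi_{v_2}h_{w'} = q_{v_2}\circ b \circ p_{w'}. \]
Since this holds for every $w'\in W'$ and $P$ is a covering family, which is epic by hypothesis, we conclude $q_{v_1}a = q_{v_2}b$.

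The step I would flag as the trickiest is the local-to-global promotion: being a kernel only gives that $\{a,b\}$ \emph{locally} refines $\Phi(v_1,v_2)$, not that it factors strictly through it, so the equation $q_{v_1}a = q_{v_2}b$ is only established after precomposition with a cover. The assumption that covering families are epic is exactly what is needed to cancel this cover, and it is the only place this hypothesis enters. Everything else is bookkeeping with the definitions of local \ka-prelimit, kernel, and effective-epic cocone.
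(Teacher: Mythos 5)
Your proof is correct and follows essentially the same route as the paper's: verify the hypothesis of effective-epicness by using that each $\Phi(v_1,v_2)$ is a local \ka-prelimit of the relevant cospan, so that $\{a,b\}$ factors through it only after precomposition with a cover, and then cancel that cover using the hypothesis that covering families are epic. Your closing remark correctly identifies the one place where that hypothesis is used.
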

\begin{proof}
  Let $P\colon X \To y$ be effective-epic and \Phi a kernel of it.  It
  suffices to show that if $F$ is a cocone under $\Phi$, then $f_{x_1}
  a = f_{x_2} b$ for any $x_1,x_2\in X$ and $\{a,b\}\colon u \To
  \{x_1,x_2\}$ such that $p_{x_1} a = p_{x_2} b$.  But by construction
  of \Phi, we have a covering family $Q\colon V\To u$ such that
  $\{a,b\} Q \le \Phi(x_1,x_2)$, hence $f_{x_1} a Q = f_{x_2} b Q$.
  Since $Q$ is epic, the claim follows.
\end{proof}


The following lemma and theorem are slight generalizations of results
in~\cite{street:family}
(see also~\cite[Lemma~2.1]{kelly:rel-factsys}).

\begin{lemma}\label{thm:street2}
  Let \bC be a \ka-ary site in which covering families are epic, and
  let $P\colon V\To u$ be a \ka-ary cocone and $Q\colon u\To W$ a cone
  in \bC.
  \begin{enumerate}[nolistsep,label=(\alph*)]
  \item If $Q$ is monic, any (strict) kernel of $P$ is also a (strict)
    kernel of $Q P$ and vice versa.\label{item:street2-1}
  \item Conversely, if $P$ is \ka-universally epic, and some kernel of
    $P$ is also a kernel of $Q P$, then $Q$ is
    monic.\label{item:street2-2}
  \end{enumerate}
\end{lemma}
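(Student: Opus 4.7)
The plan is to handle the two directions separately, both by unwinding the kernel condition in the presence of monicity (for part (a)) or universal epi-ness (for part (b)).

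\textbf{Part (a).} The key observation is that monicity of $Q$ says precisely that two morphisms $\alpha,\beta\colon z\to u$ are equal iff $q_{u,w}\alpha = q_{u,w}\beta$ for every $w\in W$. Applied with $\alpha=p_{v_1,u}a$ and $\beta=p_{v_2,u}b$, this means that the cone conditions for the two diagrams $v_1\To u\Leftarrow v_2$ and $v_1\To W\Leftarrow v_2$ coincide, so these diagrams have literally the same cones from any vertex, and hence the same strict limits and the same local $\ka$-prelimits. Being a $\ka$-ary congruence on $V$ is a property of $\Phi$ independent of which array it is a kernel of, so $\Phi$ is a (strict) kernel of $P$ iff it is one of $QP$. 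This part requires no real work.

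\textbf{Part (b).} Suppose $\Phi$ is a kernel of both $P$ and $QP$, and let $h,k\colon z\to u$ satisfy $q_{u,w}h = q_{u,w}k$ for all $w\in W$; we must show $h=k$. The strategy is to locally lift both $h$ and $k$ through $P$ along a single covering of $z$, apply the $QP$-kernel property of $\Phi$ to get a local factorization through $\Phi$, and then collapse via the $P$-cone condition satisfied by the components of $\Phi$. Using $\ka$-universal epi-ness of $P$, pull back along $h$ to obtain a covering family $R\colon Y\To z$ and a functional array $\alpha\colon Y\to V$ with $hR = P\alpha$; then, for each $y\in Y$, pull back $P$ along $kr_y\colon y\to u$ to get a covering family $S^y\colon Z^y\To y$ and a functional array $\beta^y$ with $(kr_y)S^y=P\beta^y$. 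The composite $T=R\circ\bigsqcup_y S^y\colon Z\To z$ is then a covering family along which both $hT$ and $kT$ factor through $P$ pointwise.

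For each $\zeta\in Z^y$, the hypothesis $q_{u,w}h = q_{u,w}k$ translates, via $p_{\alpha(y),u}(\alpha_y s^y_\zeta)=ht_\zeta$ and $p_{\beta^y(\zeta),u}\beta^y_\zeta = kt_\zeta$, into the cone condition for the $QP$-diagram on the pair $(\alpha_y s^y_\zeta,\beta^y_\zeta)\colon \zeta\To\{\alpha(y),\beta^y(\zeta)\}$. Since $\Phi$ is a kernel of $QP$, this pair locally factors through $\Phi(\alpha(y),\beta^y(\zeta))$. Composing the factorization with the two components of $\Phi(\alpha(y),\beta^y(\zeta))$, which satisfy the $P$-cone condition because $\Phi$ is also a kernel of $P$, shows that $hT$ and $kT$ agree after a further covering of $Z$. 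Since covering families compose and are epic, this forces $h=k$. The main obstacle is the double-pullback bookkeeping needed to arrange both lifts through $P$ along a single cover of $z$; once that setup is in place, the two kernel hypotheses chain together fairly mechanically.
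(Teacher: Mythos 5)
Your proof follows the paper's argument essentially verbatim: part (a) is exactly the observation that monicity of $Q$ makes the two cone conditions coincide, and part (b) uses the same double pre-pullback (along $h$, then along each $k r_y$), followed by the $QP$-kernel to factor the resulting pairs locally through $\Phi$ and the $P$-kernel to collapse them. One small correction: since the hypothesis is only that $P$ is \ka-universally \emph{epic} (not that it is covering), the pre-pullbacks $R$, $S^y$, and hence $T$ are epic cocones rather than covering families; this is all your final step actually needs, because the composite of the epic $T$ with the genuine covering family arising from the local factorization through $\Phi$ is still epic.
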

\begin{proof}
  Part~\ref{item:street2-1} is easy: if $Q$ is monic, then an array
  $R\colon Z\To \{v_1,v_2\}$ satisfies $p_{v_1} R|_{v_1} = p_{v_2}
  R|_{v_2}$ if and only if it satisfies $Q p_{v_1} R|_{v_1} = Q
  p_{v_2} R|_{v_2}$.

  For part~\ref{item:street2-2}, let $a,b\colon x\to u$ be given with
  $Q a = Q b$.  Let $R\colon Y\To x$ be a local \ka-pre-pullback of
  $P$ along $a$, and for each $y\in Y$ let $S^y\colon Z^y \To y$ be a
  local \ka-pre-pullback of $P$ along $b r_y$.  Since $P$ is
  \ka-universally epic, $R$ and each $S^y$ are epic.

  Define $Z = \bigsqcup Z^y$, $S = \bigsqcup S^y$, and $T = R S \colon
  Z \To x$.  Then $T$ is epic, and $a T \le P$ and $b T \le P$.  Thus,
  we have $a T = P G$ and $b T = P H$ for some functional arrays
  $G,H\colon Z\To V$.  But since $Q a = Q b$, we have $Q P G = Q a R =
  Q b R = Q P H$.  Thus for each $z\in Z$, we have $\{g_z,h_z\} \lle
  \Phi(g(z),h(z))$, where \Phi is a kernel of $Q P$.  But since \Phi
  is also a kernel of $P$, we have $P g_z F = P h_z F$ for some
  covering family $F$ of $z$.  Since covering families are epic, $P
  g_z = P h_z$ for all $z$, hence $P G = P H$.  Thus $a T = b T$,
  whence $a=b$ as $T$ is epic.
\end{proof}

\begin{thm}\label{thm:regular}\label{thm:reglex}
  For any category \bC, the following are equivalent.
  \begin{enumerate}[leftmargin=*,noitemsep]
  \item \bC is a \ka-ary site whose covering families are strong-epic,
    and any \ka-to-finite array $R\colon V\To W$ factors as $Q P$,
    where $P\colon V\To u$ is covering and $Q\colon u\To W$ is
    monic.\label{item:reg1}
  \item \bC is a subcanonical \ka-ary site, and any kernel of a
    \ka-to-finite array is also a kernel of some covering
    cocone.\label{item:reg8}
  \item \bC has finite limits, and any \ka-ary cocone $R\colon V \To
    w$ factors as $q P$, where $P\colon V \To u$ is universally
    extremal-epic and $q\colon u \to w$ is monic.\label{item:reg3}
  \item \bC has finite limits, and the strict kernel of any \ka-ary
    cocone is also the strict kernel of some universally
    effective-epic cocone.\label{item:reg4}
  \item \bC is a regular category (in the ordinary sense) and has
    pullback-stable unions of \ka-small families of
    subobjects.\label{item:reg6}
  \end{enumerate}
  Moreover, they imply
  \begin{enumerate}[resume,leftmargin=*]
  \item Every \ka-ary extremal-epic cocone is universally
    effective-epic\label{item:reg5}
  \end{enumerate}
  and in~\ref{item:reg1} and~\ref{item:reg8} the topology is
  automatically the \ka-canonical one.
\end{thm}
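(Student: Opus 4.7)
I aim to close the cycle $(i) \Rightarrow (iii) \Rightarrow (iv) \Rightarrow (ii) \Rightarrow (i)$, then verify $(iii) \Leftrightarrow (v)$ by the classical regular-category dictionary, and finally extract $(vi)$ and the canonicity claim from the factorizations. For $(i) \Rightarrow (iii)$ the key step is upgrading local \ka-prelimits to actual finite limits: given a finite diagram $\bg$, pick a finite local \ka-prelimit $T$ (which is \ka-to-finite) and factor $T = QP$ with $P$ covering and $Q$ monic; since covers are strong-epic, \autoref{thm:monic-lle} promotes local factorizations through $T$ into strict factorizations through $Q$, and monicity supplies uniqueness, so $Q$ is a limit. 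The cocone factorization in (iii) follows by applying the same factorization to a \ka-ary cocone; the covering part is \ka-universally extremal-epic by the coinduction principle of \autoref{def:univ} applied to the saturated class of covering families, which is \ka-universally stable by \autoref{def:site}\ref{item:site-pb} and consists of strong-epics.

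For $(iii) \Rightarrow (iv)$, factor a \ka-ary cocone as $R = qP$ with $P$ \ka-universally extremal-epic and $q$ monic; \autoref{thm:street2}\ref{item:street2-1} identifies the strict kernels of $R$ and $P$, reducing the task to showing that an extremal-epic $P\colon V\To u$ is the colimit of its strict kernel $\Phi$. Given a compatible cocone $F\colon V\To y$, form $(P, F)\colon V\To u\times y$ and factor it as $(m_1, m_2)\circ e$ with $e$ \ka-universally extremal-epic and $(m_1, m_2)$ a monic cone; \autoref{thm:street2}\ref{item:street2-1} identifies $\ker(P, F)$ with $\ker e$, and compatibility of $F$ with $\ker P$ forces both to equal $\ker P$. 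Since $P = m_1 e$ with $e$ \ka-universally epic, \autoref{thm:street2}\ref{item:street2-2} makes $m_1$ monic, whereupon extremality of $P$ makes $m_1$ an isomorphism, supplying $h = m_2 m_1^{-1}$ with $hP = F$; universality transfers to pullbacks because strict kernels commute with pullbacks in a finite-limit category.

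For $(iv) \Rightarrow (ii)$, equip \bC with its \ka-canonical topology, which is \ka-ary (finite limits supply finite \ka-prelimits) and subcanonical, and convert any \ka-to-finite array $R\colon V\To U$ into a cocone to $\prod U$ with the same strict kernel before applying (iv). For $(ii) \Rightarrow (i)$, given $R\colon V\To U$ whose kernel agrees with that of a covering $P\colon V\To u$, \autoref{thm:ee-lwkcolim} makes $P$ the colimit of its kernel (subcanonical covers are epic), and $R$ is automatically compatible with its own kernel, yielding $Q\colon u\To U$ with $R = QP$; \autoref{thm:street2}\ref{item:street2-2} then makes $Q$ monic, using that $P$ is \ka-universally epic and the two arrays share a kernel.

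The equivalence $(iii) \Leftrightarrow (v)$ is the classical factorization-system dictionary: the singleton case of the (iii)-factorization is ordinary regularity, and the \ka-ary case corresponds to pullback-stable unions of \ka-small families of subobjects. Property $(vi)$ follows by factoring an extremal-epic cocone $R = qP$ and observing that extremality forces $q$ to be iso, so $R$ inherits \ka-universal effectiveness from $P$; canonicity of the topology in (i) and (ii) follows because any \ka-universally effective-epic cocone is extremal-epic, so its factorization has iso monic part and agrees with a covering family up to isomorphism. The main obstacle is the extremal-epic-is-effective argument inside $(iii) \Rightarrow (iv)$, which has to orchestrate the \ka-ary factorization together with both halves of \autoref{thm:street2} at once; the remaining steps are essentially bookkeeping around \autoref{thm:monic-lle}, \autoref{thm:ee-lwkcolim}, and the standard regular-category correspondence.
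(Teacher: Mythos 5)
Your proposal is correct and follows essentially the same route as the paper: the same cycle $\ref{item:reg1}\Rightarrow\ref{item:reg3}\Rightarrow\ref{item:reg4}\Rightarrow\ref{item:reg8}\Rightarrow\ref{item:reg1}$ with $\ref{item:reg3}\Leftrightarrow\ref{item:reg6}$ handled by the classical dictionary, the same key lemmas (\autoref{thm:monic-lle}, \autoref{thm:ee-lwkcolim}, and both halves of \autoref{thm:street2}), and the same central device of factoring $(P,R)\colon V\To u\times w$ to show extremal-epic cocones are effective-epic. The only difference is organizational — you fold the proof of \ref{item:reg5} into the $\ref{item:reg3}\Rightarrow\ref{item:reg4}$ step rather than isolating it — which is immaterial.
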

\begin{proof}
  We will prove
  \[ \xymatrix@-1pc{
    \ref{item:reg1} \ar@{=>}[r] &
    \ref{item:reg3} \ar@{<=>}[d] \ar@{=>}[r]  \ar@{=>}[dr] &
    \ref{item:reg4} \ar@{=>}[r] &
    \ref{item:reg8} \ar@{=>}[r] &
    \ref{item:reg1} \\
    & \ref{item:reg6} & \ref{item:reg5}.
  }
  \]

  For \ref{item:reg1}$\Rightarrow$\ref{item:reg3}, it suffices to
  construct finite limits.  Given a finite diagram $\bg\colon \bD\to
  \bC$, let $T$ be a \ka-prelimit of $\bg$, and apply~\ref{item:reg1}
  to the array $T\colon U \To \bg(\bD)$.  The resulting array $Q$ is a
  cone over $\bg$ since $P$ is epic, and a local \ka-prelimit since
  $P$ is a covering family.  Hence, by \autoref{thm:monic-lle} it is a
  limiting cone.

  Now supposing~\ref{item:reg3}, since $1\in \ka$, \bC has
  pullback-stable (extremal-epi, mono) factorizations.  This is one
  definition of a regular category in the ordinary sense, so \bC is
  regular.  Moreover, applying the factorization of~\ref{item:reg3} to
  a \ka-small family of subobjects supplies a pullback-stable union;
  hence~\ref{item:reg3}$\Rightarrow$\ref{item:reg6}.

  Conversely, supposing~\ref{item:reg6} and given a \ka-small cocone
  $F\colon V\To w$, we can first factor each $f_v$ using the
  (extremal-epi, mono) factorization in a regular category, and then
  take the union of the resulting \ka-small family of subobjects of
  $w$.  This gives the desired factorization, so
  that~\ref{item:reg6}$\Rightarrow$\ref{item:reg3}.

  Now we assume~\ref{item:reg3} and prove~\ref{item:reg5}.  Firstly,
  if $R$ is extremal-epic, then it factors as $R = q P$ where $P$ is
  universally extremal-epic and $q$ is monic; hence $q$ is an
  isomorphism and so $R$, like $P$, is universally extremal-epic.
  Thus it suffices to show that every \ka-small extremal-epic family
  is effective-epic.

  Let $P\colon V \To u$ be a \ka-ary extremal-epic cocone with a
  kernel $\Phi$, and let $R\colon V \To w$ be a cocone under $\Phi$.
  Then the induced cocone $(P,R)\colon V \To u\times w$ factors as
  $(a,b) P'$, where $P'\colon V \To t$ is (universally) extremal-epic
  and $(a,b)\colon t \to u\times w$ is monic.  Since $P$ factors
  through $a$, $a$ must be extremal-epic.

  Now since $\Phi$ is a kernel of $P$ and $R$ is a cocone under
  $\Phi$, it follows that $\Phi$ is also a kernel of $(P,R)$.  And
  since $(a,b)$ is monic, by \autoref{thm:street2}\ref{item:street2-1}
  $\Phi$ is also a kernel of $P'$.  But $a P' = P$ and $P'$ is
  universally extremal-epic, hence universally epic; so by
  \autoref{thm:street2}\ref{item:street2-2}, $a$ is monic.  Since it
  is both extremal-epic and monic, it must be an isomorphism, and then
  the composite $b a^{-1}$ provides a factorization of $R$ through
  $P$.  This factorization is unique, since $P$ is extremal-epic and
  hence epic.  Thus~\ref{item:reg5} holds.

  Now assuming~\ref{item:reg3}, hence also~\ref{item:reg5}, we
  prove~\ref{item:reg4}.  If $\Phi$ is a strict kernel of $R$, then
  write $R=q P$ with $q$ monic and $P$ universally extremal-epic.  By
  \autoref{thm:street2}\ref{item:street2-1} $\Phi$ is also a strict
  kernel of $P$, and by~\ref{item:reg5} $P$ is universally
  effective-epic, so~\ref{item:reg4} holds.

  Now suppose \ref{item:reg4} and give \bC its \ka-canonical topology.
  Let \Phi be a kernel of a \ka-to-finite array $R\colon V\To W$, and
  let \Psi be the strict kernel of $R$.  Then \Psi is also the strict
  kernel of the induced cocone $V \To \prod_{w\in W} w$, so
  by~\ref{item:reg4}, \Psi is the strict kernel (hence a kernel) of
  some universally effective-epic cocone, which is covering in the
  \ka-canonical topology.  Since \Phi and \Psi are equivalent
  congruences, \Phi is also a kernel of this cocone; thus
  \ref{item:reg4}$\Rightarrow$\ref{item:reg8}.

  To complete the circle, assume~\ref{item:reg8}, and suppose given a
  \ka-to-finite array $R$.  Let $\Phi$ be a kernel of $R$, and let $P$
  be universally effective-epic and have $\Phi$ also as its kernel.
  Then by \autoref{thm:ee-lwkcolim}, $P$ is the colimit of \Phi, so
  $R$ factors through it as $Q P$, and by
  \autoref{thm:street2}\ref{item:street2-2} $Q$ is monic;
  thus~\ref{item:reg1} holds.

  Finally, we show that the topology in~\ref{item:reg1}
  and~\ref{item:reg8} must be \ka-canonical.  It is subcanonical by
  definition in~\ref{item:reg8} and by~\ref{item:reg5}
  in~\ref{item:reg1}, so it suffices to show that any universally
  effective-epic cocone is covering.  But by~\ref{item:reg1} such a
  cocone factors as a covering family followed by a monomorphism, and
  since effective-epic families are extremal-epic, the monic must be
  an isomorphism.
\end{proof}

\begin{defn}
  We say that a category \bC is \textbf{\ka-ary regular}, or
  \textbf{\ka-ary coherent}, if it satisfies the equivalent conditions
  of \autoref{thm:regular}.
\end{defn}

By \autoref{thm:regular}\ref{item:reg6}, \ka-ary regularity
generalizes a number of more common definitions.
\begin{itemize}[noitemsep]
\item \bC is \un-ary regular iff it is regular in the usual sense.
\item \bC is $\{0,1\}$-ary regular iff it is regular and has a strict
  initial object.
\item \bC is \om-ary regular iff it is coherent.
\item \bC is $\om_1$-ary regular iff it is countably-coherent, a.k.a.\
  \si-coherent.
\item \bC is \KA-ary regular iff it is infinitary-coherent (a.k.a.\
  geometric, although sometimes geometric categories are also required
  to be well-powered).
\end{itemize}

By \autoref{thm:regular}\ref{item:reg5}, in a \ka-ary regular
category, a \ka-ary cocone is extremal-epic if and only if it is
strong-epic, if and only if it is effective-epic, and in all cases it
is automatically universally so.  These \ka-ary cocones form the
\ka-canonical topology on a \ka-ary regular category, which we may
also call the \textbf{\ka-regular} or \textbf{\ka-coherent topology}.

Similarly, a functor between \ka-ary regular categories is a morphism
of sites (relative to the \ka-regular topologies) just when it
preserves finite limits and \ka-small extremal-epic families; we call
such a functor \textbf{\ka-ary regular}.  We write $\nREG_\ka$ for the
full sub-2-category of $\SITEk$ whose objects are the \ka-ary regular
categories with their \ka-canonical topologies.

\begin{remark}
  Let \nLEX denote the 2-category of finitely complete categories and
  finitely continuous functors, and \nWLEX that of categories with
  weak finite limits and weak-finite-limit--preserving functors (i.e.\
  representably flat functors).  Equipping such categories with their
  trivial \ka-ary topologies, we can regard \nLEX and \nWLEX as full
  sub-2-categories of \SITEk for any \ka.  However, the embedding
  $\nREG_\ka \into \nSITE_{\ka}$ does \emph{not} factor through the
  embeddings of $\nLEX$ or $\nWLEX$.  This is why our universal
  property for regular and exact completions will look simpler than
  that of~\cite{cv:reg-exact-cplt}.
\end{remark}

\begin{remark}
  Our notion of \ka-ary regular category is a different ``infinitary
  generalization'' of regularity than that considered
  in~\cite{ht:free-regex,lack:exreg-inf}; the latter instead adds to
  (unary) regularity the existence and compatibility of \ka-ary
  \emph{products}.
\end{remark}

\begin{remark}\label{rmk:locreg}
  Following~\cite[A3.2.7]{ptj:elephant}, by a \textbf{locally \ka-ary
    regular category} we will mean a category \bC with finite
  \emph{connected} limits and stable image factorizations for arrays
  with \ka-ary domain and \emph{nonempty} finite codomain.  Note that
  the construction of kernels of such arrays requires only connected
  finite (local \ka-pre-)limits.  An analogue of \autoref{thm:regular}
  is true in this case, although in the absence of finite products,
  in~\ref{item:reg3} and~\ref{item:reg4} we need to assert at least
  factorization for \ka-to-binary arrays as well as for cocones.
\end{remark}


\begin{cor}\label{thm:eqv-monic}\label{thm:reg-strictcong}
  In a \ka-ary regular category (with its \ka-regular topology):
  \begin{enumerate}[nolistsep]
  \item Every \ka-to-finite array is locally equivalent to a monic cone.
  \item Every \ka-ary congruence is equivalent to a strict one.\qed
  \end{enumerate}
\end{cor}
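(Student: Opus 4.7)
The plan is to reduce part (ii) to part (i) via \autoref{thm:eqv-cong} and then prove (i) directly from the factorization in \autoref{thm:regular}\ref{item:reg1}. So the main work is establishing (i), which I expect to be essentially immediate.

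For part (i), let $R\colon X\To U$ be a $\ka$-to-finite array. I would apply \autoref{thm:regular}\ref{item:reg1} to obtain a factorization $R = Q P$ where $P\colon X\To u$ is a covering family of the single object $u$ and $Q\colon u\To U$ is a monic cone. The claim is that $R$ and $Q$ are locally equivalent. For the direction $R\le Q$, observe that $P$ is (by definition) a functional array $X\To \{u\}$, and the equation $R = Q P$ is precisely the witness that $R$ factors through $Q$ in the sense of \autoref{def:refines}. For the reverse direction $Q\lle R$, I would take the covering family $P\colon X\To u$ itself as witness: since $Q P = R \le R$, the definition of $\lle$ in \autoref{def:lwmlim}\ref{item:locref} is satisfied. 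This gives $R$ locally equivalent to the monic cone $Q$.

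For part (ii), let $\Phi$ be a $\ka$-ary congruence on $X$. Each component $\Phi(x_1,x_2)\colon \Phi[x_1,x_2]\To \{x_1,x_2\}$ is a $\ka$-to-finite array (its target has at most two elements), so by part (i) I can choose for every pair $(x_1,x_2)$ a monic cone $\Psi(x_1,x_2)$ locally equivalent to $\Phi(x_1,x_2)$. By \autoref{thm:eqv-cong}, the resulting assignment $\Psi$ is automatically a $\ka$-ary congruence on $X$ and is equivalent to $\Phi$; and it is strict by construction.

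I do not anticipate a genuine obstacle: part (i) is a direct unpacking of definitions once the regular factorization is in hand, and part (ii) is a routine pointwise application of (i) combined with the stability remark \autoref{thm:eqv-cong}. The only subtle point worth double-checking is that the covering family $P$ in the factorization of part (i) does indeed qualify as a covering family ``of $X$'' in the sense needed for $\lle$; this is immediate because $X\To \{u\}$ is a functional array whose single cocone component is $P$, which is covering by hypothesis.
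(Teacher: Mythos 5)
Your proposal is correct and follows exactly the route the paper intends: the corollary is stated with \qed because it is immediate from \autoref{thm:regular}\ref{item:reg1} (factor the $\ka$-to-finite array as a covering cocone followed by a monic cone, note that the covering cocone witnesses local equivalence in both directions, and then apply \autoref{thm:eqv-cong} componentwise for part (ii)). Your careful check that the cocone $P\colon X\To u$ is itself the required covering family of the singleton domain of $Q$ is precisely the small point one needs to verify, and it holds for the reason you give.
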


Furthermore, in a \ka-ary regular category with its \ka-regular
topology, for monic cones $Q\colon x \To U$ and $R\colon y\To U$, we
have $Q\lle R$ if and only if $Q \le R$.  Thus, in this case a strict
\ka-ary congruence on $X$ consists of
\begin{enumerate}[noitemsep]
\item Monic spans $x_1 \ot \Phi(x_1,x_2)\to x_2$.
\item Each diagonal span $x \ot x \to x$ factors through $x \ot
  \Phi(x,x) \to x$.
\item Each $x_1 \ot \Phi(x_1,x_2) \to x_2$ is (isomorphic to) the
  reversal of $x_2 \ot \Phi(x_2,x_1) \to x_1$.
\item Each $x_{1} \ot \Phi(x_1,x_2) \times_{x_2} \Phi(x_2,x_3) \to
  x_3$ factors through $x_1 \ot \Phi(x_1,x_3) \to x_3$.
\end{enumerate}
When $X$ is a singleton, this is just an \emph{equivalence relation}
in \bC in the usual sense.

\begin{thm}\label{thm:exact}
  For a category \bC, the following are equivalent.
  \begin{enumerate}[leftmargin=*,nolistsep]
  \item \bC is a subcanonical \ka-ary site in which any \ka-ary
    congruence is a kernel of some covering cocone.\label{item:ex1}
  \item \bC has finite limits, and every strict \ka-ary congruence is
    a strict kernel of some universally effective-epic
    cocone.\label{item:ex1a}
  \item \bC is \ka-ary regular, and every strict \ka-ary congruence is
    a strict kernel of some \ka-ary cocone.\label{item:ex2a}
  \item \bC is (Barr-)exact in the ordinary sense (a.k.a.\ effective
    regular), and has disjoint and universal coproducts of \ka-small
    families of objects.\label{item:ex3}
  \end{enumerate}
  Moreover, in~\ref{item:ex1} the topology is automatically the
  \ka-canonical one.
\end{thm}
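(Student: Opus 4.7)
The plan is to prove the cycle (i)~$\Rightarrow$~(ii)~$\Rightarrow$~(iii)~$\Rightarrow$~(i) using the regularity theorem of the previous section, and then establish (iii)~$\Leftrightarrow$~(iv) by packaging strict $\ka$-ary congruences as ordinary equivalence relations on coproducts. The cycle among (i)--(iii) is fairly routine. For (i)~$\Rightarrow$~(ii), I observe that any kernel of a $\ka$-to-finite array is in particular a $\ka$-ary congruence, so (i) directly implies \autoref{thm:regular}\ref{item:reg8}; hence $\bC$ is $\ka$-ary regular, covering families are strong-epic (and universally effective-epic), and \autoref{thm:lwker-stker} upgrades the kernel provided by (i) to a strict kernel. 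The implication (ii)~$\Rightarrow$~(iii) is immediate because strict kernels of $\ka$-ary cocones are strict $\ka$-ary congruences, yielding \autoref{thm:regular}\ref{item:reg4}. For (iii)~$\Rightarrow$~(i), I would replace a given $\ka$-ary congruence by an equivalent strict one via \autoref{thm:reg-strictcong}, apply (iii) to express it as the strict kernel of a $\ka$-ary cocone $R$, factor $R = Q P$ with $P$ covering and $Q$ monic (using $\ka$-ary regularity, \autoref{thm:regular}\ref{item:reg1}), and invoke \autoref{thm:street2}\ref{item:street2-1} to recognize the original congruence as a kernel of the covering cocone $P$.

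For (iii)~$\Rightarrow$~(iv), effectiveness of equivalence relations falls out of applying (iii) to a strict $\un$-ary congruence on a singleton family: the ambient $\ka$-ary cocone necessarily reduces to a single morphism, and its strict kernel is its kernel pair. To build a $\ka$-ary coproduct of $X = \{x_i\}$, I apply (iii) to the discrete congruence $\Delta_X$ of \autoref{thm:basic-cong}, factor the resulting cocone via $\ka$-ary regularity to obtain a universally effective-epic cocone $P'\colon X \To w'$ with the same strict kernel, and identify $w' = \colim \Delta_X$ via \autoref{thm:ee-lwkcolim}; the shape of $\Delta_X$ reads off as disjointness of the coproduct, and universality follows from $P'$ being universally effective-epic together with the pullback-stability of disjointness. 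Conversely, for (iv)~$\Rightarrow$~(iii), Barr-exactness combined with disjoint universal $\ka$-small coproducts yields $\ka$-ary regularity, since the regular epi--mono factorization of the canonical map $\bigsqcup s_i \to x$ produces a pullback-stable union of any $\ka$-small family of subobjects. To realize a strict $\ka$-ary congruence $\Phi$ on $X = \{x_i\}$ as a strict kernel, I set $c = \bigsqcup x_i$, use universality to obtain $c \times c \cong \bigsqcup_{i,j} (x_i \times x_j)$, assemble the family $\{\Phi(x_i,x_j) \hookrightarrow x_i \times x_j\}_{i,j}$ into a single monic $R \hookrightarrow c \times c$, verify that $R$ is an equivalence relation on $c$, and then invoke Barr-exactness to get an effective quotient $q\colon c \to Q$. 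Composing $q$ with the coproduct injections produces a $\ka$-ary cocone $X \To Q$ whose strict kernel unpacks back to $\Phi$.

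The main obstacle is this last verification: showing that $R$ is an equivalence relation on $c$. Reflexivity requires $\Delta_c$ to factor through $R$, which holds because on each summand $x_i$ the diagonal factors through $\Phi(x_i,x_i)$ by \autoref{def:cong}\ref{item:cong3}, and by disjointness there is no contribution from off-diagonal summands. Symmetry is straightforward from \autoref{def:cong}\ref{item:cong4}. Transitivity is the most delicate: one must compute $R \times_c R$ componentwise, using that pullback along a coproduct injection picks out a single summand, to obtain $R \times_c R \cong \bigsqcup_{i,j,k} \Phi(x_i,x_j) \times_{x_j} \Phi(x_j,x_k)$, and then apply \autoref{def:cong}\ref{item:cong5} summand by summand before reassembling via the coproduct decomposition. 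It is precisely here that the compatibility of disjoint universal coproducts with pullbacks does all the heavy lifting, justifying the formal correspondence between many-object $\ka$-ary congruences and ordinary equivalence relations on a coproduct that underlies the whole notion of $\ka$-ary exactness.
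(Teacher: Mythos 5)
Your proof is correct and follows essentially the same route as the paper: the cycle among \ref{item:ex1}--\ref{item:ex2a} is driven by \autoref{thm:regular} together with \autoref{thm:lwker-stker} and \autoref{thm:street2}, and the equivalence with \ref{item:ex3} is obtained, exactly as in the paper, by trading strict $\ka$-ary congruences for equivalence relations on $\ka$-ary coproducts ($\Delta_X$ in one direction, $\bigsqcup_{i,j}\Phi(x_i,x_j)\hookrightarrow c\times c$ in the other). The one imprecision is that $\Delta_X$ is not itself strict --- its off-diagonal components are empty arrays, hence not monic cones --- so before applying \ref{item:ex2a} to it you must first pass to an equivalent strict congruence via \autoref{thm:reg-strictcong}, a step the paper makes explicit.
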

\begin{proof}
  By \autoref{thm:regular}\ref{item:reg8}, condition~\ref{item:ex1}
  implies \bC is \ka-ary regular, hence has finite limits.
  \autoref{thm:lwker-stker} then implies the rest of~\ref{item:ex1a}.
  \autoref{thm:regular}\ref{item:reg4} immediately implies
  \ref{item:ex1a}$\Leftrightarrow$\ref{item:ex2a}.  In this case, by
  \autoref{thm:reg-strictcong} any \ka-ary congruence is equivalent to
  a strict one, so since equivalent congruences are kernels of the
  same cocones,~\ref{item:ex1} holds.

  Now we assume~\ref{item:ex2a} and prove~\ref{item:ex3}.  Since a
  singleton strict congruence is an equivalence
  relation,~\ref{item:ex2a} implies that \bC is exact in the ordinary
  sense.  Moreover, for any \ka-ary family of objects $X$, the
  congruence ${\Delta_X}$ from \autoref{thm:basic-cong} is equivalent
  to a strict one, and this strict congruence is a kernel of a
  universally effective-epic cocone just when $X$ has a disjoint and
  universal coproduct.  This
  shows~\ref{item:ex2a}$\Rightarrow$\ref{item:ex3}.

  Conversely, it is well-known that~\ref{item:ex3} implies
  \autoref{thm:regular}\ref{item:reg6}, so that \bC is \ka-ary
  regular.  Moreover, if \bC satisfies~\ref{item:ex3}, then any strict
  \ka-ary congruence \Phi on $X$ gives rise to an internal equivalence
  relation on $\sum_{x\in X} x$ by taking coproducts of the
  $\Phi(x_1,x_2)$.  The quotient of this equivalence relation then
  admits a universally effective-epic cocone from $X$ of which \Phi is
  the strict kernel.
  Thus,~\ref{item:ex3}$\Rightarrow$\ref{item:ex2a}.
\end{proof}

\begin{defn}
  We say that a category \bC is \textbf{\ka-ary exact}, or a
  \textbf{\ka-ary pretopos}, if it satisfies the equivalent conditions
  of \autoref{thm:exact}.
\end{defn}

As before, \autoref{thm:exact}\ref{item:ex3} implies that \ka-ary
exactness generalizes a number of more common definitions.
\begin{itemize}[noitemsep]
\item \bC is \un-ary exact iff it is exact in the usual sense.
\item \bC is $\{0,1\}$-ary exact iff it is exact and has a strict
  initial object.
\item \bC is \om-ary exact iff it is a pretopos.
\item \bC is $\om_1$-ary exact iff it is a \si-pretopos.
\item \bC is \KA-ary exact iff it is an
  infinitary-pretopos\footnote{Also called an $\infty$-pretopos, but
    we avoid that term due to potential confusion with the very
    different ``$\infty$-toposes'' of~\cite{lurie:higher-topoi}.}
  (again, minus the occasional requirement of well-poweredness).  This
  means it satisfies all the conditions of Giraud's theorem except
  possibly the existence of a generating set.
\end{itemize}

\noindent
Let $\nEX_{\ka} \subset \nREG_{\ka}$ be the full sub-2-category
consisting of the \ka-ary exact categories.

\begin{remark}
  If $\om \in \ka$, then any \ka-ary exact category admits all
  \ka-small colimits.  The proof is well-known: it has \ka-small
  coproducts, and $\om \in \ka$ implies that any coequalizer generates
  an equivalence relation with the same quotient; hence it also has
  all coequalizers.

  Similarly, in this case a functor between \ka-ary exact categories
  is \ka-ary regular if and only if it preserves finite limits and
  \ka-small colimits.  In particular, a functor between Grothendieck
  toposes is \KA-ary regular if and only if it preserves finite limits
  and small colimits, i.e.\ if and only if it is the left-adjoint part
  of a geometric morphism.  Hence $\nEX_\KA$ contains, as a full
  sub-2-category, the opposite of the usual 2-category of Grothendieck
  toposes.
\end{remark}

\begin{remark}\label{rmk:locex}
  We call a category \textbf{locally \ka-ary exact} if it is locally
  \ka-ary regular and every \ka-ary strict congruence is a kernel.  An
  analogue of \autoref{thm:exact} is then true.
\end{remark}

\section{Framed allegories}
\label{sec:fr-alleg-new}


We now recall the basic structure used in the ``relational''
construction of the exact completion of a regular category
from~\cite{fs:catall} (see also~\cite[\S A3]{ptj:elephant}), which
will be the basis for our construction of the \ka-ary exact
completion.

\begin{defn}\label{def:allegory}
  A \textbf{\ka-ary allegory} is a 2-category \cA such that
  \begin{enumerate}[nolistsep]
  \item Each hom-category of \cA is a poset with binary meets and
    \ka-ary joins;\label{item:a1}
  \item Composition and binary meets preserve \ka-ary joins in each
    variable;\label{item:a2}
  \item \cA has a contravariant identity-on-objects involution
    $(-)\o\colon \cA\op\to\cA$; and\label{item:a3}
  \item The modular law $\psi\phi \wedge \chi \le (\psi \wedge
    \chi\phi\o)\phi$ holds.\label{item:a4}
  \end{enumerate}
\end{defn}

As with categories, all allegories will be be moderate, but not
necessarily locally small.  The basic example consists of binary
relations in a \ka-ary regular category.  This will be a special case
of our more general construction for \ka-ary sites in
\S\ref{sec:relations-ka-ary}.

\begin{remark}
  Our \un-ary allegories are the \emph{allegories}
  of~\cite{fs:catall,ptj:elephant}.  Similarly, our \om-ary allegories
  and \KA-ary allegories are called \emph{distributive} and
  \emph{locally complete} allegories, respectively,
  in~\cite{fs:catall}.  The \emph{union allegories} and
  \emph{geometric allegories} of~\cite{ptj:elephant} are almost the
  same, but do not require binary meets to distribute over \ka-ary
  joins (this is automatic for tabular allegories).
\end{remark}

Note that $(-)\o$ reverses the direction of morphisms, but preserves
the ordering on hom-posets.  Thus the modular law is equivalent to its
dual $\psi\phi \wedge \chi \le \psi(\phi \wedge \psi\o\chi)$.  We
follow~\cite{ptj:elephant} by writing morphisms in an allegory as
$\phi\colon x\rto y$.

A morphism in an allegory is called a \textbf{map} if it has a right
adjoint.  In an allegory, we write a map as $f\colon x\lto y$.  For
completeness, we reproduce the following basic proofs.

\begin{lem}\label{thm:maps}\ 
  \begin{enumerate}[nolistsep]
  \item The right adjoint of a map $f\colon x\lto y$ is necessarily
    $f\o$.\label{item:m1}
  \item The maps in an allegory \cA are discretely ordered (i.e.\
    $f\le g$ implies $f=g$).\label{item:m2}
  \end{enumerate}
\end{lem}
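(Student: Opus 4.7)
The plan is to prove (i) first and then use it to deduce (ii). Both parts rest on the modular law (Definition~\ref{def:allegory}(iv)) combined with the adjunction inequalities and the fact that $(-)\o$ is contravariant and order-preserving on hom-posets.

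For (i), let $f\colon x\lto y$ be a map with right adjoint $g\colon y\rto x$, so that $1_x \le gf$ and $fg \le 1_y$. I would first prove $g \le f\o$ by instantiating the modular law $\psi\phi \meet \chi \le (\psi \meet \chi\phi\o)\phi$ with $\psi = g$, $\phi = f$, $\chi = 1_x$, which gives $gf \meet 1_x \le (g \meet f\o)f$; since $1_x \le gf$, the left-hand side simplifies to $1_x$, yielding $1_x \le (g \meet f\o)f$. Postcomposing with $g$ and invoking $fg \le 1_y$ produces
\[
g \;=\; 1_x\cdot g \;\le\; (g \meet f\o)\,fg \;\le\; (g \meet f\o)\cdot 1_y \;=\; g \meet f\o,
\]
so $g \le f\o$. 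For the reverse inequality, apply $(-)\o$ to the two adjunction inequalities: $fg \le 1_y$ becomes $g\o f\o \le 1_y$, and $1_x \le gf$ becomes $1_x \le f\o g\o$. Thus $g\o$ is left adjoint to $f\o$, and the argument just given (applied to the adjunction $g\o \dashv f\o$) shows $f\o \le (g\o)\o = g$. Together, $g = f\o$.

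For (ii), suppose $f, h\colon x\lto y$ are maps with $f \le h$. By (i), $f\o$ is the right adjoint of $f$ and $h\o$ of $h$, so $1_x \le f\o f$ and $h h\o \le 1_y$. Since $(-)\o$ preserves the order on hom-posets, $f\o \le h\o$, so $h f\o \le h h\o \le 1_y$. Combining these gives
\[
h \;\le\; h(f\o f) \;=\; (h f\o)\,f \;\le\; (h h\o)\,f \;\le\; 1_y\cdot f \;=\; f,
\]
and together with $f \le h$ this yields $f = h$.

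The main obstacle is the first step of (i): the choice $\psi = g$, $\phi = f$, $\chi = 1_x$ in the modular law is essentially forced, but the trick of postcomposing the resulting inequality with $g$ in order to bring $fg \le 1_y$ into play is the key move that couples both halves of the adjunction into a single inequality $g \le g \meet f\o$. Once this is in hand, the reverse inequality for (i) is extracted formally by dualising with $(-)\o$, and (ii) reduces to a three-step chain that uses (i) to know the right adjoints are $f\o$ and $h\o$.
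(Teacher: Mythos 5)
Your proof is correct and follows essentially the same route as the paper: the same instantiation of the modular law with $\chi = 1_x$ yields $1_x \le (g \meet f\o)f$, the reverse inequality is obtained by dualising with $(-)\o$, and part (ii) is the identical three-step chain $g \le g f\o f \le g g\o f \le f$. The only cosmetic difference is that where the paper observes $f \dashv (f\o \meet g\o)$ and invokes uniqueness of right adjoints, you unfold that uniqueness argument into the explicit postcomposition with $g$.
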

\begin{proof}
  For~\ref{item:m1}, since $f\dashv \phi$ implies $\phi\o \dashv f\o$,
  $\phi\o$ is a map, say $g$.  Then $1_x = 1_x \meet g\o f \le (f\o
  \meet g\o)f$ by the modular law, while $f(f\o\meet g\o) \le f g\o
  \le 1_y$.  Thus $f\dashv (f\o\meet g\o)$ also.  So $f\o\meet g\o =
  g\o$, i.e.\ $g\o \le f\o$, and by symmetry $f\o\le g\o$ and so
  $f\o=g\o$.

  For~\ref{item:m2}, if $f\le g$ then $f\o\le g\o$, so $g \le g f\o f
  \le g g\o f \le f$ and hence $f=g$.
\end{proof}

\noindent
We denote by $\nMap(\cA)$ the category whose morphisms are the maps in
\cA.

\begin{remark}\label{rmk:frobenius}
  Undoubtedly, the most obscure part of \autoref{def:allegory} is the
  modular law.  There are similar structures, such as the
  \emph{bicategories of relations} of~\cite{cw:cart-bicats-i}, which
  replace this with a more familiar-looking ``Frobenius'' condition.
  In fact, a bicategory of relations is the same as a ``unital and
  pretabular'' allegory~\cite{walters:relations,nlab:bicat-rel}.
  Unfortunately, the allegories we need are not unital or pretabular,
  but they do satisfy a weaker condition which also allows us to
  rephrase the modular law as a Frobenius condition.

  On the one hand, notice that if $f$ is a map, then for any
  $\phi,\chi$ we have
  \begin{equation}
    \psi f \wedge \chi \le \psi f \wedge \chi f\o f
    = (\psi \wedge \chi f\o)f\label{eq:modular-map}
  \end{equation}
  since precomposition with $f$, being a right adjoint, preserves
  meets.  Thus, when $\phi=f$ is a map, the modular law is automatic.

  On the other hand, if $\phi = f\o$ is the right adjoint of a map
  $f$, we have
  \begin{equation}\label{eq:frobenius}
    (\psi \wedge \chi f) f\o \le (\psi f\o f \wedge \chi f) f\o
    = (\psi f\o \meet \chi)f  f\o \le \psi f\o \meet \chi
  \end{equation}
  which is the \emph{reverse} of the modular law.  Moreover, asking
  that~\eqref{eq:frobenius} be an equality is a familiar form of
  ``Frobenius law'' for the adjunction $(-\circ f\o) \dashv (-\circ
  f)$.
\end{remark}

\begin{lemma}\label{thm:frob-mod}
  Suppose \cA is a 2-category satisfying
  \ref{def:allegory}\ref{item:a1}--\ref{item:a3} and also the
  following.
  \begin{enumerate}[nolistsep,label=(\alph*)]
  \item If a morphism $f$ in \cA has a right adjoint, then that
    adjoint is $f\o$.\label{item:fm1}
  \item For any map $f$ in \cA, the inequality~\eqref{eq:frobenius} is
    an equality.
  \item Every morphism $\phi\colon x\rto y$ in \cA can be written as
    $\phi = \bigvee_u g_u f_u\o$, for some \ka-ary cocones $F\colon
    U\To x$ and $G\colon U\To y$ consisting of maps.\label{item:fm4}
\end{enumerate}
Then \cA satisfies the modular law, hence is a \ka-ary allegory.
\end{lemma}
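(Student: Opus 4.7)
The plan is to exploit the decomposition hypothesis~\ref{item:fm4}: every $\phi$ can be written as $\bigvee_u g_u f_u\o$ with $f_u, g_u$ maps. Since composition and binary meets preserve \ka-ary joins (\ref{def:allegory}\ref{item:a2}), we can distribute through the join and reduce the modular law for an arbitrary $\phi$ to an estimate on each summand $\psi g_u f_u\o \wedge \chi$. Two tools are already at our disposal for handling such a summand: the automatic inequality~\eqref{eq:modular-map}, which supplies the modular law whenever the middle factor is \emph{itself} a map (its derivation uses only that $(-\circ f)$ is a right adjoint and so preserves meets), and the Frobenius inequality~\eqref{eq:frobenius}, promoted to an equality by hypothesis~(b), which governs precomposition by the opposite $f\o$ of a map. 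The strategy is to peel off $f_u\o$ using Frobenius and then $g_u$ using~\eqref{eq:modular-map}.

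Concretely, using~\ref{def:allegory}\ref{item:a2} twice, write
\[
  \psi\phi \wedge \chi \;=\; \bigvee_u \bigl(\psi g_u f_u\o \wedge \chi\bigr).
\]
For a fixed $u$, the Frobenius equality applied to the map $f_u$ (with $\alpha = \psi g_u$ and $\beta = \chi$ in the notation of~\eqref{eq:frobenius}) gives
\[
  \psi g_u f_u\o \wedge \chi \;=\; (\psi g_u \wedge \chi f_u)\, f_u\o,
\]
and then~\eqref{eq:modular-map} applied to the map $g_u$ yields
\[
  \psi g_u \wedge \chi f_u \;\le\; (\psi \wedge \chi f_u g_u\o)\, g_u .
\]
Bounding $\chi f_u g_u\o \le \chi \phi\o$ (since $f_u g_u\o \le \bigvee_v f_v g_v\o = \phi\o$) and rejoining over $u$, one obtains
\[
  \psi\phi \wedge \chi \;\le\; \bigvee_u (\psi \wedge \chi\phi\o)\, g_u f_u\o \;=\; (\psi \wedge \chi\phi\o)\, \phi,
\]
which is exactly the modular law.

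I do not anticipate a genuine obstacle: every necessary ingredient—meet-preservation of precomposition with a map, the Frobenius equality of hypothesis~(b), and distributivity of $\wedge$ and composition over \ka-ary joins—has already been isolated in the excerpt and the preceding remark. The only point requiring care is the ordering of the rewrites, so that the modular law is never invoked on a morphism not yet known to be a map; the computation above uses~\ref{item:fm4} only to decompose $\phi$ itself, leaving $\psi$ and $\chi$ arbitrary, which is what makes the induction on ``complexity'' collapse into the single line above.
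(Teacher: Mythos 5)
Your proof is correct and follows essentially the same route as the paper's: decompose $\phi=\bigvee_u g_u f_u\o$, distribute, apply the Frobenius equality to peel off $f_u\o$, and apply~\eqref{eq:modular-map} to peel off $g_u$. The only (cosmetic) difference is in the final regrouping: you bound $f_u g_u\o \le \phi\o$ termwise before rejoining, whereas the paper enlarges to a double join over pairs $(u,u')$ and then uses distributivity; both are valid.
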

\begin{proof}
  Given $\phi\colon x\rto y$, write $\phi = \bigvee_u g_u f_u\o$ as
  in~\ref{item:fm4}.  Then for any $\psi,\chi$ we have
  \begin{multline*}
    \psi \phi \wedge \chi =
    \psi \left(\bigvee_u g_u f_u\o\right) \wedge \chi =
    \bigvee_u \Big(\psi g_u f_u\o \wedge \chi\Big) \\ =
    \bigvee_u \Big(\psi g_u \wedge \chi f_u\Big) f_u \o \le
    \bigvee_u \Big(\psi \wedge \chi f_u g_u\o\Big) g_u f_u\o \le
    \bigvee_{u,u'} \Big(\psi \wedge \chi f_u g_u\o\Big) g_{u'} f_{u'}\o \\ =
    \left(\psi \wedge \chi\left(\bigvee_u f_u g_u\o\right)\right)
    \left(\bigvee_{u'} g_{u'} f_{u'}\o \right) =
    (\psi \wedge \chi \phi\o)\phi
  \end{multline*}
  which is the modular law for $\phi$.
\end{proof}

As observed in~\cite{ptj:elephant}, it is technically even possible to
omit $(-)\o$ from the structure, since it is determined by
\autoref{thm:frob-mod}\ref{item:fm1} and~\ref{item:fm4}.  But it seems
difficult to ensure that an operation defined in this way is
well-defined and functorial, and in all naturally-occurring examples
the involution is easy to define directly.  However, this observation
does imply that a 2-functor between \ka-ary allegories of this sort
which preserves local \ka-ary joins must automatically preserve the
involution.

\begin{defn}
  A \textbf{\ka-ary allegory functor} is a 2-functor preserving the
  involution and the binary meets and \ka-ary joins in the hom-posets.
  An \textbf{allegory transformation} is an \emph{oplax} natural
  transformation (i.e.\ we have $\alpha_y \circ F(\phi) \le G(\phi)
  \circ \alpha_x$ for $\phi\colon x\rto y$) whose components
  $\alpha_x$ are all maps.
\end{defn}

Since maps in an allegory are discretely ordered, an allegory
transformation is strictly natural on maps ($\alpha_y \circ F(f) =
G(f) \circ \alpha_x$).  Similarly, there are no ``modifications''
between allegory transformations.  We write \ALLk for the 2-category
of \ka-ary allegories.

The central classical theorem about allegories is that the ``binary
relations'' construction induces an equivalence from $\REGk$ to the
2-category of ``unital and tabular'' \ka-ary allegories.  One can then
identify those allegories that correspond to exact categories and
construct the exact completion in the world of allegories.  We aim to
proceed analogously, but for this we need a class of allegories which
correspond to \ka-ary sites in the same way that unital tabular ones
correspond to regular categories.  This requires generalizing the
allegory concept slightly (essentially to allow for non-subcanonical
topologies; see \autoref{thm:chordate1}).

\begin{defn}
  A \textbf{framed \ka-ary allegory} \lA consists of a \ka-ary
  allegory \cA, a category $\nTMap(\lA)$, and a bijective-on-objects
  functor $J\colon \nTMap(\lA)\to \nMap(\cA)$.
\end{defn}

\noindent A framed allegory can be thought of as:
\begin{itemize}[noitemsep]
\item a proarrow equipment~\cite{wood:proarrows-i} whose proarrows
  form an allegory, or
\item a framed bicategory~\cite{shulman:frbi} whose horizontal
  bicategory is an allegory, or
\item an \sF-category~\cite{ls:limlax} whose loose morphisms form an
  allegory, and where every tight morphism has a loose right adjoint.
\end{itemize}

Following the terminology of~\cite{ls:limlax}, we call the morphisms
of $\nTMap(\lA)$ \textbf{tight maps}, and write them as $f\colon x\to
y$.  We refer to maps in the underlying allegory \cA as \textbf{loose
  maps}.  The functor $J$ takes each tight map $f\colon x\to y$ to a
loose map which we write as $f\sb\colon x \lto y$.  We also write
$f\pb = (f\sb)\o\colon y\rto x$ for the right adjoint of the
underlying loose map of a tight map $f$.  Note that a given loose map
may have more than one ``tightening''.

We call \lA \textbf{chordate} if $J\colon \nTMap(\lA)\to \nMap(\cA)$
is an isomorphism (i.e.\ every loose map has a unique tightening), and
\textbf{subchordate} if $J$ is faithful (i.e.\ a loose map has at most
one tightening).  This does not quite match the terminology
of~\cite{ls:limlax}, where ``chordate'' means that every
\emph{morphism} is tight, but it is ``as chordate as a framed allegory
can get'' since we require all tight morphisms to be maps.  Evidently,
chordate framed allegories can be identified with ordinary allegories.

A \textbf{framed \ka-ary allegory functor} $\lA \to \lB$ consists of a
\ka-ary allegory functor $\cA\to\cB$, together with a functor
$\nTMap(\lA) \to \nTMap(\lB)$ making the following square commute:
\[\vcenter{\xymatrix{
    \nTMap(\lA)\ar[r]\ar[d]_J &
    \nTMap(\lB)\ar[d]^J\\
    \nMap(\cA)\ar[r] &
    \nMap(\cB).
  }}\]
Similarly, a \textbf{framed \ka-ary allegory transformation} consists
of a \ka-ary allegory transformation and a compatible natural
transformation on tight maps.  We obtain a 2-category $\FALLk$ of
framed \ka-ary allegories, with a full inclusion $\ALLk\into \FALLk$
onto the chordate ones.  This has a left 2-adjoint which forgets about
the tight maps; after composing it with the inclusion we call this the
\textbf{chordate reflection}.  Similarly, there is a
\textbf{subchordate reflection} which declares two tight maps to be
equal whenever their underlying loose maps are.

\subsection{Relations in \ka-ary sites}
\label{sec:relations-ka-ary}

To simplify matters, we consider first the framed allegories that
correspond to \emph{locally} \ka-ary sites, then add conditions to
characterize the \ka-ary ones.  Thus, let \bC be a locally \ka-ary
site.  Recall this means that it is weakly \ka-ary and has finite
\emph{connected} local \ka-prelimits.  We define a framed allegory
$\lrelk(\bC)$ as follows.

The objects of $\lrelk(\bC)$ are those of \bC.  The hom-poset
$\lrelk(\bC)(x,y)$ of its underlying allegory is the poset reflection
of the preorder of \ka-sourced arrays over $\{x,y\}$, with the
relation $\lle$ of local refinement from \autoref{def:lwmlim}.  This
has binary meets by the construction $\bigwedge_i$ from
\S\ref{arrayops}, and \ka-ary joins by taking disjoint unions of
domains of arrays.  Composition is the operation $P\times_v Q$ from
\S\ref{arrayops}, and the involution is obvious.

Finally, we define $\nTMap(\lrelk(\bC)) = \bC$.  For each $f\colon
x\to y$ in \bC, we have a \ka-ary array $\{1_x,f\}\colon \{x\} \to
\{x,y\}$, which has a right adjoint $\{f,1_x\}\colon \{x\} \to
\{y,x\}$, giving a functor $J\colon \bC \to \nMap(\lrelk(\bC))$.  It
is easy to prove that composition and binary meets preserve \ka-ary
joins and that the modular law holds (perhaps by using
\autoref{thm:frob-mod}).

\begin{examples}\label{rmk:relk-egs}
  Suppose \bC is a locally \ka-ary regular category with its \ka-ary
  regular topology.  By \autoref{thm:eqv-monic}, the underlying
  allegory of $\lrelk(\bC)$ is isomorphic to the usual allegory of
  relations in \bC.  It is well-known that \bC can be identified with
  the category of maps in this allegory, so $\lrelk(\bC)$ is chordate
  (see also \autoref{thm:chordate1}).

  On the other hand, if \bC is a small \KA-ary site, the underlying
  allegory of $\lRel_{\KA}(\bC)$ is the bicategory used
  in~\cite{walters:sheaves-cauchy-2} to construct $\nSh(\bC)$.
  Finally, if \bC has pullbacks and a trivial unary topology, the
  underlying allegory of $\lRel_{\un}(\bC)$ is the allegory used
  in~\cite[\S7]{carboni:free-constr} and~\cite[A3.3.8]{ptj:elephant}
  to construct the regular completion of \bC.
\end{examples}

We now aim to isolate the essential properties of $\lrelk(\bC)$.
Recall the notions of matrix, array, and sparse array from
\S\ref{sec:preliminaries}.  Of course, arrays in an allegory inherit a
pointwise ordering.  Moreover, if $\Phi\colon X\Rto Y$ is a matrix in
a \ka-ary allegory such that each set $\phi_{x y}$ is \ka-small, then
it has a join $\bigvee \Phi$ which is an array $X\Rto Y$.  In
particular, if $\Phi\colon X\Rto Y$ and $\Psi\colon Y\Rto Z$ are
arrays in a \ka-ary allegory and $Y$ is \ka-ary, then the composite
matrix $\Psi\Phi$ satisfies this hypothesis, hence has a join
$\bigvee(\Psi\Phi)$.  Similarly, $1_X \colon X\Rto X$ also always
satisfies this condition when $X$ is \ka-ary.

We first observe that we can recover the topology of \bC from $\lrelk(\bC)$.

\begin{lem}\label{thm:cov-detect}
  Let $P\colon V\To u$ be a \ka-ary cocone in a locally \ka-ary site
  \bC.  Then $P$ is a covering family if and only if $\bigvee(P\sb
  P\pb) = 1_u$ in $\lrelk(\bC)$.
\end{lem}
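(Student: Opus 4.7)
The plan is to reduce both sides of the equation to concrete representative arrays in $\lrelk(\bC)$ and then compare them using the local-refinement relation $\lle$. First I would unpack $\bigvee(P\sb P\pb)$: for each $v\in V$, the loose map $p_{v\sb}\colon v\lto u$ is represented by the array $\{v\}\To\{v,u\}$ with entries $(1_v,p_v)$, and $p_v^\pb$ is its involute. Because the composition operation $\times_v$ from \S\ref{arrayops} asks for a local $\ka$-prelimit of the cospan $v\xleftarrow{1_v}v\xrightarrow{1_v}v$ (which $v$ itself resolves), the composite $p_{v\sb}p_v^\pb\colon u\rto u$ is represented by $\{v\}\To\{u,u\}$ with both entries equal to $p_v$. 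Taking the join over $v\in V$, $\bigvee(P\sb P\pb)$ is represented by the array $V\To\{u,u\}$ whose two entries at each $v$ are both $p_v$, whereas $1_u$ is represented by $\{u\}\To\{u,u\}$ with both entries $1_u$.

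Next I would dispatch the easy direction: the assignment $v\mapsto u$ with witnessing morphisms $p_v$ is a functional array $V\To\{u\}$ showing directly that $\bigvee(P\sb P\pb)\le 1_u$, requiring no covering family, so this inequality holds unconditionally. The content of the lemma therefore lives in the reverse inequality $1_u\lle\bigvee(P\sb P\pb)$. Unfolding \autoref{def:lwmlim}, this says there exists a covering family $Q\colon W\To u$ and a functional array $H\colon W\To V$ such that $q_w=p_{h(w)}\circ H_w$ for each $w\in W$ --- equivalently, a covering family $Q$ of $u$ with $Q\le P$ in the sense of \autoref{def:refines}. If $P$ is covering, the choice $Q=P$ with the identity functional array works; conversely, given such $Q\le P$, saturation (\ref{def:site}\ref{item:site-sat}) forces $P$ itself to be covering.

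I do not anticipate a real obstacle: once the composite $p_{v\sb}p_v^\pb$ is recognized to collapse (because its ``middle'' cospan is a pair of identities), the rest is just matching definitions. The one point to be careful about is bookkeeping --- specifically, that the join in the hom-poset is computed by disjoint union of array domains, and that the nontrivial direction of the equality is the one that requires the local-refinement condition to supply the covering family.
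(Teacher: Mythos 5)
Your proof is correct and follows essentially the same route as the paper's: the inequality $\bigvee(P\sb P\pb)\le 1_u$ holds unconditionally (the paper phrases this as ``$P\sb$ consists of maps,'' which amounts to the same functional array $V\To\{u\}$ you exhibit), and the reverse inequality unfolds by definition of $\lle$ to the existence of a covering family refining $P$, which saturation converts to $P$ itself being covering. Your explicit computation of the representative array for $p_{v\sb}p_v^\pb$ is a correct (and slightly more detailed) unpacking of what the paper leaves implicit.
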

\begin{proof}
  The inequality $\bigvee(P\sb P\pb) \le 1_u$ is always true since
  $P\sb$ consists of maps.  By definition of $\lrelk(\bC)$, the other
  inequality $1_u\le \bigvee(P\sb P\pb)$ asserts that $u$ admits a
  covering family factoring through $P$, which is equivalent to $P$
  being covering.
\end{proof}

\begin{defn}\label{thm:wkktab}
  A \ka-ary framed allegory \lA is \textbf{weakly \ka-tabular} if
  \begin{enumerate}[leftmargin=*,nolistsep]
  \item Every morphism $\phi$ can be written as $\phi = \bigvee(F\sb
    G\pb)$, where $F$ and $G$ are \ka-ary cocones of tight
    maps.\label{item:relk1}
  \item If $f,g\colon x\toto y$ are parallel tight maps such that
    $f\sb=g\sb$, then there exists a \ka-ary cocone $P\colon U\To x$
    of tight maps such that $f P = g P$ and $\bigvee(P\sb P\pb) =
    1_x$.\label{item:relk3}
  \end{enumerate}
\end{defn}

\begin{prop}
  $\lrelk(\bC)$ is weakly \ka-tabular, for any locally \ka-ary site \bC.
\end{prop}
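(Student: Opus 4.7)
The plan is to verify the two conditions of weak $\ka$-tabularity by directly unpacking the construction of $\lrelk(\bC)$; both should reduce to essentially tautological manipulations once the correspondences are set up.

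For condition~(i), I would take an arbitrary morphism $\phi\colon x\rto y$, which by definition of $\lrelk(\bC)$ is an equivalence class (under $\lle$) of $\ka$-sourced arrays $\Phi\colon U\To\{x,y\}$. Such an array is precisely a pair of $\ka$-ary cocones of tight maps $G\colon U\To x$ and $F\colon U\To y$, namely the two components of $\Phi$. I would then show $\phi = \bigvee(F\sb G\pb)$ by unpacking the composition $\times_v$ and the $\ka$-ary join in $\lrelk(\bC)$ from \S\ref{arrayops}: each term $f_u{}\sb\, g_u{}\pb$ corresponds to the single-element span $\{g_u, f_u\}$, and the join over $u$ is the disjoint union of their domains, which manifestly reproduces $\Phi$.

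For condition~(ii), suppose $f,g\colon x\toto y$ satisfy $f\sb = g\sb$. This equality of loose maps amounts to local equivalence of the arrays $\{1_x, f\}$ and $\{1_x, g\}\colon \{x\}\To\{x,y\}$, so in particular there is a covering family $P\colon U\To x$ with $\{1_x, f\}\,P \le \{1_x, g\}$. Unwinding \autoref{def:refines}, the factoring morphism at each $u\in U$ is forced to be $p_u$ itself and must satisfy $f p_u = g p_u$, giving $f P = g P$. Finally, since $P$ is covering, \autoref{thm:cov-detect} yields $\bigvee(P\sb P\pb) = 1_x$.

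The bulk of the work lies in condition~(i), where one must carefully track the explicit formulas for composition and join in $\lrelk(\bC)$ to confirm that $\bigvee(F\sb G\pb)$ really recovers $\Phi$ up to local equivalence; this is the only genuine obstacle and is purely bookkeeping, complicated slightly by the need to keep orientations straight between $(-)\sb$ and $(-)\pb$. Given \autoref{thm:cov-detect}, condition~(ii) is then comparatively immediate.
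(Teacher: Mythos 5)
Your proof is correct and follows essentially the same route as the paper's: condition (i) is verified by splitting an array $\Phi\colon U\To\{x,y\}$ into its two component cocones and unwinding the composition and join in $\lrelk(\bC)$, and condition (ii) by unpacking $\{1_x,f\}\lle\{1_x,g\}$ to force the factoring cocone to equal $P$, then invoking \autoref{thm:cov-detect}. The only cosmetic difference is that the paper leaves the bookkeeping for (i) as ``easy to see'' rather than spelling it out.
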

\begin{proof}
  By definition, a morphism $\phi\colon x\rto y$ in $\lrelk(\bC)$ is
  an array $H\colon Z\To \{x,y\}$.  It is easy to see that then $\phi
  = \bigvee ((H|_y)\sb (H|_x)\pb)$, showing~\ref{item:relk1}.

  For~\ref{item:relk3}, if we have $f,g\colon x\toto y$ with $f\sb =
  g\sb$, then $f\sb\le g\sb$, and so by definition $\{1_x,f\} \lle
  \{1_x,g\}$.  Thus, we have a covering family $P\colon V\To x$ and a
  cocone $H\colon V\To x$ such that $1_x P = 1_x H$ and $f P = g H$.
  Hence $H=P$ and thus $f P = g P$.
\end{proof}

We refer to $F$ and $G$ as in~\ref{item:relk1} as a \textbf{weak
  \ka-tabulation} of $\phi$.  Note that~\ref{item:relk1} implies that
any array of morphisms $\Phi\colon U \Rto V$ can be written as $\Phi =
\bigvee (F\sb G\pb)$, where $F$ and $G$ are functional arrays of tight
maps.  If $U$ and $V$ are \ka-ary, then the common source of $F$ and
$G$ will also be \ka-ary.

We now aim to prove that any weakly \ka-tabular \lA is of the form
$\lrelk(\bC)$.  \autoref{thm:cov-detect} suggests the following
definition.

\begin{defn}
  A \ka-ary cocone $P\colon V\To u$ of tight maps in a \ka-ary framed
  allegory is \textbf{covering} if $\bigvee(P\sb P\pb) = 1_u$.
\end{defn}

In the categorified context of~\cite{street:cauchy-enr}, such families
were called \emph{cauchy dense}.  The following fact is useful for
recognizing covering families.

\begin{lem}\label{thm:entire-detect}
  Suppose $\Phi \colon x\Rto V$ is a \ka-ary cone in a \ka-ary
  allegory such that $\Phi = \bigvee (F G\o)$, where $G\colon W\To x$
  is a \ka-ary cocone of maps and $F\colon W \To V$ is a functional
  array of maps.  Then $1_x \le \bigvee(\Phi\o\Phi)$ if and only if
  $1_x = \bigvee(G G\o)$.
\end{lem}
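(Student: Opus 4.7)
My plan is to unwind both sides using the distributive laws of the allegory and then reduce the main direction to a Frobenius identity for maps. Since $F$ is functional, write $v = f(w)$ for each $w \in W$. Expanding the composite $\Phi = \bigvee(F G\o)$ entrywise gives $\phi_{x,v} = \bigvee_{w : f(w)=v} f_w g_w\o$, and then matrix composition together with distributivity of $\wedge$ over \ka-ary $\bigvee$ yields
\[
\bigvee(\Phi\o\Phi) \;=\; \bigvee_{v} \phi_{x,v}\o \phi_{x,v} \;=\; \bigvee_{\substack{w,w' \in W \\ f(w)=f(w')}} g_w f_w\o f_{w'} g_{w'}\o,
\qquad
\bigvee(GG\o) \;=\; \bigvee_{w} g_w g_w\o.
\]
Because each $g_w$ is a map, $g_w g_w\o \le 1_x$; so the conclusion $\bigvee(GG\o) = 1_x$ is equivalent to $\bigvee(GG\o) \ge 1_x$.

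The easy direction, $\bigvee(GG\o) = 1_x \Longrightarrow 1_x \le \bigvee(\Phi\o\Phi)$, just takes $w = w'$: since $f_w$ is a map, $1_w \le f_w\o f_w$, so $g_w g_w\o \le g_w f_w\o f_w g_w\o$, which appears as the $(w,w)$ summand in the expansion of $\bigvee(\Phi\o\Phi)$.

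For the converse, the key tool is a Frobenius identity for maps. The paper observes in \autoref{rmk:frobenius} that for a map $h$ the modular inequality $\psi h\o \wedge \chi \le (\psi \wedge \chi h) h\o$ is in fact an equality; taking involutions and relabelling gives the equivalent statement $h\alpha \wedge \beta = h(\alpha \wedge h\o \beta)$. Applying this with $h = g_w$, $\alpha = f_w\o f_{w'} g_{w'}\o$, and $\beta = 1_x$ gives
\[
g_w f_w\o f_{w'} g_{w'}\o \wedge 1_x
\;=\; g_w\bigl(f_w\o f_{w'} g_{w'}\o \wedge g_w\o\bigr)
\;\le\; g_w g_w\o,
\]
where the last step uses only that the inner meet is bounded by $g_w\o$. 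Joining over all pairs with $f(w) = f(w')$ and distributing $\wedge$ over $\bigvee$ yields $1_x \wedge \bigvee(\Phi\o\Phi) \le \bigvee_w g_w g_w\o$; under the hypothesis $1_x \le \bigvee(\Phi\o\Phi)$ the left-hand side equals $1_x$, which combined with $\bigvee_w g_w g_w\o \le 1_x$ gives the desired equality.

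The main obstacle is finding the right move. Direct uses of the modular law pull meets outward but leave composites like $f_w\o f_{w'}$ in the middle, which cannot be bounded above since $f_{w'}\o f_{w'} \ge 1$ for a map $f_{w'}$. The trick is to exploit the Frobenius equality (not merely the modular inequality) on the outer map $g_w$, so that $g_w$ can be pulled \emph{outside} the meet with $1_x$; then the remaining inner meet is trivially dominated by $g_w\o$ and the result falls out.
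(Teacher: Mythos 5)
Your argument is correct and is essentially the paper's own proof: both directions proceed by expanding $\bigvee(\Phi\o\Phi)$ into the double join over pairs $w,w'$ with $f(w)=f(w')$, handling the easy direction via $1\le f_w\o f_w$ on the diagonal terms, and handling the converse by distributing $1_x\wedge(-)$ over the join and applying one instance of the modular law per term to peel the outer $g$ off the meet and land in $g_w g_w\o$. The only cosmetic difference is that you peel from the left using the dual (Frobenius) form of the modular law where the paper peels $g_w\o$ from the right using the primal form — these are related by the involution — and in fact you only ever use the modular inequality, not the full Frobenius equality, so your closing remark overstates what the argument requires.
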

\proof
  If $1_x = \bigvee(G G\o)$, then
  \begin{multline*}
    1_x = \bigvee(G G\o) = \bigvee_{w} \big(g_w g_w\o\big)
    \le \bigvee_{w} \big(g_w f_w\o f_w g_w\o\big)
    \le \bigvee_{w,w'\atop f(w)=f(w')} \big(g_{w'} f_{w'}\o f_{w} g_{w}\o\big)\\
    = \bigvee_v \left(\bigvee_{f(w')=v} (g_{w'} f_{w'}\o)\right)
    \left(\bigvee_{f(w)=v} (f_{w} g_{w}\o)\right)
    = \bigvee_v (\phi_v\o \phi_v)
    = \bigvee (\Phi\o\Phi).
  \end{multline*}
  Conversely, suppose $1_x \le \bigvee(\Phi\o\Phi)$.  Then using the
  modular law, we have
  \begin{align*}
    1_x &= 1_x \wedge \bigvee(\Phi\o\Phi)
    = \bigvee_v \Big( 1_x \wedge \phi_v\o \phi_v \Big)\\
    &= \bigvee_v \left( 1_x \wedge \left(\bigvee_{f(w')=v}
        (g_{w'} f_{w'}\o)\right)
      \left(\bigvee_{f(w)=v} (f_{w} g_{w}\o)\right) \right)\\
    &= \bigvee_{w,w'\atop f(w)=f(w')}
    \Big( 1_x \wedge g_{w'} f_{w'}\o f_{w} g_{w}\o\Big)\\
    &\le \bigvee_{w,w'\atop f(w)=f(w')}
    \Big( g_{w} \wedge g_{w'} f_{w'}\o f_{w}\Big) g_{w}\o
    \le \bigvee_{w} \big( g_{w} g_{w}\o \big)
    = \bigvee (G G\o).\qedhere
  \end{align*}

\begin{cor}\label{thm:entire-detect2}
  If $\Phi\colon X\Rto V$ is a \ka-targeted array in a \ka-ary framed
  allegory and $\Phi = \bigvee (F\sb G\pb)$, with $F$ and $G$ \ka-ary
  functional arrays of tight maps, then $\bigvee 1_X \le \bigvee
  (\Phi\o\Phi)$ if and only if $G$ is a covering family.\qed
\end{cor}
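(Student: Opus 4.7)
The plan is to reduce the corollary to an object-by-object application of Lemma \ref{thm:entire-detect}. First I would unpack the inequality $\bigvee 1_X \le \bigvee(\Phi\o\Phi)$ componentwise in $X\times X$. The $(x,x')$-entry of $\bigvee 1_X$ is $1_x$ if $x=x'$ and $\bot$ otherwise, so off the diagonal the inequality is automatic, and on the diagonal it reduces to the family of inequalities $1_x \le \bigvee((\Phi|^x)\o(\Phi|^x))$ indexed by $x\in X$, where $\Phi|^x\colon x\Rto V$ is the restriction of $\Phi$ to the singleton subfamily $\{x\}$.

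Next, I would restrict the given tabulation $\Phi = \bigvee(F\sb G\pb)$ to each $x\in X$. Writing $W^x = \{w\in W \mid g(w)=x\}$ for the fiber of $g$ over $x$, the functional array $G$ restricts to a \ka-ary cocone $G|_x\colon W^x \To x$ of tight maps, and $F$ restricts to a functional array $F|^{W^x}\colon W^x \To V$ of tight maps. Since $\phi_{x,v}$ equals $\bigvee_{w\,:\,g(w)=x,\,f(w)=v} f_w g_w\o$, we get $\Phi|^x = \bigvee((F|^{W^x})\sb (G|_x)\pb)$, matching the hypotheses of Lemma \ref{thm:entire-detect} with $x$, $G|_x$, and $F|^{W^x}$ in place of $x$, $G$, and $F$ respectively.

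Applying Lemma \ref{thm:entire-detect} at each $x$, the inequality $1_x \le \bigvee((\Phi|^x)\o(\Phi|^x))$ holds if and only if $1_x = \bigvee((G|_x)\sb(G|_x)\pb)$, which is exactly the statement that the cocone $G|_x\colon W^x\To x$ is covering in the sense introduced just before the lemma. Quantifying over all $x\in X$ and invoking Definition \ref{def:covfam} (a functional array $G\colon W\To X$ is a covering family of $X$ precisely when each $G|_x$ covers $x$), this is equivalent to $G$ being a covering family of $X$, which finishes the proof. There is no real obstacle here: the only point requiring care is verifying that both sides of the iff descend compatibly along the index $x\in X$ so that the parallel application of Lemma \ref{thm:entire-detect} assembles into a single statement about arrays.
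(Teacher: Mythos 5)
Your proof is correct and is exactly the reduction the paper intends: the corollary carries no written proof precisely because it follows by unpacking $\bigvee 1_X \le \bigvee(\Phi\o\Phi)$ on the diagonal and applying \autoref{thm:entire-detect} to each restricted cone $\Phi|^x = \bigvee\big((F|^{W^x})\sb (G|_x)\pb\big)$, which is what you do. Both the off-diagonal triviality and the identification of ``each $G|_x$ covering'' with ``$G$ is a covering family'' are handled correctly.
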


\begin{thm}\label{thm:ktab-top}
  For any weakly \ka-tabular \lA, the covering families as defined
  above form a weakly \ka-ary topology on $\nTMap(\lA)$.
\end{thm}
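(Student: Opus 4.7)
The plan is to verify the four axioms of \autoref{def:site} in turn for the covering families of tight maps just defined. The singleton axiom is immediate. For composition, given a covering $P\colon V\To u$ and coverings $Q_v\colon W_v\To v$, distributivity of composition over $\ka$-ary joins collapses $\bigvee\big((P(\bigsqcup Q_v))\sb (P(\bigsqcup Q_v))\pb\big)$ to $\bigvee_v (p_v)\sb \cdot 1_v \cdot (p_v)\pb = 1_u$ via the covering hypothesis on each $Q_v$ followed by that on $P$. For saturation, a refinement $P\le Q$ by a functional array $H$ with $P=QH$ gives $(p_v)\sb (p_v)\pb \le (q_{h(v)})\sb (q_{h(v)})\pb$ pointwise, which sandwiches $\bigvee(Q\sb Q\pb)$ between $1_u=\bigvee(P\sb P\pb)$ and $1_u$.

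The pullback-stability axiom is the only nontrivial one. Given a covering $P\colon V\To u$ and a tight map $f\colon x\to u$, I would apply weak $\ka$-tabulation to each loose morphism $(p_v)\pb f\sb\colon x\rto v$, obtaining $\ka$-ary cocones $F_v\colon Z_v\To v$ and $G_v\colon Z_v\To x$ of tight maps with $(p_v)\pb f\sb = \bigvee((F_v)\sb (G_v)\pb)$. Assembling these into a cone $\Phi\colon \{x\}\Rto V$ and taking $Q:=\bigsqcup_v G_v\colon Y\To x$ with $Y=\bigsqcup_v Z_v$, the covering hypothesis on $P$ yields
\[ \bigvee(\Phi\o\Phi) \;=\; f\pb\cdot\bigvee(P\sb P\pb)\cdot f\sb \;=\; f\pb f\sb \;\ge\; 1_x, \]
so \autoref{thm:entire-detect2} immediately supplies that $Q$ is a covering cocone of $x$.

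What remains, and is the step I expect to require the most care, is to promote $Q$ to a cocone whose composite with $f$ genuinely refines $P$ in $\nTMap(\lA)$, not merely in the underlying allegory. From $(F_v)_z\sb (G_v)_z\pb \le (p_v)\pb f\sb$, composing with $(p_v)\sb$ on the left and $(G_v)_z\sb$ on the right and invoking the unit and counit of $f\sb\dashv f\pb$ and $(G_v)_z\sb\dashv (G_v)_z\pb$, I would obtain $(p_v\circ(F_v)_z)\sb\le (f\circ(G_v)_z)\sb$, hence equality because maps in \cA are discretely ordered. Condition~\ref{thm:wkktab}\ref{item:relk3} then furnishes, for each $(v,z)$, a covering $P'_{v,z}\colon U_{v,z}\To z$ such that $f\circ(G_v)_z\circ P'_{v,z}=p_v\circ(F_v)_z\circ P'_{v,z}$ as tight maps. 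Pre-composing $Q$ with $\bigsqcup_{v,z} P'_{v,z}$ yields, via the already-verified composition axiom, a covering cocone $Q'\colon Y'\To x$ together with a functional array $H'\colon Y'\To V$ satisfying $fQ' = PH'$, which is the required factorization $fQ'\le P$.
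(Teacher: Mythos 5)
Your proposal is correct and follows essentially the same route as the paper's proof: the first three axioms by direct join calculations, and pullback-stability by weakly $\ka$-tabulating the cone $P\pb f\sb$, detecting that the resulting cocone into $x$ is covering via \autoref{thm:entire-detect2}, upgrading the pointwise inequality of maps to an equality of loose maps by discrete ordering, and then invoking \autoref{thm:wkktab}\ref{item:relk3} plus composition of covers to get the tight factorization $fQ'\le P$. The only cosmetic difference is that you tabulate each component $(p_v)\pb f\sb$ separately and assemble, where the paper tabulates the whole array at once.
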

\begin{proof}
  Firstly, it is clear that $\{1_u\colon u\to u\}$ is covering.

  Secondly, if $P\colon V\To u$ is covering, and for each $v\in V$ we
  have a covering family $Q_v\colon W_v \To v$, let $Q\colon W \To V$
  denote $\bigsqcup Q_v$.  Then $\bigvee (Q\sb Q\pb) = \bigvee 1_V$,
  so we have
  \begin{equation*}
    1_u = \bigvee(P\sb P\pb)
    = \bigvee\left (P\sb\circ \bigvee \left(Q\sb Q\pb \right)
      \circ  P\pb\right )
    = \bigvee\left ((P Q)\sb \circ (P Q)\pb \right )
  \end{equation*}
  so $P Q$ is also covering.

  Thirdly, if $P\colon V\To u$ is covering and $P\le Q$, say $P = Q F$
  for some functional array $F$, then
  \[ 1_u = \bigvee(P\sb P\pb) \le \bigvee(P\sb F\sb F\pb P\pb)
  = \bigvee(Q\sb Q\pb) \]
  since $F\sb$ consists of maps.
  The reverse inclusion is always true, so $Q$ is also covering.

  Fourthly, let $P\colon V\To u$ be a covering family and $f\colon x
  \to u$ a morphism in \bC; thus $f$ is a tight map and $P$ a cocone
  of tight maps with $\bigvee(P\sb P\pb) = 1_u$.  Then by weak
  \ka-tabularity, we can write $P\pb f\sb = \bigvee (F\sb G\pb)$ for
  $G\colon Y\To x$ a \ka-ary cocone of tight maps and $F\colon Y \To
  V$ a functional array of tight maps.  Then we have
  \[ 1_x \le f\pb f\sb = \bigvee (f\pb P\sb P\pb f\sb)
  = \bigvee \Big( (P\pb f\sb)\o (P\pb f\sb) \Big)
  \]
  so by applying \autoref{thm:entire-detect2} to $P\pb f\sb$, we
  conclude that $G$ is covering.

  Now the assumption $P\pb f\sb = \bigvee (F\sb G\pb)$ implies, in
  particular, that for any $y\in Y$ we have $(f_y)\sb (g_y)\pb \le
  (p_{f(y)})\pb f\sb$.  Since $(p_{f(y)})\sb \dashv (p_{f(y)})\pb$ and
  $(g_y)\sb\dashv (g_y)\pb$, by the mates correspondence for
  adjunctions this is equivalent to $f\sb (g_y)\sb \le (p_{f(y)})\sb
  (f_y)\sb$.  But since maps are discretely ordered, this is
  equivalent to $f\sb (g_y)\sb = (p_{f(y)})\sb (f_y)\sb$.  As this
  holds for all $y\in Y$, we have $f\sb G\sb = P\sb F\sb$.

  Therefore, by \autoref{thm:wkktab}\ref{item:relk3} there is a
  covering family $Q\colon Z\To Y$ with $f G Q = P F Q$, hence $f (G
  Q) \le P$.  We have already proven that covering families compose,
  so $G Q$ is covering.
\end{proof}

Henceforth, we will always consider $\nTMap(\lA)$ as a weakly \ka-ary
site with this topology.  Next, we characterize local \ka-prelimits in
$\nTMap(\lA)$ in terms of \lA, following~\cite[\S2.14]{fs:catall}.

\begin{lem}\label{thm:pb-detect}
  In a weakly \ka-tabular \lA, a commuting diagram of arrays of tight
  maps
  \[\vcenter{\xymatrix{
      U\ar@{=>}[r]^A\ar@{=>}[d]_B &
      x\ar[d]^f\\
      y\ar[r]_g &
      z}}
  \]
  such that $g\pb f\sb = \bigvee (B\sb A\pb)$ is a local
  \ka-pre-pullback in $\nTMap(\lA)$.  Therefore, $\nTMap(\lA)$ has
  local \ka-pre-pullbacks.
\end{lem}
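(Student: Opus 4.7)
My plan is to show that the given commuting square is a local \ka-pre-pullback in $\nTMap(\lA)$. Fix a cone $(a,b)$ with $a\colon w\to x$ and $b\colon w\to y$ tight maps satisfying $fa=gb$; the goal is to produce a covering family $Q'\colon V'\to w$ and a functional tight array $H'\colon V'\to U$ with $aQ'=AH'$ and $bQ'=BH'$ on the nose. The key auxiliary morphism is
\[ \chi_u := (a_u\pb\,a\sb) \wedge (b_u\pb\,b\sb)\colon w \rto u \qquad (u\in U), \]
together with the identity $a\sb = \bigvee_u a_u\sb\,\chi_u$ (and symmetrically for $b\sb$).

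To prove this identity, observe that $f\sb a\sb = g\sb b\sb$ gives $a\sb \le f\pb f\sb a\sb = f\pb g\sb b\sb$, and taking the involution of the hypothesis $g\pb f\sb = \bigvee_u b_u\sb a_u\pb$ yields $f\pb g\sb = \bigvee_u a_u\sb b_u\pb$; so $a\sb \le \bigvee_u a_u\sb b_u\pb b\sb$. Meeting with $a\sb$ and distributing over the join gives $a\sb = \bigvee_u(a\sb \wedge a_u\sb b_u\pb b\sb)$, and the dual modular law $\psi\phi\wedge\chi \le \psi(\phi\wedge\psi\o\chi)$ applied with $\psi=a_u\sb$, $\phi=b_u\pb b\sb$, $\chi=a\sb$ yields $a\sb\wedge a_u\sb b_u\pb b\sb \le a_u\sb\,\chi_u$; the reverse inequality is immediate from $a_u\sb a_u\pb \le 1$.

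Now use \autoref{thm:wkktab}\ref{item:relk1} to tabulate each $\chi_u = \bigvee_{v\in V_u}(h_{u,v})\sb(q_{u,v})\pb$ with $h_{u,v}\colon v\to u$ and $q_{u,v}\colon v\to w$ tight. Setting $V=\bigsqcup_u V_u$, $Q\colon V\to w$ with $q_v=q_{u,v}$, and $H\colon V\to U$ with $h(v)=u$, $h_v=h_{u,v}$ for $v\in V_u$, substitution gives a tabulation $a\sb = \bigvee_v (a_{h(v)}h_v)\sb (q_v)\pb$; since $1_w \le a\pb a\sb = \bigvee(a\sb\o a\sb)$, \autoref{thm:entire-detect2} forces $Q$ to be covering. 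Each summand satisfies $(a_{h(v)}h_v)\sb(q_v)\pb \le a\sb$, from which discreteness of the loose-map ordering gives $(a_{h(v)}h_v)\sb = (aq_v)\sb$, and similarly $(b_{h(v)}h_v)\sb = (bq_v)\sb$. Applying \autoref{thm:wkktab}\ref{item:relk3} to each of these loose-map equalities produces a covering of $v$ on which the tight-map equality holds; a common refinement (available by the remark after \autoref{def:site}, using the weakly \ka-ary topology on $\nTMap(\lA)$ from \autoref{thm:ktab-top}) makes both equalities tight simultaneously. Composing $Q$ with these refinements delivers $Q'$ and $H'$.

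For the existence claim, given a cospan $x\xto{f} z\xot{g} y$, \autoref{thm:wkktab}\ref{item:relk1} tabulates $g\pb f\sb = \bigvee(B\sb A\pb)$ with \ka-ary tight cocones $A$ and $B$. From $b_u\sb a_u\pb \le g\pb f\sb$, postcomposing with $g\sb$ and precomposing with $a_u\sb$ (using $g\sb g\pb\le 1$ and $a_u\pb a_u\sb\ge 1$) yields $(gb_u)\sb \le (fa_u)\sb$, hence equality; \autoref{thm:wkktab}\ref{item:relk3} supplies a covering $R$ with $fAR=gBR$ on the nose, and because $R$ is covering the tabulation identity is preserved when $(A,B)$ is replaced by $(AR,BR)$, so the first part applies. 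The main obstacle is the identity $a\sb = \bigvee_u a_u\sb\chi_u$: this is where the tabulation hypothesis, the cone compatibility $f\sb a\sb=g\sb b\sb$, and the dual modular law must be combined in just the right order---everything else is bookkeeping with adjunctions and covering compositions.
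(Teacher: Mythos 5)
Your proof is correct and follows essentially the same route as the paper's: your cone $\chi_u = (a_u)\pb a\sb \wedge (b_u)\pb b\sb$ is exactly the paper's $\Theta = A\pb h\sb \meet B\pb k\sb$, and both arguments weakly \ka-tabulate it, detect that the resulting cocone into $w$ is covering via \autoref{thm:entire-detect2}, and conclude with discreteness of maps plus the second half of weak \ka-tabularity (the existence clause is handled identically in both). The only difference is cosmetic: you spend the modular law on the identity $a\sb=\bigvee_u a_u\sb\chi_u$ and then apply the covering criterion to the tabulation of $a\sb$ itself, whereas the paper spends it on showing $1_v \le \bigvee(\Theta\o\Theta)$ and applies the criterion to $\Theta$ directly.
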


\noindent
Note that the given condition holds in $\lrelk (\bC)$ for any local
\ka-pre-pullback in \bC.

\begin{proof}
  Suppose $h\colon v\to x$ and $k\colon v\to y$ with $f h = g k$; then
  \[ \bigvee (k\pb B\sb A\pb h\sb ) = k\pb g\pb f\sb h\sb
  = h\pb f\pb f\sb h\sb \ge 1_v.
  \]
  Define $\Theta = A\pb h\sb \meet B\pb k\sb \colon v \Rto U$.  Since
  \lA is weakly \ka-tabular, we can find a \ka-ary cocone $P\colon W
  \To v$ and a functional array $S\colon W \To U$ such that $\bigvee
  (S\sb P\pb) = \Theta$.  Using twice the modular law and the fact
  that meets distribute over joins, we have
  \begin{align*}
    \bigvee (\Theta\o\Theta)
    &= \bigvee \big(\Theta\o(A\pb h\sb \meet \Theta)\big)\\
    &\ge \left(\bigvee (\Theta\o A\pb h\sb)\right) \meet 1_v\\
    &= \left(\bigvee ((h\pb A\sb \meet k\pb B\sb) A\pb h\sb)\right)
    \meet 1_v\\
    &\ge 1_v \meet \left(\bigvee (k\pb B\sb A\pb h\sb )\right) \meet 1_v\\
    &= 1_v
  \end{align*}
  (using the calculation above in the last step).  Thus, by
  \autoref{thm:entire-detect2}, $P$ is covering.  Now by definition of
  $S$ and $P$, we have
  \[ \bigvee (A\sb S\sb P\pb)
  = \bigvee (A\sb (A\pb h\sb \meet B\pb k\sb))
  \le \bigvee (A\sb A\pb h\sb) \le h\sb
  \]
  and therefore (since maps are discretely ordered) $A\sb S\sb = h\sb
  P\sb$.  Similarly, $B\sb S\sb = k\sb P\sb$.  Thus, by the second
  half of weak \ka-tabularity, we can find a further covering family
  $Q$ such that $A S Q = h P Q$ and $B S Q = k P Q$.  Since $P Q$ is
  covering and $SQ$ is functional, this gives the desired
  factorization.

  Finally, to show that $\nTMap(\lA)$ has local \ka-pre-pullbacks, it
  suffices to show that for any $f,g$ there exist $A,B$ as above.  By
  the first half of weak \ka-tabularity, we can find $C,D$ with domain
  $V$, say, such that $g\pb f\sb = \bigvee (D\sb C\pb)$.  Then
  $(d_v)\sb (c_v)\pb \le g\pb f\sb$ for any $v$, whence $g\sb (d_v)\sb
  \le f\sb (c_v)\sb$, i.e.\ $g\sb D\sb \le f\sb C\sb$.  Since maps are
  discretely ordered, $g\sb D\sb = f\sb C\sb$, so by the second half
  of weak \ka-tabularity we can find a covering family $R\colon W \To
  V$ such that $g D R = f C R$.  And since $\bigvee (R\sb R\pb) = 1$,
  we have $g\pb f\sb = \bigvee (D\sb C\pb) = \bigvee (D\sb R\sb R\pb
  C\pb)$, so defining $A = C R$ and $B = D R$ suffices.
\end{proof}

\begin{lem}\label{thm:ktab-monic}
  If $\phi \colon x\rto x$ in a \ka-ary allegory satisfies $\phi \le
  1_x$, and $\phi = \bigvee (F G\o)$ is a weak \ka-tabulation of \phi,
  then $F=G$.
\end{lem}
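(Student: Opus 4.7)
The plan is to reduce the equality of arrays to an equality of individual maps, then invoke \autoref{thm:maps}\ref{item:m2} (discreteness of maps). Write $F = \{f_u\}_{u\in U}$ and $G = \{g_u\}_{u\in U}$, so that $\phi = \bigvee_{u\in U} f_u g_u\o$. It suffices to establish $f_u = g_u$ as maps for each individual $u$, since both inequalities $f_u \le g_u$ and $g_u \le f_u$ will collapse to equality by discreteness.

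From $\phi \le 1_x$ and the definition of $\phi$ as a join, each summand obeys $f_u g_u\o \le 1_x$. Because $g_u$ is a map, the unit of the adjunction $g_u \dashv g_u\o$ gives $1 \le g_u\o g_u$. Composing on the left with $f_u$ and then using the inequality for the summand yields
\[
f_u \;=\; f_u \cdot 1 \;\le\; f_u\, g_u\o\, g_u \;\le\; 1_x\cdot g_u \;=\; g_u.
\]
Applying $(-)\o$ to $\phi \le 1_x$ (and using $1_x\o = 1_x$) gives $\bigvee_u g_u f_u\o \le 1_x$, so the same calculation with $F$ and $G$ swapped delivers $g_u \le f_u$. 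Discreteness of maps then forces $f_u = g_u$ for each $u$, i.e.\ $F = G$.

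There is no real obstacle here; notably the modular law is not needed, only the unit of the adjunction $g_u\dashv g_u\o$ and the pointwise bound $f_u g_u\o \le 1_x$. The only thing worth flagging is that ``$F = G$'' should be read as equality of their underlying arrays of maps in the allegory, which is exactly what the above calculation delivers.
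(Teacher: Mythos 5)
Your proof is correct and takes essentially the same route as the paper: both use the unit $1\le g_u\o g_u$ of the adjunction to get $f_u \le f_u g_u\o g_u \le g_u$ (you via the summand bound $f_u g_u\o \le 1_x$, the paper via $\phi g_u \le g_u$), and then conclude by discreteness of maps. Your symmetric second inequality $g_u\le f_u$ is redundant, since \autoref{thm:maps}\ref{item:m2} already upgrades the single inequality $f_u\le g_u$ to an equality.
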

\begin{proof}
  Denote by $U$ the domain of $F$, $G$.
  Then for any $u\in U$ we have
  \[ f_u \le (f_u g_u\o g_u) \le \bigvee (F G\o g_u) = \phi g_u \le g_u \]
  and consequently $f_u=g_u$ since maps are discretely ordered.
\end{proof}

\begin{lem}\label{thm:eql-detect}
  If $f,g\colon x\toto y$ are tight maps in a weakly \ka-tabular \lA,
  and $E\colon U\To x$ is a \ka-ary cocone of tight maps such that $f
  E = g E$ and
  \[\bigvee \big(E\sb E\pb\big)
  = 1_x \meet (f\pb \meet g\pb)(f\sb \meet g\sb),
  \]
  then $E$ is a local \ka-pre-equalizer of $f$ and $g$ in
  $\nTMap(\lA)$.  Therefore, $\nTMap(\lA)$ has local
  \ka-pre-equalizers.
\end{lem}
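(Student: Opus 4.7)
The plan is to follow the pattern of \autoref{thm:pb-detect}, in two steps: first prove the characterization of $E$ as a local \ka-pre-equalizer, then construct such an $E$ by weak \ka-tabulation.

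For the characterization, suppose $h\colon v\to x$ is a tight map with $fh=gh$; writing $\alpha = \bigvee(E\sb E\pb) = 1_x \meet (f\pb \meet g\pb)(f\sb \meet g\sb)$, I would show $1_v \le \bigvee(h\pb E\sb E\pb h\sb) = h\pb\alpha h\sb$. Precomposition with $h\pb$ (a right adjoint) and postcomposition with $h\sb$ (by the modular law, since $h\sb h\pb \le 1$) both preserve meets; the assumption $fh=gh$ gives $f\sb h\sb = g\sb h\sb$, hence $(f\sb \meet g\sb)h\sb = f\sb h\sb$ and dually $h\pb(f\pb \meet g\pb) = h\pb f\pb$. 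Thus $h\pb\alpha h\sb = h\pb h\sb \meet (fh)\pb(fh)\sb \ge 1_v$. The remainder mirrors \autoref{thm:pb-detect}: weak \ka-tabularity writes $E\pb h\sb = \bigvee(F\sb G\pb)$ with $G\colon W\To v$ and functional $F\colon W\To U$ cocones of tight maps; \autoref{thm:entire-detect2} forces $G$ to be covering; from $F\sb G\pb \le E\pb h\sb$ together with $E\sb E\pb \le 1_x$ and the adjunctions, we extract $E\sb F\sb \le h\sb G\sb$, upgraded to equality by \autoref{thm:maps}\ref{item:m2}; and \autoref{thm:wkktab}\ref{item:relk3} yields a refining covering $Q$ with $EFQ = hGQ$ holding tightly, realizing $h$ as factoring locally through $E$.

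For existence, set $\alpha := 1_x \meet (f\pb \meet g\pb)(f\sb \meet g\sb) \le 1_x$ and apply weak \ka-tabularity; \autoref{thm:ktab-monic} then forces both legs of the tabulation to coincide, giving $\alpha = \bigvee(E_0\sb E_0\pb)$ for a single \ka-ary cocone $E_0\colon U_0\To x$ of tight maps. To produce $f\sb E_0\sb = g\sb E_0\sb$ pointwise, I would use the zig-zag identity to rewrite $e_{0,u}\sb \le \alpha e_{0,u}\sb$ and distribute $(f\sb \meet g\sb) e_{0,u}\sb = f\sb e_{0,u}\sb \meet g\sb e_{0,u}\sb$; the adjunction $g\sb \dashv g\pb$ then collapses $e_{0,u}\sb \le g\pb(f\sb e_{0,u}\sb \meet g\sb e_{0,u}\sb)$ to $g\sb e_{0,u}\sb \le f\sb e_{0,u}\sb$, and symmetry gives equality. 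Now \autoref{thm:wkktab}\ref{item:relk3} supplies a cover $P$ of $U_0$ with $fE_0 P = g E_0 P$ tightly; for $E := E_0 P$, the covering identity $\bigvee(P\sb P\pb) = 1_{U_0}$ ensures $\bigvee(E\sb E\pb) = \alpha$, so the first part applies and $E$ is the desired local \ka-pre-equalizer. I expect the main obstacle to be the meet-and-adjunction manipulation deducing $f\sb E_0\sb = g\sb E_0\sb$ from the mere \emph{inclusion} of $\alpha$ in $(f\pb \meet g\pb)(f\sb \meet g\sb)$, since composition with $f\sb$ on the left does not distribute over meets in general.
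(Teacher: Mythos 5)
Your proposal is correct and follows essentially the same route as the paper's proof: the same two-step structure (first the characterization via $1_v \le \bigvee(h\pb E\sb E\pb h\sb)$, \autoref{thm:entire-detect2}, mates, and the second half of weak \ka-tabularity; then existence via \autoref{thm:ktab-monic} applied to a weak \ka-tabulation of $1_x \meet (f\pb\meet g\pb)(f\sb\meet g\sb)$), differing only in that you derive the key inequalities from meet-preservation of composition with maps and their adjoints where the paper iterates the modular law --- an interchangeable choice, per \autoref{rmk:frobenius}. The ``obstacle'' you flag at the end is not one: your own argument only ever distributes meets across \emph{pre}composition with the map $(e_{0,u})\sb$ and uses monotonicity of $(f\pb\meet g\pb)\le g\pb$, so left-composition with $f\sb$ is never required to preserve meets (and the paper sidesteps the point entirely by passing directly from $\alpha \le f\pb g\sb$ to $f\sb(a_u)\sb \le g\sb(a_u)\sb$ by mates).
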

\begin{proof}
  Suppose $f h = g h$ for a tight map $h\colon v \to x$.  By weak
  \ka-tabularity, we have a \ka-ary cocone $P\colon W\To v$ and a
  functional array $S\colon W \To U$ such that $\bigvee(S\sb P\pb) =
  E\pb h\sb$.  Then using the modular law frequently, we have
  \begin{align*}
    \bigvee (h\pb E\sb E\pb h\sb)
    &= h\pb \left(\bigvee (E\sb E\pb) \right) h\sb\\
    &= h\pb \Big(1_x \meet \big(f\pb \meet g\pb\big)
    \big(f\sb \meet g\sb\big)\Big) h\sb\\
    &\ge h\pb \Big(h\sb h\pb \meet \big(f\pb \meet g\pb\big)
    \big(f\sb \meet g\sb\big)\Big) h\sb\\
    &\ge h\pb \Big(h\sb \meet \big(f\pb \meet g\pb\big)
    \big(f\sb \meet g\sb\big)h\sb\Big)\\
    &\ge 1_v \meet h\pb\big(f\pb \meet g\pb\big)
    \big(f\sb \meet g\sb\big)h\sb\\
    &\ge 1_v \meet h\pb\big(h\sb h\pb f\pb \meet g\pb\big)
    \big(f\sb h\sb h\pb \meet g\sb\big)h\sb\\
    &\ge 1_v \meet \big(h\pb f\pb \meet h\pb g\pb\big)
    \big(f\sb h\sb \meet g\sb h\sb\big)\\
    &= 1_v \meet \big(h\pb f\pb\big)\big(f\sb h\sb\big)\\
    &= 1_v.
  \end{align*}
  Therefore, by \autoref{thm:entire-detect2}, $P$ is a covering family.

  Now, $\bigvee(S\sb P\pb) = E\pb h\sb$ implies $(s_{w})\sb (p_w)\pb
  \le (e_{s(w)})\pb h\sb$ for all $w$, hence $E\sb S\sb \le h\sb
  P\sb$.  Since maps are discretely ordered, we have $E\sb S\sb = h\sb
  P\sb$, and so by the second half of weak \ka-tabularity we have a
  further covering family $Q$ such that $E S Q = h P Q$.  Since $P Q$
  is covering and $S Q$ is functional, this gives the desired
  factorization.

  Finally, to show that $\nTMap(\lA)$ has local \ka-pre-equalizers, it
  suffices to show that for any $f,g$ there exists an $E$ as above.
  By the first half of weak \ka-tabularity, we can write
  \[\bigvee \big(A\sb B\pb\big)
  = 1_x \meet (f\pb \meet g\pb)(f\sb \meet g\sb)
  \]
  for some $A$ and $B$, but then by \autoref{thm:ktab-monic}, we must
  have $A=B$.  Then for any $u$, we have $(a_u)\sb (a_u)\pb \le f\pb
  g\sb$, hence $f\sb (a_u)\sb \le g\sb (a_u)\sb$, whence $f\sb A\sb =
  g\sb A\sb$ since maps are discretely ordered.  As usual, we can then
  find a covering family $R$ such that $f A R = g A R$, and also
  $\bigvee (A\sb A\pb) = \bigvee (A\sb R\sb R\pb A\pb)$ since $R$ is
  covering; thus defining $E = A R$ suffices.
\end{proof}

Now we can prove the main theorem relating framed allegories to sites.

\begin{thm}\label{thm:relk-im}
  A \ka-ary framed allegory is of the form $\lrelk(\bC)$ for a locally
  \ka-ary site \bC if and only if it is weakly \ka-tabular.
\end{thm}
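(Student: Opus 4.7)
My plan is to prove the two directions separately. The \emph{only if} direction is already in hand: we have verified that $\lrelk(\bC)$ is weakly \ka-tabular for any locally \ka-ary site \bC. So the substance of the theorem lies in the \emph{if} direction, where, given a weakly \ka-tabular framed allegory \lA, I must produce a locally \ka-ary site \bC together with a framed allegory isomorphism $\lrelk(\bC) \cong \lA$.

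The candidate is $\bC \coloneqq \nTMap(\lA)$ equipped with the topology from \autoref{thm:ktab-top}, which makes it a weakly \ka-ary site. By \autoref{thm:pb-detect} and \autoref{thm:eql-detect}, this \bC has local \ka-pre-pullbacks and local \ka-pre-equalizers, and hence by \autoref{thm:pbeq-connlim} admits all finite connected local \ka-prelimits; thus \bC is a locally \ka-ary site in the sense of \autoref{rmk:locsite}. I will then define a morphism $\Theta\colon \lrelk(\bC)\to \lA$ of framed allegories which is the identity on objects and on tight maps (using that $\nTMap(\lrelk(\bC))=\bC=\nTMap(\lA)$), and which on loose morphisms sends an array $H\colon W\To\{x,y\}$ (representing a morphism $x\rto y$ in $\lrelk(\bC)$) to $\bigvee_{w\in W}(H|_y)_w\sb (H|_x)_w\pb$ in \lA.

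The verifications break into four parts. First, well-definedness on $\lle$-equivalence classes follows easily from the fact that a covering family $P$ satisfies $\bigvee(P\sb P\pb)=1$, so that if $H \lle H'$ via a covering $P$ and functional $K$, then each $(H|_y)_w\sb (H|_x)_w\pb$ equals $\bigvee_{v\in V|_w}((H'|_y)_{k(v)} k_v)\sb((H'|_x)_{k(v)} k_v)\pb \le \bigvee((H'|_y)\sb(H'|_x)\pb)$. Second, functoriality on loose morphisms: preservation of composition reduces to the formula $\bigvee(s\sb r\pb) = q\pb p\sb$ for local \ka-pre-pullbacks (\autoref{thm:pb-detect}) applied to the operation $P\times_v Q$ of \S\ref{arrayops}; preservation of meets, joins, involution, and identities is immediate from the definitions. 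Third, surjectivity on hom-posets follows directly from weak \ka-tabularity part (i). Finally, preservation of the framing is immediate from the construction.

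The main obstacle will be step (4): showing that $\Theta$ reflects the order on hom-posets, i.e., that $\Theta(H)\le\Theta(H')$ implies $H\lle H'$. Writing $f_w=(H|_x)_w,\ g_w=(H|_y)_w$ and similarly for primed variables, the hypothesis is $\bigvee_w g_w\sb f_w\pb \le \bigvee_{w'} g'_{w'}\sb f'_{w'}\pb$. Fixing $w\in W$, I plan to form the comparison array $\Lambda_w \colon w\Rto W'$ with entries $\lambda_{w,w'} \coloneqq f'_{w'}\pb f_w\sb \wedge g'_{w'}\pb g_w\sb$ (the morphism whose intended tabulators are the sought-after factorizations of $h_w$ through $h'_{w'}$). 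Weakly \ka-tabulating each $\lambda_{w,w'} = \bigvee_i (B_i)\sb (A_i)\pb$, the inequalities $(B_i)\sb (A_i)\pb \le f'_{w'}\pb f_w\sb$ and $(B_i)\sb (A_i)\pb \le g'_{w'}\pb g_w\sb$ yield, via the adjunctions $f'_{w'}\sb \dashv f'_{w'}\pb$ and discreteness of maps, the equalities $f'_{w'} B_i = f_w A_i$ and $g'_{w'} B_i = g_w A_i$ on the nose (after using weak \ka-tabularity (ii) to further cover). Then by \autoref{thm:entire-detect2}, the combined family $\{A_i\}$ is a covering family of $w$ exactly when $1_w \le \bigvee \Lambda_w\o\Lambda_w$, and the crux is to extract this last inequality from the hypothesis using the modular law, much as in the derivations at the end of \autoref{thm:pb-detect} and \autoref{thm:eql-detect}; the key trick, following the template of those proofs, is to intersect $1_w$ with the adjoint compositions $f_w\pb f_w\sb$ and $g_w\pb g_w\sb$ and then push through the hypothesis, producing a term already of the form $\bigvee_{w'}\lambda_{w,w'}\o\lambda_{w,w'}$. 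Once this is in place, the resulting $\{A_i\}_{w',i}\colon V_w \To w$ is a covering family of $w$, and the associated $\{B_i\}\colon V_w\To W'$ gives a functional array exhibiting $H \lle H'$, completing the proof.
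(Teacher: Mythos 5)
Your proposal is correct and follows essentially the same route as the paper's proof: take $\bC=\nTMap(\lA)$ with the topology of \autoref{thm:ktab-top}, use Lemmas \ref{thm:pb-detect}, \ref{thm:eql-detect} and \autoref{thm:pbeq-connlim} to see it is a locally \ka-ary site, and define the same identity-on-objects comparison sending an array $H$ to $\bigvee\big((H|_y)\sb(H|_x)\pb\big)$, with surjectivity on hom-posets from weak \ka-tabularity and order-reflection via the meet $f'_{w'}\pb f_w\sb \wedge g'_{w'}\pb g_w\sb$, a weak \ka-tabulation of it, the modular-law computation, and \autoref{thm:entire-detect2}. This is exactly the paper's argument, down to the choice of the comparison cone in the order-reflection step.
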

\begin{proof}
  Only ``if'' remains to be proven.  Thus, suppose \lA is weakly
  \ka-tabular, and define $\bC = \nTMap (\lA)$.  By
  Lemmas~\ref{thm:ktab-top}, \ref{thm:pb-detect},
  and~\ref{thm:eql-detect} and \autoref{thm:pbeq-connlim}, \bC is a
  locally \ka-ary site, so it remains to show $\lA \cong \lrelk
  (\bC)$.
  
  Define $\f\colon \lrelk (\bC)\to \lA$ to be the identity on objects
  and tight maps, and to take a \ka-sourced array $P\colon U \To
  \{x,y\}$ (regarded as a morphism $x\rto y$ in $\lrelk (\bC)$) to the
  morphism $\bigvee\left((P|_y)\sb (P|_x)\pb\right)\colon x\rto y$ in
  \lA.  Our construction of local \ka-pre-pullbacks in $\nTMap(\lA)$
  implies easily that this operation preserves composition, and it
  certainly preserves identities.  Moreover, if $Q\colon V \To U$ is a
  covering family, then by definition $\bigvee (Q\sb Q\pb)$ is the
  identity, so $\f(P) = \f(P Q)$.  This implies that $\f$ preserves
  the ordering on hom-posets, hence defines a 2-functor.

  Now we claim that $\f(P) \le \f(Q)$ implies $P\lle Q$ (i.e.\ $P\le
  Q$ in $\lrelk (\bC)$).  Write $U$ for the domain of $P$ and $V$ for
  the domain of $Q$, and define $F = P|_x$, $G = P|_y$, $H = Q|_x$,
  and $K = Q|_y$; thus $\f(P) \le \f(Q)$ means $\bigvee (G\sb F\pb)
  \le \bigvee (K\sb H\pb)$.  Fix some $u\in U$; then by assumption
  $(g_u)\sb (f_u)\pb \le \bigvee (K\sb H\pb)$, hence
  \[ 1_u \le \bigvee \Big( (g_u)\pb K\sb H\pb (f_u)\sb \Big).
  \]
  Let $\Phi\colon u\rto V$ denote the cone $H\pb (f_u)\sb \meet K\pb
  (g_u)\sb$.  By weak \ka-tabularity, we can write $\Phi = \bigvee
  (S\sb R\pb)$ for a \ka-ary cocone $R\colon W \To u$ and a functional
  array $S\colon W \To V$.  Moreover, using the modular law twice, we
  have
  \begin{align*}
    \bigvee (\Phi\o\Phi)
    &= \bigvee \Big(\Phi\o \big( H\pb (f_u)\sb \meet \Phi\big)\Big)\\
    &\ge \left( \bigvee (\Phi\o H\pb (f_u)\sb) \right) \meet 1_u\\
    &= \left( \bigvee \big((f_u)\pb H\sb \meet (g_u)\pb K\sb\big)
      H\pb (f_u)\sb \right) \meet 1_u\\
    &\ge 1_u \meet
    \left(\bigvee \big((g_u)\pb K\sb H\pb (f_u)\sb\big) \right)
    \meet 1_u\\
    &= 1_u.
  \end{align*}
  Thus by \autoref{thm:entire-detect2}, $R$ is covering.  By
  definition, we also have for any $w\in W$
  \[ (s_w)\sb (r_w)\pb \le (h_{s(w)})\pb (f_{r(w)})\sb.
  \]
  Hence $H\sb S\sb \le F\sb R\sb$, whence $H\sb S\sb = F\sb R\sb$ as
  maps are discretely ordered.  As usual, by the second half of weak
  \ka-tabularity we can then find a further covering family $R'$ such
  that $H S R' = F R R'$.  Repeating this process for $K$ and $G$, we
  obtain a covering family $R''$ such that $K S R'' = G R R''$.  By
  passing to a common refinement of $R'$ and $R''$, we can conclude
  that $P\lle Q$.

  Thus, \f is an embedding on hom-posets.  But by the first half of
  weak \ka-tabularity, it is also surjective on hom-posets; hence
  (since it is bijective on objects) it is an isomorphism of
  2-categories.  Since it evidently preserves the involution and the
  finite meets and \ka-ary joins in hom-posets, this makes it an
  isomorphism of \ka-ary framed allegories.
\end{proof}

We can also make this functorial.  Recall the notion of
\emph{pre-morphism of sites} from \autoref{rmk:premor}.  Since a
pre-morphism of sites preserves $\lle$ and all the prelimit
constructions used in defining $\lrelk$, it induces a \ka-ary framed
allegory functor $\lrelk(\f) \colon \lrelk(\bC) \to \lrelk(\bD)$.
This operation easily extends to a 2-functor $\lrelk\colon \LSITEk \to
\FALLk^{\mathrm{wt}}$, where $\FALLk^{\mathrm{wt}}$ denotes the full
sub-2-category of $\FALLk$ determined by the weakly \ka-tabular framed
\ka-ary allegories.

\begin{thm}\label{thm:relk-2ff}
  The 2-functor $\lrelk\colon \LSITEk\to \FALLk^{\mathrm{wt}}$ is a
  2-equivalence.
\end{thm}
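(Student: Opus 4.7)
The plan is to exhibit $\nTMap$ as a quasi-inverse 2-functor to $\lrelk$, leaning heavily on \autoref{thm:relk-im}. First I would verify that $\nTMap$ extends to a 2-functor $\FALLk^{\mathrm{wt}} \to \LSITEk$. On objects, \autoref{thm:relk-im} gives $\lA \cong \lrelk(\nTMap(\lA))$, which in particular shows $\nTMap(\lA)$ is a locally \ka-ary site. On 1-cells, given a framed allegory functor $\Phi\colon \lA \to \lB$, I would take $\nTMap(\Phi)$ to be its action on tight maps; to show this is a pre-morphism of sites, note that \autoref{thm:cov-detect}, \autoref{thm:pb-detect}, and \autoref{thm:eql-detect} characterize covering families, local \ka-pre-pullbacks, and local \ka-pre-equalizers purely in terms of the framed-allegory operations (involution, composition, meets, \ka-ary joins) that $\Phi$ preserves, so the characterization in \autoref{rmk:premor} applies. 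On 2-cells, a framed allegory transformation is, in particular, a natural transformation on tight maps.

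Next I would verify the two pseudonatural isomorphisms. The composite $\nTMap \circ \lrelk$ is the identity 2-functor on $\LSITEk$ essentially by definition, since $\nTMap(\lrelk(\bC)) = \bC$, a pre-morphism $\f$ restricts to itself on tight maps, and likewise for natural transformations. For the other composite $\lrelk \circ \nTMap$, the isomorphism on objects is given by \autoref{thm:relk-im}, and the required 2-naturality reduces to showing that, under this isomorphism, every framed allegory functor $\Phi\colon \lA \to \lB$ agrees with $\lrelk(\nTMap(\Phi))$, and similarly for transformations.

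The main technical point is the equality $\Phi = \lrelk(\nTMap(\Phi))$ (under the identification of \autoref{thm:relk-im}), which I would establish using weak \ka-tabularity: every morphism $\phi\colon x\rto y$ in $\lA$ can be written as $\phi = \bigvee(G\sb H\pb)$ with $G, H$ \ka-ary cocones of tight maps, so
\[ \Phi(\phi) = \bigvee\big(\Phi(G)\sb\, \Phi(H)\pb\big) = \bigvee\big(\nTMap(\Phi)(G)\sb\, \nTMap(\Phi)(H)\pb\big) = \lrelk(\nTMap(\Phi))(\phi), \]
since $\Phi$ preserves the involution and \ka-ary joins. For a framed allegory transformation $\alpha\colon \Phi \To \Psi$, the components $\alpha_x$ are tight maps and the oplax naturality on loose morphisms is a condition rather than extra data, so $\alpha$ is determined by its tight-map components. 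Conversely, any natural transformation on tight maps automatically satisfies oplax naturality on loose morphisms when transported through $\lrelk$: writing $\phi = \bigvee_u (g_u)\sb (f_u)\pb$, strict naturality gives $\alpha_x \circ \f(f_u) = \bg(f_u) \circ \alpha_u$, and combining with $\f(f_u)\sb \f(f_u)\pb \le 1$ and the adjunction $\bg(f_u)\sb \dashv \bg(f_u)\pb$ yields $(\alpha_u)\sb \f(f_u)\pb \le \bg(f_u)\pb (\alpha_x)\sb$, from which $\alpha_y \circ \lrelk(\f)(\phi) \le \lrelk(\bg)(\phi) \circ \alpha_x$ follows by joining over $u$.

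The hardest step will be bookkeeping the 2-cell side correctly, especially the oplax naturality calculation; but once the weak \ka-tabulation is expanded, the inequality is an entirely routine manipulation. No step requires a genuinely new idea beyond what is already packaged into \autoref{thm:cov-detect}--\autoref{thm:eql-detect} and \autoref{thm:relk-im}.
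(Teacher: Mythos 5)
Your proposal is correct and follows essentially the same route as the paper: both arguments rest on \autoref{thm:relk-im} for surjectivity on objects, on Lemmas \ref{thm:cov-detect}, \ref{thm:pb-detect} and \ref{thm:eql-detect} together with \autoref{rmk:premor} to see that the tight part of a framed allegory functor is a pre-morphism of sites, and on weak \ka-tabularity (every morphism being $\bigvee(F\sb G\pb)$) to see that a framed allegory functor and a framed allegory transformation are determined by their tight parts. The only cosmetic difference is that you package the argument as an explicit quasi-inverse $\nTMap$ while the paper shows the hom-functors are isomorphisms of categories; your explicit verification of oplax naturality for the induced transformation is a detail the paper leaves to the remark preceding the theorem, and it is carried out correctly.
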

\begin{proof}
  Let \bC and \bD be locally \ka-ary sites; we must show that
  \begin{equation}\label{eq:relk-2ff}
    \lrelk\colon \LSITEk(\bC,\bD) \to \FALLk(\lrelk(\bC),\lrelk(\bD))
  \end{equation}
  is an isomorphism of categories.  Firstly, since a natural
  transformation in \LSITEk is literally still present as the tight
  part of its induced framed allegory
  transformation,~\eqref{eq:relk-2ff} is faithful.  Secondly, since a
  framed allegory transformation is determined uniquely by its tight
  part, which is a natural transformation and hence automatically a
  2-cell in \LSITEk (since we have imposed no restrictions on
  these),~\eqref{eq:relk-2ff} is full.  Thirdly, since a pre-morphism
  of sites is literally still present as the tight part of its induced
  framed allegory functor,~\eqref{eq:relk-2ff} is injective on
  objects.  Thus, it remains to show it is surjective on objects.

  Let $\f\colon \lrelk(\bC) \to \lrelk(\bD)$ be a \ka-ary framed
  allegory functor.  Its underlying tight part is a functor
  \[ \bC = \nTMap(\lrelk(\bC)) \to \nTMap(\lrelk(\bD)) = \bD.
  \]
  By \autoref{rmk:premor} and Lemmas~\ref{thm:cov-detect},
  \ref{thm:pb-detect} and~\ref{thm:eql-detect}, this functor is a
  pre-morphism of sites; thus it remains only to show that the framed
  allegory functor it induces coincides with \f.  But every morphism
  of $\lrelk(\bC)$ can be written as a \ka-ary join $\bigvee(F\sb
  G\pb)$ for some \ka-ary cocones $F$ and $G$ in \bC.  Thus, since \f
  preserves $J$, $(-)\o$, and $\bigvee$, its action on all morphisms
  is determined by its action on tight maps.
\end{proof}

It now remains only to characterize those weakly \ka-tabular framed
allegories \lA for which $\nTMap(\lA)$ is a \ka-ary site.

\begin{defn}
  An allegory has \textbf{local maxima} if each hom-poset has a top
  element.
\end{defn}

\begin{lem}\label{thm:prod-detect}
  A \ka-sourced array $P\colon W \To \{u,v\}$ in a locally \ka-ary
  site \bC is a local \ka-pre-product of $u$ and $v$ if and only if
  $P|_u$ and $P|_v$ form a weak \ka-tabulation of a top element $u\rto
  v$ in $\lrelk(\bC)$.
\end{lem}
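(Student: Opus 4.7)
The plan is to reduce the statement to an unpacking of the definition of the hom-poset of $\lrelk(\bC)$ and a single elementary identity that we already saw in the proof that $\lrelk(\bC)$ is weakly \ka-tabular.

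First I would record the key identity: for any \ka-sourced array $P\colon W\To\{u,v\}$, the equality
\[
[P] \;=\; \bigvee\bigl((P|_v)\sb\,(P|_u)\pb\bigr)
\]
holds in the hom-poset $\lrelk(\bC)(u,v)$. This is exactly the computation already used to verify \ref{thm:wkktab}\ref{item:relk1} for $\lrelk(\bC)$, and it uses nothing beyond the definitions of composition, involution, and join in $\lrelk(\bC)$. In particular, $(P|_u, P|_v)$ is \emph{always} a weak \ka-tabulation of the element $[P]\in\lrelk(\bC)(u,v)$ that $P$ represents. Consequently, $(P|_u,P|_v)$ forms a weak \ka-tabulation of a top element of $\lrelk(\bC)(u,v)$ if and only if $[P]$ itself is such a top element (uniqueness of tops making the ``a'' harmless).

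Next I would translate ``$[P]$ is the top element'' back into \bC. By construction, $\lrelk(\bC)(u,v)$ is the poset reflection of \ka-sourced arrays $Z\To\{u,v\}$ under local refinement $\lle$, so $[P]$ is top precisely when every \ka-sourced array $A\colon Z\To\{u,v\}$ satisfies $A\lle P$. On the other hand, the discrete diagram on $\{u,v\}$ has no non-identity morphisms, so arrays over it are just arbitrary arrays into $\{u,v\}$; by the observation following \autoref{def:prelimit}, such an array locally refines $P$ iff each of its singleton sub-cones does. Since singleton-sourced arrays are \ka-sourced ($1\in\ka$), these two universal-factorization conditions coincide, and both amount exactly to the definition of $P$ being a local \ka-pre-product of $u$ and $v$.

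Combining these two paragraphs closes the loop: $P$ is a local \ka-pre-product iff $[P]$ is the top of $\lrelk(\bC)(u,v)$ iff $(P|_u,P|_v)$ is a weak \ka-tabulation of a top element. There is no real obstacle here; the only mild subtlety is making sure the hom-poset is built from \ka-sourced arrays while local \ka-pre-products a priori need to dominate \emph{all} arrays (including arbitrarily large ones), and this is handled by the singleton-decomposition remark after \autoref{def:prelimit}.
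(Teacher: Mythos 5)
Your proposal is correct and is essentially the paper's argument: the paper's own proof is just ``Essentially by definition,'' and your two reductions --- that $(P|_u,P|_v)$ always weakly $\ka$-tabulates the class $[P]=\bigvee\bigl((P|_v)\sb(P|_u)\pb\bigr)$, and that $[P]$ being top in the hom-poset of $\ka$-sourced arrays under $\lle$ coincides with the local $\ka$-pre-product condition via the cone-by-cone decomposition remark following \autoref{def:prelimit} --- are exactly the definitional unpacking intended. Your explicit handling of the size point (all cones versus $\ka$-sourced arrays, using $1\in\ka$) is a worthwhile clarification the paper leaves implicit.
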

\begin{proof}
  Essentially by definition.
\end{proof}

\begin{defn}
  A \textbf{weak \ka-unit} in a \ka-ary allegory \cA is a \ka-ary
  family of objects $U$ such that for every object $x$, there is a
  cone $\Phi\colon x\Rto U$ such that $1_x \le \bigvee (\Phi\o\Phi)$.
\end{defn}

\begin{lem}\label{thm:term-detect}
  A \ka-ary family $U$ of objects in a locally \ka-ary site is a local
  \ka-pre-terminal-object if and only if it is a weak \ka-unit in
  $\lrelk(\bC)$.
\end{lem}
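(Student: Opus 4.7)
The plan is to identify both conditions with a common diagrammatic form and then invoke Corollary~\ref{thm:entire-detect2}. That corollary says precisely that if $\Phi$ is tabulated as $\bigvee(F\sb G\pb)$ with $G$ a functional array of tight maps into $x$ and $F$ a functional array to $U$, then $1_x \le \bigvee(\Phi\o\Phi)$ holds if and only if $G$ is a covering family. This reduces the lemma to a translation exercise.

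First I would unpack the meaning of a local \ka-pre-terminal-object. A cone over the empty diagram with vertex $x$ is nothing more than $x$ itself, and the factorization ``$x$ factors locally through $U$'' collapses (both arrays having empty codomain) to the existence of a covering family $G\colon W \To x$ together with a functional array $F\colon W \To U$. So the first condition of the lemma says exactly that every object of \bC admits such a cover-plus-function pair into $U$.

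For the forward direction, given such a pair $(G, F)$ for each $x$, I would define $\Phi = \bigvee(F\sb G\pb) \colon x \Rto U$; then Corollary~\ref{thm:entire-detect2}, specialized to the singleton family $X = \{x\}$ (so that $\bigvee 1_X = 1_x$), yields $1_x \le \bigvee(\Phi\o\Phi)$, witnessing $U$ as a weak \ka-unit. For the reverse direction, given a cone $\Phi\colon x \Rto U$ with $1_x \le \bigvee(\Phi\o\Phi)$, I would apply weak \ka-tabularity componentwise to each $\phi_u$ and assemble the resulting tabulations by disjoint unions (as in the observation following Definition~\ref{thm:wkktab}) to express $\Phi = \bigvee(F\sb G\pb)$ with $G\colon W \To x$ a \ka-ary cocone of tight maps and $F\colon W \To U$ a functional array of tight maps. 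Corollary~\ref{thm:entire-detect2} then produces the desired covering family $G$, together with the functional array $F$ into $U$.

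I do not anticipate any substantive obstacle; the argument is essentially a translation between two languages via an already-established corollary. The only point requiring care is confirming that the componentwise tabulations of $\Phi$ can be assembled into a single weak tabulation with \ka-ary common source, which follows because $U$ is \ka-ary and arity classes are closed under indexed sums.
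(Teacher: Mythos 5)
Your proposal is correct and follows essentially the same route as the paper: both directions are the translation of ``local \ka-pre-terminal-object'' into the existence, for each $x$, of a covering family $G\colon W\To x$ and a functional array $F\colon W\To U$, with Corollary~\ref{thm:entire-detect2} (plus weak \ka-tabularity for the reverse direction) mediating between that data and the weak \ka-unit condition. The only point you leave implicit is the appeal to Lemma~\ref{thm:cov-detect} identifying ``$G$ covering in the allegory sense'' with ``$G$ covering in \bC'', which the paper cites explicitly but which is already established.
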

\begin{proof}
  By \autoref{thm:entire-detect2} and weak \ka-tabularity, $U$ is a
  weak \ka-unit in $\lrelk (\bC)$ if and only if for every $x$, there
  exists a \ka-ary cocone $G\colon W\To x$ and a functional array
  $F\colon W\To U$ in \bC such that $\bigvee (G\sb G\pb) = 1_x$.  But
  by \autoref{thm:cov-detect}, $\bigvee (G\sb G\pb) = 1_x$ just says
  that $G$ is covering, so this is precisely the definition of when
  $U$ is a local \ka-pre-terminal-object.
\end{proof}

\noindent
Let $\FALLk^{\mathrm{wt}\times}$ denote the locally full
sub-2-category of \FALLk determined by
\begin{itemize}[nolistsep]
\item weakly \ka-tabular \ka-ary framed allegories with local maxima
  and weak \ka-units, and
\item \ka-ary framed allegory functors that preserve local maxima and
  weak \ka-units.
\end{itemize}

\begin{thm}\label{thm:relk-eqv}
  The 2-functor $\lrelk$ is a 2-equivalence from \SITEk to
  $\FALLk^{\mathrm{wt}\times}$.
\end{thm}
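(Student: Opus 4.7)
The plan is to deduce this from the 2-equivalence $\lrelk\colon \LSITEk \xrightarrow{\sim} \FALLk^{\mathrm{wt}}$ of \autoref{thm:relk-2ff}, by showing that the extra conditions defining $\SITEk$ inside $\LSITEk$ and $\FALLk^{\mathrm{wt}\times}$ inside $\FALLk^{\mathrm{wt}}$ correspond precisely under $\lrelk$, both on objects and on morphisms.

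For objects: by \autoref{def:site2} together with \autoref{thm:prodeq}, a locally \ka-ary site $\bC$ is a (strongly) \ka-ary site if and only if it additionally admits local binary \ka-pre-products and a local \ka-pre-terminal-object (the other finite local \ka-prelimits being already supplied by the local \ka-pre-equalizers and local \ka-pre-pullbacks constructed in Lemmas~\ref{thm:pb-detect} and~\ref{thm:eql-detect}). Now \autoref{thm:prod-detect} translates the existence of local binary \ka-pre-products into the statement that $\lrelk(\bC)$ has a top element in each hom-poset $\lrelk(\bC)(u,v)$, i.e., local maxima; and \autoref{thm:term-detect} translates the existence of a local \ka-pre-terminal-object into the existence of a weak \ka-unit in $\lrelk(\bC)$. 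Hence $\bC \in \SITEk$ iff $\lrelk(\bC) \in \FALLk^{\mathrm{wt}\times}$.

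For morphisms: \autoref{thm:prodeq-pres} says that a pre-morphism of sites between \ka-ary sites is a morphism of sites iff it also preserves local binary \ka-pre-products and local \ka-pre-terminal-objects. By Lemmas~\ref{thm:prod-detect} and~\ref{thm:term-detect} once more, these preservation conditions are equivalent to $\lrelk(\f)$ preserving local maxima and weak \ka-units, which is precisely what is required for $\lrelk(\f)$ to be a 1-cell of $\FALLk^{\mathrm{wt}\times}$. Since $\SITEk \subseteq \LSITEk$ and $\FALLk^{\mathrm{wt}\times} \subseteq \FALLk^{\mathrm{wt}}$ are both locally full sub-2-categories, the 2-cells are automatically inherited.

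Combining these two correspondences with \autoref{thm:relk-2ff}, the restriction of $\lrelk$ lands in $\FALLk^{\mathrm{wt}\times}$, is locally an isomorphism of categories, and is (strictly) surjective on objects, hence a 2-equivalence. There is no substantive obstacle here: once \autoref{thm:relk-2ff} and the detection lemmas are in hand, the theorem is a pure restriction argument. The only point meriting care is verifying that ``local maxima'', a condition on each individual hom-poset $\lrelk(\bC)(u,v)$, correctly encodes the pointwise existence of local binary \ka-pre-products for every ordered pair $(u,v)$ via \autoref{thm:prod-detect}, rather than some weaker global condition.
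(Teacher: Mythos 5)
Your proof is correct and follows essentially the same route as the paper: the paper's own (very terse) proof likewise deduces the result from \autoref{thm:relk-2ff} by observing that Lemmas~\ref{thm:prod-detect} and~\ref{thm:term-detect} characterize when a locally \ka-ary site is \ka-ary in terms of local maxima and weak \ka-units, and that the morphisms of $\FALLk^{\mathrm{wt}\times}$ are defined precisely to match morphisms of sites. Your write-up simply makes explicit the object/morphism bookkeeping that the paper leaves implicit.
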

\begin{proof}
  Lemmas~\ref{thm:prod-detect} and~\ref{thm:term-detect} show that a
  locally \ka-ary site \bC is \ka-ary exactly when $\lrelk(\bC)$ lies
  in $\FALLk^{\mathrm{wt}\times}$.  Similarly, the morphisms in
  $\FALLk^{\mathrm{wt}\times}$ are chosen precisely to be those
  corresponding to morphisms of sites.
\end{proof}

\section{The exact completion}
\label{sec:exact-completion}

We now identify those framed allegories that correspond to \ka-ary
exact categories, using the notion of \emph{collage of a congruence},
an allegorical analogue of the \emph{quotients} of congruences we used
in \S\ref{sec:regularity}.  In this section we consider only
``unframed'' collages.

\begin{defn}\label{def:all-cong}
  A \textbf{\ka-ary congruence} in a \ka-ary allegory \cA is a \ka-ary
  family of objects $X$ in \cA with an array $\Phi\colon X \Rto X$
  such that $\bigvee 1_X \le \Phi$, $\bigvee(\Phi\Phi) \le \Phi$, and
  $\Phi\o \le \Phi$.
\end{defn}

It follows that the last two inequalities are actually equalities:
$\bigvee(\Phi\Phi) = \Phi$ and $\Phi\o = \Phi$.

\begin{example}\label{thm:cong-cong}
  A \ka-ary congruence in $\lrelk(\bC)$ is precisely an equivalence
  class of \ka-ary congruences in \bC as in \autoref{def:cong}.
\end{example}

\begin{lemma}
  Let $X$ and $Y$ be \ka-ary families of objects in a \ka-ary site.
  \begin{enumerate}[nolistsep]
  \item $\bigvee 1_X \colon X\Rto X$ is a \ka-ary congruence.
  \item For any \ka-ary congruence $\Phi\colon Y\Rto Y$ and functional
    array $F\colon X\To Y$ of maps, $F\o\Phi F\colon X\Rto X$ is a
    \ka-ary congruence.
  \item For any finite family $\{\Phi_i \colon X\Rto X\}_{1\le i\le
      n}$ of \ka-ary congruences, $\bigwedge_i \Phi_i$ is a \ka-ary
    congruence.\qed
  \end{enumerate}
\end{lemma}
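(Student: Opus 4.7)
The plan is to verify the three defining conditions of \autoref{def:all-cong} --- reflexivity $\bigvee 1_X \le \Phi$, transitivity $\bigvee(\Phi\Phi) \le \Phi$, and symmetry $\Phi\o \le \Phi$ --- for each of the three constructions. In every case these will come out by routine manipulation using the basic allegory axioms: composition and involution are monotone, involution reverses composition and preserves meets, and \ka-ary joins distribute over composition in each slot.

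For (i), writing $\Phi = \bigvee 1_X$, reflexivity holds trivially, symmetry follows from $1_x\o = 1_x$, and transitivity holds because in the matrix product $1_X \cdot 1_X$ the only nonempty slot is the diagonal one $1_x \cdot 1_x = 1_x$. For (iii), with $\Phi = \bigwedge_i \Phi_i$, reflexivity holds because each $\Phi_i$ sits above $\bigvee 1_X$ and meets are monotone, symmetry holds because $(-)\o$ is an order-isomorphism and therefore commutes with finite meets, and for transitivity the monotonicity of composition and join gives $\bigvee(\Phi\Phi) \le \bigvee(\Phi_i \Phi_i) \le \Phi_i$ for every $i$, so $\bigvee(\Phi\Phi) \le \bigwedge_i \Phi_i = \Phi$.

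Part (ii) is the only one with any real content. For reflexivity I would read off a diagonal entry of $F\o \Phi F$: it contains $f_x\o \cdot \phi_{f(x),f(x)} \cdot f_x \ge f_x\o \cdot 1_{f(x)} \cdot f_x = f_x\o f_x \ge 1_x$, using that each $f_x$ is a loose map with $f_x \dashv f_x\o$ and that $\bigvee 1_Y \le \Phi$; off-diagonal entries of $\bigvee 1_X$ are $0$, so nothing else needs checking. Symmetry follows by distributing $\o$ across the composite: $(F\o \Phi F)\o = F\o \Phi\o F \le F\o \Phi F$. For transitivity I would rearrange
\[ (F\o \Phi F)(F\o \Phi F) = F\o \Phi (F F\o) \Phi F \]
and invoke $FF\o \le 1_Y$; this is the one step that requires a small argument, and it holds because $F$ is functional of maps, so the $(y,y')$-entry of $FF\o$ vanishes unless $y=y'$, in which case it is a join of terms $f_x f_x\o \le 1_y$ over those $x$ with $f(x)=y$. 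Substituting then gives $F\o \Phi \Phi F \le F\o \Phi F$ using $\bigvee(\Phi\Phi) \le \Phi$. I do not anticipate any serious obstacle beyond keeping the matrix/array bookkeeping straight while moving joins past compositions.
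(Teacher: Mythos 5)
Your verification is correct, and it fills in exactly the routine check that the paper omits (the lemma is stated with its proof suppressed as immediate from \autoref{def:all-cong}): the only nontrivial points are the adjunction inequalities $1_x \le f_x\o f_x$ and $\bigvee(FF\o) \le \bigvee 1_Y$ for the functional array of maps $F$, together with distributivity of composition over \ka-ary joins, and you identify and use all of these correctly. No gaps.
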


This should be compared with \autoref{thm:basic-cong}.  In particular,
the kernel of a \ka-to-finite array $P\colon X \To U$ in a \ka-ary
site can be constructed as $\bigwedge_{u\in U} (P|_u)\pb (P|_u)\sb$ in
$\lrelk(\bC)$.

\begin{defn}
  Let $\Phi\colon X\Rto X$ be a \ka-ary congruence in a \ka-ary
  allegory \cA.  A \textbf{collage} of $\Phi$ is a lax colimit of
  $\Phi$ regarded as a diagram in the 2-category \cA.
\end{defn}

\noindent
Explicitly, a collage of $\Phi$ is an object $w$ with a cocone
$\Psi\colon X\Rto w$ such that
\begin{enumerate}[label=(\alph*),leftmargin=*,noitemsep]
\item $\bigvee(\Psi\Phi) \le \Psi$;\label{item:coll1}
\item For any other object $z$ and cocone $\Theta\colon X\Rto z$ such
  that $\bigvee (\Theta\Phi) \le \Theta$, there is a morphism
  $\chi\colon w\rto z$ such that $\chi \Psi = \Theta$;
  and\label{item:coll2}
\item Given morphisms $\chi,\chi'\colon w\rto z$ such that $\chi \Psi
  \le \chi' \Psi$, we have $\chi \le \chi'$.\label{item:coll3}
\end{enumerate}

\noindent
Note that~\ref{item:coll3} implies uniqueness of $\chi$
in~\ref{item:coll2}.  Also, since $\bigvee 1_X \le \Phi$, we have
$\Theta\le \bigvee (\Theta\Phi)$ for any $\Theta\colon X\Rto z$; hence
$\bigvee (\Theta\Phi) \le \Theta$ is equivalent to $\bigvee
(\Theta\Phi) = \Theta$.

\begin{remark}
  A unary congruence is precisely a \emph{symmetric monad} in the
  2-category \cA, while its collage is a \emph{Kleisli object}.
\end{remark}

The following lemma is essentially a special case of parts
of~\cite[Prop.~2.2]{street:cauchy-enr}.

\begin{lem}\label{thm:cauchydense}
  If $\Psi\colon X\Rto w$ is a collage of $\Phi$, then:
  \begin{enumerate}[leftmargin=*,nolistsep]
  \item Each component $\psi_x\colon x \rto w$ is a
    map.\label{item:cd1}
  \item We have $\Phi = \Psi\o\Psi$ and $1_w = \bigvee
    (\Psi\Psi\o)$.\label{item:cd4}
  \item The cone $\Psi\o\colon w \Rto X$ exhibits $w$ as the lax limit
    of $\Phi$.\label{item:cd2}
  \item A morphism $\xi\colon w\rto z$ is a map if and only if
    $\xi\Psi$ is composed of maps.\label{item:cd3}
\end{enumerate}
\end{lem}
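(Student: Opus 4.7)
The plan is to handle the four parts in order, relying on the universal property of the collage together with \autoref{thm:maps}.

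For (i), apply the existence part (b) of the collage universal property to the cocone $\Theta^{x_0}\colon X\Rto x_0$ whose $x$-component is $\phi_{x x_0}$, for each fixed $x_0\in X$. The hypothesis $\bigvee(\Theta^{x_0}\Phi)\le \Theta^{x_0}$ unpacks to $\bigvee_y \phi_{y x_0}\phi_{x y}\le \phi_{x x_0}$, which is immediate from $\Phi\Phi\le \Phi$. We therefore obtain a unique $\chi_{x_0}\colon w\rto x_0$ with $\chi_{x_0}\psi_x = \phi_{x x_0}$ for all $x$. Since $1_X\le \Phi$ forces $1_{x_0}\le \phi_{x_0 x_0}$, we get $\chi_{x_0}\psi_{x_0}\ge 1_{x_0}$. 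For the other triangle, the $x$-component of $\psi_{x_0}\chi_{x_0}\Psi$ equals $\psi_{x_0}\phi_{x x_0}$, which is a summand of $(\Psi\Phi)_x\le \psi_x$; hence $\psi_{x_0}\chi_{x_0}\Psi\le \Psi = 1_w\Psi$, and the inequality-uniqueness clause (c) yields $\psi_{x_0}\chi_{x_0}\le 1_w$. Thus $\psi_{x_0}\dashv \chi_{x_0}$, so $\psi_{x_0}$ is a map, and by \autoref{thm:maps}(i) we have $\chi_{x_0}=(\psi_{x_0})\o$.

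Part (ii) now follows. Combining $\chi_{x_0}=(\psi_{x_0})\o$ with $\chi_{x_0}\psi_x=\phi_{x x_0}$ gives $\Phi=\Psi\o\Psi$, and the inequality $\psi_{x_0}\chi_{x_0}\le 1_w$ for every $x_0$ yields $\bigvee(\Psi\Psi\o)\le 1_w$. For the reverse inequality, set $\mu=\bigvee(\Psi\Psi\o)$; the $x$-component of $\mu\Psi$ is $\bigvee_y \psi_y(\psi_y)\o\psi_x=\bigvee_y \psi_y \phi_{x y}$, which is $\le \psi_x$ by $\Psi\Phi\le\Psi$ and also $\ge \psi_x\phi_{x x}\ge \psi_x$ by $1_x\le\phi_{x x}$. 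So $\mu\Psi=\Psi=1_w\Psi$, and clause (c) applied in both directions gives $\mu=1_w$. For (iii), use the involution: given a cone $\Xi\colon z\Rto X$ with $\Phi\Xi\le \Xi$, the dual $\Xi\o\colon X\Rto z$ is a cocone with $\Xi\o\Phi\le \Xi\o$ (because $\Phi\o=\Phi$); the colimit universal property supplies a unique $\zeta'\colon w\rto z$ with $\zeta'\Psi=\Xi\o$, and $\zeta=(\zeta')\o$ is the desired factorization $\Psi\o\zeta=\Xi$, with uniqueness and the inequality clause transferring formally under $(-)\o$.

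For (iv), the ``only if'' direction is immediate from (i) because composites of maps are maps. For ``if'', suppose every $\xi\psi_x$ is a map. Using $1_w=\bigvee_x \psi_x(\psi_x)\o$ from (ii),
\[
\xi\xi\o=\xi\Bigl(\bigvee_x \psi_x(\psi_x)\o\Bigr)\xi\o=\bigvee_x(\xi\psi_x)(\xi\psi_x)\o\le 1_z.
\]
For the other triangle, the inequality $1_x\le(\xi\psi_x)\o(\xi\psi_x)=(\psi_x)\o\xi\o\xi\psi_x$ gives, on pre-composing with $\psi_x$ and post-composing with $(\psi_x)\o$ and then using $\psi_x(\psi_x)\o\le 1_w$ twice, that $\psi_x(\psi_x)\o\le \xi\o\xi$; joining over $x$ yields $1_w\le \xi\o\xi$. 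Hence $\xi\dashv \xi\o$, so $\xi$ is a map. The principal subtlety lies in step (i), where both the existence clause (b) and the inequality-uniqueness clause (c) of the lax-colimit universal property must be used in tandem to extract the right adjoint of $\psi_{x_0}$ and verify both triangle identities; once $\chi_{x_0}=(\psi_{x_0})\o$ is identified, parts (ii)--(iv) reduce to straightforward allegorical manipulation.
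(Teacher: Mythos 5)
Your proof is correct and follows essentially the same route as the paper: part (i) is obtained by applying the collage's universal property to the cocone $\Phi|_{x_0}$ to manufacture the right adjoint $\chi_{x_0}=(\psi_{x_0})\o$, and the remaining parts follow by the same allegorical manipulations. The only (harmless) variations are that in (ii) you verify $\mu\Psi=\Psi$ exactly before invoking the uniqueness clause, and in (iv) you derive $1_w\le\xi\o\xi$ from the already-proved identity $1_w=\bigvee(\Psi\Psi\o)$ rather than re-invoking the collage's universal property.
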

\begin{proof}
  For any $x\in X$, consider the cocone $\Phi|_x \colon X \Rto x$.
  Since $\Phi$ is a congruence, we have $\bigvee (\Phi|_x\circ \Phi)
  \le \Phi|_x$, hence a unique induced morphism $\chi_x\colon w\to x$
  such that $\chi_x \psi_{x'} = \phi_{x' x}$ for all $x'$.  In
  particular, $1_x \le \phi_{xx} = \chi_x \psi_x$.  On the other hand,
  since
  \[ \psi_x \chi_x \psi_{x'} = \psi_x \phi_{x' x} \le \psi_{x'} = 1_x \psi_{x'}
  \]
  we have $\psi_x \chi_x \le 1_w$.  Hence $\psi_x \dashv \chi_x$, so
  $\psi_x$ is a map and $\chi_x = \psi_x\o$; thus~\ref{item:cd1}
  holds.

  Now the equality $\psi_x\o\psi_{x'} = \chi_x \psi_{x'} = \phi_{x'
    x}$ is exactly the first part of~\ref{item:cd4}.  For the second
  part, since $\Psi$ is composed of maps, we have $\bigvee(\Psi\Psi\o)
  \le 1_w$.  But furthermore, for any $x$ we have $ \psi_x \le \psi_x
  \psi_x\o \psi_x$,
  hence $\psi_x \le \bigvee(\Psi\Psi\o)\psi_x$.  Since $w$ is a
  collage, this implies $1_w \le \bigvee(\Psi\Psi\o)$;
  thus~\ref{item:cd4} holds.

  \ref{item:cd2} is clear since $(-)\o$ is a contravariant
  equivalence, so it remains to prove~\ref{item:cd3}.  ``Only if''
  follows from~\ref{item:cd1}, so let $\xi\colon w\rto z$ be such that
  each $\xi\psi_x$ is a map.  Then
  \[ \psi_x \le \psi_x \psi_x\o \xi\o\xi \psi_x  \le \xi\o\xi \psi_x  \]
  for all $x$, hence since $\Psi$ is a collage we have $1_w \le \xi\o\xi$.
  On the other hand, using~\ref{item:cd4} we have
  $\xi\xi\o = \bigvee \xi \Psi \Psi\o \xi \le 1_z$,
  so $\xi$ is a map.
\end{proof}

\begin{lem}\label{thm:coll-char}
  A cocone $\Psi\colon X \Rto w$ is a collage of $\Phi\colon X \Rto X$
  if and only if $\Phi = \Psi\o\Psi$ and $1_w = \bigvee(\Psi\Psi\o)$.
\end{lem}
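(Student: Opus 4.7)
The ``only if'' direction is immediate: it is precisely parts (i) and (ii) of \autoref{thm:cauchydense}. So the work is entirely in ``if'', and the plan is to take the two equations $\Phi = \Psi\o\Psi$ and $1_w = \bigvee(\Psi\Psi\o)$ as hypotheses (together with $\Phi$ being a \ka-ary congruence, which is presupposed by the term ``collage of $\Phi$'') and verify the three clauses~\ref{item:coll1}--\ref{item:coll3} of the definition of collage directly.

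First I would establish that each component $\psi_x\colon x\rto w$ is a map. From $\bigvee 1_X \le \Phi = \Psi\o\Psi$ we read off $1_x \le \psi_x\o\psi_x$, and from $\bigvee_y(\psi_y\psi_y\o) = 1_w$ we read off $\psi_x\psi_x\o \le 1_w$; hence $\psi_x$ is a map with right adjoint $\psi_x\o$. Clause~\ref{item:coll1} is then immediate, since
\[ \bigvee(\Psi\Phi) = \bigvee(\Psi\Psi\o\Psi) \le 1_w\cdot \Psi = \Psi,\]
using $\psi_x\psi_x\o \le 1_w$ componentwise.

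For the universal property~\ref{item:coll2}, given $\Theta\colon X\Rto z$ with $\bigvee(\Theta\Phi)\le\Theta$ (equivalently, $\bigvee(\Theta\Phi) = \Theta$ since $\bigvee 1_X \le \Phi$), I would define $\chi = \bigvee(\Theta\Psi\o)\colon w\rto z$. Then using local distributivity of composition over \ka-ary joins and the equation $\Psi\o\Psi = \Phi$, the $x$-component of $\chi\Psi$ is
\[ (\chi\Psi)_x = \bigvee_y \theta_y\psi_y\o\psi_x = \bigvee_y \theta_y\phi_{yx} = \theta_x, \]
so $\chi\Psi = \Theta$. For clause~\ref{item:coll3}, if $\chi\Psi \le \chi'\Psi$ then post-composing with $\Psi\o$ and using $\bigvee(\Psi\Psi\o) = 1_w$ gives
\[ \chi = \chi\cdot\bigvee(\Psi\Psi\o) = \bigvee_x \chi\psi_x\psi_x\o \le \bigvee_x \chi'\psi_x\psi_x\o = \chi'\cdot\bigvee(\Psi\Psi\o) = \chi', \]
which also yields uniqueness of $\chi$ in~\ref{item:coll2}.

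The main obstacle is essentially bookkeeping rather than a genuine difficulty: one must be careful that the matrix-level equation $\Psi\o\Psi = \Phi$ interacts correctly with the joins $\bigvee$ when we compose arrays with single morphisms, so that identities like $(\chi\Psi)_x = \bigvee_y\theta_y\phi_{yx}$ are justified by the distributivity axiom~\ref{def:allegory}\ref{item:a2}. No appeal to the modular law is needed in this direction; everything reduces to the two equational hypotheses together with the fact that composition preserves \ka-ary joins on each side.
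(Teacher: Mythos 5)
Your proposal is correct and follows essentially the same route as the paper's proof: read off that each $\psi_x$ is a map from the two equations together with $\bigvee 1_X\le\Phi$, verify $\bigvee(\Psi\Phi)\le\Psi$, take $\chi=\bigvee(\Theta\Psi\o)$ for the existence clause, and use $1_w=\bigvee(\Psi\Psi\o)$ for the ordering/uniqueness clause. The only (immaterial) difference is that the paper checks clause~\ref{item:coll1} via the mate inequality $\psi_{x'}\phi_{x'x}\le\psi_x$ rather than componentwise via $\psi_x\psi_x\o\le 1_w$.
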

\begin{proof}
  ``Only if'' is just \autoref{thm:cauchydense}\ref{item:cd4}, so
  suppose $\Phi = \Psi\o\Psi$ and $1_w = \bigvee(\Psi\Psi\o)$.  Then
  $1_x \le \phi_{xx} \le \psi_x\o \psi_x$, while $\psi_x \psi_x\o \le
  1_w$, so $\Psi$ is composed of maps.  Thus, $\phi_{x'x}\le
  \psi_{x'}\o \psi_x$ implies $\psi_{x'} \phi_{x'x} \le \psi_x$, i.e.\
  $\bigvee(\Psi\Phi) \le \Psi$.
  Now for any $\Theta\colon X\Rto z$ with $\bigvee (\Theta\Phi) =
  \Theta$, let $\chi=\bigvee(\Theta\Psi\o)\colon w\rto z$; then
  \[\chi\Psi = \bigvee(\Theta\Psi\o \Psi)
  = \bigvee(\Theta\Phi) = \Theta.
  \]
  Finally, given $\chi,\chi'\colon w\rto z$ with $\chi\Psi \le
  \chi'\Psi$, we have
  \[ \chi = \bigvee (\chi \Psi\Psi\o)
  \le \bigvee(\chi' \Psi\Psi\o) = \chi'.
  \]
  Thus, $\Psi$ is a collage of $\Phi$.
\end{proof}

In particular, any \ka-ary allegory functor preserves collages of
\ka-ary congruences.  We are now ready to characterize properties of
\bC in terms of $\lrelk (\bC)$.

\begin{prop}\label{thm:subchordate1}
  For a locally \ka-ary site \bC, the following are equivalent.
  \begin{enumerate}[nolistsep]
  \item All covering families in \bC are epic.
  \item $\lrelk (\bC)$ is subchordate.
  \end{enumerate}
\end{prop}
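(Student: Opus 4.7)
The plan is to prove both directions using the detection lemma \autoref{thm:cov-detect} (which says $P$ is covering iff $\bigvee(P\sb P\pb) = 1_u$ in $\lrelk(\bC)$) together with the weak $\ka$-tabularity of $\lrelk(\bC)$, specifically the second clause of \autoref{thm:wkktab}, which was established in the proof following that definition.

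For the forward direction, assume all covering families in $\bC$ are epic, and let $f, g \colon x \to y$ be tight maps with $f\sb = g\sb$ in $\lrelk(\bC)$. Unfolding definitions, $f\sb = g\sb$ means $\{1_x, f\} \lle \{1_x, g\}$ as $\ka$-sourced arrays over $\{x,y\}$, and chasing through the proof that $\lrelk(\bC)$ satisfies \autoref{thm:wkktab}\ref{item:relk3} yields a covering family $P \colon V \To x$ with $fP = gP$. Since $P$ is assumed epic, we conclude $f = g$.

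For the converse, assume $\lrelk(\bC)$ is subchordate, let $P\colon V \To u$ be a covering family, and suppose $f, g\colon u \to y$ in $\bC$ satisfy $fP = gP$. By \autoref{thm:cov-detect} we have $\bigvee(P\sb P\pb) = 1_u$, so
\[
f\sb \;=\; f\sb \cdot \bigvee(P\sb P\pb) \;=\; \bigvee\bigl((fP)\sb P\pb\bigr) \;=\; \bigvee\bigl((gP)\sb P\pb\bigr) \;=\; g\sb,
\]
where we use that composition with $f\sb$ preserves $\ka$-ary joins and that $(fP)\sb = (gP)\sb$ as functional arrays of tight maps. Subchordateness then forces $f = g$, so $P$ is epic.

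No serious obstacle is expected; the only care needed is keeping the translation between the elementary covering-family language of $\bC$ and the allegorical language of $\lrelk(\bC)$ straight, which the two cited lemmas handle. The proof is essentially a packaging exercise.
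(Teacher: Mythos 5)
Your proof is correct and takes essentially the same route as the paper: the forward direction uses the second clause of weak $\ka$-tabularity of $\lrelk(\bC)$ to extract a covering family $P$ with $fP = gP$ and then invokes epimorphy, and the converse computes $f\sb = \bigvee(f\sb P\sb P\pb) = \bigvee(g\sb P\sb P\pb) = g\sb$ exactly as the paper does. No gaps.
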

\begin{proof}
  Since $\lrelk (\bC)$ is weakly \ka-tabular, if $f\sb = g\sb$ then $f
  P = g P$ for a covering family $P$.  Thus, if covering families are
  epic, $f=g$.  Conversely, if $f P = g P$,
  then 
  \[ f\sb = \bigvee f\sb P\sb P\pb = \bigvee g\sb P\sb P\pb = g\sb.
  \]
  Thus, if $\lrelk (\bC)$ is subchordate, then $f=g$; hence covering
  families are epic.
\end{proof}

\begin{prop}\label{thm:chordate1}
  For a locally \ka-ary site \bC, the following are equivalent.
  \begin{enumerate}[nolistsep]
  \item \bC is subcanonical.
  \item $\lrelk (\bC)$ is chordate.
  \end{enumerate}
\end{prop}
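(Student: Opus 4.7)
Since chordateness strengthens subchordateness, \autoref{thm:subchordate1} disposes of half the work in each direction: whenever either side of the claimed equivalence holds, covering families of \bC are epic and $\lrelk(\bC)$ is subchordate. What remains is to match the surjectivity of $J\colon\nTMap(\lrelk(\bC))\to\nMap(\lrelk(\bC))$ with effective-epicness of covering families.

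For the forward direction, given a loose map $\phi\colon x\lto y$, the plan is to weakly $\ka$-tabulate $\phi = \bigvee(F\sb G\pb)$ with $F\colon Z\To y$ and $G\colon Z\To x$ functional arrays of tight maps, and extract a tight factorization from the allegorical inequalities. By \autoref{thm:entire-detect2}, $\phi\o\phi \ge 1_x$ says exactly that $G$ is covering. To handle $\phi\phi\o\le 1_y$, I would tabulate each middle composite $(g_z)\pb (g_{z'})\sb$ using local $\ka$-pre-pullbacks via \autoref{thm:pb-detect}; the inequality then unfolds into the claim that whenever $a\colon t\to z$ and $b\colon t\to z'$ satisfy $g_z a = g_{z'} b$, the span $(f_z a, f_{z'} b)$ locally refines the diagonal $\{1_y,1_y\}$, which under subchordateness collapses to $f_z a = f_{z'} b$. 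This is exactly the kernel-compatibility of \autoref{def:eff-epi}, so effective-epicness of $G$ (provided by subcanonicality) produces a unique tight $f\colon x\to y$ with $fG = F$. The identification $\phi = f\sb$ then follows from the short calculation $\phi = \bigvee(f\sb G\sb G\pb) = f\sb\cdot \bigvee(G\sb G\pb) = f\sb$, invoking \autoref{thm:cov-detect} at the last step.

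For the converse, I would start with a covering family $P\colon V\To u$ and a cocone $Q\colon V\To x$ satisfying the compatibility in \autoref{def:eff-epi}, and form the candidate loose relation $\phi = \bigvee(Q\sb P\pb)\colon u\rto x$. The inequality $\phi\o\phi \ge 1_u$ will follow from its diagonal terms together with \autoref{thm:cov-detect}, while $\phi\phi\o \le 1_x$ is dual to the first direction: a local pre-pullback tabulation of each $(p_{v_1})\pb(p_{v_2})\sb$, combined with compatibility of $Q$, replaces each arising span $(q_{v_1}a_w, q_{v_2}b_w)$ by a single tight map $h$, whence $h\sb h\pb \le 1_x$ automatically. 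Chordateness then produces a unique tight $h\colon u\to x$ with $h\sb = \phi$, and unpacking the resulting local equivalence in $\lrelk(\bC)$ yields, for each $v$, a covering family $R_v\colon T_v \To v$ with $hp_v R_v = q_v R_v$; epicness of $R_v$ upgrades this to $hP = Q$, with uniqueness of $h$ coming from $P$ being epic.

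The main obstacle in both directions is the same translation step: converting the allegorical inequality $\phi\phi\o\le 1$ into the concrete kernel-compatibility condition on the site. The key is combining the local pre-pullback tabulation of \autoref{thm:pb-detect} with the elementary observation that a span $\{h,k\}$ refines the diagonal locally precisely when $h$ and $k$ agree after covering, which under subchordateness collapses to strict equality. Once this dictionary is in place, the rest is routine bookkeeping with functional arrays and repeated application of $(f\sb f\pb)\le 1$ and $f\pb f\sb\ge 1$ for tight maps.
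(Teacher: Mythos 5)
Your argument is correct, and its skeleton coincides with the paper's: in each direction one weakly \ka-tabulates the relevant loose morphism, identifies the domain leg as a covering family via \autoref{thm:entire-detect2} and \autoref{thm:cov-detect}, and then trades effective-epicness of that covering family against the existence and uniqueness of a tightening. Where you diverge is in the middle layer. The paper packages the kernel of the covering family as the congruence $P\pb P\sb$, observes that $P\sb$ is its collage (\autoref{thm:coll-char}), and lets \autoref{thm:ee-lwkcolim} (an effective-epic cocone is the colimit of any of its kernels) do the translation between the allegorical and site-theoretic sides; you instead unfold $\phi\phi\o\le 1_y$ pointwise into the compatibility condition of \autoref{def:eff-epi} and apply effective-epicness directly, and in the converse verify the two adjunction inequalities for $\bigvee(Q\sb P\pb)$ by hand rather than quoting the collage's universal property. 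This makes the proof more self-contained, at the cost of redoing the local-pre-pullback bookkeeping that those lemmas encapsulate. The one step to spell out with care is the passage from $\phi\phi\o\le 1_y$, which directly concerns only the legs of the chosen local \ka-pre-pullbacks of $g_z$ and $g_{z'}$, to the compatibility condition for \emph{arbitrary} $a,b$ with $g_z a = g_{z'} b$: one must factor $(a,b)$ locally through the pre-pullback and then cancel an epic covering family --- exactly the ``agree after covering'' observation you gesture at in your closing paragraph, so this is compression rather than a gap.
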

\begin{proof}
  Suppose \bC is subcanonical.  By \autoref{thm:subchordate1}, $\lrelk
  (\bC)$ is subchordate, so it will suffice to show that every loose
  map in $\lrelk (\bC)$ can be tightened.  Let $f\colon x\lto y$ be a
  loose map, and let $f = \bigvee (G\sb P\pb)$ be a weak
  \ka-tabulation of it, for cocones $G\colon U \To y$ and $P\colon
  U\To x$ in \bC.  Then $P$ is covering by
  \autoref{thm:entire-detect2}.  Define $\Phi = P\pb P\sb$; then
  $\Phi$ is a congruence, and $P\sb$ is a collage of $\Phi$ by
  \autoref{thm:coll-char}.

  Now $f = \bigvee (G\sb P\pb)$ implies $(g_u)\sb p_u\pb \le f$ for
  all $u\in U$, hence $(g_u)\sb \le f (p_u)\sb$ and so $G\sb = f P\sb$
  since maps are discretely ordered.  Thus, we have
  \[ \bigvee (G\sb \Phi) = \bigvee (G\sb P\pb P\sb) = f P\sb = G\sb
  \]
  so $G\sb$ is the cocone which induces $f$ by the universal property
  of the collage $P\sb$.

  Choose weak \ka-tabulations to obtain functional arrays $A$ and $B$
  such that $\Phi = \bigvee (A\sb B\pb)$.  Then by construction of
  $\lrelk (\bC)$, $A$ and $B$ form a kernel of $P$ in \bC, and by the
  above we have $\bigvee (G\sb A\sb B\pb) \le G\sb$, hence $G\sb A\sb
  = G\sb B\sb$ since maps are discretely ordered.  Since covering
  families in \bC are epic, by \autoref{thm:subchordate1} we have $G A
  = G B$.  And since \bC is subcanonical, by \autoref{thm:ee-lwkcolim}
  $P$ is a colimit of $A$ and $B$.  Thus, there is a unique map
  $h\colon x \to y$ in \bC with $h P = G$.  Since $f$ is unique such
  that $f P\sb = G\sb$, we have $f = h\sb$.  Thus, $\lrelk (\bC)$ is
  chordate.
  
  Conversely, suppose $\lrelk (\bC)$ is chordate, and let $P\colon
  U\To x$ be a covering family in \bC.  As before, define $\Phi = P\pb
  P\sb$ and $\Phi = \bigvee (A\sb B\pb)$; then $A$ and $B$ are a
  kernel of $P$, and $P\sb$ is a collage of $\Phi$.  It will suffice
  to show that $P$ is a colimit of $A$ and $B$, so let $G\colon U\To
  y$ satisfy $G A = G B$.  Then $G\sb \ge \bigvee (G\sb B\sb B\pb) =
  \bigvee (G\sb A\sb B\pb) = \bigvee (G\sb \Phi)$, so $G$ induces a
  unique (loose) map $f\colon x\lto y$ such that $f P\sb = G\sb$.
  Since $\lrelk (\bC)$ is chordate, $f$ admits a unique tightening,
  which says precisely that $G$ factors uniquely through $P$.  Thus,
  $P$ is effective-epic, so \bC is subcanonical.
\end{proof}

\begin{thm}\label{thm:alleg-exact}
  For a \ka-ary site \bC, the following are equivalent.
  \begin{enumerate}[nolistsep]
  \item \bC is \ka-ary exact with its \ka-canonical
    topology.\label{item:ae1}
  \item $\lrelk(\bC)$ is chordate and has collages of \ka-ary
    congruences.\label{item:ae2}
  \end{enumerate}
\end{thm}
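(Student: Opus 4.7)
The plan is to reduce the equivalence to condition \ref{item:ex1} of \autoref{thm:exact}, which characterizes \ka-ary exactness in site-theoretic terms as ``subcanonical \ka-ary site in which every \ka-ary congruence is a kernel of some covering cocone.''  The first clause of \ref{item:ex1} matches up with chordateness of $\lrelk(\bC)$ via \autoref{thm:chordate1}, so it remains to show that, for subcanonical \bC, the remaining condition (every \ka-ary congruence in \bC is a kernel of a covering cocone) corresponds precisely to existence of collages for all \ka-ary congruences in $\lrelk(\bC)$.

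The crucial observation is that \autoref{thm:coll-char} together with \autoref{thm:cov-detect} identifies a collage of $\Phi\colon X\Rto X$ with a covering cocone in \bC whose kernel is $\Phi$.  By \autoref{thm:cong-cong}, a \ka-ary congruence in $\lrelk(\bC)$ is the same data as an equivalence class of \ka-ary congruences on a family $X$ in \bC.  Given such a $\Phi$, \autoref{thm:coll-char} says that a cocone $\Psi\colon X \Rto w$ in $\lrelk(\bC)$ is a collage iff $\Phi = \Psi\o\Psi$ and $1_w = \bigvee(\Psi\Psi\o)$.  By \autoref{thm:cauchydense}\ref{item:cd1}, the components of any collage $\Psi$ are loose maps, and under chordateness these lift uniquely to tight maps, so $\Psi$ corresponds to an honest cocone $X\To w$ in \bC.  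The equation $1_w = \bigvee(\Psi\Psi\o)$ then says exactly that this cocone is covering (\autoref{thm:cov-detect}), while the equation $\Phi = \Psi\o\Psi$ unpacks via the construction of kernels in $\lrelk(\bC)$ as arrays $(P|_u)\pb(P|_u)\sb$ to assert that $\Phi$ is a kernel of this covering cocone.

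With this dictionary in hand, both implications are immediate.  For \ref{item:ae1}$\To$\ref{item:ae2}: subcanonicality gives chordateness by \autoref{thm:chordate1}, and given any \ka-ary congruence $\Phi$ in $\lrelk(\bC)$, pick a representative in \bC, find a covering cocone $\Psi$ of which it is a kernel (using \ref{item:ex1} of \autoref{thm:exact}), and conclude via \autoref{thm:coll-char} that $\Psi\sb$ is a collage of $\Phi$.  For \ref{item:ae2}$\To$\ref{item:ae1}: chordateness gives subcanonicality by \autoref{thm:chordate1}, and given any \ka-ary congruence in \bC, its collage in $\lrelk(\bC)$ (which exists by hypothesis) yields, by the preceding paragraph, a covering cocone in \bC whose kernel is the given congruence; hence \bC satisfies \autoref{thm:exact}\ref{item:ex1}.

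I do not anticipate any serious obstacle: the entire argument is a matter of carefully translating the allegorical notion of collage into site-theoretic language using the already-established results \autoref{thm:coll-char}, \autoref{thm:cauchydense}, \autoref{thm:cov-detect}, and \autoref{thm:cong-cong}.  The only minor point requiring care is that the correspondence between congruences in $\lrelk(\bC)$ and congruences in \bC is only up to equivalence (\autoref{thm:eqv-cong}), but this causes no trouble since equivalent congruences have the same kernels and the same collages.
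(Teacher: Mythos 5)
Your proposal is correct and follows essentially the same route as the paper: reduce \ref{item:ae1} to \autoref{thm:exact}\ref{item:ex1}, translate subcanonicality into chordateness via \autoref{thm:chordate1}, and translate ``every \ka-ary congruence is a kernel of a covering cocone'' into existence of collages via \autoref{thm:coll-char} and the identification of congruences and kernels in \bC with those in $\lrelk(\bC)$. The paper states this translation in one sentence; your unpacking of it (using \autoref{thm:cov-detect}, \autoref{thm:cauchydense}, and the expression of kernels as $P\pb P\sb$) is exactly the intended argument.
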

\begin{proof}
  By \autoref{thm:exact}\ref{item:ex1}, condition~\ref{item:ae1} is
  equivalent to \bC being subcanonical and any \ka-ary congruence
  being a kernel of some covering cocone.  By \autoref{thm:chordate1},
  the former is equivalent to $\lrelk(\bC)$ being chordate.  And by
  \autoref{thm:coll-char} and the identification of congruences and
  kernels in \bC and $\lrelk(\bC)$, the latter is equivalent to
  $\lrelk(\bC)$ having collages of \ka-ary congruences.
\end{proof}

Thus, to show that \EXk is reflective in \SITEk, by
\autoref{thm:relk-eqv} it suffices to show that framed allegories
satisfying \ref{thm:alleg-exact}\ref{item:ae2} are reflective in
$\FALLk^{\mathrm{wt}\times}$.  We know that chordate framed allegories
are reflective in \FALLk, and the chordate reflection clearly takes
$\FALLk^{\mathrm{wt}\times}$ into itself, so it remains to freely add
collages.  This can be done very explicitly, but we prefer a more
abstract approach, for reasons that will become clear in
\S\ref{sec:frac}.

Let \SUPk denote the category of moderate \textbf{\ka-cocomplete
  posets} (those with \ka-ary joins).  There is a tensor product on
\SUPk which represents ``bilinear'' maps: functions that preserve
\ka-ary joins in each variable.  A \SUPk-enriched category is
precisely a 2-category whose hom-categories are posets with \ka-ary
joins that are preserved by composition.  In particular, every \ka-ary
allegory is such.  We use the same notation and terminology for
morphisms and maps in \SUPk-categories as we do in allegories.

\begin{remark}
  Of course, \SUPk is very large, so this is an exception to our
  general rule that all categories are moderate.  We will only
  consider \SUPk-enriched categories that are moderate (but not
  necessarily, of course, locally small).
\end{remark}

If we omit the condition $\Phi\o \le \Phi$ from the definition of a
\ka-ary congruence, we obtain a notion which makes sense in any
\SUPk-category.

\begin{defn}\label{def:dircong}
  A \textbf{\ka-ary directed congruence} in a \SUPk-category is a
  \ka-ary family of objects $X$ with an array $\Phi\colon X \Rto X$
  such that $\bigvee 1_X \le \Phi$ and $\bigvee(\Phi\Phi) \le \Phi$.
\end{defn}

\begin{remark}
  In~\cite{walters:sheaves-cauchy-1,walters:sheaves-cauchy-2,
    bcsw:variation-enr,ckw:axiom-mod}
  directed congruences in \cA are called \emph{\cA-categories}, but we
  prefer a different terminology to minimize confusion with (for
  instance) \SUPk-categories.
\end{remark}

A \textbf{collage} of a directed congruence is, as before, its lax
colimit; this can be expressed as a certain \SUPk-weighted colimit.

\begin{lem}\label{thm:supk-coll-char}
  If $\Phi\colon X\Rto X$ is a \ka-ary directed congruence in a
  \SUPk-category, then a cocone $\Psi\colon X \Rto w$ is a collage of
  $\Phi$ if and only if $\Psi$ is composed of maps and for
  $\Psi^*\colon w \Rto X$ its cone of right adjoints, we have $\Phi =
  \Psi^* \Psi$ and $1_w = \bigvee(\Psi\Psi^*)$.
\end{lem}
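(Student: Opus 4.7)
My plan is to adapt the proof of \autoref{thm:coll-char} to the \SUPk-enriched setting. The only substantive change is that in the absence of an involution, the role played by $\Psi\o$ must be taken by the cone $\Psi^*$ of right adjoints; in particular, for the forward direction we first have to exhibit each $\psi_x$ as a map, mimicking \autoref{thm:cauchydense}\ref{item:cd1}. I anticipate no serious obstacle once this extraction is done, since every subsequent step translates verbatim.

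For the forward direction, suppose $\Psi$ is a collage of $\Phi$. Fix $x\in X$; the cocone $\Phi|_x\colon X\Rto x$ satisfies $\bigvee(\Phi|_x\circ\Phi)\le\Phi|_x$, this being the family over $x'$ of $(x',x)$-entries of $\bigvee(\Phi\Phi)\le\Phi$. The collage universal property supplies a unique $\chi_x\colon w\rto x$ with $\chi_x\psi_{x'}=\phi_{x'x}$ for all $x'$. Setting $x'=x$ gives $1_x\le\chi_x\psi_x$. Conversely, for each $x'$ we compute
\[ \psi_x\chi_x\psi_{x'}=\psi_x\phi_{x'x}\le\psi_{x'}=1_w\psi_{x'}, \]
the middle inequality being the $(x',w)$-entry of $\bigvee(\Psi\Phi)\le\Psi$; clause (c) of the collage definition then yields $\psi_x\chi_x\le 1_w$. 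Thus $\psi_x$ is a map with right adjoint $\psi_x^*\coloneqq\chi_x$, and these assemble into a cone $\Psi^*\colon w\Rto X$. The equality $\Phi=\Psi^*\Psi$ is now just the identity $\phi_{x'x}=\psi_x^*\psi_{x'}$. The inequality $\bigvee(\Psi\Psi^*)\le 1_w$ is immediate from the adjunctions $\psi_x\dashv\psi_x^*$; for the reverse, the computation $\psi_x\le\psi_x\psi_x^*\psi_x\le\bigl(\bigvee_{x'}\psi_{x'}\psi_{x'}^*\bigr)\psi_x$ gives $1_w\Psi\le\bigvee(\Psi\Psi^*)\cdot\Psi$, so clause (c) applied with $\chi=1_w$ and $\chi'=\bigvee(\Psi\Psi^*)$ yields $1_w\le\bigvee(\Psi\Psi^*)$.

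For the reverse direction, suppose $\Psi$ is composed of maps with right adjoints $\Psi^*$, that $\Phi=\Psi^*\Psi$, and that $1_w=\bigvee(\Psi\Psi^*)$. The $(x,w)$-entry of $\bigvee(\Psi\Phi)$ is $\bigvee_{x'}\psi_{x'}\phi_{xx'}=\bigvee_{x'}\psi_{x'}\psi_{x'}^*\psi_x=1_w\psi_x=\psi_x$, so $\bigvee(\Psi\Phi)=\Psi$. Given any cocone $\Theta\colon X\Rto z$ with $\bigvee(\Theta\Phi)=\Theta$, set $\chi\coloneqq\bigvee(\Theta\Psi^*)$; the analogous computation shows $\chi\psi_x=\bigvee_{x'}\theta_{x'}\phi_{xx'}=\theta_x$, i.e.\ $\chi\Psi=\Theta$. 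Finally, if $\chi\Psi\le\chi'\Psi$, then
\[ \chi=\chi\cdot 1_w=\chi\bigvee_x\psi_x\psi_x^*=\bigvee_x(\chi\psi_x)\psi_x^*\le\bigvee_x(\chi'\psi_x)\psi_x^*=\chi', \]
so clause (c) holds and $\Psi$ is a collage of $\Phi$.
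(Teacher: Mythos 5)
Your proof is correct and is exactly the argument the paper intends: the paper's own proof consists of the single remark that the proofs of Lemma~\ref{thm:cauchydense}(i) and Lemma~\ref{thm:coll-char} apply with trivial modifications, and you have carried out precisely those modifications (replacing $\Psi\o$ by the cone of right adjoints extracted via the collage universal property). No issues.
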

\begin{proof}
  The proofs of Lemmas~\ref{thm:cauchydense}\ref{item:cd1} and
  \ref{thm:coll-char} apply with trivial modifications.
\end{proof}

\begin{cor}
  Any \SUPk-functor preserves collages of directed congruences.\qed
\end{cor}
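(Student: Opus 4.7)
The plan is to apply the characterization from Lemma~\ref{thm:supk-coll-char}, which reduces collage-preservation to preservation of three purely algebraic conditions. Concretely, let $F\colon \cA \to \cB$ be a \SUPk-functor and let $\Psi\colon X \Rto w$ be a collage of a \ka-ary directed congruence $\Phi\colon X\Rto X$ in \cA. I want to show that $F\Psi\colon FX\Rto Fw$ is a collage of $F\Phi$, which by Lemma~\ref{thm:supk-coll-char} amounts to verifying:
\begin{enumerate}[nolistsep]
\item $F\Psi$ is composed of maps;
\item $F\Phi = (F\Psi)^*(F\Psi)$; and
\item $1_{Fw} = \bigvee\bigl(F\Psi\cdot (F\Psi)^*\bigr)$.
\end{enumerate}

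The first step is the standard observation that any 2-functor preserves adjunctions, since the unit and counit inequalities translate directly; thus each $F\psi_x$ has right adjoint $F(\psi_x^*)$, so $F\Psi$ is composed of maps and $(F\Psi)^* = F(\Psi^*)$. The second step then follows immediately from $\Phi = \Psi^*\Psi$ by functoriality: $F\Phi = F(\Psi^*\Psi) = F(\Psi^*)\cdot F(\Psi) = (F\Psi)^*\,(F\Psi)$. The third step uses that $F$ is \SUPk-enriched, hence preserves the \ka-ary join in each hom-poset; combined with preservation of identities and composition, it gives
\[
  1_{Fw} = F(1_w) = F\!\left(\bigvee(\Psi\Psi^*)\right)
    = \bigvee F(\Psi\Psi^*)
    = \bigvee\bigl(F\Psi\cdot (F\Psi)^*\bigr).
\]

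There is no real obstacle: the entire content is that a \SUPk-functor preserves composition, identities, \ka-ary joins, and (automatically) adjunctions between maps, which are exactly the data appearing in the collage characterization of Lemma~\ref{thm:supk-coll-char}. The only thing to be careful about is noting that $(F\Psi)^* = F(\Psi^*)$, but this is a formal consequence of 2-functoriality applied to the adjunction $\psi_x \dashv \psi_x^*$, and requires no computation.
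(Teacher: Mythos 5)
Your proof is correct and is exactly the argument the paper intends: the corollary is stated with an immediate \qed because \autoref{thm:supk-coll-char} characterizes collages by conditions (being composed of maps, $\Phi=\Psi^*\Psi$, $1_w=\bigvee(\Psi\Psi^*)$) that any \SUPk-functor visibly preserves, since 2-functors preserve adjunctions and \SUPk-functors preserve composition, identities, and \ka-ary joins. The only detail you leave tacit is that $F\Phi$ is again a directed congruence (needed as a hypothesis of the lemma), but this is immediate from preservation of joins and order.
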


\noindent
That is, collages of directed congruences are \textbf{absolute
  colimits} for \SUPk (see~\cite{street:absolute}).

\begin{cor}
  If $\Psi\colon X \Rto w$ is a collage of a directed congruence
  $\Phi$ in a \SUPk-category, then $\Psi^*\colon w \Rto X$ is a lax
  limit of $\Phi$.
\end{cor}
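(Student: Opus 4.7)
The plan is to verify the universal property of a lax limit directly, using the two absolute-colimit identities $\Phi = \Psi^*\Psi$ and $\bigvee(\Psi\Psi^*) = 1_w$ provided by \autoref{thm:supk-coll-char}. The entire argument is formally dual to the proof of \autoref{thm:coll-char}, exchanging the roles of $\Psi$ and $\Psi^*$ throughout.

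The first step is to confirm that $\Psi^* \colon w \Rto X$ is itself a lax cone over $\Phi$, i.e., $\Phi\Psi^* = \Psi^*$. The inequality $\Phi\Psi^* \le \Psi^*$ reduces via $\Phi = \Psi^*\Psi$ to $\Psi^*\Psi\Psi^* \le \Psi^*$, which holds because $\Psi$ consists of maps, so $\Psi\Psi^* \le 1_w$ pointwise. The reverse inequality is immediate from $\bigvee 1_X \le \Phi$. The same observation shows that for any candidate lax cone $\Theta \colon z \Rto X$, the defining hypothesis $\Phi\Theta \le \Theta$ automatically upgrades to $\Phi\Theta = \Theta$.

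The second step is to exhibit the factorization through $\Psi^*$. Given such a $\Theta$, I take $\chi := \bigvee(\Psi\Theta) \colon z \Rto w$ as the candidate morphism; a short calculation gives $\Psi^*\chi = \Psi^*\Psi\Theta = \Phi\Theta = \Theta$. For the 2-categorical part of the universal property, suppose $\chi,\chi' \colon z \Rto w$ satisfy $\Psi^*\chi \le \Psi^*\chi'$. Applying $\Psi$ on the left and using $\bigvee(\Psi\Psi^*) = 1_w$ together with distributivity of composition over joins yields
\[ \chi = \bigvee(\Psi\Psi^*)\,\chi = \bigvee\!\big(\Psi\,(\Psi^*\chi)\big) \le \bigvee\!\big(\Psi\,(\Psi^*\chi')\big) = \chi'. \]
The special case $\Psi^*\chi = \Psi^*\chi'$ delivers the uniqueness of the factorization.

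No substantive obstacle is expected: every step is formal manipulation in a \SUPk-category, invoking only that composition preserves \ka-ary joins in each variable. The only point of care is to keep the matrix-composition directions straight, but the overall structure mirrors \autoref{thm:coll-char} precisely — there one recovers morphisms out of $w$ as $\bigvee(\Theta\Psi^*)$ from a cocone $\Theta$ under $\Phi$, and here one recovers morphisms into $w$ as $\bigvee(\Psi\Theta)$ from a cone $\Theta$ over $\Phi$.
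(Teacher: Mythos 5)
Your proof is correct and is essentially the paper's own argument made explicit: the paper's proof simply says to apply one direction of \autoref{thm:supk-coll-char} in \cA (to obtain $\Phi = \Psi^*\Psi$ and $1_w = \bigvee(\Psi\Psi^*)$) and the other direction in $\cA\op$ (where these self-dual identities exhibit $\Psi^*$ as a collage, i.e.\ a lax limit in \cA). Your direct verification of the universal property is precisely that dual argument written out by hand, so the content is the same.
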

\begin{proof}
  Apply one direction of \autoref{thm:supk-coll-char} in \cA and the
  other direction in $\cA\op$.
\end{proof}

\begin{cor}\label{thm:coll-mor}
  If $u$ and $v$ are collages of directed congruences $\Phi\colon
  X\Rto X$ to $\Theta\colon Y\Rto Y$, respectively, then $\cA(u,v)$ is
  isomorphic to the poset of arrays $\Psi\colon X\Rto Y$ such that
  $\bigvee (\Theta\Psi) \le \Psi$ and $\bigvee (\Psi\Phi) \le \Psi$,
  with pointwise ordering.  If \cA is additionally a \ka-ary allegory,
  then the involution $\cA(u,v) \to \cA(v,u)$ is identified with the
  induced involution taking arrays $X\Rto Y$ to arrays $Y\Rto X$.  And
  if $w$ is the collage of a third directed congruence $\Xi\colon
  Z\Rto Z$, then the composite of $\Psi\colon X\Rto Y$ representing a
  morphism $u\rto v$ with $\Omega\colon Y\Rto Z$ representing a
  morphism $v\rto w$ is represented by $\bigvee (\Omega\Psi)\colon
  X\Rto Z$.
\end{cor}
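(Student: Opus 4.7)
The plan is to combine two applications of the universal property of collages. Write $\Pi\colon X\Rto u$ and $\Xi\colon Y\Rto v$ for the collage cocones of $\Phi$ and $\Theta$ respectively, so that by \autoref{thm:supk-coll-char} we have $\Pi^*\Pi = \Phi$, $\Xi^*\Xi = \Theta$, $\bigvee(\Pi\Pi^*) = 1_u$, and $\bigvee(\Xi\Xi^*) = 1_v$. First, since $u$ is a lax colimit of $\Phi$, morphisms $f\colon u\rto v$ in $\cA$ are in bijection with lax cocones $\Omega\colon X\Rto v$ satisfying $\bigvee(\Omega\Phi)\le\Omega$, via $f\mapsto f\Pi$ and $\Omega\mapsto\bigvee(\Omega\Pi^*)$. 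Second, by the Corollary immediately preceding, $\Xi^*\colon v\Rto Y$ is a lax limit of $\Theta$, so such $\Omega$ biject with arrays $\Psi\colon X\Rto Y$ satisfying $\bigvee(\Theta\Psi)\le\Psi$, via $\Omega\mapsto\Xi^*\Omega$ and $\Psi\mapsto\bigvee(\Xi\Psi)$. Chaining yields the asserted correspondence $f\mapsto \Xi^* f\Pi$ with inverse $\Psi\mapsto \bigvee(\Xi\Psi\Pi^*)$.

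The remaining conditions are quick verifications. The array $\Xi^* f\Pi$ automatically satisfies $\bigvee(\Theta\Psi)\le\Psi$ because
\[
\bigvee(\Theta\cdot\Xi^* f\Pi) = \bigvee(\Xi^*\Xi\Xi^* f\Pi) = \Xi^*\bigl(\bigvee(\Xi\Xi^*)\bigr)f\Pi = \Xi^* f\Pi,
\]
where we use that composition distributes over \ka-ary joins in each variable; symmetrically $\bigvee(\Psi\Phi)\le\Psi$. The two assignments are mutually inverse: starting from $f$ gives $\bigvee(\Xi\Xi^* f\Pi\Pi^*) = \bigvee(\Xi\Xi^*)\cdot f\cdot\bigvee(\Pi\Pi^*) = f$, while starting from $\Psi$ and returning gives $\Xi^*\bigl(\bigvee(\Xi\Psi\Pi^*)\bigr)\Pi = \bigvee(\Theta\Psi\Phi) = \Psi$, using $1\le\Theta,\Phi$ together with $\bigvee(\Theta\Psi)=\Psi=\bigvee(\Psi\Phi)$. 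Monotonicity in both directions is immediate since the constructions involve only composition and \ka-ary joins.

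For the allegorical involution I compute $(\Xi^* f\Pi)\o = \Pi\o f\o\Xi^{*\,\o} = \Pi^* f\o\Xi$, using that $\Pi\o=\Pi^*$ because the components of $\Pi$ are maps (and similarly for $\Xi$); this is precisely the array assigned to $f\o\colon v\rto u$ under the symmetric bijection with $u,v$ swapped, and agrees with the entry-wise involution of $\Xi^* f\Pi$. For composition, let $w$ be the collage of some $\Omega^*\Omega$ with cocone $\Omega\colon Z\Rto w$, let $g\colon v\rto w$ have associated array $\Upsilon = \Omega^* g\Xi$, and compute
\[
\bigvee(\Upsilon\Psi) = \bigvee(\Omega^* g\Xi\Xi^* f\Pi) = \Omega^* g\bigl(\bigvee(\Xi\Xi^*)\bigr)f\Pi = \Omega^* (gf)\Pi,
\]
which is exactly the array assigned to $gf$. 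The main ``obstacle'' is really only bookkeeping: keeping track of when matrix products must be joined to yield arrays, and of when the involution coincides with passage to the right adjoint. Once the two universal properties are identified, every assertion reduces to repeated use of the two identities $\bigvee(\Pi\Pi^*) = 1_u$ and $\bigvee(\Xi\Xi^*) = 1_v$.
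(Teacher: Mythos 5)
Your proof is correct and follows essentially the same route as the paper's: the paper's (three-sentence) proof likewise derives the hom-poset identification from the fact that collages are simultaneously lax colimits and lax limits, gets the involution claim from functoriality of $(-)\o$, and reads off the composition formula from the explicit construction of $\chi$ as $\bigvee(\Theta\Psi\o)$ in the proof of \autoref{thm:coll-char}. You have merely written out in full the formulas $f\mapsto \Xi^* f\Pi$ and $\Psi\mapsto\bigvee(\Xi\Psi\Pi^*)$ and the verifications that the paper leaves implicit.
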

\begin{proof}
  The first sentence follows because collages are both lax colimits
  and lax limits, and the second because the involution is functorial.
  The third follows from the construction of $\chi$ in the proof of
  \autoref{thm:coll-char}.
\end{proof}

\noindent
Note that
the inequalities $\bigvee (\Theta\Psi) \le \Psi$ and $\bigvee
(\Psi\Phi) \le \Psi$ are automatically equalities.

We would now like to construct the free cocompletion of a \ka-ary
allegory with respect to collages of congruences, and general enriched
category theory suggests that this should be the closure of \cA in
$[\cA\op,\SUPk]$ under collages of congruences.  In general, such a
closure must be constructed by (transfinite) iteration, but in the
case of collages it stops after one step, due to the following fact.

\begin{lem}\label{thm:coll-coll}
  Let $X$ and $Y$ be \ka-ary families of objects in a \SUPk-category
  \cA, let $F\colon X\To Y$ be a functional array of maps, and let
  $G\colon Y\To z$ be a cocone of maps.  Suppose $G$ is a collage of
  some \ka-ary directed congruence $\Psi\colon Y\Rto Y$, and that for
  each $y\in Y$, the cocone $F|_y \colon X|_y \To y$ is a collage of
  some \ka-ary directed congruence $\Phi_y \colon X|_y \Rto X|_y$.
  Then $G F\colon X \To z$ is a collage of $F^* \Psi F$.
\end{lem}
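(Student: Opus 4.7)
The plan is to reduce the problem to the characterization of collages established in Lemma \ref{thm:supk-coll-char}: a cocone of maps $H\colon X\Rto w$ is a collage of a directed congruence precisely when the cone $H^*$ of right adjoints satisfies $H^*H = \Phi$ and $\bigvee(HH^*) = 1_w$. Thus I need to verify three things about $GF$: that it consists of maps, that $(GF)^*(GF) = F^*\Psi F$, and that $\bigvee\bigl((GF)(GF)^*\bigr) = 1_z$.

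The first is automatic since $G$ and $F$ consist of maps, and maps compose to maps. For the second, I would just compute directly: $(GF)^*(GF) = F^*G^*GF$, and since $G$ is a collage of $\Psi$, Lemma \ref{thm:supk-coll-char} gives $G^*G = \Psi$, so the composite equals $F^*\Psi F$ as required.

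The nontrivial step is the third. Here I decompose the functional array as $F = \bigsqcup_y F|_y$ and use distributivity of composition over $\bigvee$ in a \SUPk-category to rewrite
\[
\bigvee\bigl((GF)(GF)^*\bigr)
= \bigvee_{y\in Y} g_y \left(\bigvee (F|_y)(F|_y)^*\right) g_y^*.
\]
Now each inner join equals $1_y$ by applying Lemma \ref{thm:supk-coll-char} to the collage $F|_y$ of $\Phi_y$. This leaves $\bigvee_{y\in Y} g_y g_y^* = \bigvee(GG^*)$, which equals $1_z$ by applying Lemma \ref{thm:supk-coll-char} once more, this time to the collage $G$ of $\Psi$. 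The whole argument is thus a short telescoping calculation; the only thing to keep straight is the indexing convention that $F|_y\colon X|_y \To y$ with $X|_y = \{x\in X\mid f(x)=y\}$, so that the disjoint union decomposition of $F$ turns the single sum over $X$ into a nested sum over $y\in Y$ and $x\in X|_y$.
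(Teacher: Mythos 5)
Your proposal is correct and follows essentially the same route as the paper: verify the two identities of Lemma \ref{thm:supk-coll-char} for $GF$, getting $(GF)^*(GF)=F^*\Psi F$ immediately from $G^*G=\Psi$, and handling $\bigvee\bigl((GF)(GF)^*\bigr)=1_z$ by the telescoping computation through $\bigvee(FF^*)$ and $\bigvee(GG^*)$. If anything, your third step is slightly more careful than the paper's, which displays only the inequality $\bigvee(GFF^*G^*)\le 1_z$ and leaves the reverse direction to the remark that $F$ and $G$ are ``covering''; your explicit decomposition $F=\bigsqcup_y F|_y$ makes the equality transparent.
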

\begin{proof}
  On the one hand, we have
  \[(G F)^* (G F) = F^* G^* G F = F^* \Psi F,\]
  since $G$ is a collage of \Psi.
  On the other hand, we have
  \[ \bigvee (G F) (G F)^*
  = \bigvee G F F^* G^* \le \bigvee G G^* \le 1_z
  \]
  since $F$ and $G$ are both covering.  Since $G F$ is clearly
  composed of maps, by \autoref{thm:supk-coll-char} it is a collage of
  $F^* \Psi F$.
\end{proof}

Therefore, let us define $\Modk(\cA)$ to be the full
sub-\SUPk-category of $[\cA\op,\SUPk]$ determined by the collages of
(the images of) congruences in \cA, and write $\fy\colon \cA\to
\Modk(\cA)$ for the restricted Yoneda embedding.  We can describe
$\Modk(\cA)$ more explicitly as follows.

\begin{lem}\label{thm:cocompletion}
  Let \cA be a \ka-ary allegory; then $\Modk(\cA)$ is equivalent to
  the following \SUPk-category.
  \begin{itemize}[nolistsep]
  \item Its objects are \ka-ary congruences in \cA.
  \item Its morphisms from $\Phi\colon X\Rto X$ to $\Theta\colon Y\Rto
    Y$ are arrays $\Psi\colon X\Rto Y$ such that $\bigvee(\Theta\Psi)
    \le \Psi$ and $\bigvee(\Psi\Phi)\le \Psi$.  (This is equivalent to
    $\bigvee(\Theta\Psi) = \Psi$ and $\bigvee(\Psi\Phi) = \Psi$, and
    also to $\bigvee(\Theta\Psi\Phi) = \Psi$.)
  \item The ordering on such arrays is pointwise.
  \item The composition of $\Psi$ and $\Psi'$ is $\bigvee(\Psi'\Psi)$,
    and the identity of $\Phi$ is $\Phi$ itself.
  \end{itemize}
  Under this equivalence, the functor \fy takes an object $x$ to the
  corresponding singleton congruence $\Delta_{\{x\}}$.
\end{lem}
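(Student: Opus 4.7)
The plan is to exhibit an equivalence of $\SUPk$-categories between $\Modk(\cA)$ and the $\SUPk$-category $\cL$ described in the statement, using \autoref{thm:coll-mor} to identify hom-posets and composition, and \autoref{thm:supk-coll-char} to identify identities and the Yoneda image.

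First I would verify that $\cL$ is a well-defined $\SUPk$-category. The conditions $\bigvee(\Theta\Psi)\le\Psi$ and $\bigvee(\Psi\Phi)\le\Psi$ are automatically equalities because $\bigvee 1_Y \le \Theta$ and $\bigvee 1_X \le \Phi$ imply $\Psi \le \bigvee(\Theta\Psi)$ and $\Psi \le \bigvee(\Psi\Phi)$. Then the composition $(\Psi',\Psi)\mapsto\bigvee(\Psi'\Psi)$ lands in the correct hom-set by bilinearity of composition in $\cA$, and is unital with unit $\Phi$ at each object $\Phi$ since $\bigvee(\Psi\Phi)=\Psi$ and $\bigvee(\Theta\Psi)=\Psi$. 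Associativity and the $\SUPk$-enrichment of the hom-posets both reduce to the fact that composition in $\cA$ is associative and preserves $\ka$-ary joins in each variable.

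Next, for essential surjectivity on objects: by the very definition of $\Modk(\cA)$, every object is a collage in $[\cA\op,\SUPk]$ of the $\fy$-image of some $\ka$-ary congruence in $\cA$. Conversely, since $[\cA\op,\SUPk]$ is $\SUPk$-cocomplete, every $\fy(\Phi)$ admits such a collage. Pick one such collage $u_\Phi$ for each $\Phi$; the assignment $\Phi\mapsto u_\Phi$ is then the object map of the desired equivalence.

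For the hom-poset structure, \autoref{thm:coll-mor} applied inside $[\cA\op,\SUPk]$ directly identifies $\Modk(\cA)(u_\Phi,u_\Theta)$ with arrays $\Psi\colon X\Rto Y$ satisfying the two bimodule inequalities, ordered pointwise, with composition $\bigvee(\Psi'\Psi)$ — exactly the data of $\cL$. For identities, \autoref{thm:supk-coll-char} gives $\Psi_\Phi^*\Psi_\Phi = \Phi$ for the collage cocone $\Psi_\Phi\colon X\Rto u_\Phi$, so under the correspondence of \autoref{thm:coll-mor} the identity $1_{u_\Phi}$ is represented by $\Psi_\Phi^*\circ 1_{u_\Phi}\circ\Psi_\Phi = \Phi$, matching $\cL$. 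Finally, to see $\fy(x)$ corresponds to $\Delta_{\{x\}}$, note that the singleton cocone $\{1_{\fy(x)}\}\colon\{\fy(x)\}\Rto \fy(x)$ satisfies $1_{\fy(x)}^*\cdot 1_{\fy(x)} = 1_{\fy(x)} = \fy(\Delta_{\{x\}}(x,x))$ and $\bigvee 1_{\fy(x)}\cdot 1_{\fy(x)}^* = 1_{\fy(x)}$, so by \autoref{thm:supk-coll-char} it exhibits $\fy(x)$ as a collage of $\fy(\Delta_{\{x\}})$.

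The main obstacle is essentially bookkeeping: confirming that the identity at $u_\Phi$ really is represented by $\Phi$ and not some other locally equivalent array, and that the composition in $\Modk(\cA)$ agrees on the nose with $\bigvee(\Psi'\Psi)$. Both points follow without incident from the uniqueness statements in \autoref{thm:supk-coll-char} and the explicit description in \autoref{thm:coll-mor}.
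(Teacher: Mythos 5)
Your proposal is correct and follows essentially the same route as the paper, which likewise takes the objects to be the congruences themselves (up to equivalence) and reads off the hom-posets, composition, and identities from \autoref{thm:coll-mor}. The only detail worth flagging is that \autoref{thm:coll-mor} a priori yields arrays between the $\fy$-images in $[\cA\op,\SUPk]$, so one should invoke full-faithfulness of the Yoneda embedding (as the paper does explicitly) to transport these back to arrays in \cA.
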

\begin{proof}
  Up to equivalence, we may certainly take the objects to be the
  \ka-ary congruences themselves rather than their collages in
  $[\cA\op,\SUPk]$.  The identification of the morphisms between these
  collages in $[\cA\op,\SUPk]$, along with their composition, follows
  from \autoref{thm:coll-mor} and fully-faithfulness of the Yoneda
  embedding.
\end{proof}

Unfortunately, since colimits of congruences (as opposed to directed
congruences) are not determined by a class of \SUPk-weights, we cannot
directly apply a general theorem such as~\cite[5.35]{kelly:enriched}
to deduce the universal property of $\Modk (\cA)$.  However,
essentially the same proofs apply.

\begin{lem}\label{thm:alleg-cocplt}
  If \cA is a \ka-ary allegory, then:
  \begin{enumerate}[leftmargin=*,nolistsep]
  \item $\Modk(\cA)$ is a \ka-ary allegory.\label{item:mod1}
  \item $\fy\colon \cA\to\Modk(\cA)$ is a \ka-ary allegory
    functor.\label{item:mod3}
  \item All \ka-ary congruences in $\Modk(\cA)$ have
    collages.\label{item:mod2}
  \item If \cB is a \ka-ary allegory with collages of all \ka-ary
    congruences, then
    \[ \ALLk(\Modk(\cA),\cB) \xto{(-\circ \fy)} \ALLk (\cA,\cB) \]
    is an equivalence of categories.\label{item:mod4}
  \end{enumerate}
\end{lem}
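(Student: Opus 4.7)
The plan is to treat (i)--(iii) as essentially bookkeeping using the explicit description of $\Modk(\cA)$ from \autoref{thm:cocompletion}, and to prove (iv) by a standard enriched-cocompletion argument, leveraging the absoluteness of collages of directed congruences noted after \autoref{thm:supk-coll-char}.

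For (i) and (ii): each hom-poset $\Modk(\cA)(\Phi,\Theta)$ consists of arrays $\Psi\colon X\Rto Y$ in $\cA$ satisfying $\bigvee(\Theta\Psi\Phi) = \Psi$. Pointwise \ka-ary joins and binary meets in $\cA$ preserve this condition (since composition distributes over them), as does the involution (using that $\Phi$ and $\Theta$ are symmetric). The composition $(\Psi',\Psi)\mapsto \bigvee(\Psi'\Psi)$ visibly preserves \ka-ary joins, and the modular law transfers pointwise from $\cA$. For (ii), $\fy$ sends $x$ to $\Delta_{\{x\}}$, and the canonical map $\cA(x,y)\to \Modk(\cA)(\Delta_{\{x\}},\Delta_{\{y\}})$ is an isomorphism of posets preserving all the structure, since the defining equation becomes trivial on singleton arrays.

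For (iii): a \ka-ary congruence in $\Modk(\cA)$ is a \ka-ary family $\{(X_i,\Phi_i)\}_{i\in I}$ equipped with an array $\Xi$ whose components are arrays $\Xi_{ij}\colon X_i\Rto X_j$ in $\cA$. Since \ka is closed under indexed sums, $X \coloneqq \bigsqcup_i X_i$ is \ka-ary, and the $\Xi_{ij}$ assemble into a single array $\Xi\colon X\Rto X$. Unraveling the congruence axioms in $\Modk(\cA)$ via the composition formula shows precisely that $\Xi$ is a \ka-ary congruence in $\cA$. The restrictions $\psi_i \coloneqq \Xi|_{X_i}\colon X_i\Rto X$ then define morphisms $(X_i,\Phi_i)\to (X,\Xi)$ in $\Modk(\cA)$; a direct computation using $\bigvee(\Xi\Xi) = \Xi$ shows they are maps satisfying $\psi_j^*\psi_i = \Xi_{ij}$ and $\bigvee_i \psi_i\psi_i^* = 1_{(X,\Xi)}$, so \autoref{thm:supk-coll-char} identifies $(X,\Xi)$ with the cocone $\{\psi_i\}$ as the desired collage.

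For (iv): given a \ka-ary allegory functor $F\colon \cA\to \cB$ with $\cB$ having collages of \ka-ary congruences, I define $\hat F\colon \Modk(\cA)\to \cB$ by sending $(X,\Phi)$ to a chosen collage of $F(\Phi)$ in $\cB$. A morphism $\Psi\colon (X,\Phi)\rto (Y,\Theta)$ in $\Modk(\cA)$ is an array with $\bigvee(\Theta\Psi\Phi) = \Psi$; $F$ sends this to $F(\Psi)$ satisfying the analogous equation for $F(\Phi)$ and $F(\Theta)$, and \autoref{thm:coll-mor} produces the unique induced morphism $\hat F(\Psi)$ between the chosen collages. Preservation of identities, composition, involution, meets, joins, and 2-cells all follow from the corresponding properties of $F$ combined with the correspondences in \autoref{thm:coll-mor}; the natural isomorphism $\hat F\circ \fy \cong F$ holds since the collage of $\Delta_{\{F(x)\}}$ is $F(x)$. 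For essential uniqueness, any 2-functor extending $F$ along \fy agrees with $\hat F$ up to canonical iso, since each object $(X,\Phi)$ of $\Modk(\cA)$ is canonically the collage of $\fy(\Phi)$ viewed as a congruence on $\{\fy(x)\}_{x\in X}$, and \SUPk-functors preserve this collage absolutely. The main obstacle is this last step---promoting essential-surjectivity on objects and faithfulness/fullness of restriction to an equivalence of hom-categories---but it reduces to the formal fact that collages of directed congruences are absolute \SUPk-colimits.
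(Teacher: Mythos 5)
Your proposal is correct, and parts (i)--(ii) match the paper's (very brief) argument. For (iii) and (iv), however, you take a genuinely more hands-on route than the paper. For (iii), the paper works in the presheaf category $[\cA\op,\SUPk]$ and invokes \autoref{thm:coll-coll} (collages of collages are collages of a composite directed congruence) to see that the collage of a congruence in $\Modk(\cA)$ is again the collage of a congruence in \cA, hence lies in $\Modk(\cA)$; you instead assemble the data into a single congruence $\Xi$ on $\bigsqcup_i X_i$ and exhibit $(X,\Xi)$ directly as the collage inside $\Modk(\cA)$ via the criterion of \autoref{thm:supk-coll-char}. This is essentially the same computation unwound, and the identities $\psi_j^*\psi_i=\Xi_{ij}$ and $\bigvee_i\psi_i\psi_i^*=\Xi=1_{(X,\Xi)}$ do check out. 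For (iv), the paper first reduces allegory functors to \SUPk-functors (a \SUPk-functor out of $\Modk(\cA)$ is an allegory functor iff its restriction along \fy is, by \autoref{thm:coll-mor}), and then cites Kelly's enriched Kan-extension results together with \SUPk-density of \fy and absoluteness of collages; you instead construct the extension $\hat F$ explicitly by choosing collages in \cB and using \autoref{thm:coll-mor} to induce its action on morphisms. Both arguments turn on the same key fact, namely that collages of directed congruences are absolute \SUPk-colimits; your version is more elementary and self-contained but leaves the routine functoriality checks and the full-faithfulness of $(-\circ\fy)$ on transformations as sketches (the latter does indeed follow from absoluteness, since every object of $\Modk(\cA)$ is a collage of objects from \cA preserved by any \SUPk-functor, so transformations are determined by, and extend uniquely from, their restrictions to \cA). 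The paper's version outsources exactly these verifications to the general theory.
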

\begin{proof}
  Binary meets and an involution for $\Modk(\cA)$ are defined
  pointwise.  The modular law follows from that in \cA, since
  composition and meets in \cA distribute over joins.  Thus
  property~\ref{item:mod1} holds, and~\ref{item:mod3} is clear by
  definition.

  Now by \autoref{thm:coll-coll}, the collage in $[\cA\op,\SUPk]$ of
  any \ka-ary congruence in $\Modk(\cA)$ is also the collage of some
  \ka-ary directed congruence in \cA.  This directed congruence is
  easily verified to be a congruence, so its collage also lies in
  $\Modk(\cA)$; thus~\ref{item:mod2} holds.

  Finally, for~\ref{item:mod4}, suppose \cB has collages of \ka-ary
  congruences.  By \autoref{thm:coll-mor}, the meets and involutions
  between collages in \cB of congruences in \cA are determined by
  meets and involutions in the image of \cA.  Thus, a \SUPk-functor
  $\Modk (\cA) \to \cB$ is a \ka-ary allegory functor if and only if
  its composition with \fy is.

  Now by~\cite[4.99]{kelly:enriched}, since \fy is fully faithful,
  $(-\circ \fy)$ induces an equivalence
  \[ \ALLk(\Modk(\cA),\cB)^\ell \xto{(-\circ \fy)}\ALLk(\cA,\cB)' \]
  whose domain is the full subcategory of $\ALLk(\Modk(\cA),\cB)$
  determined by the functors that are (pointwise) left Kan extensions
  of their restrictions to \cA, and whose codomain is the full
  subcategory of $\ALLk(\cA,\cB)$ determined by the functors which
  admit (pointwise) left Kan extensions along \fy.

  Now since $\Modk(\cA)$ is a full sub-\SUPk-category of
  $[\cA\op,\SUPk]$, the inclusion \fy is \SUPk-dense.  Thus, since
  collages of (directed) congruences are absolute \SUPk-colimits,
  by~\cite[5.29]{kelly:enriched} we have $\ALLk(\Modk(\cA),\cB)^\ell =
  \ALLk(\Modk(\cA),\cB)$.  Finally, since \cB has collages of \ka-ary
  congruences and every object of $\Modk (\cA)$ is an absolute colimit
  (a collage) of a \ka-ary congruence in \cA, the same proof as
  for~\cite[4.98]{kelly:enriched} proves that $\ALLk(\cA,\cB)' =
  \ALLk(\cA,\cB)$.
\end{proof}

\begin{thm}\label{thm:cong-refl}
  The 2-category of \ka-ary allegories that have collages of all
  \ka-ary congruences is reflective in \ALLk.\qed
\end{thm}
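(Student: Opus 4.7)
The plan is direct: take $\Modk$ itself as the reflector. By \autoref{thm:alleg-cocplt}\ref{item:mod1} and~\ref{item:mod2}, for any \ka-ary allegory $\cA$ the \SUPk-category $\Modk(\cA)$ is again a \ka-ary allegory and has collages of all \ka-ary congruences, so it lies in the purported reflective sub-2-category. By~\ref{item:mod3}, $\fy\colon\cA\to\Modk(\cA)$ is a morphism in \ALLk, and it will serve as the unit of the reflection at $\cA$. The universal property defining a reflective sub-2-category---that precomposition with $\fy$ induces an equivalence of hom-categories
\[
\ALLk(\Modk(\cA),\cB)\xto{(-\circ\fy)}\ALLk(\cA,\cB)
\]
for every $\cB$ in the sub-2-category---is exactly \autoref{thm:alleg-cocplt}\ref{item:mod4}.

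The remaining content is to promote the object assignment $\cA\mapsto\Modk(\cA)$ to a 2-functor and to establish pseudo-naturality of $\fy$. For this I would invoke the standard argument from the universal property: given a \ka-ary allegory functor $\f\colon\cA\to\cA'$, the composite $\fy_{\cA'}\circ\f$ extends essentially uniquely along $\fy_\cA$ to a morphism $\Modk(\f)\colon\Modk(\cA)\to\Modk(\cA')$ by applying~\ref{item:mod4} with $\cB=\Modk(\cA')$; the action on 2-cells of \ALLk is extracted similarly from the full faithfulness half of the equivalence in~\ref{item:mod4}. Coherence constraints for composites and identities, and the corresponding invertible modifications witnessing naturality of $\fy$, are then forced by the same universal property.

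I expect no genuine obstacle: the theorem is essentially a repackaging of \autoref{thm:alleg-cocplt} into the language of reflective sub-2-categories. The only mildly delicate point worth flagging is that ``reflective sub-2-category'' must be interpreted bicategorically---the universal property asserts an \emph{equivalence} of hom-categories rather than an isomorphism, so one obtains a biadjunction rather than a strict 2-adjunction---but this is the sense in which the word ``reflective'' is used throughout the paper (compare \autoref{thm:intro-excplt} and the discussion in \S\ref{sec:exact-completion}).
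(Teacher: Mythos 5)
Your proposal is correct and matches the paper exactly: the theorem is stated with an immediate \qed because it is a direct repackaging of \autoref{thm:alleg-cocplt}, with $\Modk$ as the reflector and $\fy$ as the unit, and the paper likewise notes immediately afterward that ``reflective'' means a left biadjoint rather than a strict 2-adjoint. Your additional remarks on extracting the 2-functoriality of $\Modk$ from the universal property are the standard (and correct) way to fill in what the paper leaves implicit.
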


By ``reflective'' here we mean the inclusion has a left biadjoint, not
a strict 2-adjoint.  The same is true in the following, which is the
central theorem of the paper.

\begin{thm}\label{thm:adjoint}
  The inclusion $\EXk \into \SITEk$ is reflective.
\end{thm}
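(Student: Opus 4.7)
The plan is to combine the characterization of \EXk inside \SITEk with the free cocompletion $\Modk$ of \autoref{thm:alleg-cocplt}. By \autoref{thm:relk-eqv}, $\lrelk$ is a 2-equivalence $\SITEk\simeq\FALLk^{\mathrm{wt}\times}$, and by \autoref{thm:alleg-exact} it identifies \EXk with the full sub-2-category of $\FALLk^{\mathrm{wt}\times}$ consisting of chordate framed allegories that admit collages of all \ka-ary congruences. It therefore suffices to exhibit a reflection of $\FALLk^{\mathrm{wt}\times}$ onto this sub-2-category, which I would build as a composite of two reflections.

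The first stage is the chordate reflection on \FALLk, which enlarges the set of tight maps to consist of all loose maps. The underlying allegory is unchanged, so local maxima and weak \ka-units persist; any previous weak \ka-tabulation by cocones of tight maps remains one after the enlargement, since the class of tight maps has only grown; and the second clause of \autoref{thm:wkktab} is vacuous once $J$ is a bijection. Hence this reflection restricts to an endofunctor of $\FALLk^{\mathrm{wt}\times}$, and we may thereafter assume the framed allegory is chordate with underlying allegory \cA.

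The second stage applies $\Modk$ to \cA and reinterprets the result as a chordate framed allegory. By \autoref{thm:cong-refl} and \autoref{thm:alleg-cocplt}, $\Modk(\cA)$ has collages of all \ka-ary congruences and is 2-universal with this property among \ka-ary allegories. The main technical obstacle is to verify that $\Modk(\cA)$ still lies in $\FALLk^{\mathrm{wt}\times}$. For weak \ka-tabularity, given a morphism $\Psi\colon \Phi\to\Theta$ in $\Modk(\cA)$ between congruences on families $X$ and $Y$, the canonical cocones $\fy(X)\to\Phi$ and $\fy(Y)\to\Theta$ exhibiting $\Phi$ and $\Theta$ as their own collages consist of maps by \autoref{thm:cauchydense}; weakly \ka-tabulating each entry of $\Psi$ inside \cA and concatenating with these canonical cocones produces the required weak \ka-tabulation of $\Psi$ in $\Modk(\cA)$. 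Local maxima in $\Modk(\cA)$ are built entry-wise from local maxima in \cA, and a weak \ka-unit in \cA lifts through \fy, together with the projections from each congruence to its component objects, to a weak \ka-unit in $\Modk(\cA)$.

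Composing the two reflections yields the desired reflection of $\FALLk^{\mathrm{wt}\times}$ onto the sub-2-category corresponding to \EXk, and transferring back via $\lrelk^{-1}$ gives the reflection of \SITEk onto \EXk. The universal property is assembled from those of the two stages: a morphism of sites $\bC\to\bE$ with $\bE\in\EXk$ corresponds under $\lrelk$ to a framed allegory functor into a chordate $\lrelk(\bE)$ with collages, which factors uniquely through the composite reflector. The main obstacle is the explicit verification in stage two that $\Modk(\cA)$ has weak \ka-tabulations and weak \ka-units, which requires carefully combining tabulations in \cA with the canonical cocones into each congruence produced by the collage construction.
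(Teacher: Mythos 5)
Your proposal is correct and follows essentially the same route as the paper: reduce via the 2-equivalence $\lrelk$ to showing that the chordate objects with collages are reflective in $\FALLk^{\mathrm{wt}\times}$, apply the chordate reflection followed by $\Modk$, and verify that $\Modk$ preserves weak \ka-tabularity (by tabulating each entry of a morphism between congruences in \cA and composing with the canonical collage cocones), local maxima, and weak \ka-units. The paper's proof fills in the same verifications with explicit computations such as $\bigvee\bigl((\Theta G)(\Phi F)\o\bigr) = \bigvee(\Theta\Psi\Phi) = \Psi$, and obtains the universal property from \autoref{thm:alleg-cocplt} exactly as you indicate.
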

\begin{proof}
  Let $\ALLk^{\mathrm{wt}\times}$ denote the full sub-2-category of
  $\FALLk^{\mathrm{wt}\times}$ on the chordate objects.  By the
  remarks after \autoref{thm:alleg-exact}, it suffices to show that
  $\Modk$ takes $\ALLk^{\mathrm{wt}\times}$ into itself.  Thus,
  suppose $\cA\in\ALLk^{\mathrm{wt}\times}$.

  In the chordate case, the second part of weak \ka-tabularity is
  automatic.  For the first part, let $\Phi\colon X\Rto X$ and
  $\Theta\colon Y\Rto Y$ be congruences in \cA, and $\Psi\colon \Phi
  \rto \Theta$ a morphism in $\Modk(\cA)$.  Thus $\Psi$ is an array
  $X\Rto Y$ in \cA with $\bigvee(\Theta\Psi) = \Psi$ and
  $\bigvee(\Psi\Phi)= \Psi$.  By assumption, for each $x\in X$ and
  $y\in Y$ we have a weak \ka-tabulation $\psi_{x y} = \bigvee (G^{xy}
  (F^{xy})\o)$ in \cA, for \ka-ary cocones of maps $F^{xy}\colon
  U^{xy}\To x$ and $G^{xy}\colon U^{xy}\To y$.  Define $U =
  \bigsqcup_{x,y} U^{xy}$, with induced functional arrays $F\colon U
  \To X$ and $G\colon U\To Y$.  Then $\bigvee (\Phi F)\colon \fy(U)
  \To \Phi$ and $\bigvee (\Theta G)\colon \fy(U) \To \Theta$ are
  cocones of maps in $\Modk(\cA)$, and we have
  \[ \bigvee((\Theta G) (\Phi F)\o)
  = \bigvee \left(\Theta \circ \bigvee (G F\o) \circ \Phi\right)
  = \bigvee (\Theta \Psi \Phi) = \Psi.
  \]
  Hence $\bigvee (\Phi F)$ and $\bigvee (\Theta G)$ are a weak
  \ka-tabulation of $\Psi$.

  Next, suppose $U$ is a weak \ka-unit in \cA.  Given any \ka-ary
  congruence $\Phi\colon X\Rto X$, for each $x\in X$ we have
  $\Psi^x\colon x\Rto U$ with $1_x \le \bigvee ((\Psi^x)\o\Psi^x)$.
  Putting these together, we obtain an array $\Psi\colon X \Rto U$
  with $\bigvee 1_X \le \bigvee (\Psi\o \Psi)$.  Therefore,
  \[ \Phi = \bigvee(\Phi\o\Phi) \le \bigvee (\Phi\o \Psi\o\Psi \Phi)
  = \bigvee \left(\bigvee(\Psi\Phi)\o \circ \bigvee(\Psi\Phi)\right),
  \]
  so $\bigvee(\Psi\Phi)\colon \Phi \To \fy(U)$ is a cone in
  $\Modk(\cA)$ exhibiting $\fy(U)$ as a weak \ka-unit.

  Finally, if \cA has local maxima, then so does $\Modk(\cA)$; they
  are arrays consisting entirely of local maxima in \cA.  Moreover, if
  $\f\colon \cA \to \cB$ preserves these structures, then clearly so
  does $\Modk(\f)$.
\end{proof}

We denote this left biadjoint by $\exk\colon \SITEk \to \EXk$.

\begin{remark}
  If we ignore products and terminal objects, the same proof shows
  that locally \ka-ary exact categories are reflective in \LSITEk.  We
  also write $\exk(\bC)$ for this reflection; if \bC is \ka-ary the
  notation is unambiguous.
\end{remark}

By definition, the objects of $\exk(\bC)$ are the \ka-ary congruences
in \bC.  Its morphisms are less easy to describe explicitly at the
moment; see \S\ref{sec:frac} for an alternative description.  However,
our current definition is sufficient to identify our construction of
the exact completion with many of those existing in the literature.

\begin{example}
  As in \autoref{rmk:relk-egs}, if \bC is \ka-ary regular with its
  \ka-regular topology, then $\lrelk(\bC)$ is chordate and can be
  identified with the usual 2-category of relations $\nRel(\bC)$ in a
  regular category.  In the case $\ka=\un$, we can then identify
  $\nMod_{\un}(\nRel(\bC))$ with the splitting of equivalence
  relations (unary congruences), which is precisely the construction
  of the \emph{exact completion} of a regular category \bC
  from~\cite[2.169]{fs:catall}.

  For general \ka, we can factor $\Modk$ by first passing to an
  allegory of \ka-ary families, then splitting equivalence relations
  (see \autoref{thm:superext-fam}).  In this way we obtain (the
  \ka-ary version of) the \emph{pretopos completion} of a coherent
  category, as described in~\cite[2.217]{fs:catall}.
\end{example}

\begin{example}
  If \bC is a small \KA-ary site, we have remarked that
  $\lRel_{\KA}(\bC)$ is the bicategory
  of~\cite{walters:sheaves-cauchy-2}.  In that paper, the topos
  $\nSh(\bC)$ of sheaves on \bC was identified with the category of
  ``small symmetric Cauchy-complete $\lRel_{\KA}(\bC)$-categories'',
  and isomorphism classes of functors between them.
  Now small symmetric $\lRel_{\KA}(\bC)$-categories are precisely
  \KA-ary congruences in $\lRel_{\KA}(\bC)$, and the
  \emph{profunctors} between them can be identified with the loose
  morphisms in $\lmodk(\lRel_{\KA}(\bC))$.  Thus, our $\exK(\bC)$ is
  equivalent to the category of small symmetric
  $\lRel_{\KA}(\bC)$-categories and isomorphism classes of
  left-adjoint profunctors between them.  Not every left-adjoint
  profunctor is induced by a functor, but this is so when the codomain
  is Cauchy-complete.  Moreover, every $\lRel_{\KA}(\bC)$-category is
  Morita equivalent (that is, equivalent by profunctors) to a
  Cauchy-complete one as in~\cite{street:enr-cohom}.  Thus, our
  $\exK(\bC)$ is also equivalent to $\nSh(\bC)$.  We will reprove and
  generalize this by a different method in
  \S\ref{sec:exact-compl-sheav}.
\end{example}

We can also identify the universal property of $\exk(\bC)$, as
expressed in \autoref{thm:adjoint}, with those of other exact
completions.

\begin{example}
  Since \EXk is contained in \REGk as subcategories of \SITEk, the
  construction \exk is also left biadjoint to the inclusion $\EXk\into
  \REGk$.  In particular, this is the case for $\nEX_{\un} \into
  \nREG_{\un}$, which is the usual universal property of the exact
  completion of a regular category.  The case of the (infinitary)
  pretopos completion of an (infinitary) coherent category is also
  well-known.
\end{example}

\begin{example}\label{eg:exlex}
  On the other hand, we have remarked that \EXk is \emph{not}
  contained in \nLEX as a subcategory of \SITEk, since $\nLEX \into
  \SITEk$ equips a lex category with its trivial topology, while
  $\EXk\into \SITEk$ equips a \ka-ary exact category with its
  \ka-regular topology.  Nevertheless, the functor $\exk\colon
  \nLEX\to \EXk$ is still left biadjoint to the forgetful functor.
  This is because if \bC has finite limits and a trivial \ka-ary
  topology, while \bD is \ka-ary exact with its \ka-regular topology,
  then a functor $\bC\to \bD$ is a morphism of sites precisely when it
  preserves finite limits (by \autoref{thm:morsite-flim}).  Thus, we
  also reproduce the known universal property of the exact or pretopos
  completion of a lex category.
\end{example}

\begin{example}\label{eg:exwlex}
  On the third hand, the situation is different for \exk regarded as a
  functor $\nWLEX\to \EXk$, since the converse part of
  \autoref{thm:morsite-flim} fails if the domain does not have true
  finite limits.  Instead, we recover (in the case $\ka=\un$) the
  result of~\cite{cv:reg-exact-cplt} that when \bC has weak finite
  limits and \bD is exact, regular functors $\exk(\bC)\to \bD$ are
  naturally identified with \emph{left covering functors} $\bC\to \bD$
  (see \autoref{eg:left-covering}).
\end{example}

We end this section with the following observation.

\begin{thm}\label{thm:ex-prop}
  Let \bC be a \ka-ary site.
  \begin{enumerate}[nolistsep]
  \item $\fy\colon \bC\to \exk(\bC)$ is faithful if and only if all
    covering families in \bC are epic.\label{item:ep1}
  \item $\fy\colon \bC\to \exk(\bC)$ is fully faithful if and only if
    \bC is subcanonical.\label{item:ep2}
  \item $\fy\colon \bC\to \exk(\bC)$ is an equivalence in $\SITEk$ if
    and only if \bC is \ka-ary exact and equipped with its
    \ka-canonical topology, and if and only if \bC is subcanonical and
    every \ka-ary congruence is a kernel of some covering
    cocone.\label{item:ep3}
  \end{enumerate}
\end{thm}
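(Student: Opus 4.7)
I will factor $\fy\colon \bC \to \exk(\bC)$ as
\[ \bC = \nTMap(\lrelk(\bC)) \xto{J} \nMap(\lrelk(\bC)) \xto{G} \nMap(\Modk(\lrelk(\bC))) = \exk(\bC), \]
where $J$ sends a tight map $f$ to its underlying loose map $f\sb$, and $G$ is the restriction to categories of maps of the Yoneda embedding $\fy\colon \lrelk(\bC)\to\Modk(\lrelk(\bC))$. Since $\fy$ is fully faithful on every hom-poset by ordinary Yoneda (using that $\Modk(\lrelk(\bC))$ sits as a full sub-\SUPk-category of $[\lrelk(\bC)\op,\SUPk]$), and since it is a $2$-functor and hence preserves and reflects adjunctions between maps, $G$ is fully faithful. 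Consequently the faithfulness and fullness of $\fy\colon \bC \to \exk(\bC)$ are inherited from those of $J$.

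For \ref{item:ep1}, $J$ is faithful precisely when each tight map is determined by its underlying loose map, i.e.\ when $\lrelk(\bC)$ is subchordate; by \autoref{thm:subchordate1} this is equivalent to covering families in $\bC$ being epic. For \ref{item:ep2}, $J$ is additionally full iff every loose map has a tightening, so $J$ is fully faithful iff $\lrelk(\bC)$ is chordate, which by \autoref{thm:chordate1} holds iff $\bC$ is subcanonical.

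For \ref{item:ep3}, the second equivalence in the statement is immediate from \autoref{thm:exact}\ref{item:ex1}. For the first, if $\bC$ is \ka-ary exact with its \ka-canonical topology, then by \autoref{thm:alleg-exact} $\lrelk(\bC)$ is chordate and already has collages of all its \ka-ary congruences. By \autoref{thm:cocompletion} every object of $\exk(\bC)$ is represented by such a congruence, and by \autoref{thm:supk-coll-char} allegory functors preserve collages; hence each congruence $\Phi$ in $\lrelk(\bC)$ becomes, via $\fy$, isomorphic in $\Modk(\lrelk(\bC))$ to $\fy$ applied to the loose collage of $\Phi$. Thus $\fy$ is essentially surjective, and combined with \ref{item:ep2} it is an equivalence of categories; since both sites carry the \ka-canonical topology and this notion is a categorical invariant, $\fy$ is an equivalence in $\SITEk$. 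Conversely, $\exk(\bC)$ is always \ka-ary exact with its \ka-canonical topology by \autoref{thm:adjoint}, so if $\fy$ is an equivalence in $\SITEk$ then so is $\bC$. The most delicate step will be this essential surjectivity argument: I will need to verify carefully, using \autoref{thm:coll-char} and \autoref{thm:supk-coll-char}, that the loose collage of a congruence $\Phi$ in $\lrelk(\bC)$ maps under $\fy$ to an object isomorphic in $\Modk(\lrelk(\bC))$ to the one $\Phi$ itself represents.
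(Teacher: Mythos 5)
Your proposal is correct and follows essentially the same route as the paper: factor $\fy$ through the chordate reflection (i.e.\ through $J\colon \nTMap(\lrelk(\bC))\to\nMap(\lrelk(\bC))$ followed by the fully faithful $\cA\to\Modk(\cA)$), and invoke Propositions \ref{thm:subchordate1} and \ref{thm:chordate1} together with Theorem \ref{thm:exact}. The only difference is that for (iii) you spell out essential surjectivity via preservation of collages where the paper simply appeals to the universal property of the reflection; both arguments are sound.
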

\begin{proof}
  Recall from \autoref{thm:subchordate1} that all covering families in
  \bC are epic just when $\lrelk(\bC)$ is subchordate.  But this is
  equivalent to faithfulness (on tight maps) of the map from
  $\lrelk(\bC)$ to its chordate reflection.  Since $\cA \to \Modk
  (\cA)$ is fully faithful for any \ka-ary allegory \cA, this
  proves~\ref{item:ep1}.  \ref{item:ep2} is analogous using
  \autoref{thm:chordate1}, and~\ref{item:ep3} is immediate from the
  universal property of \exk and \autoref{thm:exact}.
\end{proof}

\section{Exact completion with anafunctors}
\label{sec:frac}

As constructed in \S\ref{sec:exact-completion}, the objects of
$\exk(\bC)$ are \ka-ary congruences in \bC, and its morphisms are a
sort of ``entire functional relations'' or ``representable
profunctors''.  In this section, we give an equivalent description of
these morphisms using a calculus of fractions, i.e.\ as
``anafunctors'' in the style of~\cite{roberts:ana}.

From the perspective of the rest of the paper, the goal of this
section is to prove \autoref{thm:ex-concrete}, which will be used to
prove \autoref{thm:ex-sheaves}.  If this were the only point, then
\autoref{thm:ex-concrete} could no doubt be obtained more directly, or
the need for it avoided entirely.  But I believe that the ideas of
this section clarify certain aspects of the theory, and will be useful
when we come to categorify it.  However, the reader is free to skip
ahead to the statement of \autoref{thm:ex-concrete} on
page~\pageref{thm:ex-concrete} and then go on to
\S\ref{sec:exact-compl-sheav}.

The central construction of this section is a \emph{framed} version of
$\Modk$.  Here is where our description of $\Modk$ as an enriched
cocompletion is most helpful; all we need do is change the enrichment.
Let $\Fk$ denote the (very large) category whose objects are functions
\[ j\colon A_\tau \to A_\lambda
\]
where $A_\tau$ is a moderate set and $A_\lambda$ is a moderate
\ka-cocomplete poset.  Its morphisms are commutative squares
consisting of a set-function and a \ka-cocontinuous poset map.  Then
\Fk is complete, cocomplete, and closed symmetric monoidal under the
tensor product
\[ A_\tau\times B_\tau \to A_\lambda\otimes B_\lambda \]
where $\otimes$ denotes the tensor product in \SUPk.

A \Fk-enriched category \lA consists of a \SUPk-enriched category \cA,
together with a category $\bA$ and a bijective-on-objects functor
$J\colon \bA\to\cA$.  In particular, any framed allegory is an
\Fk-category.  As in a framed allegory, we refer to morphisms in \bA
as \textbf{tight}, writing them as $f\colon x\to y$, and denoting by
$f\sb\colon x\rto y$ the image of $f$ under $J$.  In this generality,
$f\sb$ is not necessarily a map, but if it is, we denote its right
adjoint by $f\pb$.

If \lA is an \Fk-enriched category, by a \textbf{directed congruence}
in \lA we mean a directed congruence, as in \autoref{def:dircong}, in
the underlying \SUPk-category of \lA.

\begin{defn}
  A \textbf{tight collage} of a directed congruence $\Phi\colon X \Rto
  X$ in an \Fk-category is a cocone of tight morphisms $F\colon X\To
  w$ such that
  \begin{enumerate}[leftmargin=*,nolistsep]
  \item $F\sb$ is a collage of $\Phi$ in the underlying
    \SUPk-category, and
  \item For any $\chi\colon w\rto z$, composition with $F$ determines
    a bijection between
    \begin{enumerate}[nolistsep]
    \item tight morphisms $h$ with $h\sb = \chi$ and
    \item tight cocones $G$ with $G\sb = \chi F\sb$.
    \end{enumerate}
  \end{enumerate}
\end{defn}

Recall (\autoref{thm:cong-cong}) that \ka-ary congruences in $\lrelk
(\bC)$ are equivalence classes of \ka-ary congruences in \bC.

\begin{lem}\label{thm:tcoll-colim}
  Let \bC be a \ka-ary site in which covering families are epic, let
  $\Phi\colon X\Rto X$ be a \ka-ary congruence in \bC, and let
  $F\colon X\To w$ be a cocone in \bC such that $F\sb$ is a collage of
  $\Phi$ in the underlying allegory of $\lrelk (\bC)$.  Then $F$ is a
  tight collage of $\Phi$ in $\lrelk (\bC)$ if and only if it is a
  colimit of \Phi in \bC.
\end{lem}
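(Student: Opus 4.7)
The plan is first to unpack the collage hypothesis into concrete conditions on $F$ in $\bC$, then to translate the allegorical compatibility condition into the ordinary ``cocone under $\Phi$'' condition, and finally to deduce both directions of the equivalence as near-immediate restatements. By \autoref{thm:coll-char}, the hypothesis on $F$ amounts to $\Phi = F\pb F\sb$ together with $\bigvee(F\sb F\pb) = 1_w$. By \autoref{thm:cov-detect} the latter says that $F$ is a covering family of $w$ in $\bC$, and via the identification in \autoref{thm:cong-cong} the former says that $\Phi$, as an equivalence class of congruences in $\bC$, is a kernel of $F$. Since covering families are epic, $F$ is in particular epic.

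The key step is the following translation lemma: a tight cocone $G\colon X\To z$ is a cocone under $\Phi$ viewed as a diagram in $\bC$ if and only if $\bigvee(G\sb\Phi) = G\sb$ in the underlying allegory. Indeed, after picking a representative array for each $\Phi(x_1,x_2)$ and unpacking composition in $\lrelk(\bC)$ via the $\times_v$ construction of \S\ref{arrayops}, the allegorical inequality $\bigvee(G\sb\Phi) \le G\sb$ asserts exactly that, for each $(x_1,x_2)$, the two arrays into $z$ built from $g_{x_1}\circ \phi_{x_1 x_2}|_{x_1}$ and from $g_{x_2}\circ \phi_{x_1 x_2}|_{x_2}$ locally refine one another; since their targets are singletons, the hypothesis that covering families are epic reduces $\lle$ to the pointwise equation $g_{x_1}\circ \phi_{x_1 x_2}|_{x_1} = g_{x_2}\circ \phi_{x_1 x_2}|_{x_2}$, which is precisely the cocone-under-$\Phi$ condition in $\bC$.

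With the translation in hand, both directions are routine. For ``tight collage implies colimit'': a cocone $G$ under $\Phi$ in $\bC$ satisfies $\bigvee(G\sb\Phi) = G\sb$, so the allegorical collage property yields a unique $\chi\colon w\rto z$ with $\chi F\sb = G\sb$; the tight-collage property then lifts $\chi$ to a tight $h$ with $h\sb=\chi$ and $hF = G$, and uniqueness of $h$ in $\bC$ is automatic since $F$ is epic. For ``colimit implies tight collage'': given $\chi\colon w\rto z$, each tight $h$ with $h\sb=\chi$ yields a tight cocone $G := hF$ satisfying $G\sb = \chi F\sb$; conversely, such a $G$ satisfies $\bigvee(G\sb\Phi) = \chi\,\bigvee(F\sb\Phi) = \chi F\sb = G\sb$ (using $\bigvee(F\sb\Phi) = F\sb$, which holds for any collage), so by the translation $G$ is a cocone under $\Phi$ in $\bC$ and factors uniquely through the colimit $F$ as $G = hF$; the equation $h\sb F\sb = \chi F\sb$ together with the cancellation clause~(iii) of the collage universal property then forces $h\sb=\chi$. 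These two assignments are mutually inverse, again because $F$ is epic. The main obstacle is the translation lemma, which rests on correctly unpacking $\times_v$ and using ``covering families are epic'' to convert $\lle$ into pointwise equality in $\bC$; the rest is formal bookkeeping with the universal properties of collages.
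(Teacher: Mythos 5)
Your proposal is correct and follows essentially the same route as the paper: both hinge on the translation lemma that a tight cocone $G\colon X\To z$ is a cocone under $\Phi$ in \bC exactly when $\bigvee(G\sb\Phi)\le G\sb$ (the paper proves this by writing $\Phi=A\sb B\pb$ and invoking discrete ordering of maps plus subchordateness, while you unpack $\times_v$ directly and use epicness of covering families to turn $\lle$ into equality --- the same use of the hypothesis at a different level of abstraction), and then both compare the resulting universal properties of the loose collage and the colimit. Your final bookkeeping with clause~(iii) of the collage property and epicness of $F$ is a correct, slightly more explicit version of the paper's closing sentence.
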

\begin{proof}
  First, let $G\colon X\To z$ be any cocone in \bC; we claim that $G$
  is a cocone under \Phi in \bC if and only if $\bigvee G\sb \Phi \le
  G\sb$ in $\lrelk (\bC)$.  If we write $A,B \colon \bigsqcup
  \Phi[x_1,x_2] \To X$ for the two functional arrays that make up
  \Phi, then we have $\Phi = A\sb B\pb$ in $\lrelk (\bC)$.  Thus
  $\bigvee G\sb \Phi \le G\sb$ is equivalent to $G\sb A\sb \le G\sb
  B\sb$, which is equivalent to $G\sb A\sb = G\sb B\sb$ since maps are
  discretely ordered, and thence to $G A = G B$ since $\lrelk (\bC)$
  is subchordate.  But this is precisely to say that $G$ is a cocone
  under \Phi.

  In particular, since $F\sb$ is a collage of \Phi, we have $\bigvee
  F\sb \Phi \le F\sb$, so $F$ is a cocone under \Phi.  Moreover, a
  cocone $G\colon X\To z$ is under \Phi if and only if $G\sb$ factors
  uniquely through $F\sb$ by a (loose) map.  Thus, any such $G$
  factors uniquely through $F$ just when these loose maps are
  necessarily tight; hence $F$ is a colimit of \Phi just when it is a
  tight collage.
\end{proof}

Note that any collage in a chordate framed allegory is automatically
tight.

Tight collages can be expressed as \Fk-enriched colimits, along the
lines of~\cite[Corollary 6.6]{ls:limlax}.  We can therefore hope to
construct the free cocompletion of an \Fk-enriched category with
respect to tight collages of any class of directed congruences.

In order to describe this cocompletion explicitly, we first describe
collages in \Fk itself.  For clarity, we write $\lF_\ka$ for \Fk
regarded as an \Fk-category.  Its tight morphisms are the morphisms in
the ordinary category \Fk, while its loose morphisms $A\to B$ are
morphisms $A_\lambda \to B_\lambda$ in \SUPk.  Thus, a directed
congruence $\Phi\colon X\Rto X$ in $\lF_\ka$ consists of a \ka-ary
family $X$ of objects of \Fk, together with morphisms $\phi_{x
  x'}\colon x_\lambda \to (x')_\lambda$ in \SUPk for all $x,x'\in X$,
satisfying $\xi \le \phi_{xx}(\xi)$ and $\phi_{x'x''}(\phi_{xx'}(\xi))
\le \phi_{x x''}(\xi)$ for all $\xi\in x_{\lambda}$.

\begin{lem}\label{thm:fk-collage}
  Let $X$ be a \ka-ary family of objects of \Fk, and let $\Phi\colon
  X\Rto X$ be a directed congruence in $\lF_\ka$.  The tight collage
  of $\Phi$ is the object $w$ of \Fk described as follows.
  \begin{itemize}[nolistsep]
  \item The \ka-cocomplete poset $w_\lambda$ consists of $X$-tuples
    $(\xi_x)_{x\in X}$, where $\xi_x\in x_\lambda$ and for each
    $x,x'\in X$ we have $\phi_{x x'}(\xi_{x}) \le \xi_{x'}$.
  \item The set $w_\tau$ consists of pairs $(z,\ze)$ where $z\in X$
    and $\ze\in z_\tau$.
  \item The function $w_\tau \to w_\lambda$ sends $(z,\ze)$ to the
    tuple $(\phi_{z x}(j(\ze)))_{x\in X}$.
  \end{itemize}
  The tight cocone $F\colon X\Rto w$ is defined by
  \begin{itemize}[nolistsep]
  \item $(f_z)_\tau(\ze) = (z,\ze)$ for $z\in X$, $\ze \in z_\tau$.
  \item $(f_z)_\lambda(\xi) = (\phi_{z x}(\xi))_{x\in X}$ for
    $z\in X$, $\xi \in z_\lambda$.
  \end{itemize}
\end{lem}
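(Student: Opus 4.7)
The plan is to verify the explicit construction directly, since the statement describes $w$ concretely and the result should follow by unwinding the definitions. I proceed in four steps.

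First, I check that $w$ is a well-defined object of $\Fk$. The poset $w_\lambda$ inherits its $\ka$-ary joins componentwise from the $x_\lambda$, and the defining constraint $\phi_{x x'}(\xi_x) \le \xi_{x'}$ is preserved under such joins since each $\phi_{x x'}$ is $\ka$-cocontinuous. The function $w_\tau \to w_\lambda$ lands inside $w_\lambda$ precisely by the transitivity axiom $\phi_{x' x''}\phi_{x x'} \le \phi_{x x''}$ of the directed congruence, applied with $x$ replaced by $z$. Checking that each $f_z$ is a morphism of $\Fk$ and that together they form a cocone is likewise immediate: the $\lambda$-component is $\ka$-cocontinuous componentwise, and the commuting square reduces to the very definition of $j_w$.

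Second, I identify the right adjoints, to then verify the collage conditions of \autoref{thm:supk-coll-char}. A direct computation shows that $(f_z)\pb\colon w_\lambda \to z_\lambda$ is the $z$-th projection $(\xi_x)_{x\in X} \mapsto \xi_z$: the adjunction $(f_z)\sb \dashv (f_z)\pb$ uses the defining inequality of $w_\lambda$ together with the reflexivity $\xi \le \phi_{z z}(\xi)$. From this, $(f_{x'})\pb (f_x)\sb = \phi_{x x'}$, hence $F\pb F\sb = \Phi$. Moreover,
\[
\bigvee_z (f_z)\sb(f_z)\pb\bigl((\xi_x)_{x\in X}\bigr) = \Bigl(\bigvee_z \phi_{z y}(\xi_z)\Bigr)_{y\in X},
\]
and the constraint on $w_\lambda$ forces each component to lie below $\xi_y$, while reflexivity at $z = y$ gives the reverse inequality; thus $\bigvee F\sb F\pb = 1_w$, so $F\sb$ is a collage of $\Phi$ in the underlying $\SUPk$-category.

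Finally, I verify the tightness property. Given $\chi\colon w \rto z$ in the underlying $\SUPk$-category and a tight cocone $G\colon X\Rto z$ with $G\sb = \chi F\sb$, I define $h$ by $h_\lambda = \chi$ and $h_\tau\bigl((x,\ze)\bigr) = (g_x)_\tau(\ze)$. The defining $\Fk$-square for $h$ then commutes because the corresponding squares commute for each $g_x$ and because $G\sb = \chi F\sb$; the equations $h\sb = \chi$ and $h \circ f_x = g_x$ are immediate from the definitions. Uniqueness is forced, since $h_\lambda$ must equal $\chi$ and the relation $h \circ f_x = g_x$ pins down $h_\tau$ on the image of each $(f_x)_\tau$, which exhausts $w_\tau$. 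Conversely, any tight $h$ with $h\sb = \chi$ yields a tight cocone $h F$ whose loose part is $\chi F\sb$. The only genuine obstacle is the bookkeeping; the conceptual key is that the right adjoints $(f_z)\pb$ are the projections, which is what lets both the collage equations and the tightness bijection reduce cleanly to the directed-congruence axioms.
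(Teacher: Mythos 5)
Your verification is correct and is exactly the direct check that the paper dismisses as ``straightforward verification'': you confirm $w$ is a well-defined object of $\Fk$, identify $(f_z)\pb$ as the $z$-th projection to get $F\pb F\sb = \Phi$ and $\bigvee(F\sb F\pb) = 1_w$ via \autoref{thm:supk-coll-char}, and establish the tightness bijection by reading off $h_\tau$ from the $G$'s on the elements $(x,\ze)=(f_x)_\tau(\ze)$, which exhaust $w_\tau$. Nothing is missing.
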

\begin{proof}
  Straightforward verification.
\end{proof}

Now we need a framed version of \autoref{thm:coll-coll}.

\begin{lem}\label{thm:frcoll-coll}
  Let $X$ and $Y$ be \ka-ary families of objects in an \Fk-category
  \lA, let $F\colon X\To Y$ be a functional array of tight morphisms,
  and let $G\colon Y\To z$ be a cocone of tight morphisms.  Suppose
  $G$ is a tight collage of some \ka-ary directed congruence
  $\Psi\colon Y\Rto Y$, and that for each $y\in Y$, the cocone $F|_y
  \colon X|_y \To y$ is a tight collage of some \ka-ary directed
  congruence $\Phi_y \colon X|_y \Rto X|_y$.  Then $G F\colon X \To z$
  is a tight collage of $F\pb \Psi F\sb$.
\end{lem}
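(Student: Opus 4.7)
The plan is to verify the two defining conditions of a tight collage separately, reducing each to something already established.

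First I would handle the underlying-\SUPk-category condition. Since $\lA$'s underlying \SUPk-category is a \SUPk-category, and $G\sb$ and each $(F|_y)\sb$ are collages of $\Psi$ and $\Phi_y$ (by the assumption that $G$ and $F|_y$ are \emph{tight} collages, which in particular makes the underlying loose cocones ordinary collages), \autoref{thm:coll-coll} applies verbatim to show that $(GF)\sb = G\sb F\sb$ is a collage of $F\pb \Psi F\sb$. No new work is needed here.

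Second, I would verify the universal factorization property. Fix a loose morphism $\chi\colon z\rto z'$; I must show that composition with $GF$ gives a bijection between tight morphisms $h\colon z\to z'$ with $h\sb = \chi$ and tight cocones $H\colon X\To z'$ with $H\sb = \chi (GF)\sb$. The idea is to factor this bijection through tight cocones on $Y$. Since $G$ is a tight collage of $\Psi$, we have a bijection between such $h$ and tight cocones $K\colon Y\To z'$ with $K\sb = \chi G\sb$, under which $h$ corresponds to $K = h G$. For any such $K$, each component $k_y\colon y\to z'$ is tight with $(k_y)\sb = \chi g_y\sb$, so the tight-collage property of $F|_y$ (applied to the loose morphism $\chi g_y\sb$) gives a bijection between tight morphisms $k_y\colon y\to z'$ satisfying $(k_y)\sb = \chi g_y\sb$ and tight cocones $H^y\colon X|_y \To z'$ satisfying $(H^y)\sb = \chi g_y\sb (F|_y)\sb$. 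Assembling these across $y\in Y$ gives a bijection between tight cocones $K\colon Y\To z'$ with $K\sb = \chi G\sb$ and tight cocones $H\colon X\To z'$ with $H\sb = \chi G\sb F\sb$. Composing the two bijections finishes the proof.

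The main piece of bookkeeping is to check that the composite bijection really is ``composition with $GF$,'' i.e.\ that the tight cocone $H$ produced from $h$ is $H = h G F$. Chasing through: $h$ maps to $K = h G$, then each component $k_y = h g_y$ maps via the $F|_y$-bijection to $H^y = k_y F|_y = h g_y F|_y = h (GF)|_y$, so the assembled cocone is exactly $H = hGF$. Naturality and injectivity of the assembly across $y\in Y$ follow from the disjoint-union description of functional arrays. This is the only subtle point; everything else is mechanical. I expect no serious obstacle, since \autoref{thm:coll-coll} already does the loose work and the framed universal property factors cleanly through $Y$.
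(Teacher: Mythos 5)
Your proof is correct and follows essentially the same route as the paper's: invoke \autoref{thm:coll-coll} for the loose collage property, then chain the tight universal property of $G$ with those of the $F|_y$ to get the tight bijection. The paper additionally notes up front that the loose parts of tight collages consist of maps (via \autoref{thm:supk-coll-char}), which is what makes $F\pb$ exist and lets \autoref{thm:coll-coll} apply, but this is the only detail you leave implicit.
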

\begin{proof}
  Note that since tight collages have underlying loose collages, by
  \autoref{thm:supk-coll-char} if $H$ is a tight collage then $H\sb$
  is composed of maps.  In particular, $F\pb$ exists, so that $F\pb
  \Psi F\sb$ makes sense.  Moreover, by \autoref{thm:coll-coll}, $(G
  F)\sb$ is a loose collage of $F\pb \Psi F\sb$, so it remains to
  check the tight part of the universal property.
  
  Now given $\chi\colon z\rto w$, since $G$ is a tight collage, we
  have a bijection between tight morphisms $h\colon z\to w$ with $h\sb
  = \chi$ and tight cocones $K\colon Y\To w$ with $K\sb = \chi G\sb$.
  But for any $y\in Y$, since $F|_y$ is a tight collage, we have a
  bijection between tight morphisms $k_y\colon y\to w$ with $(k_y)\sb
  = \chi (g_y)\sb$ and tight cocones $L_y \colon X|_y \To w$ with
  $(L_y)\sb = \chi (g_y)\sb (F|_y)\sb$.  Putting these together, we
  see that $G F$ is a tight collage as well.
\end{proof}

Thus, given a \ka-ary framed allegory \lA, we let $\lmodk(\lA)$ denote
the full sub-\Fk-category of $[\lA\op,\lF_\ka]$ determined by the
collages of \ka-ary congruences in \lA.  We write $\fy\colon
\lA\to\lmodk(\lA)$ for the restricted Yoneda embedding.

\begin{prop}\label{thm:tcoll-cplt}
  The \Fk-category $\lmodk(\lA)$ can be described directly as follows.
  \begin{itemize}[nolistsep]
  \item Its underlying \ka-ary allegory is $\Modk(\cA)$, as described
    in \autoref{thm:cocompletion}.
  \item A tight morphism from $\Phi\colon X\Rto X$ to $\Theta\colon
    Y\Rto Y$ is a functional array $G\colon X\To Y$ of tight maps in
    \lA such that $\bigvee(G\sb \Phi) \le \Theta G\sb$.
  \item For such a $G$, the underlying loose morphism $\Phi \rto
    \Theta$ in $\lmodk(\lA)$ is the array $\Theta G\sb\colon X \Rto Y$
    in \lA.
  \end{itemize}
\end{prop}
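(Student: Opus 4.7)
The proposition is essentially an unpacking lemma: it rewrites the abstract definition of $\lmodk(\lA)$ (as the closure of \lA under tight collages inside $[\lA\op,\lF_\ka]$) into concrete combinatorial data. My plan is to handle the three bullet points in turn, re-using as much as possible of the unframed analysis in \autoref{thm:cocompletion} and \autoref{thm:coll-coll}.

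For the first bullet, I would invoke \autoref{thm:cocompletion} directly. The underlying \SUPk-category of $[\lA\op,\lF_\ka]$ is $[\cA\op,\SUPk]$, and tight collages in $[\lA\op,\lF_\ka]$ have the same underlying loose collages; hence the underlying \SUPk-category of $\lmodk(\lA)$ is exactly $\Modk(\cA)$, and its description as in \autoref{thm:cocompletion} gives the claimed allegory of objects, loose morphisms, and composition.

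For the second and third bullets I would argue pointwise via \autoref{thm:fk-collage}. The collage in $[\lA\op,\lF_\ka]$ of a congruence $\Phi\colon X\Rto X$ is computed pointwise, so at an object $a$ its tight-element set consists of pairs $(x,\zeta)$ with $x\in X$ and $\zeta\colon a\to x$ a tight map in \lA. By the tight universal property of the collage, a tight morphism from (the collage of) $\Phi$ to (the collage of) $\Theta$ is the same as a tight cocone $K\colon X\To\text{coll}(\Theta)$ under $\Phi$. Using Yoneda component-by-component and the description just recalled, each leg $K_x\colon \fy(x)\to\text{coll}(\Theta)$ is a tight element of $\text{coll}(\Theta)(x)_\tau$, i.e.\ a choice of $y\in Y$ and a tight map $g_x\colon x\to y$ in \lA. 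Collecting these over $x\in X$ assembles exactly into a functional array $G\colon X\To Y$ of tight maps. The same pointwise description (the second clause of \autoref{thm:fk-collage}) tells me that the underlying loose map of $K_x$ is $\bigvee_{y'}\theta_{y,y'}(g_x)\sb$, so the underlying loose cocone $K\sb$ is $\Theta G\sb$, and this is the underlying loose morphism $\Phi\rto\Theta$ in $\lmodk(\lA)$.

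Finally I would rewrite the cocone-under-$\Phi$ condition $\bigvee(K\sb\Phi)\le K\sb$ using $K\sb = \Theta G\sb$: it becomes $\bigvee(\Theta G\sb\Phi)\le \Theta G\sb$. Since $\Theta$ is a congruence, precomposing with $\Theta$ and using $\bigvee(\Theta\Theta)=\Theta$ shows this is equivalent to $\bigvee(G\sb\Phi)\le \Theta G\sb$. Conversely any functional array $G$ satisfying the latter produces via the formula $\Theta G\sb$ a valid loose morphism $\Phi\rto\Theta$ in $\Modk(\cA)$ and a tight lift of it. The main obstacle, and the step that deserves the most care, is keeping the framed/pointwise bookkeeping straight: verifying that tight collages in $[\lA\op,\lF_\ka]$ really are computed pointwise with the universal property recorded in \autoref{thm:fk-collage}, and that the bijection between tight morphisms and such $G$'s is natural in the data so that composition of tight morphisms agrees with the evident composition of functional arrays (which it does, by \autoref{thm:frcoll-coll}).
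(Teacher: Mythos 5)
Your proposal is correct and follows essentially the same route as the paper's proof: identify the loose part with $\Modk(\cA)$ via \autoref{thm:cocompletion}, use the tight universal property of the collage together with the pointwise description from \autoref{thm:fk-collage} and the Yoneda lemma to unpack a tight morphism as a functional array $G$ of tight maps with underlying loose cocone $\Theta G\sb$, and then reduce the closure condition $\bigvee(\Theta G\sb\Phi)\le\Theta G\sb$ to $\bigvee(G\sb\Phi)\le\Theta G\sb$ using $\bigvee(\Theta\Theta)=\Theta$ and $\bigvee 1_Y\le\Theta$. The only cosmetic difference is that you characterize tight cocones under $\Phi$ directly rather than parametrizing tightenings by a fixed loose morphism $\Psi$, which amounts to the same bijection.
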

\begin{proof}
  Write $\widehat{(-)}\colon \lA \to [\lA\op,\lF_{\ka}]$ for the
  \Fk-enriched Yoneda embedding of \lA, and write $\ncoll(-)$ for the
  collage of a directed congruence.  First of all, since the loose
  parts of colimits in \Fk coincide with colimits in \SUPk, and the
  analogous cocompletion of \SUPk-categories could have been
  constructed in the same way using the \SUPk-enriched Yoneda
  embedding, the loose morphisms must be the same as those described
  in \autoref{thm:cocompletion} for the \SUPk-case.

  Now, let $\Phi\colon X\Rto X$ to $\Theta\colon Y\Rto Y$ and let
  $\Psi\colon \ncoll(\widehat{\Phi}) \rto \ncoll(\widehat{\Theta})$ be
  a loose morphism in $[\lA\op,\lF_{\ka}]$.  By
  \autoref{thm:cocompletion}, $\Psi$ is determined by an array $\Psi
  \colon X\Rto Y$ such that $\bigvee(\Theta \Psi) \le \Psi$ and
  $\bigvee (\Psi\Phi)\le \Psi$.  By the universal property of a tight
  collage, a tightening of $\Psi$ is determined by a tight cocone
  $G\colon \widehat{X} \To \ncoll (\widehat{\Theta})$ in
  $[\lA\op,\lF_{\ka}]$ such that $(g_x)\sb = \Psi (f_x)\sb$ for every
  $x$, where $F\colon \widehat{X}\To \ncoll (\widehat{\Phi})$ is the
  colimiting cocone.

  Now each $g_x$ is a tight morphism from $\widehat{x}$ to $\ncoll
  (\widehat{\Theta})$.  By the enriched Yoneda lemma, this is
  equivalently an element of $\ncoll (\widehat{\Theta})(x)_\tau$.  And
  since colimits in $[\lA\op,\lF_{\ka}]$ are pointwise, this is the
  same as a tight element of the collage of $\widehat{\Theta}(x)$.
  Finally, by \autoref{thm:fk-collage}, this is just a choice of some
  $g(x)\in Y$ and a tight morphism $g_x\colon x\to g(x)$ in \lA.

  Similarly, $f_x$ can be identified with the identity
  $1_x\colon x\to x$.

  By \autoref{thm:fk-collage}, the loose morphism underlying $g_x$ in
  $[\lA\op,\lF_{\ka}]$ is determined by the family of composites
  $(\theta_{g(x),y} (g_x)\sb)_{y\in Y}$.  Similarly, the loose
  morphism underlying $f_x$ is determined by the family
  $(\phi_{x,x'})_{x'\in X}$.  Thus, the condition $(g_x)\sb = \Psi
  (f_x)\sb$ asks that
  \[ \theta_{g(x),y} (g_x)\sb = \bigvee_{x'} (\psi_{x',y} \phi_{x,x'}), \]
  i.e.\ that $\Theta G\sb = \bigvee (\Psi\Phi) = \Psi$.
  So it suffices to show that $\Theta G\sb$ determines a loose
  morphism $\ncoll(\widehat{\Phi}) \rto \ncoll(\widehat{\Theta})$ if
  and only if $\bigvee(G\sb \Phi) \le \Theta G\sb$.  But the former is
  equivalent to
  \[ \bigvee(\Theta \Theta G\sb) \le \Theta G\sb \quad\text{and}\quad
  \bigvee (\Theta G\sb \Phi)\le \Theta G\sb.
  \]
  Since $\bigvee(\Theta\Theta) = \Theta$, the first inequality is
  automatic, while since additionally $\bigvee 1_Y \le \Theta$, the
  second is equivalent to $\bigvee(G\sb \Phi) \le \Theta G\sb$.
\end{proof}

\begin{remark}
  To avoid confusion, if $G\colon X\To Y$ is a functional array of
  tight maps in \lA representing a tight morphism $\Phi\to\Theta$ in
  $\lmodk(\lA)$, we reserve the notation $G\sb$ for the underlying
  array of loose maps in \lA, and write $G\sbb$ for the underlying
  loose morphism $\Phi\lto\Theta$ in $\lmodk(\lA)$ (which is
  represented by $\Theta G\sb$ in \lA).
\end{remark}

Composition of tight maps in $\lmodk(\lA)$ is just composition of
functional arrays in \lA.  \autoref{thm:tcoll-cplt} implies that for
$G\colon \Phi\to\Theta$ and $H\colon \Theta \to \Psi$, we have $H\sbb
G\sbb = (H G)\sbb$.  But once we show in \autoref{thm:fall-cocplt}
that $\lmodk(\lA)$ is a framed allegory, we can deduce this directly.
Namely, we have
\[ H\sbb G\sbb = \bigvee (\Psi H\sb \Theta G\sb)
\le \bigvee (\Psi\Psi H\sb G\sb) = \Psi (HG)\sb = (HG)\sbb;
\]
hence $H\sbb G\sbb = (H G)\sbb$, since both are maps in $\lmodk(\lA)$
and maps are discretely ordered.

\begin{remark}\label{rmk:tight-frs}
  If we regard congruences as a decategorification of the
  bicategory-enriched categories
  of~\cite{street:cauchy-enr,ckw:axiom-mod}, then loose morphisms
  between them decategorify \emph{modules} (a.k.a.\ profunctors).
  \autoref{thm:tcoll-cplt} says that the \emph{tight} maps between
  them decategorify \emph{functors}, just as framed allegories
  decategorify proarrow equipments.

  For instance, if \cA is a monoidal \ka-cocomplete poset, regarded as
  a \Fk-enriched category \lA with one object and only the identity
  being tight, then a \ka-ary congruence in \lA is precisely a
  symmetric \cA-enriched category with a \ka-small set of objects.  In
  this case, the tight morphisms in $\lmodk(\lA)$ are precisely
  \cA-enriched functors, while its loose morphisms are \cA-enriched
  profunctors.

  Similarly, if \lA is $\lRel_{\un}(\bC)$ for regular \bC, then a
  unary congruence has a unique representation as an internal
  equivalence relation.  If we view these as a particular sort of
  internal category, then the tight morphisms in $\lmodk(\lA)$ are
  precisely internal functors, while its loose morphisms are internal
  profunctors whose defining span is jointly monic.

  The general case is analogous, for suitably generalized ``functors''
  and ``profunctors''.
\end{remark}

Unlike the case of \SUPk-enriched categories, tight collages are
\emph{not} absolute \Fk-colimits.  Thus the universal property of
$\lmodk (\lA)$ differs from that of $\Modk (\cA)$.

\begin{lem}\label{thm:fall-cocplt}
  If \lA is a \ka-ary framed allegory, then:
  \begin{enumerate}[leftmargin=*,nolistsep]
  \item $\lmodk(\lA)$ is a \ka-ary framed allegory.\label{item:fmod1}
  \item $\fy\colon \lA\to\lmodk(\lA)$ is a \ka-ary framed allegory
    functor.\label{item:fmod3}
  \item All \ka-ary congruences in $\lmodk(\lA)$ have tight
    collages.\label{item:fmod2}
  \item If \lB is a \ka-ary framed allegory with tight collages of
    \ka-ary congruences, then
    \[ \FALLk(\lmodk(\lA),\lB\big)^\ell \xto{(-\circ \fy)}
    \FALLk\big(\lA,\lB\big)
    \]
    is an equivalence of categories, where the $(-)^\ell$ on the
    domain denotes the full subcategory determined by the functors
    which preserve tight collages of \ka-ary congruences coming from
    \lA.\label{item:fmod4}
  \end{enumerate}
\end{lem}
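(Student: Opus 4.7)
The plan is to mirror the proof of \autoref{thm:alleg-cocplt}, but in the framed \Fk-enriched setting, adding the tight structure at each stage. Parts~\ref{item:fmod1} and~\ref{item:fmod3} reduce to routine verification using the explicit description in \autoref{thm:tcoll-cplt}: the underlying \ka-ary allegory of $\lmodk(\lA)$ is $\Modk(\cA)$, which is a \ka-ary allegory by \autoref{thm:alleg-cocplt}\ref{item:mod1}; the tight maps of \autoref{thm:tcoll-cplt} evidently compose (composing functional arrays of tight maps in \lA, and checking the defining inequality persists), and the assignment $G\mapsto G\sbb$ is functorial by the brief calculation immediately following that proposition. The Yoneda embedding \fy\ sends tight morphisms in \lA to tight morphisms in $\lmodk(\lA)$ by construction, and the enriched Yoneda lemma in \Fk\ makes it fully faithful as a framed allegory functor.

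For~\ref{item:fmod2}, let $\Xi\colon W\Rto W$ be a \ka-ary congruence in $\lmodk(\lA)$, where $W=\ff{\Phi_w}{w\in W}$ with each $\Phi_w$ a \ka-ary congruence on some family $X_w$ in \lA. Its underlying \SUPk-collage in $[\lA\op,\SUPk]$ exists by the proof of \autoref{thm:alleg-cocplt}\ref{item:mod2} and, by a double application of \autoref{thm:coll-coll}, is also the collage of a single \ka-ary directed congruence $\widetilde\Xi\colon X\Rto X$ in \cA, where $X=\bigsqcup_w X_w$. The framed refinement \autoref{thm:frcoll-coll}, applied pointwise using \autoref{thm:fk-collage}, upgrades this to a tight collage in $[\lA\op,\lF_\ka]$, so it lies in $\lmodk(\lA)$. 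Symmetry of $\widetilde\Xi$, making it a genuine congruence, follows from the involution on $\Xi$ by the same computation as in the proof of \autoref{thm:alleg-cocplt}\ref{item:mod2}.

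For~\ref{item:fmod4}, apply the enriched version of~\cite[4.99]{kelly:enriched} in \Fk: full-faithfulness of \fy\ yields an equivalence between \Fk-functors $\lmodk(\lA)\to\lB$ that are pointwise left Kan extensions of their restrictions to \lA\ and \Fk-functors $\lA\to\lB$ admitting such extensions. Since $\lmodk(\lA)\hookrightarrow[\lA\op,\lF_\ka]$, \fy\ is \Fk-dense, so every \Fk-functor on $\lmodk(\lA)$ is automatically a pointwise left Kan extension of its restriction to \lA. Unlike \SUPk-collages, however, tight collages are \emph{not} absolute in \Fk, so such a Kan extension need not preserve the tight collages that exhibit the objects of $\lmodk(\lA)$; demanding that it does is exactly the $(-)^\ell$ condition. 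Conversely, because every object of $\lmodk(\lA)$ is by construction a tight collage of a \ka-ary congruence in \lA, and \lB\ has all such tight collages, every \Fk-functor $\lA\to\lB$ admits a pointwise left Kan extension, as in~\cite[4.98]{kelly:enriched}. That the extension is a framed allegory functor follows from the tight analogue of \autoref{thm:coll-mor} packaged into \autoref{thm:tcoll-cplt}, as in \autoref{thm:alleg-cocplt}\ref{item:mod4}.

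The main obstacle will be~\ref{item:fmod2}: the naive \SUPk-argument only produces an underlying loose collage, and one must genuinely verify that the iterated-collage recipe \autoref{thm:frcoll-coll} upgrades it to a pointwise tight collage in the presheaf \Fk-category, which means unwinding the explicit description of tight collages in \Fk\ from \autoref{thm:fk-collage}. A secondary subtlety in~\ref{item:fmod4} is isolating exactly which \Fk-functors $\lmodk(\lA)\to\lB$ satisfy the $(-)^\ell$ condition; the non-absoluteness of tight collages is precisely what distinguishes the framed universal property from the \SUPk\ one, and is why the restriction is needed here instead of being automatic as it was in \autoref{thm:alleg-cocplt}\ref{item:mod4}.
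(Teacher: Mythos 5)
Your proposal follows essentially the same route as the paper for all four parts: reduce (i) and (ii) to \autoref{thm:alleg-cocplt} via the explicit description in \autoref{thm:tcoll-cplt}, get (iii) from \autoref{thm:frcoll-coll} applied in the presheaf \Fk-category, and get (iv) from Kelly's 4.98/4.99 together with \Fk-density of \fy, with the $(-)^\ell$ restriction explained by the non-absoluteness of tight collages. Your discussion of (iv) in particular matches the paper's argument point for point.

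There is one concrete omission in part (i). A framed allegory requires by definition a functor $J\colon \nTMap(\lA)\to\nMap(\cA)$, so to show $\lmodk(\lA)$ is a framed allegory you must verify that every tight morphism $G\colon\Phi\to\Theta$ of \autoref{thm:tcoll-cplt} has an underlying loose \emph{map}, i.e.\ that $\Theta G\sb$ has a right adjoint. This is the only substantive content of (i) --- the underlying allegory is already handled by \autoref{thm:alleg-cocplt} --- and your ``routine verification'' addresses composition and functoriality but never this adjointness. The check is short (one exhibits $G\pb\Theta$ as the right adjoint, using $\bigvee(\Theta\Theta)=\Theta$ and $\bigvee(G\pb G\sb)\ge \bigvee 1$), but it should appear explicitly; note also that the calculation $H\sbb G\sbb=(HG)\sbb$ you cite is, in the paper, partly justified \emph{after} this fact is established. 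A smaller quibble: in (iii), \autoref{thm:frcoll-coll} does not ``upgrade'' a loose collage to a tight one; rather, the tight collage of $\Xi$ already exists in the cocomplete presheaf \Fk-category, and \autoref{thm:frcoll-coll} identifies it with the tight collage of a directed congruence in \lA, which one then checks is a genuine congruence so that the collage lies in $\lmodk(\lA)$. Your argument reaches the right place but the stated mechanism is slightly off.
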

\begin{proof}
  Since the underlying \SUPk-category of $\lmodk(\lA)$ is
  $\Modk(\cA)$, condition~\ref{item:fmod1} will follow from
  \autoref{thm:alleg-cocplt}\ref{item:mod1} as soon as we show that
  every tight morphism in $\lmodk(\lA)$ is a map.  But if $G\colon
  \Phi\to \Theta$ is a tight morphism as in \autoref{thm:tcoll-cplt},
  with underlying loose morphism $G\sbb = \Theta G\sb$, then $G\pb
  \Theta$ defines a loose morphism $\Theta \rto \Phi$, and we have
  \begin{gather*}
    \bigvee(\Theta G\sb G\pb \Theta) \le \bigvee (\Theta\Theta)
    = \Theta \mathrlap{\qquad\text{and}}\\
    \bigvee(G\pb \Theta \Theta G\sb) = G\pb \Theta G\sb
    \ge \bigvee (G\pb G\sb \Phi) \ge \Phi.
  \end{gather*}
  Thus, $G\pb \Theta$ is right adjoint to $\Theta G\sb$,
  so~\ref{item:fmod1} holds.  And since an \Fk-functor is a \ka-ary
  framed allegory functor just when its underlying \SUPk-functor is a
  \ka-ary allegory functor,~\ref{item:fmod3} is immediate from
  \autoref{thm:alleg-cocplt}\ref{item:mod3}.

  Now by \autoref{thm:frcoll-coll}, the tight collage in
  $[\lA\op,\lF_{\ka}]$ of any \ka-ary congruence in $\lmodk (\lA)$ is
  also the tight collage of a \ka-ary directed congruence in \lA.  As
  in \autoref{thm:alleg-cocplt}, this directed congruence is actually
  a congruence, so its collage also lies in $\lmodk (\lA)$,
  yielding~\ref{item:fmod2}.

  For~\ref{item:fmod4}, we also argue essentially as in
  \autoref{thm:alleg-cocplt}.  Note that an \Fk-functor $F\colon
  \lmodk(\lA)\to\lB$ is a \ka-ary framed allegory functor if and only
  if $F\circ \fy$ is so.  Thus by~\cite[4.99]{kelly:enriched}, since
  \fy is fully faithful, $(-\circ \fy)$ induces an equivalence
  \[ \FALLk(\lmodk(\lA),\lB)^\ell \xto{(-\circ \fy)} \FALLk(\lA,\lB)'
  \]
  for any \lB.  Here the domain is the full subcategory of functors
  that are (pointwise) left Kan extensions of their restrictions to
  \lA, and the codomain is the full subcategory of functors which
  admit (pointwise) left Kan extensions along \fy.  Since
  $\lmodk(\lA)$ is a full sub-\Fk-category of $[\lA\op,\lF_{\ka}]$,
  the inclusion \fy is \Fk-dense.  Thus,
  by~\cite[5.29]{kelly:enriched}, $\FALLk(\lmodk(\lA),\lB)^\ell$
  consists of the functors that preserve tight collages of \ka-ary
  congruences coming from \lA.  And since \lB has collages of \ka-ary
  congruences, every object of $\lmodk (\lA)$ is a colimit (a tight
  collage) of a \ka-ary congruence in \lA, and these colimits are
  preserved by maps out of objects of \lA (since they are colimits in
  a presheaf category), the same proof as
  for~\cite[4.98]{kelly:enriched} proves that $\FALLk(\lA,\lB)' =
  \FALLk(\lA,\lB)$.
\end{proof}

We write $G\pbb\colon \Theta \rto \Phi$ for the right adjoint of
$G\sbb$ (represented by $G\pb \Theta$ as above).  Now we make the
following simple observation.

\begin{prop}\label{thm:mod-chordate}
  For a \ka-ary framed allegory \lA, the following are equivalent.
  \begin{itemize}[nolistsep]
  \item The chordate reflection of $\lmodk(\lA)$.
  \item $\Modk$ of the chordate reflection of \lA.
  \end{itemize}
\end{prop}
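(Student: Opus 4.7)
The plan is to observe that both constructions yield the same chordate framed allegory, namely the one whose underlying allegory is $\Modk(\cA)$ (where $\cA$ is the underlying allegory of $\lA$). This should follow almost immediately by unpacking definitions, with no real obstacle.

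First I would recall the description of the chordate reflection. By the paper's definition, the inclusion $\ALLk\into\FALLk$ picks out framed allegories in which $J\colon\nTMap(\lA)\to\nMap(\cA)$ is the identity, and the chordate reflection is its left biadjoint, which ``forgets the tight maps'' and re-adopts all loose maps as tight. Thus, up to the identification $\ALLk\simeq\FALLk^{\mathrm{ch}}$, the chordate reflection of a framed allegory $\lA$ is simply its underlying allegory $\cA$.

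Next I would apply this to the two sides. For the left-hand side, the chordate reflection of $\lmodk(\lA)$ is the underlying allegory of $\lmodk(\lA)$. By \autoref{thm:tcoll-cplt}, this underlying allegory is precisely $\Modk(\cA)$. For the right-hand side, the chordate reflection of $\lA$ is $\cA$ viewed as a chordate framed allegory, and applying $\Modk$ (a construction on allegories) gives $\Modk(\cA)$, viewed again as chordate. Hence both constructions produce $\Modk(\cA)$ as a chordate framed allegory, and the claim follows.

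I do not anticipate any real obstacle: the proof is essentially a matter of observing that the underlying-allegory functor commutes with $\Modk$ in the evident way, and that the chordate reflection is exactly the underlying-allegory functor after re-framing. If one wanted to be fully pedantic, the only check would be to verify that the involution, meets, and joins in $\Modk(\cA)$ (inherited pointwise from $\cA$ via the Yoneda embedding) agree on the nose with those of the underlying allegory of $\lmodk(\lA)$, but this is immediate from \autoref{thm:tcoll-cplt} since the loose hom-posets in $\lmodk(\lA)$ were defined to coincide with those of $\Modk(\cA)$.
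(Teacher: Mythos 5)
Your proof is correct and follows the paper's primary argument: the paper's own proof simply states that the result ``follows immediately from the explicit description in \autoref{thm:tcoll-cplt},'' which is exactly the unpacking you carry out. (The paper also sketches an alternative argument via universal properties---both sides reflect $\lA$ into chordate $\ka$-ary allegories with collages of $\ka$-ary congruences, using that collages in a chordate framed allegory are automatically tight---but your direct verification is the same as its first route.)
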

\begin{proof}
  This follows immediately from the explicit description in
  \autoref{thm:tcoll-cplt}, but we can also show directly that they
  have the same universal property: they both reflect \lA into
  chordate \ka-ary allegories with collages of \ka-ary congruences
  (since every collage in a chordate framed allegory is automatically
  tight, by \autoref{thm:cauchydense}\ref{item:cd3}).
\end{proof}

Therefore, although in \S\ref{sec:exact-completion} we constructed the
\ka-ary exact completion of a \ka-ary site \bC by applying \Modk to
the chordate reflection of $\lrelk (\bC)$, it could equally well be
defined as the chordate reflection of $\lmodk(\lrelk(\bC))$.  This is
useful because the latter admits an alternative description as a
category of fractions.

\begin{defn}\label{def:weq}
  We say that a tight map $f\colon x\to y$ in a framed allegory is a
  \textbf{weak equivalence} if $f\sb$ is an isomorphism.
\end{defn}

Since $f\sb$ has a right adjoint $f\pb$, this is equivalent to
requiring $1_x = f\pb f\sb$ and $1_y = f\sb f\pb$.

\begin{prop}\label{thm:mod-weq}
  A tight map $G\colon \Phi \to \Theta$ in $\lmodk(\lA)$, as in
  \autoref{thm:tcoll-cplt}, is a weak equivalence if and only if $\Phi
  = G\pb \Theta G\sb$ and $\bigvee 1_Y \le \bigvee (\Theta G\sb G\pb
  \Theta)$ in \lA.
\end{prop}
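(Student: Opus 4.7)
The strategy is to unfold \autoref{def:weq} using the explicit description of $\lmodk(\lA)$ from \autoref{thm:tcoll-cplt} and \autoref{thm:cocompletion}, and compute the two identities $G\pbb G\sbb = 1_\Phi$ and $G\sbb G\pbb = 1_\Theta$ directly.

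By \autoref{thm:tcoll-cplt} and the computation in the proof of \autoref{thm:fall-cocplt}, $G\sbb\colon \Phi \rto \Theta$ is represented by the array $\Theta G\sb$ in \lA, with right adjoint $G\pbb$ represented by $G\pb \Theta$. By \autoref{def:weq}, $G$ is a weak equivalence precisely when $G\sbb$ is an isomorphism in $\Modk(\cA)$, which (since $G\sbb$ is a map) holds iff $G\pbb G\sbb = 1_\Phi$ and $G\sbb G\pbb = 1_\Theta$. By \autoref{thm:cocompletion}, the identities in $\Modk(\cA)$ are $\Phi$ and $\Theta$ themselves, and composition is given by $\bigvee(\Psi'\Psi)$.

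For the first identity I would expand
\[
G\pbb G\sbb \;=\; \bigvee\bigl((G\pb \Theta)(\Theta G\sb)\bigr).
\]
Because $G\sb$ and $G\pb$ are functional arrays, each of $G\pb\Theta$ and $\Theta G\sb$ is already an array with singleton entries $(g_{x'})\pb \theta_{y,g(x')}$ and $\theta_{g(x),y}(g_x)\sb$; joining over the intermediate $y$ and using $\bigvee(\Theta\Theta) = \Theta$ (from \autoref{def:all-cong}, since $\bigvee 1_Y \le \Theta$) collapses the expression to the array $G\pb \Theta G\sb$ with $(x,x')$-entry $(g_{x'})\pb \theta_{g(x),g(x')} (g_x)\sb$. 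Thus $G\pbb G\sbb = \Phi$ iff $G\pb \Theta G\sb = \Phi$.

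For the second identity an analogous computation gives $G\sbb G\pbb = \bigvee(\Theta G\sb G\pb \Theta)$, and the remaining task is to show that the equality $\bigvee(\Theta G\sb G\pb\Theta) = \Theta$ reduces to the stated inequality. One direction is automatic: each counit $(g_x)\sb (g_x)\pb \le 1_{g(x)}$ gives $\bigvee(G\sb G\pb) \le \bigvee 1_Y$ as arrays $Y \Rto Y$, so
\[
\bigvee(\Theta G\sb G\pb \Theta) \;\le\; \bigvee\bigl(\Theta\cdot\bigvee 1_Y\cdot\Theta\bigr) \;=\; \bigvee(\Theta\Theta) \;=\; \Theta.
\]
Conversely, once $\bigvee 1_Y \le \bigvee(\Theta G\sb G\pb\Theta)$ is assumed, one sandwiches
\[
\Theta \;=\; \bigvee\bigl(\Theta\cdot \bigvee 1_Y \cdot \Theta\bigr)
\;\le\; \bigvee\bigl(\Theta \cdot \bigvee(\Theta G\sb G\pb\Theta) \cdot \Theta\bigr)
\;=\; \bigvee(\Theta G\sb G\pb\Theta),
\]
the final equality absorbing the outer $\Theta$'s via $\bigvee(\Theta\Theta) = \Theta$. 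The only subtle bookkeeping is tracking when matrix composites need a genuine join (for the two $\Theta$'s) versus pin down a unique entry (for the functional arrays $G\sb$ and $G\pb$), and this follows mechanically from the formulas already established.
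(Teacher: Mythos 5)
Your proposal is correct and follows essentially the same route as the paper's proof: unfold $G\sbb = \Theta G\sb$ and $G\pbb = G\pb\Theta$, identify the identities of $\Phi$ and $\Theta$ in $\lmodk(\lA)$ with $\Phi$ and $\Theta$ themselves, collapse $\bigvee(G\pb\Theta\Theta G\sb)$ to $G\pb\Theta G\sb$ via $\bigvee(\Theta\Theta)=\Theta$, and reduce $\Theta = \bigvee(\Theta G\sb G\pb\Theta)$ to the stated inequality using the counits of $G$ and absorption of the outer $\Theta$'s. The extra entry-level bookkeeping you supply is sound but not needed beyond what the paper records.
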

\begin{proof}
  Since $G\sbb = \Theta G\sb$ in \lA, and the identity morphism of
  $\Phi$ in $\lmodk(\lA)$ is \Phi itself in \lA, asking that $G\pbb
  G\sbb = 1_\Phi$ in $\lmodk(\lA)$ is to ask that $\Phi = \bigvee(G\pb
  \Theta\o \Theta G\sb)$ in \lA, which is equivalent to $\Phi = G\pb
  \Theta G\sb$.

  Similarly, asking that $1_\Theta = G\sbb G\pbb$ is to ask that
  $\Theta = \bigvee(\Theta G\sb G\pb \Theta)$ in \lA.  Since $G$
  consists of maps, we always have $\bigvee(\Theta G\sb G\pb \Theta)
  \le \bigvee(\Theta\Theta) = \Theta$, so the content is in $\Theta
  \le \bigvee(\Theta G\sb G\pb \Theta)$.  But since $\bigvee 1_Y\le
  \Theta$, this implies $\bigvee 1_Y \le \bigvee(\Theta G\sb G\pb
  \Theta)$, while the converse holds since $\bigvee(\Theta\Theta) =
  \Theta$.
\end{proof}

\begin{remark}
  If we regard tight maps of congruences as ``functors'' as in
  \autoref{rmk:tight-frs}, \autoref{thm:mod-weq} says that that the
  weak equivalences are those which are ``fully faithful'' and
  ``essentially surjective''.
\end{remark}

Recall that a subcategory $\bW$ of a category \bA which contains all
the objects is said to admit a \emph{calculus of right fractions} if
\begin{itemize}[noitemsep]
\item Given $f\colon y\to z$ in \bA and $p\colon x\to z$ in \bW, there
  exist $g\colon w\to x$ and $q\colon w\to y$ such that $q\in\bW$ and
  $p g = f q$ (the \emph{right Ore condition}), and
\item Given $p\colon y\to z$ in \bW and $f,g\colon x\toto y$ in \bA
  such that $p f = p g$, there exists $q\colon w\to x$ in \bW such
  that $f q = g q$ (the \emph{right cancellability condition}).
\end{itemize}
In this case, the localization $\bA[\bW^{-1}]$ can be constructed as
follows: its objects are those of \bA, and its morphisms from $x$ to
$y$ are equivalence classes of spans
\[ x \xleftarrow{p} w \xto{f} y
\]
such that $p\in \bW$, under the equivalence relation that identifies
$(p,f)$ with $(p',f')$ if we have a commutative diagram
\[ \xymatrix@-.5pc{
  & w \ar[dl]_p \ar[dr]^f \\
  x & z  \ar[u] \ar[d] & y\\
  & w' \ar[ul]^{p'} \ar[ur]_{f'}
}\]
with the common composite $z\to x$ in \bW.  The composite of $(p,f)$
with $(q,g)$ is defined to be $(p r, g h)$, where $q h = f r$ and
$r\in\bW$ (such $h$ and $r$ exist by the right Ore condition).

\begin{lemma}\label{thm:ktabun}
  In a weakly \ka-tabular framed allegory with tight collages of
  \ka-ary congruences, if $f,g\colon x\toto y$ are tight maps such
  that $f\sb = g\sb$, then there is a weak equivalence $k\colon u\to
  x$ such that $f k = g k$.
\end{lemma}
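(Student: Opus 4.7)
The idea is to collapse the covering cocone supplied by weak $\ka$-tabularity into a single tight morphism by forming a tight collage of its kernel congruence, and then appealing to collage universality. First, apply \autoref{thm:wkktab}(ii) to the pair $f,g$ (using $f\sb = g\sb$) to obtain a $\ka$-ary cocone $P\colon U\To x$ of tight maps with $fP = gP$ and $\bigvee(P\sb P\pb) = 1_x$. Define $\Phi \coloneqq P\pb P\sb\colon U\Rto U$; a short check using that each $(p_v)\sb$ is a map (so $(p_v)\sb(p_v)\pb \le 1_x$ and $1_v \le (p_v)\pb(p_v)\sb$) shows $\Phi$ is a $\ka$-ary congruence. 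The identities $\Phi = P\pb P\sb$ and $1_x = \bigvee(P\sb P\pb)$ are exactly the hypotheses of \autoref{thm:coll-char}, so $P\sb$ is a loose collage of $\Phi$ at $x$. By the standing hypothesis there is also a tight collage $F\colon U\To u$ of $\Phi$, whose underlying $F\sb$ is another loose collage of $\Phi$.

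Now invoke universality. Since $P\sb$ is a loose cocone under $\Phi$, the loose collage property of $F\sb$ produces a unique loose $\chi\colon u\rto x$ with $\chi F\sb = P\sb$; the symmetric construction going the other way shows $\chi$ is invertible. The tight part of the universal property of $F$ applied to this $\chi$ together with the tight cocone $P$ (which satisfies $P\sb = \chi F\sb$ by construction) then promotes $\chi$ to a unique tight map $k\colon u\to x$ with $k\sb = \chi$ and $kF = P$. Since $k\sb = \chi$ is an iso, $k$ is a weak equivalence. To verify $fk = gk$, note $fkF = fP = gP = gkF$ as tight cocones and $(fk)\sb = f\sb k\sb = g\sb k\sb = (gk)\sb$; both $fk$ and $gk$ thus match the same data in the tight-collage bijection for $F$ at the object $y$, and uniqueness forces $fk = gk$.

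No serious obstacle arises; once the correct congruence $\Phi$ is identified, everything is routine bookkeeping with map adjunctions as in Lemmas \ref{thm:cov-detect}--\ref{thm:entire-detect2}. The one delicate point worth flagging is the interplay of the loose and tight halves of the universal property of $F$ when building $k$: one first uses the loose collage property to locate $\chi$ and deduce its invertibility by comparison with $P\sb$, and only then invokes the tight bijection (keyed on the already-constructed tight cocone $P$) to obtain $k$ realizing both $k\sb = \chi$ and $kF = P$ simultaneously.
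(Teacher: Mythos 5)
Your proof is correct and follows essentially the same route as the paper: obtain a covering family $P$ with $fP=gP$ from weak $\ka$-tabularity, form the kernel congruence $\Phi = P\pb P\sb$, compare the tight collage of $\Phi$ with the loose collage $P\sb$ to get an invertible loose map, tighten it to $k$, and conclude $fk=gk$ from the collage's universal property. Your elaboration of the final step via the tight-collage bijection is a correct unpacking of what the paper leaves as "the universal property of $Q$ implies $fk=gk$."
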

\begin{proof}
  By weak \ka-tabularity, if $f\sb = g\sb$ we have $f P = g P$ for a
  covering family $P\colon V \To x$.  Let $\Phi = P\pb P\sb$ be the
  congruence ``kernel'' of $P$, and let $Q\colon V\To u$ be a tight
  collage of $\Phi$.  In particular, $Q\sb$ is a loose collage of
  \Phi.  Since $P\sb$ is also a loose collage of $\Phi$ (by
  \autoref{thm:coll-char}), there is a unique loose isomorphism
  $h\colon u \lto x$ with $P\sb = h Q\sb$.  But because $P\sb$ admits
  the tightening $P$, and $Q$ is a tight collage, $h$ admits a unique
  tightening $k\colon u\to x$ with $P = k Q$.  Since $k\sb = h$ is an
  isomorphism, $k$ is a weak equivalence, and the universal property
  of $Q$ implies $f k = g k$.
\end{proof}

\begin{thm}\label{thm:rfrac}
  If \lA is weakly \ka-tabular with tight collages of \ka-ary
  congruences, then the weak equivalences in $\nTMap(\lA)$ admit a
  calculus of right fractions, and
  \[\nTMap (\lA)[\cW^{-1}] \cong \nMap (\cA).\]
\end{thm}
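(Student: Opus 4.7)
The plan is to construct the induced functor $\tilde J\colon \nTMap(\lA)[\cW^{-1}] \to \nMap(\cA)$ from $J\colon f \mapsto f\sb$, which sends weak equivalences to isomorphisms by definition, and then verify both the calculus of right fractions and that $\tilde J$ is an isomorphism. Right cancellability is immediate from \autoref{thm:ktabun}: if $pf = pg$ with $p$ a weak equivalence, then $f\sb = g\sb$, and the lemma supplies the required equalising weak equivalence. For the right Ore condition, given $f\colon y \to z$ tight and $p\colon x \to z$ a weak equivalence, I would invoke \autoref{thm:pb-detect} to produce a local pre-pullback $A\colon U \To x$, $B\colon U \To y$ with $pA = fB$ and (equivalently) $p\pb f\sb = \bigvee(A\sb B\pb)$. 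Since $p\sb$ is an isomorphism, the loose map $p\pb f\sb$ is itself a map, so \autoref{thm:entire-detect2} forces $B$ to be a covering family. The tight collage $K\colon U \To w$ of $\Psi = B\pb B\sb$ then yields a weak equivalence $q\colon w \to y$ with $qK = B$, exactly as in the proof of \autoref{thm:ktabun} (both $q\sb q\pb = 1_y$ and $q\pb q\sb = 1_w$ telescope to squares of $\bigvee B\sb B\pb = 1_y$ and $\bigvee K\sb K\pb = 1_w$). Using $A_u\sb = p\pb f\sb B_u\sb$, obtained from $p\sb A_u\sb = f\sb B_u\sb$, one checks that $A\sb$ is a cocone under $\Psi$, yielding a tight $g\colon w \to x$ with $gK = A$; then $pg = fq$ follows from $pgK = pA = fB = fqK$ and the collage universal property.

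Bijectivity on objects is trivial, so it remains to show $\tilde J$ is fully faithful. For fullness, given a loose map $\phi\colon x \to y$ in $\cA$, I would weak-tabulate $\phi = \bigvee(F\sb G\pb)$ with $F\colon U \To y$ and $G\colon U \To x$ tight cocones; again $G$ covers by \autoref{thm:entire-detect2}. The crucial observation is that $\phi\phi\o \le 1_y$ expands to $\bigvee_{u,u'} F_u\sb G_u\pb G_{u'}\sb F_{u'}\pb \le 1_y$, and two applications of the adjunction $F_u\sb \dashv F_u\pb$ convert this into the compatibility $G_u\pb G_{u'}\sb \le F_u\pb F_{u'}\sb$ for all $u, u'$. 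This makes $F\sb$ automatically a cocone under $\Psi = G\pb G\sb$, since $F_{u'}\sb G_{u'}\pb G_u\sb \le F_{u'}\sb F_{u'}\pb F_u\sb \le F_u\sb$. Taking the tight collage of $\Psi$ yields $K\colon U \To w$ together with a weak equivalence $p\colon w \to x$ satisfying $pK = G$ and a tight $g\colon w \to y$ satisfying $gK = F$, and a direct computation gives $g\sb p\pb = \bigvee_{u,u'} F_u\sb G_u\pb G_{u'}\sb G_{u'}\pb = \phi \cdot 1_x = \phi$. For faithfulness, if $(p, g)$ and $(p', g')$ both map to $\phi$, the Ore condition applied to $p'$ and the weak equivalence $p$ produces a weak equivalence $r$ and tight $s$ with $pr = p's$; using $p\pb p\sb = 1$ and $(p')\pb (p')\sb = 1$ one deduces $(gr)\sb = (g's)\sb$, and \autoref{thm:ktabun} then supplies a further weak equivalence $t$ with $grt = g'st$, exhibiting $(prt, grt)$ as a common refinement of the two spans.

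The main conceptual hurdle is the compatibility $G_u\pb G_{u'}\sb \le F_u\pb F_{u'}\sb$ in the fullness step: this is where the hypothesis that $\phi$ is a loose map enters decisively, ensuring that any weak tabulation of a map automatically encodes enough compatibility for $F$ to descend through the quotient built from $G$ alone, producing the second leg of the span without any additional refinement or iteration.
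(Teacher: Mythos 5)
Your proof is correct and follows essentially the same route as the paper: right cancellability via \autoref{thm:ktabun}, the right Ore condition via \autoref{thm:pb-detect}, \autoref{thm:entire-detect2} and a tight collage of the kernel congruence of the covering leg, and fullness by weakly tabulating a loose map and taking the tight collage of $G\pb G\sb$. The only (harmless) deviation is in faithfulness, where you reuse the already-established Ore condition to produce the common refinement rather than invoking \autoref{thm:pb-detect} directly as the paper does; the underlying computation is the same.
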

\begin{proof}
  The weak equivalences are clearly a subcategory and contain all
  objects.  For the right Ore condition, suppose $f\colon y\to z$ is a
  tight map and $p\colon x\to z$ a weak equivalence.  As in
  \autoref{thm:pb-detect}, we can find tight cocones $G\colon U \To x$
  and $Q\colon U \To y$ such that $p G = f Q$ and $p\pb f\sb =
  \bigvee(G\sb Q\pb)$.  Since $1_y \le f\pb f\sb = f\pb p\sb p\pb
  f\sb$, by \autoref{thm:entire-detect2} $Q$ is covering.

  Let $\Phi = Q\pb Q\sb$, and let $F\colon U \To w$ be a tight collage
  of $\Phi$.  By \autoref{thm:coll-char}, $Q\sb$ is a (loose) collage
  of $\Phi$, so there is a unique tight map $r\colon w\to y$ such that
  $r F = Q$, and (by uniqueness of loose collages) $r$ is a weak
  equivalence.  Similarly, we have
  \[G\sb \Phi = p\pb f\sb Q\sb \Phi \le p\pb f\sb Q\sb = Q\sb
  \]
  so there is a unique tight map $h\colon w\to x$ with $h F = G$.
  Finally, we have $p h F = p G = f Q = f r F$, so since $F$ is a
  tight collage, $p h = f r$.  This shows the right Ore condition.

  For right cancellability, suppose $p\colon y\to z$ is a weak
  equivalence and $f,g\colon x\toto y$ are tight maps with $p f = p
  g$.  Since $p\sb$ is an isomorphism, $f\sb = g\sb$, so we can apply
  \autoref{thm:ktabun}.  Thus the weak equivalences admit a calculus
  of right fractions.

  Now since $J\colon \nTMap(\lA) \to \nMap(\cA)$ inverts weak
  equivalences, it extends to a functor $\nTMap(\lA)[\cW^{-1}] \too
  \nMap(\cA)$,
  which, like $J$, takes a span $x \xleftarrow{p} w \xto{f} y$ (with
  $p$ a weak equivalence) to $f\sb p\pb$.  Since it is bijective on
  objects, it suffices to show that it is full and faithful.

  For fullness, suppose $\phi\colon x\lto y$ is a loose map in \lA,
  and let $\phi = \bigvee (F\sb G\pb)$ for tight cocones $F\colon Z\To
  y$, $G\colon Z\To x$.  By \autoref{thm:entire-detect2}, $G$ is
  covering.  Let $\Psi = G\pb G\sb$ and $H$ be a tight collage of
  $\Psi$.  Since $G$ is a loose collage of $\Psi$, we have a weak
  equivalence $r$ with $G = r H$.  Now for any $z\in Z$, we have
  $(f_z)\sb (g_z)\pb \le \phi$, hence $(g_z)\sb (f_z)\pb \le \phi\o$
  and so
  \[ \phi (g_z)\sb \le \phi (g_z)\sb (f_z)\pb (f_z)\sb
  \le \phi\phi\o (f_z)\sb \le (f_z)\sb.
  \]
  Thus, $\bigvee (F\sb \Psi) = \bigvee(F\sb G\pb G\sb) = \phi G\sb \le
  F\sb$, so since $H$ is a tight collage of $\Psi$, we have a unique
  $k\colon w\to y$ with $k H = F$.  Since tight collages are also
  loose collages, we have $\phi r\sb = k\sb$, hence $\phi = k\sb
  r\pb$; thus $\phi$ is in the image of $\nTMap(\lA)[\cW^{-1}]$.

  For faithfulness, suppose that $x \xleftarrow{p} w \xto{f} y$ and $x
  \xleftarrow{q} v \xto{g} y$ are spans with $p,q$ weak equivalences
  such that $f\sb p\pb = g\sb q\pb$.  As in \autoref{thm:pb-detect},
  we can find $R\colon U \To w$ and $S\colon U\To v$ such that $p R =
  q S$ and $p\pb q\sb = \bigvee(R\sb S\pb)$.  Since $p\pb q\sb$ is an
  isomorphism, by \autoref{thm:entire-detect2} $R$ and $S$ are both
  covering, and $R\pb R\sb = R\pb p\pb p\sb R\sb = S\pb q\pb q\sb S\sb
  = S\pb S\sb $.

  Let $H\colon U\To z$ be a tight collage of the congruence $\Phi =
  R\pb R\sb = S\pb S\sb$.  Then since $R$ and $S$ are loose collages
  of $\Phi$, we have weak equivalences $m\colon z\to w$ and $n\colon
  z\to v$ with $m H = R$ and $n H = S$.  Hence
  $ p m H = p R = q S = q n H$,
  so since $H$ is a tight collage, $p m = q n$ and is a weak
  equivalence.  Now
  \[ f\sb m\sb H\sb = f\sb R\sb = f\sb p\pb p\sb R\sb
  = g \sb q\pb q\sb S\sb  = g\sb S\sb = g\sb n\sb H\sb,
  \]
  so since $H\sb$ is a loose collage, $f\sb m\sb = g\sb n\sb$.  By
  \autoref{thm:ktabun}, we can find a weak equivalence $t$ with $f m t
  = g n t$ and (of course) $p m t = q n t$.  Thus $(p,f)=(q,g)$ in
  $\nTMap(\lA)[\cW^{-1}]$.
\end{proof}

Unfortunately, $\lmodk(\lrelk (\bC))$ does not quite satisfy the
hypotheses of \autoref{thm:rfrac}; it may not inherit the second half
of weak \ka-tabularity from $\lrelk (\bC)$.  But we can remedy this by
considering its subchordate reflection, which is easily seen to
inherit all the other relevant properties of $\lrelk (\bC)$.  This
yields a fairly explicit description of $\exk(\bC)$ as a category of
fractions, but we can improve it even further as follows.

\begin{defn}\label{def:surjeq}
  Let $\Phi\colon X\Rto X$ and $\Theta\colon Y\Rto Y$ be \ka-ary
  congruences in a framed \ka-ary allegory \lA.  A tight map $G\colon
  \Phi \to \Theta$ in $\lmodk (\lA)$ is a \textbf{surjective
    equivalence} if
  \begin{enumerate}[nolistsep]
  \item $\Phi = G\pb \Theta G\sb$, and\label{item:se1}
  \item $G\colon X\To Y$ is a covering family in \lA.\label{item:se2}
  \end{enumerate}
\end{defn}

Note that the condition on a functional array $G\colon X\To Y$ to be a
tight map $\Phi\to\Theta$ in $\lmodk (\lA)$, namely $\bigvee (G\sb
\Phi) \le \Theta G\sb$, is equivalent by adjunction to $\Phi \le G\pb
\Theta G\sb$.  Thus, in \autoref{def:surjeq} we do not need to assert
that $G$ is a tight map; it follows from~\ref{item:se1}.

\begin{lemma}
  A surjective equivalence is a weak equivalence in $\lmodk (\lA)$.
\end{lemma}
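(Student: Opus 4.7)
The plan is to verify the two conditions characterising weak equivalences given by \autoref{thm:mod-weq}, starting from the two conditions in \autoref{def:surjeq}. The first condition of \autoref{thm:mod-weq}, namely $\Phi = G\pb \Theta G\sb$, is literally condition~(i) of being a surjective equivalence, so nothing needs to be said there. The work is to derive the second condition, $\bigvee 1_Y \le \bigvee(\Theta G\sb G\pb \Theta)$, from the hypothesis that $G\colon X\To Y$ is a covering family in \lA.

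The key input is the observation, recorded essentially in \autoref{thm:cov-detect} and its consequences for functional arrays, that a covering family $G\colon X\To Y$ (meaning each cocone $G|_y$ is covering) satisfies $\bigvee (G\sb G\pb) = \bigvee 1_Y$, viewed as arrays $Y\Rto Y$ in \cA. Given this, the second condition of \autoref{thm:mod-weq} follows by a one-line calculation:
\[
  \bigvee\bigl(\Theta G\sb G\pb \Theta\bigr)
  = \Theta\,\bigl(\textstyle\bigvee G\sb G\pb\bigr)\,\Theta
  = \Theta\,\bigl(\textstyle\bigvee 1_Y\bigr)\,\Theta
  = \bigvee(\Theta\Theta) = \Theta,
\]
where the last equality uses that $\Theta$ is a congruence. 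Since $\bigvee 1_Y \le \Theta$ by \autoref{def:all-cong}, we conclude $\bigvee 1_Y \le \bigvee(\Theta G\sb G\pb \Theta)$, as required.

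There is no real obstacle here; the lemma is essentially a matching-up of the two definitions via \autoref{thm:mod-weq}. The only point that requires any care is justifying the identity $\bigvee(G\sb G\pb) = \bigvee 1_Y$ for a covering \emph{functional array} (rather than a covering cocone): this is immediate by decomposing $G$ as the disjoint union $\bigsqcup_y G|_y$ of covering cocones and applying the single-target case $\bigvee((G|_y)\sb (G|_y)\pb) = 1_y$ componentwise.
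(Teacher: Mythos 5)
Your proof is correct and is essentially the paper's own argument: the paper likewise notes that condition (i) of \autoref{def:surjeq} gives the first half of \autoref{thm:mod-weq} for free, and establishes the second half via the chain $\bigvee 1_Y \le \Theta = \bigvee(\Theta\Theta) = \bigvee(\Theta G\sb G\pb \Theta)$, which is your computation read in the opposite direction. Your explicit remark that $\bigvee(G\sb G\pb) = \bigvee 1_Y$ for a covering \emph{functional array} follows componentwise from the single-cocone case is a point the paper leaves implicit, but it is the same proof.
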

\begin{proof}
  If $G$ is a surjective equivalence, we have
  \[ \bigvee 1_Y \le \Theta = \bigvee(\Theta\Theta)
  = \bigvee (\Theta G\sb G\pb \Theta). \]
  So by \autoref{thm:mod-weq}, $G$ is a weak equivalence.
\end{proof}

\begin{lemma}\label{thm:surj-eqv}
  If \lA is weakly \ka-tabular and $G\colon \Phi \to \Theta$ is a weak
  equivalence in $\lmodk(\lA)$, then there exist surjective
  equivalences $F\colon \Psi\to \Theta$ and $H\colon \Psi\to \Phi$
  with $G\sbb H\sbb = F\sbb$.
\end{lemma}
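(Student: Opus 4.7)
My plan is to construct both surjective equivalences from a single weak \ka-tabulation of the loose morphism $G\sbb = \Theta G\sb \colon X \Rto Y$ in \lA. By \autoref{thm:wkktab}\ref{item:relk1}, I would write $\Theta G\sb = \bigvee(F\sb H\pb)$ for functional arrays of tight maps $F \colon Z \To Y$ and $H \colon Z \To X$ with common source $Z$, and take $\Psi \coloneqq F\pb \Theta F\sb$ as a candidate congruence on $Z$; the goal is then to show that $F$ and $H$ themselves are surjective equivalences $\Psi \to \Theta$ and $\Psi \to \Phi$ with $G\sbb H\sbb = F\sbb$.

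First I would verify the two covering conditions via \autoref{thm:entire-detect2}. Applied directly to the tabulation $\Theta G\sb = \bigvee(F\sb H\pb)$, the lemma says $H$ is covering iff $\bigvee 1_X \le \bigvee\bigl((\Theta G\sb)\o (\Theta G\sb)\bigr) = G\pb \Theta G\sb$; but by \autoref{thm:mod-weq} this right-hand side equals $\Phi$, which is reflexive, so the condition holds automatically. Taking $\o$ of the tabulation gives $G\pb \Theta = \bigvee(H\sb F\pb)$, and the same lemma now yields $F$ covering iff $\bigvee 1_Y \le \bigvee(\Theta G\sb\, G\pb \Theta)$, which is exactly the second clause of \autoref{thm:mod-weq} witnessing that $G$ is a weak equivalence.

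The central step is then proving the identity $\Theta F\sb = \Theta G\sb\, H\sb$ of arrays $Z \Rto Y$ in \lA. Once established, dualising yields $F\pb \Theta = H\pb G\pb \Theta$, and combining with $\Phi = G\pb \Theta G\sb$ gives $F\pb \Theta F\sb = H\pb \Phi H\sb$, so $\Psi$ has both advertised descriptions. Routine manipulation with $F\sb F\pb \le 1_Y$, $F\pb F\sb \ge 1_Z$, $\bigvee(\Theta\Theta) = \Theta$ (and the analogous identities for $H$ and $\Phi$) then shows that $\Psi$ is a \ka-ary congruence on $Z$ and that $F$ and $H$ define tight maps $\Psi \to \Theta$ and $\Psi \to \Phi$ in $\lmodk(\lA)$ satisfying \autoref{def:surjeq}. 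Finally $G\sbb H\sbb = (GH)\sbb = \Theta (GH)\sb = \Theta G\sb\, H\sb = \Theta F\sb = F\sbb$, using the composition formula from the remark after \autoref{thm:tcoll-cplt}.

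The delicate part is the entrywise identity $\Theta F\sb = \Theta G\sb\, H\sb$, where at position $(z,y)$ one must match $\theta_{f(z),y}(f_z)\sb$ with $\theta_{g(h(z)),y}(g_{h(z)}h_z)\sb$. The tabulation gives $(f_z)\sb(h_z)\pb \le \theta_{g(h(z)),f(z)}(g_{h(z)})\sb$; post-composing with $(h_z)\sb$ and absorbing $(h_z)\pb(h_z)\sb \ge 1$ yields $(f_z)\sb \le \theta_{g(h(z)),f(z)}(g_{h(z)}h_z)\sb$, and a dual chase of the $\o$-version, using $(g_{h(z)})\sb(g_{h(z)})\pb \le 1$ and $(f_z)\pb(f_z)\sb \ge 1$, yields $(g_{h(z)}h_z)\sb \le \theta_{f(z),g(h(z))}(f_z)\sb$. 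Precomposing each with the appropriate $\theta$-entry on the left and invoking transitivity $\bigvee(\Theta\Theta) = \Theta$ turns these into the two inequalities needed for equality at $(z,y)$. This is a tight bookkeeping argument, but it uses only the modular-law identities already established in Section~\ref{sec:fr-alleg-new}.
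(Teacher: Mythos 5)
Your proposal is correct and follows essentially the same route as the paper: weakly tabulate $\Theta G\sb = \bigvee(F\sb H\pb)$, use \autoref{thm:entire-detect2} together with the weak-equivalence characterization of \autoref{thm:mod-weq} to see that $H$ and then $F$ are covering, and take the two legs of the tabulation as the desired surjective equivalences. The only divergence is in the finish: where you prove the exact identity $\Theta F\sb = \Theta G\sb H\sb$ by an entrywise chase and read off both conditions of \autoref{def:surjeq} for $F$ and $H$ directly, the paper sets $\Psi = H\pb \Phi H\sb$, establishes only the inequalities $\Psi \le F\pb\Theta F\sb$ and $G\sbb H\sbb \le F\sbb$ (via a mates argument), upgrades the latter to an equality because maps are discretely ordered, and then concludes that $F$ is a surjective equivalence from the two-out-of-three property of weak equivalences together with \autoref{thm:mod-weq} --- a slicker ending, though your more computational one is equally valid.
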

\begin{proof}
  Let $\Theta G\sb = \bigvee(F\sb H\pb)$ for functional arrays of
  tight maps $F$ and $H$ in \lA.  Since
  \[ \bigvee 1 \le \bigvee (G\pb G\sb)
  \le \bigvee(G\pb \Theta\o \Theta G\sb),
  \]
  by \autoref{thm:entire-detect2} $H$ is a covering family.  But since
  $G$ is a weak equivalence, we also have $\bigvee 1 \le
  \bigvee(\Theta G\sb G\pb \Theta\o)$, so $F$ is also a covering
  family.  Let $\Psi = H\pb \Phi H\sb$; then $H$ becomes by definition
  a tight map $\Psi\to\Phi$ that is a surjective equivalence.

  Now $\bigvee(F\sb H\pb) = \Theta G\sb$ implies, by the mates
  correspondence, that $H\pb G\pb \le F\pb \Theta$, and hence also
  $G\sb H\sb \le \Theta F\sb$.  Therefore, we have
  \begin{equation*}
    \Psi = H\pb \Phi H\sb 
     = H\pb G\pb \Theta G\sb H\sb
     \le F\pb \Theta\Theta\Theta F\sb
    = F\pb \Theta F\sb.
  \end{equation*}
  Hence $F$ is a tight map $\Psi \to\Theta$.
  Now we also have
  \[ \bigvee(\Theta G\sb \Phi H\sb)
  = \bigvee(F\sb H\pb \Phi H\sb)
  = \bigvee(F\sb \Psi)
  \le \Theta F\sb,
  \]
  so that in $\lmodk(\lA)$, we have $G\sbb H\sbb \le F\sbb$, hence
  $G\sbb H\sbb = F\sbb$.  It follows that since $G$ and $H$ are weak
  equivalences in $\lmodk (\lA)$, so is $F$.  Thus, as it is a
  covering family in \lA, it is also a surjective equivalence $\Psi
  \to\Theta$.
\end{proof}

\begin{lemma}\label{thm:sbbeq}
  If $F,G\colon \Phi\toto\Theta$ are two tight maps in $\lmodk (\lA)$,
  then $F\sbb = G\sbb$ if and only if $\bigvee(F\sb G\pb) \le \Theta$.
\end{lemma}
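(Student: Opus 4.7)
The plan is to reduce the claim to a direct computation in \lA using the explicit description of tight morphisms from \autoref{thm:tcoll-cplt}. Recall from that proposition that the loose morphism underlying the tight map $F\colon \Phi\to\Theta$ is $F\sbb = \Theta F\sb$ (as an array $X\Rto Y$ in \lA), and similarly $G\sbb = \Theta G\sb$. So the claim reduces to showing that, in \lA,
\[ \Theta F\sb = \Theta G\sb \qquad\iff\qquad \bigvee(F\sb G\pb) \le \Theta. \]
The only facts I will need are the three defining properties of the congruence $\Theta$, namely $\bigvee 1_Y \le \Theta$, $\bigvee(\Theta\Theta) = \Theta$, and $\Theta\o = \Theta$, together with the standard inequalities $G\sb G\pb \le 1_Y$ and $1_X \le G\pb G\sb$ that hold because $G$ is composed of maps.

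For the forward direction, I would first note that $\bigvee 1_Y \le \Theta$ gives $F\sb \le \Theta F\sb$, so $F\sb G\pb \le \Theta F\sb G\pb$. Substituting the hypothesis $\Theta F\sb = \Theta G\sb$ turns this into $\Theta G\sb G\pb$, and finally $G\sb G\pb \le 1_Y$ (since $G$ consists of maps) bounds this by $\Theta \cdot 1_Y = \Theta$. Taking joins gives $\bigvee (F\sb G\pb) \le \Theta$.

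For the converse, suppose $\bigvee (F\sb G\pb) \le \Theta$. Then $1_X \le G\pb G\sb$ yields
\[ F\sb \le F\sb G\pb G\sb \le \Theta G\sb, \]
and composing on the left with $\Theta$ and using $\Theta\Theta = \Theta$ gives $\Theta F\sb \le \Theta G\sb$. For the reverse inequality, I would apply the involution $(-)\o$ to the hypothesis: since $(F\sb G\pb)\o = G\sb F\pb$ and $\Theta\o = \Theta$, we have $\bigvee(G\sb F\pb) \le \Theta$ as well, so the same argument with the roles of $F$ and $G$ swapped yields $\Theta G\sb \le \Theta F\sb$. Combining gives $\Theta F\sb = \Theta G\sb$, i.e.\ $F\sbb = G\sbb$.

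There is no real obstacle here; the argument is a short manipulation in the allegory \cA, and the only point requiring care is to keep straight the distinction between the tight-map data $F\sb$ in \lA and the loose morphism $F\sbb = \Theta F\sb$ in $\lmodk(\lA)$, so that one correctly identifies $F\sbb = G\sbb$ with the equation $\Theta F\sb = \Theta G\sb$ rather than $F\sb = G\sb$.
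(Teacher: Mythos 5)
Your proof is correct and follows essentially the same route as the paper: both directions are short computations in \lA with the representation $F\sbb = \Theta G\sb$-style arrays, and your converse is literally the paper's argument ($\Theta F\sb \le \Theta F\sb G\pb G\sb \le \Theta\Theta G\sb = \Theta G\sb$, then swap $F$ and $G$ via the involution). The only cosmetic difference is in the forward direction, where the paper extracts $1 \le F\pb\Theta G\sb$ from the adjunction unit $1 \le F\pbb G\sbb$ and sandwiches, while you insert $\Theta$ on the left of $F\sb$ and substitute; these are equivalent manipulations.
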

\begin{proof}
  If $F\sbb = G\sbb$, then
  \[ 1 \le F\pbb G\sbb = \bigvee (F\pb \Theta\o \Theta G\sb)
  = F\pb \Theta G\sb,
  \]
  and hence
  \[ \bigvee(F\sb G\pb) \le \bigvee(F\sb F\pb \Theta G\sb G\pb)
  \le \Theta.\]
  Conversely, if $F\sb G\pb \le \Theta$, then
  \[ F\sbb = \Theta F\sb \le \bigvee(\Theta F\sb G\pb G\sb)
  \le \bigvee(\Theta\Theta G\sb) = \Theta G\sb = G\sbb, \]
  and dually.
\end{proof}

\begin{thm}\label{thm:frac-concrete}
  If \lA is weakly \ka-tabular, then the category of loose maps in
  $\lmodk(\lA)$ can be described as follows.
  \begin{enumerate}[nolistsep]
  \item Its objects are \ka-ary congruences in \lA.\label{item:fc1}
  \item For congruences $\Phi\colon X\Rto X$ and $\Theta\colon Y\Rto
    Y$, a morphism $\Phi \to \Theta$ is represented by a span $X
    \xLeftarrow{P} W \xRightarrow{F} Y$ of functional arrays of tight
    maps in \lA, such that $P$ is a covering family and $P\pb \Phi
    P\sb \le F\pb \Theta F\sb$.\label{item:fc2}
  \item Two such spans represent the same morphism $\Phi \to \Theta$
    if there is a diagram of functional arrays of tight maps in
    \lA:\label{item:fc3}
    \begin{equation}
      \vcenter{\xymatrix@R=1.5pc@C=2pc{
        & W \ar@{=>}[dl]_P \ar@{=>}[dr]^F \\
        X & U  \ar@{=>}[u]_(.4){S} \ar@{=>}[d]^(.4){S'} & Y\\
        & W' \ar@{=>}[ul]^{P'} \ar@{=>}[ur]_{F'}
      }}\label{eq:bfeqr}
    \end{equation}
    in which the left-hand quadrilateral commutes (but not necessarily
    the other one), $PS=P'S'$ is a covering family, and $\bigvee
    \big((F S)\sb (F'S')\pb\big) \le \Theta$.
  \end{enumerate}
\end{thm}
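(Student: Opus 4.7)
The plan is to exhibit a bijection between equivalence classes of spans satisfying (ii) modulo (iii) and loose morphisms $\Phi \to \Theta$ in $\lmodk(\lA)$, sending a span $X \xLeftarrow{P} W \xRightarrow{F} Y$ to the composite $F\sbb \circ P\pbb$. For well-definedness, the covering condition on $P$ makes it a surjective equivalence (\autoref{def:surjeq}) from $\Psi := P\pb\Phi P\sb$ to $\Phi$ in $\lmodk(\lA)$, while the inequality $P\pb\Phi P\sb \le F\pb\Theta F\sb$ is, by adjunction, exactly the condition that $F$ be a tight map $\Psi \to \Theta$ in $\lmodk(\lA)$ (cf.\ the remark after \autoref{def:surjeq}). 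Hence $P\sbb$ has a loose inverse $P\pbb$, and $F\sbb \circ P\pbb$ is represented in \lA (via \autoref{thm:tcoll-cplt}) by the array $\Theta F\sb P\pb \Phi$. Invariance under (iii) is then formal: $PS = P'S'$ is covering hence a surjective equivalence with $(PS)\pbb = S\pbb P\pbb = S'\pbb P'\pbb$, and by \autoref{thm:sbbeq} the hypothesis $\bigvee((FS)\sb(F'S')\pb) \le \Theta$ gives $(FS)\sbb = (F'S')\sbb$, so post-composition with $(PS)\pbb$ yields $F\sbb P\pbb = F'\sbb P'\pbb$.

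For surjectivity, let $\psi\colon \Phi\to\Theta$ be a loose map in $\lmodk(\lA)$ and weakly tabulate the underlying array in \lA as $\psi = \bigvee(F\sb H\pb)$, with $F\colon W\to Y$ and $H\colon W\to X$ functional arrays of tight maps. Since $\psi$ is a map in $\Modk(\cA)$, we have $\bigvee 1_X \le \Phi \le \bigvee(\psi\o\psi)$, so \autoref{thm:entire-detect2} applied to the decomposition shows that $H$ is a covering family in \lA. The remaining span condition $H\pb\Phi H\sb \le F\pb\Theta F\sb$ follows by adjunction from $\bigvee(F\sb H\pb \Phi H\sb F\pb) = \bigvee(\psi\Phi\psi\o) = \bigvee(\psi\psi\o) \le \Theta$, where the first equality uses $\bigvee(\psi\Phi) = \psi$ and the inequality uses that $\psi$ is a loose map. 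Thus $(H, F)$ is a span satisfying (ii), and the computation $\Theta F\sb H\pb \Phi = \bigvee(\Theta\psi\Phi) = \psi$ verifies that it represents $\psi$.

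For injectivity, suppose $(P, F)$ and $(P', F')$ are spans with $F\sbb P\pbb = F'\sbb P'\pbb$. Weakly tabulate $P\pb P'\sb = \bigvee(S\sb S'\pb)$ in \lA, yielding functional arrays $S\colon U\to W$ and $S'\colon U\to W'$ of tight maps. The inequality $(s_u)\sb (s'_u)\pb \le P\pb P'\sb$ translates by adjunction and discreteness of maps into $((PS)_u)\sb = ((P'S')_u)\sb$ for each $u$; the second half of weak \ka-tabularity of \lA supplies a further covering family $Q\colon V\to U$ with $PSQ = P'S'Q$ as tight functional arrays. Replacing $U$ by $V$ (which preserves $\bigvee(S\sb S'\pb) = P\pb P'\sb$, since $\bigvee(Q\sb Q\pb) = 1$) yields the strict equality $PS = P'S'$. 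Next, \autoref{thm:entire-detect2}, together with the covering identities $P\sb P\pb = 1_X$ and $P'\sb P'\pb = 1_X$, shows that $S$ and $S'$ are themselves covering in \lA, hence so is $PS$. Finally, unwinding $F\sbb P\pbb = F'\sbb P'\pbb$ via \autoref{thm:tcoll-cplt} as $\Theta F\sb P\pb \Phi = \Theta F'\sb P'\pb \Phi$ in \lA, one computes $\bigvee((FS)\sb (F'S')\pb) = \bigvee(F\sb P\pb P'\sb F'\pb) \le \bigvee(\Theta F\sb P\pb \Phi P'\sb F'\pb \Theta) \le \Theta$, completing (iii).

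The main obstacle is the passage from the weak tabulation $\bigvee(S\sb S'\pb) = P\pb P'\sb$ to the strict equality $PS = P'S'$ required by (iii); this invokes the second half of weak \ka-tabularity of \lA in the same spirit as the Ore-condition step of \autoref{thm:rfrac}. The rest is bookkeeping with \autoref{thm:tcoll-cplt}, \autoref{thm:sbbeq}, and \autoref{thm:entire-detect2}, together with the identification of surjective equivalences in $\lmodk(\lA)$ with covering functional arrays of tight maps in \lA.
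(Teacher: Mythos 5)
Your proposal is correct, but it takes a genuinely different route from the paper. The paper obtains the description by first proving the general calculus-of-fractions result \autoref{thm:rfrac} (loose maps are fractions of tight maps by weak equivalences), applying it to the subchordate reflection of $\lmodk(\lA)$, normalizing the weak-equivalence leg to a surjective equivalence via \autoref{thm:surj-eqv} to get~(ii), and then doing the nontrivial translation of the abstract fraction-equivalence (which involves an auxiliary congruence $\Psi$ and the conditions $(PH)\sbb=(P'H')\sbb$, $(FH)\sbb=(F'H')\sbb$) into the concrete condition~(iii), in both directions. You instead build the bijection directly: spans map to $F\sbb P\pbb$, and you verify well-definedness, surjectivity (by weakly tabulating the loose map itself in \lA and using \autoref{thm:entire-detect2} to see the left leg is covering), and injectivity (by weakly tabulating $P\pb P'\sb$ and running an Ore-style argument with the second half of weak $\ka$-tabularity). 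This is essentially an inlining of the fullness/faithfulness/Ore steps of \autoref{thm:rfrac}, specialized and carried out at the level of arrays in \lA rather than tight collages in $\lmodk(\lA)$; what it buys is that you never need the subchordate reflection nor the (delicate) question of whether $\lmodk(\lA)$ inherits weak $\ka$-tabularity, while the paper's route reuses \autoref{thm:rfrac} wholesale but pays for it in the translation step. Two small points of hygiene: the "covering identities" you invoke should read $\bigvee(P\sb P\pb)=1_X$ rather than $P\sb P\pb = 1_X$; and the cancellation "post-composition with $(PS)\pbb$ yields $F\sbb P\pbb = F'\sbb P'\pbb$" deserves one more line — e.g.\ from $P\sbb S\sbb = (PS)\sbb$ and invertibility of $P\sbb$ and $(PS)\sbb$ one gets $S\sbb = P\pbb (PS)\sbb$, whence $F\sbb P\pbb = (FS)\sbb (PS)\pbb$ — but this is routine and does not affect correctness.
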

\begin{proof}
  By \autoref{thm:rfrac}, it suffices to consider the category of
  fractions of the tight maps in the subchordate reflection of
  $\lmodk(\lA)$, so a morphism $\Phi \to \Theta$ can be represented by
  a span $\Phi \xleftarrow{P} \Psi \xrightarrow{F} \Theta$ where $P$
  and $F$ are tight maps in $\lmodk(\lA)$ and $P$ is a weak
  equivalence.  By \autoref{thm:surj-eqv}, we may assume $P$ is a
  surjective equivalence.  Hence, $P\colon W\To X$ is a covering
  family and $\Psi = P\pb \Phi P\sb$, so $F$ being a tight map in
  $\lmodk(\lA)$ is equivalent to $P\pb \Phi P\sb \le F\pb \Theta
  F\sb$.  This gives~\ref{item:fc2}.

  Now, by the construction of a category of fractions, two such spans
  represent the same morphism just when we have a diagram of tight
  functional arrays
  \begin{equation}
    \vcenter{\xymatrix@R=1.5pc@C=2pc{
      & W \ar@{=>}[dl]_P \ar@{=>}[dr]^F \\
      X & Z  \ar@{=>}[u]_(.4){H} \ar@{=>}[d]^(.4){H'} & Y\\
      & W' \ar@{=>}[ul]^{P'} \ar@{=>}[ur]_{F'}
    }}\label{eq:bfeqr-cfrac}
  \end{equation}
  (not necessarily commutative)
  and a congruence $\Psi\colon Z\Rto Z$ such that
  \begin{enumerate}[noitemsep,label=(\arabic*)]
  \item $H$ and $H'$ are tight maps $\Psi\to P\pb\Phi P\sb$ and
    $\Psi\to (P')\pb\Phi (P')\sb$, respectively;\label{item:fca1}
  \item $(P H)\sbb = (P' H')\sbb$;\label{item:fca2}
  \item $P H$ is a weak equivalence $\Psi\to \Phi$ (hence so is
    $P'H'$); and\label{item:fca3}
  \item $(F H)\sbb = (F' H')\sbb$.\label{item:fca4}
  \end{enumerate}
  If we are given~\eqref{eq:bfeqr}, then we define $Z=U$, $H=S$,
  $H'=S'$, and
  \[\Psi = (PS)\pb \Phi (PS)\sb = (P'S')\pb \Phi (P'S')\sb.\]
  Then~\ref{item:fca1}--\ref{item:fca3} are immediate,
  while~\ref{item:fca4} follows from \autoref{thm:sbbeq}.
  
  Conversely, suppose given~\eqref{eq:bfeqr-cfrac}
  satisfying~\ref{item:fca1}--\ref{item:fca4}.  Then by
  \autoref{thm:surj-eqv}, we can find a surjective equivalence
  $Q\colon \Psi' \to \Psi$ such that $P H Q$ is a surjective
  equivalence.  Thus we have three covering families $P$, $P'$, and
  $PHQ$ of $X$.

  Since $\nTMap(\lA)$ is a locally \ka-ary site, there is a covering
  family $R\colon U\To X$ and functional arrays $S$, $S'$, and $S''$
  such that $R = P S = P' S' = PHQS''$.  Thus we have~\eqref{eq:bfeqr}
  in which the left-hand quadrilateral commutes.  Let $\Psi'' = R\pb
  \Phi R\sb$; then $S$ is a tight map $\Psi'' \to P\pb \Phi P\sb$ and
  $S'$ is a tight map $\Psi''\to (P')\pb \Phi (P')\sb$, and we
  calculate
  \begin{align*}
    F\sbb S\sbb
    &= F\sbb (P\pb \Phi P\sb) S\sb\\
    &= F\sbb P\pb \Phi P\sb H\sb Q\sb (S'')\sb\\
    &= F\sbb H\sbb Q\sb (S'')\sb\\
    &= (F')\sbb (H')\sbb Q\sb (S'')\sb\\
    &= (F')\sbb (P')\pb \Phi (P')\sb (H')\sb Q\sb (S'')\sb\\
    &= (F')\sbb (P')\pb \Phi P\sb H\sb Q\sb (S'')\sb\\
    &= (F')\sbb (P')\pb \Phi (P')\sb (S')\sb\\
    &= (F')\sbb (S')\sbb.
  \end{align*}
  By \autoref{thm:sbbeq}, this is equivalent to $\bigvee \big((F S)\sb
  (F'S')\pb\big) \le \Theta$, so~\ref{item:fc3} holds.
\end{proof}

\begin{thm}\label{thm:ex-concrete}
  The exact completion of a \ka-ary site \bC can be described as
  follows.
  \begin{enumerate}[nolistsep]
  \item Its objects are \ka-ary congruences \Phi in
    \bC.\label{item:exc1}
  \item Each morphism $\Phi\to \Psi$ is represented by a span of
    functional arrays $X \xLeftarrow{P} W \xRightarrow{F} Y$ such that
    $P$ is a covering family and $P^*\Phi \;\lle\; F^*
    \Psi$.\label{item:exc2}
  \item Two such spans $(P,F)$ and $(Q,G)$ determine the same morphism
    $\Phi\to \Psi$ if there is a covering family $R\colon U\To X$ and
    functional arrays $H$ and $K$ such that $R = P H = Q K$ and for
    all $u\in U$, $\{ f_{h(u)} h_u, g_{k(u)} k_u \} \;\le\;
    \Psi(fh(u), gk(u))$.\label{item:exc3}
  \end{enumerate}
\end{thm}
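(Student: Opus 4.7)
The plan is to derive \autoref{thm:ex-concrete} by translating \autoref{thm:frac-concrete} through the equivalence $\lrelk$ of \autoref{thm:relk-eqv}. From the construction of \exk in the proof of \autoref{thm:adjoint}, combined with \autoref{thm:mod-chordate}, $\exk(\bC)$ is the chordate reflection of $\lmodk(\lrelk(\bC))$. Since chordate reflection does not alter the underlying allegory, tight maps in this reflection are precisely the loose maps (= morphisms in the underlying allegory) of $\lmodk(\lrelk(\bC))$. Applying \autoref{thm:frac-concrete} to the subchordate reflection of $\lrelk(\bC)$, which is still weakly \ka-tabular and carries the same underlying allegory, one obtains these loose maps as equivalence classes of spans $\Phi \xLeftarrow{P} W \xRightarrow{F} \Theta$ of functional arrays of tight maps in $\lrelk(\bC)$, with $P$ a covering family and $P\pb \Phi P\sb \le F\pb \Theta F\sb$, modulo the equivalence relation of \autoref{thm:frac-concrete}(iii).

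Next, I would dictionary-translate each allegorical ingredient back into the site. Objects: \ka-ary congruences in $\lrelk(\bC)$ are equivalence classes of \ka-ary congruences in \bC by \autoref{thm:cong-cong}, so choosing representatives gives objects as in (i), with equivalent congruences producing canonically isomorphic objects. Tight maps in $\lrelk(\bC)$ are just morphisms of \bC, so a functional array of tight maps is a functional array in \bC. Covering families in the allegorical sense agree with covering families in \bC by \autoref{thm:cov-detect}. The pointwise entry of $P\pb \Phi P\sb$ at $(w_1,w_2)$ is built by composing the maps $p_{w_1}, p_{w_2}$ with $\Phi(p(w_1),p(w_2))$ using the local \ka-pre-pullback operations of \S\ref{arrayops}, which matches the site-level $(P^*\Phi)(w_1,w_2)$ of \autoref{thm:basic-cong}. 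Since $\le$ in the hom-posets of $\lrelk(\bC)$ is by definition $\lle$ in \bC, the module condition becomes pointwise $P^*\Phi \lle F^*\Psi$, as in (ii).

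For (iii), \autoref{thm:frac-concrete}(iii) produces a diagram with $PH = QK$ a covering family and $\bigvee((FH)\sb (GK)\pb) \le \Psi$ in $\lrelk(\bC)$. Unpacking the composite matrix and the pointwise $\lle$ in each hom-poset, this latter condition asserts that for each $u\in U$, the span $\{f_{h(u)}h_u, g_{k(u)}k_u\}\colon u \To \{fh(u), gk(u)\}$ locally refines $\Psi(fh(u), gk(u))$. The main step to reach the statement as given is to upgrade this $\lle$ formulation to the $\le$ formulation in (iii): for each $u$ choose a covering family $C_u\colon V_u\to u$ realizing the local factorization, and replace $R$, $H$, $K$ by their composites with $\bigsqcup_u C_u$, which is again a functional array whose disjoint-union composite with $R$ is covering by the axioms of a \ka-ary site. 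I expect the main obstacle to be the bookkeeping in this last translation, in particular verifying that the composition $(FH)\sb (GK)\pb$ in $\lrelk(\bC)$ has the pointwise description claimed; this is a matter of unfolding the definitions of composition and involution in $\lrelk(\bC)$ from \S\ref{sec:relations-ka-ary} together with the operations on arrays from \S\ref{arrayops}.
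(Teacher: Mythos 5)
Your proposal is correct and follows the paper's own route: the paper's proof of \autoref{thm:ex-concrete} is precisely a ``direct translation'' of \autoref{thm:frac-concrete} through the dictionary between $\lrelk(\bC)$ and \bC, with the only wrinkle being the upgrade from $\lle$ to $\le$ in part~(iii) by passing to an extra covering family, exactly as you handle it. Your write-up simply spells out the bookkeeping that the paper leaves implicit.
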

\begin{proof}
  This is a direct translation of \autoref{thm:frac-concrete}, except
  that in~\ref{item:exc3} we assert $\le$ rather than $\lle$.
  However, this can easily be obtained by passing to an extra covering
  family.
\end{proof}

This description of $\exk(\bC)$ is a decategorification of the
bicategory of \emph{internal anafunctors}, as described
in~\cite{bartels:hgt,roberts:ana}.  This approach to exact completion
does not seem as popular as the relational one, but one case of it
appears in the literature.

\begin{example}
  If \bC has weak finite limits and the trivial unary topology, then a
  unary congruence in \bC reduces precisely to a
  \emph{pseudo-equivalence relation} as defined
  in~\cite[Def.~6]{cv:reg-exact-cplt}.  The tight maps in
  $\lMod_{\un}(\lRel_{\un}(\bC))$ similarly reduce to morphisms of
  pseudo-equivalence relations, and every surjective equivalence has a
  section.  This implies that every span as in
  \autoref{thm:ex-concrete}\ref{item:exc2} is equivalent to one where
  $P$ is the identity, and likewise in~\ref{item:exc3} we may assume
  $R$ is an identity.  Thus, in this case the above construction of
  $\exu (\bC)$ yields precisely the exact completion of \bC as
  constructed in~\cite[Def.~14]{cv:reg-exact-cplt}.
\end{example}

\section{Exact completion and sheaves}
\label{sec:exact-compl-sheav}


Suppose that \bC is a \emph{small} \ka-ary site.  Since the category
$\nSh(\bC)$ of (small) sheaves on \bC is \KA-ary exact, hence also
\ka-ary exact, and the sheafified Yoneda embedding $\by\colon \bC \to
\nSh(\bC)$ is a morphism of sites, we have an induced \ka-ary regular
functor
\[ \bytil\colon \exk(\bC) \to \nSh(\bC). \]

\begin{lem}\label{thm:yimg}
  Given $\cF\in \nSh(\bC)$, consider the following statements.
  \begin{enumerate}[nolistsep]
  \item $\cF$ is in the image of $\bytil$.\label{item:y1}
  \item $\cF$ is the colimit, in $\nSh(\bC)$, of the \by-image of some
    \ka-ary congruence in \bC.\label{item:y2}
  \item $\cF$ is the colimit in $\nSh(\bC)$ of a \ka-small diagram of
    sheafified representables.\label{item:y3}
  \end{enumerate}
  Then~\ref{item:y1}$\Leftrightarrow$\ref{item:y2} always,
  while~\ref{item:y2}$\Rightarrow$\ref{item:y3} if $2\in \ka$,
  and~\ref{item:y3}$\Rightarrow$\ref{item:y2} if $\om\in\ka$.
\end{lem}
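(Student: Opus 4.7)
For (i)$\Leftrightarrow$(ii), I would apply the universal property of \exk from \autoref{thm:adjoint}: since $\by\colon\bC\to\nSh(\bC)$ is a morphism of sites into a $\ka$-ary exact category (with its canonical topology), it factors as $\bytil\circ\fy$ for a $\ka$-ary regular functor $\bytil$. By \autoref{thm:tcoll-cplt} and \autoref{thm:tcoll-colim}, every object $\Phi\in\exk(\bC)$ is the colimit in $\exk(\bC)$ of its underlying congruence diagram $\fy\circ\Phi$, the colimiting cocone being the universally effective-epic collage cocone with kernel $\Phi$. Because $\bytil$ is $\ka$-ary regular, it preserves finite limits and universally effective-epic $\ka$-ary cocones, and hence these colimits; so $\bytil(\Phi)\cong\colim(\by\circ\Phi)$, which yields both directions.

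For (ii)$\Rightarrow$(iii) assuming $2\in\ka$, the argument is a size count: the underlying diagram of a $\ka$-ary congruence $\Phi$ on a $\ka$-small family $X$ is indexed by $X\sqcup\bigsqcup_{(x_1,x_2)\in X\times X}\Phi[x_1,x_2]$. Since $2\in\ka$, we have $X\times X\in\ka$ by \autoref{rmk:arity}, and the ensuing indexed sum of $\ka$-small sets is again $\ka$-small; hence $\by\circ\Phi$ is the required $\ka$-small diagram of sheafified representables.

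For (iii)$\Rightarrow$(ii) assuming $\om\in\ka$, given a $\ka$-small diagram $D\colon\bE\to\nSh(\bC)$ with $D(e)=\by(c_e)$, I would construct a $\ka$-ary congruence $\Phi$ on $X=\{c_e\}_{e\in\bE}$ in $\bC$ directly. Each morphism $D(\alpha)\colon\by(c_{e_1})\to\by(c_{e_2})$ is, by the plus-construction description of $\ba$, represented by a matching family — equivalently, a span $c_{e_1}\xleftarrow{P_\alpha}W_\alpha\xrightarrow{F_\alpha}c_{e_2}$ in $\bC$ with $P_\alpha$ a covering family. Let $\Phi_0(c_{e_1},c_{e_2})$ collect these spans for all $\alpha\in\bE(e_1,e_2)$, together with the reversals (for symmetry) and the identity array on the diagonal, and define inductively
\[ \Phi_{n+1}(x_1,x_3) = \Phi_n(x_1,x_3)\vee \bigvee_{x_2\in X}\bigl(\Phi_n(x_1,x_2)\times_{x_2}\Phi_n(x_2,x_3)\bigr), \]
using local $\ka$-pre-pullbacks in $\bC$ for $\times_{x_2}$. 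Setting $\Phi=\bigsqcup_{n<\om}\Phi_n$, each $\Phi_n$ is $\ka$-ary by the same counting as above together with $\om$-indexed closure, so $\Phi$ is a $\ka$-ary congruence on $X$ in $\bC$. The main obstacle is then verifying $\colim(\by\circ\Phi)\cong\colim D$ in $\nSh(\bC)$: one must check that the $\by$-images of the generating spans $(P_\alpha,F_\alpha)$ recover the morphisms $D(\alpha)$ (which is the defining property of the sheafification representation), and that the congruence closure in $\bC$, after applying $\by$, matches the equivalence-relation closure used to compute $\colim D$ as a quotient of $\coprod_e\by(c_e)$ in the exact category $\nSh(\bC)$.
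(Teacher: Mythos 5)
Your proposal is correct and follows essentially the same route as the paper's proof: (i)$\Leftrightarrow$(ii) via preservation of colimits of congruences by $\bytil$, (ii)$\Rightarrow$(iii) by counting with closure of \ka under indexed sums, and (iii)$\Rightarrow$(ii) by assembling a congruence from spans representing the morphisms of the diagram, with $\om\in\ka$ controlling the size of the closure. Two small remarks. First, in (ii)$\Rightarrow$(iii) the hypothesis $2\in\ka$ is not actually needed for $X\times X\in\ka$ (that already follows from $X\in\ka$ and closure under indexed sums); where it is genuinely used is the outer \emph{binary} disjoint union $X\sqcup\bigsqcup_{x_1,x_2}\Phi[x_1,x_2]$, which is a $2$-indexed sum. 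Second, for (iii)$\Rightarrow$(ii) the paper first replaces the diagram by a coequalizer presentation $\sum\by(Y)\rightrightarrows\sum\by(X)\to\cF$ with $S,T$ functional arrays in \bC (using that precomposing a parallel pair with an epimorphism does not change its coequalizer), and then takes $\Phi(x,x')$ to be a disjoint union of local \ka-prelimits of \emph{whole zigzags}, one for each zigzag; your inductive closure of the generating spans under $\times_{x_2}$ computes the same thing stage by stage, and both versions leave the final identification $\colim\by(\Phi)\cong\colim D$ at the same level of detail, so neither choice buys anything substantive over the other.
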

\begin{proof}
  Firstly, every object of $\exk(\bC)$ is the colimit of some \ka-ary
  congruence in \bC, and \bytil preserves such colimits.  This
  immediately gives~\ref{item:y1}$\Rightarrow$\ref{item:y2}.
  Conversely, if~\ref{item:y2} holds, say $\cF\cong \colim \by(\Phi)$,
  then $\fy(\Phi)$ has a colimit in $\exk(\bC)$ and this colimit is
  preserved by \bytil, hence its image is isomorphic to \cF;
  thus~\ref{item:y2}$\Rightarrow$\ref{item:y1}.

  If $2\in \ka$, then \ka-small sets are closed under binary
  coproducts, and thus a \ka-ary congruence is a \ka-small diagram;
  hence~\ref{item:y2}$\Rightarrow$\ref{item:y3}.  Conversely, suppose
  that $\om\in\ka$ and that \cF satisfies~\ref{item:y3}.  Then we can
  present \cF as a coequalizer
  \[ \xymatrix {\sum \by(Y)
    \ar@<-1mm>[r]_{\sum T} \ar@<1mm>[r]^{\sum S} &
    \sum \by(X) \ar[r] & \cF.}
  \]
  where $X$ and $Y$ are \ka-ary families of objects of \bC and
  $S,T\colon Y\To X$ are functional arrays.  We define a congruence
  $\Phi$ on $X$ as follows.  Given $x,x' \in X$, consider zigzags of
  the form
  \[\xymatrix@-.5pc{
    & y_1  \ar[dl] \ar[dr] && y_2 \ar[dl] \ar[dr]
    && y_n \ar[dl] \ar[dr] \\
    \mathllap{x =\, }x_0 && x_1 && \cdots
    && x_n \mathrlap{\,= x',}}
  \]
  in which each span $x_{i-1} \ot y_{i} \to x_i$ is either $(s_{y_i},
  t_{y_i})$ or $(t_{y_i}, s_{y_i})$.  For each $n$, there are a
  \ka-small number of such zigzags, so since $\om\in\ka$ there are
  overall a \ka-small number of them.  Since \bC is a \ka-ary site,
  each zigzag has a local \ka-prelimit; let $\Phi(x,x')$ be the
  disjoint union of one local \ka-prelimit of each zigzag.  Then \Phi
  is a \ka-ary congruence, and the colimit of $\by(\Phi)$ is also
  $\cF$.  Thus,~\ref{item:y3}$\Rightarrow$\ref{item:y2} when
  $\om\in\ka$.
\end{proof}

We write $\shk(\bC)$ for the full image of $\exk (\bC)$ in
$\nSh(\bC)$.

\begin{thm}\label{thm:ex-sheaves}
  The functor $\bytil \colon \exk(\bC) \to \shk(\bC)$ is an
  equivalence of categories.
\end{thm}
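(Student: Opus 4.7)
Since $\shk(\bC)$ is by definition the full image of $\bytil$ in $\nSh(\bC)$, the functor $\bytil\colon \exk(\bC) \to \shk(\bC)$ is automatically essentially surjective. Thus it suffices to show that $\bytil\colon \exk(\bC)\to \nSh(\bC)$ is fully faithful, and my plan is to do this by directly comparing the concrete span description of morphisms in $\exk(\bC)$ from Theorem~\ref{thm:ex-concrete} with the standard local-sections description of morphisms in $\nSh(\bC)$.

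Fix \ka-ary congruences $\Phi\colon X\Rto X$ and $\Psi\colon Y\Rto Y$ in \bC. Because $\bytil$ preserves collages of \ka-ary congruences, $\bytil(\Phi) = \colim \by(\Phi)$ in $\nSh(\bC)$, so morphisms $\bytil(\Phi)\to \bytil(\Psi)$ correspond bijectively to $\Phi$-compatible cocones on $\by(\Phi)$ with vertex $\bytil(\Psi)$. By Yoneda, such a cocone unpacks as a family of elements $\alpha_x \in \bytil(\Psi)(x)$, one for each $x\in X$, compatible with the arrays $\Phi(x,x')$. Now $\bytil(\Psi)$ is the sheafification of the presheaf $\colim y(\Psi)$, whose sections over $x$ are pairs $(y_j, g\colon x\to y_j)$ modulo the relation generated by $\Psi$. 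Hence each $\alpha_x$ is represented on some cover $P_x\colon W_x \To x$ by a functional array $F_x\colon W_x\To Y$, where the sheaf matching condition on $P_x$ (pulled back via local \ka-pre-pullbacks of $P_x$ against itself) becomes a local $\Psi$-compatibility of $F_x$. Assembling $P = \bigsqcup_x P_x$ and $F = \bigsqcup_x F_x$ produces a span $X \xLeftarrow{P} W \xRightarrow{F} Y$ with $P$ covering, and the $\Phi$-compatibility of the cocone translates into exactly the condition $P^*\Phi \lle F^*\Psi$. This span represents a morphism in $\exk(\bC)$ whose image under $\bytil$ is the original morphism, giving fullness.

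For faithfulness, two spans $(P,F)$ and $(P',F')$ induce the same element of $\bytil(\Psi)(x)$ if and only if, after pulling back along a common refining cover $R = PS = P'S'$ (produced using local \ka-pre-pullbacks in \bC), the resulting functional arrays $FS$ and $F'S'$ agree modulo the sheafification of the $\Psi$-relation, i.e.\ satisfy $\{f_{hS(u)}S_u, f'_{h'S'(u)}S'_u\} \le \Psi(\cdot,\cdot)$ pointwise. This is precisely the equivalence relation of Theorem~\ref{thm:ex-concrete}\ref{item:exc3}, so two spans represent the same morphism in $\exk(\bC)$ exactly when they define the same cocone in $\nSh(\bC)$.

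The main obstacle is the bookkeeping in the dictionary between ``local representative for a section after sheafification'' and ``span with a covering left leg''. In particular, one must verify that the matching condition for an amalgamation into $\bytil(\Psi)$ over a cover $P_x$, when computed using local \ka-pre-pullbacks in \bC (which $\by$ carries to genuine pullbacks locally, since $\by$ is a morphism of sites), coincides term-for-term with the inequality $P^*\Phi \lle F^*\Psi$ required in Theorem~\ref{thm:ex-concrete}\ref{item:exc2}. Once this identification is in place, both fullness and faithfulness reduce to matching the two equivalence relations, and the result follows.
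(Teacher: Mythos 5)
Your proposal is correct and follows essentially the same route as the paper: reduce via colimit-preservation to comparing $\exk(\bC)(\fy(y),\Psi)$ with $\bytil(\Psi)(y)$, and then match the span description from Theorem~\ref{thm:ex-concrete} term-for-term against the one-step (hypercover) description of sheafification of the colimit presheaf. The only point the paper adds that you pass over quickly is that sections of the unsheafified colimit presheaf are already identified up to $\Psi$, so the local representatives $F_x$ are only determined up to $\Psi$ from the start; this extra identification is harmless because it is absorbed by the final equivalence relation of Theorem~\ref{thm:ex-concrete}.
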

\begin{proof}
  It remains only to show that it is fully faithful.  Moreover, since
  every object of $\exk(\bC)$ is the colimit of some diagram in \bC
  (specifically, a \ka-ary congruence), and these colimits are
  preserved by \bytil, it suffices to prove that
  \[ \exk(\bC)\Big(\fy(y), \Phi\Big) \too
  \shk(\bC)\Big(\by(y), \bytil(\Phi)\Big) \cong \bytil(\Phi)(y)
  \]
  is a bijection, for any object $y$ and \ka-ary congruence
  $\Phi\colon X\Rto X$ in \bC.

  Now by \autoref{thm:ex-concrete}, a morphism $\fy(y) \to \Phi$ in
  $\exk (\bC)$ is determined by a covering family $P\colon W\To y$ and
  a functional array $F\colon W\To X$ such that the kernel of $P$
  locally refines $F^* \Phi$.  More concretely, this is a covering
  $P\colon W\To y$ and for each $w\in W$, a morphism $f_w\colon w\to
  f(w)$ for some $f(w)\in X$, such that for any $a\colon u\to w_1$ and
  $b\colon u\to w_2$ with $p_{w_1} a = p_{w_2} b$, there is a covering
  family $Q\colon V\To u$ such that $\{f_{w_1} a q_v, f_{w_2} b q_v\}$
  factors through $\Phi(f(w_1),f(w_2))$ for all $v\in V$.  Two such
  collections define the same morphism $\fy(y) \to \Phi$ if there is a
  covering $R\colon Z\To y$, which refines both $P$ and $P'$ as $R = P
  S = P' S'$, such that $\{ f_{s(z)} s_z, f'_{s'(z)} s'_z \}$ factors
  through $\Phi(fs(z),f's'(z))$ for any $z\in Z$.

  On the other hand, let \cF denote the colimit of the congruence
  $\by(\Phi)$ in the presheaf category $[\bC\op,\bSet]$.  Then
  \begin{equation}\label{eq:hypercover-pshf}
    \cF(w) = \coprod_{x\in X} \bC(w,x) \Big/ \sim,
  \end{equation}
  where the equivalence relation $\sim$ relates $\alpha_1\colon w\to
  x_1$ and $\alpha_2\colon w\to x_2$ if $\{\alpha_1,\alpha_2\}$
  factors through $\Phi(x_1,x_2)$.  Then $\bytil(\Phi)$ is the
  sheafification of \cF.  Using the 1-step construction of
  sheafification via hypercoverings
  (e.g.~\cite[Prop.~7.9]{dhi:hypercovers}
  or~\cite[\S6.5.3]{lurie:higher-topoi}), we can describe this as
  follows.  An element of $\bytil(\Phi)(y)$ is determined by a
  covering family $P\colon W\To y$ together with for each $w\in W$, an
  element $f_w \in \cF(w)$, such that for any $a\colon u\to w_1$ and
  $b\colon u\to w_2$ with $p_{w_1} a = p_{w_2} b$, there is a covering
  family $Q\colon V\To u$ such that $(q_v)^*a^*(f_{w_1}) = (q_v)^*
  b^*(f_{w_2})$ for all $v\in V$.  Two such collections of data define
  the same element of $\bytil(\Phi)$ if there is a covering family
  $R\colon Z\To y$, which refines both $P$ and $P'$ as $R = P S = P'
  S'$, and such that for any $z\in Z$ we have $s_z^*(f_{s(z)}) =
  (s')_z^*(f'_{s'(z)})$.

  These descriptions are essentially identical.  The only difference
  is that there is a bit more identification at first in an element of
  $\bytil(\Phi)$ (the elements $f_w$ are only specified up to $\Phi$
  to begin with), but this disappears after we quotient by the full
  equivalence relations.
\end{proof}

\begin{example}
  Suppose \bC has a trivial unary topology.  Then $\nSh(\bC) =
  [\bC\op,\bSet]$.  And if $X_1 \toto X_0$ is a unary congruence in
  \bC, i.e.\ a pseudo-equivalence relation, then the image of
  $\by(X_1) \to \by(X_0) \times \by(X_0)$ is an equivalence relation
  on $\by(X_0)$ whose quotient is the colimit of $\by(X_1 \toto X_0)$.
  Thus, every presheaf in the image of \bytil admits a surjection from
  a representable, such that the kernel of the surjection also admits
  a surjection from a representable.  Conversely, given a presheaf
  with this property, the two resulting representables give a
  pseudo-equivalence relation.  Thus we reproduce the characterization
  of the exact completion of a weakly lex category
  from~\cite{ht:free-regex} in terms of presheaves.
\end{example}

\begin{example}
  If \bC is a (unary) regular category with its regular unary
  topology, then we have seen that every unary congruence in \bC is
  equivalent to an internal equivalence relation.  Equivalent
  congruences have isomorphic colimits in $\nSh(\bC)$, so a sheaf on
  \bC lies in the image of \bytil just when it is the quotient of an
  equivalence relation in \bC.  Thus we also reproduce the
  characterization of the exact completion of a regular category
  from~\cite{lack:exreg-inf} in terms of sheaves.
\end{example}

Finally, the following example is important enough to call a theorem.

\begin{thm}\label{thm:topos}
  If \bC is a small \KA-ary site, then $\nEx_{\KA}(\bC)\simeq
  \nSh(\bC)$.\qed
\end{thm}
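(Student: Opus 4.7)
The plan is to combine \autoref{thm:ex-sheaves}, specialized to $\ka=\KA$, with the classical density theorem expressing any sheaf as a small colimit of representables. By \autoref{thm:ex-sheaves} the functor $\bytil\colon \exK(\bC)\to\nSh(\bC)$ is an equivalence onto its essential image, so it will suffice to show that every sheaf $\cF\in\nSh(\bC)$ lies in this image.

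Since $\om\in\KA$, \autoref{thm:yimg} reduces the question to showing that every $\cF\in\nSh(\bC)$ can be written in $\nSh(\bC)$ as a \KA-small (i.e.\ small) colimit of sheafified representables. To obtain such a presentation I would invoke the standard density theorem: the canonical cocone indexed over the category of elements $\int\cF$ exhibits $\cF$ as a colimit of representables in $[\bC\op,\bSet]$. Since sheafification $\ba\colon [\bC\op,\bSet]\to\nSh(\bC)$ is a left adjoint, it preserves this colimit, so applying it (and using that $\cF$ is its own sheafification) yields
\[ \cF \;\cong\; \colim\Bigl(\textstyle\int\cF \to \bC \xto{\by} \nSh(\bC)\Bigr) \]
in $\nSh(\bC)$.

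Smallness of the indexing category is immediate: \bC is small by hypothesis and $\cF$ takes values in $\bSet$, so $\int\cF$ is small and in particular \KA-small. This places $\cF$ in the image of $\bytil$ via \autoref{thm:yimg}, and \autoref{thm:ex-sheaves} delivers the equivalence. There is no real obstacle here: all of the hard work has been front-loaded into \autoref{thm:ex-sheaves} and \autoref{thm:yimg}, and what remains is the density presentation together with a routine smallness bookkeeping.
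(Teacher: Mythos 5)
Your proof is correct and takes the same route the paper intends: the theorem is stated there with an immediate \qed, as a direct consequence of \autoref{thm:ex-sheaves} and \autoref{thm:yimg}\ref{item:y3}$\Rightarrow$\ref{item:y1} (using $\om\in\KA$). The density presentation over $\int\cF$, preserved by sheafification, together with the observation that $\int\cF$ is small (hence \KA-small) because \bC is small and \cF is \bSet-valued, is exactly the routine step being left to the reader.
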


Combining this with \autoref{thm:adjoint}, we obtain a new proof of
the classical theorem that for a small site \bC and a Grothendieck
topos \bE, geometric morphisms $\bE\to \nSh(\bC)$ are equivalent to
(what we call) morphisms of \KA-ary sites $\bC\to\bE$.

\subsection{Small sheaves on large sites}

Now suppose that \bC is a \emph{large} (but moderate) \KA-ary site.
We write \bSET for the very large category of moderate sets.
Similarly, we write $\nSH(\bC)$ for the very large category of
\bSET-valued sheaves on \bC.  As before, we can show:

\begin{prop}
  For any \KA-ary site \bC, we have a full embedding
  $\bytil\colon\exK(\bC)\into \nSH(\bC)$, whose image consists of
  those \bSET-valued sheaves which are colimits in $\nSH(\bC)$ of
  small diagrams in \bC.
\end{prop}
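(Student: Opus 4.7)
The plan is to mirror the proof of \autoref{thm:ex-sheaves}, replacing \bSet-valued sheaves by \bSET-valued sheaves throughout, and then to read off the image of $\bytil$ from the facts that every object of $\exK(\bC)$ is a small colimit of a congruence and that $\bytil$ preserves such colimits. First I would observe that $\nSH(\bC)$ is \KA-ary exact---it is the moderate infinitary pretopos of moderate-set-valued sheaves on $\bC$---and that the sheafified Yoneda embedding $\by\colon \bC\to\nSH(\bC)$ is a morphism of \KA-ary sites. The universal property given by \autoref{thm:adjoint} then produces a canonical \KA-ary regular functor $\bytil\colon\exK(\bC)\to\nSH(\bC)$ extending \by.

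Next I would verify that $\bytil$ is fully faithful by repeating the argument of \autoref{thm:ex-sheaves} verbatim. Since every object of $\exK(\bC)$ is the colimit of the \fy-image of some \KA-ary congruence $\Phi\colon X\Rto X$ in \bC and $\bytil$ preserves these colimits (being \KA-ary regular), it suffices to show that the map
\[ \exK(\bC)\bigl(\fy(y),\Phi\bigr) \to \nSH(\bC)\bigl(\by(y),\bytil(\Phi)\bigr) \cong \bytil(\Phi)(y) \]
is a bijection for each $y\in\bC$. Matching the anafunctor description from \autoref{thm:ex-concrete} with the one-step hypercover formula for sheafification yields this bijection exactly as in \autoref{thm:ex-sheaves}: the hypercover formula is purely local, depending only on covers of the single object $y$, so it is insensitive to the size of \bC and applies to \bSET-valued sheaves without modification.

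For the image, one containment is immediate: every object of $\exK(\bC)$ is a small colimit (of the congruence that represents it) of representables, and $\bytil$ preserves these, so every sheaf in its image is a small colimit of sheafified representables. For the converse, suppose $\cF\in\nSH(\bC)$ is such a small colimit. Since $\om\in\KA$, we may present \cF as a coequalizer of small coproducts $\sum\by(Y)\toto\sum\by(X)$ of sheafified representables, and then construct a \KA-ary congruence $\Phi$ on $X$ by taking, for each $x,x'\in X$, the disjoint union of local \KA-prelimits of all finite zigzags between $x$ and $x'$ built from the two parallel functional arrays $Y\To X$---exactly as in the proof of \autoref{thm:yimg}\ref{item:y3}$\Rightarrow$\ref{item:y2}. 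The set of zigzags is moderate because $\om\in\KA$, local finite \KA-prelimits exist because \bC is a \KA-ary site, and the colimit of $\by(\Phi)$ in $\nSH(\bC)$ recovers $\cF$; thus $\cF\cong\bytil(\fy(\Phi))$. The main obstacle is simply to confirm that this zigzag construction still produces a \KA-ary (i.e., small) congruence when \bC is large: this holds because the construction only assembles \KA-small data attached to fixed pairs of objects of \bC, so moderation is preserved throughout.
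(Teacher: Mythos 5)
Your proposal is correct and follows exactly the route the paper intends: the paper gives no separate proof here (it says only ``As before, we can show''), and your argument is precisely the adaptation of \autoref{thm:yimg}, \autoref{thm:ex-concrete}, and \autoref{thm:ex-sheaves} to \bSET-valued sheaves on a large \KA-ary site. One small wording point: in the zigzag construction you need the set of zigzags between $x$ and $x'$ to be \KA-small (i.e.\ small), not merely moderate, which is what the count in \autoref{thm:yimg} actually gives since $\om\in\KA$; your closing sentence about ``\KA-small data'' has it right.
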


\noindent
By a \textbf{small sheaf} we mean a sheaf in the image of \bytil, or
equivalently an object of $\exK(\bC)$.

\begin{example}\label{eg:small-presheaves}
  If \bC is a moderate category with finite \KA-prelimits and a
  trivial \KA-ary topology, then $\exK(\bC)$ is equivalent to the
  category $\mathcal{P}(\bC)$ of \emph{small presheaves} on \bC in the
  sense of~\cite{dl:lim-smallfr}: the colimits in $[\bC\op,\bSET]$ of
  small diagrams in \bC.  Thus, under these hypotheses,
  $\mathcal{P}(\bC)$ is \KA-ary exact, and in particular has finite
  limits.  This last conclusion is the finitary version of one
  direction of a theorem of~\cite{dl:lim-smallfr}; we will also deduce
  the converse in \autoref{eg:small-presheaves-2}.
\end{example}

\begin{remark}
  While a small presheaf on a locally small category necessarily takes
  values in small sets (since colimits in $[\bC\op,\bSET]$ are
  pointwise), the same is not true of a small sheaf.  One virtue of
  our approach is that we have defined $\exK(\bC)$, for a large
  \KA-ary site \bC, without needing the whole very-large category
  $\nSH(\bC)$.
\end{remark}

When \bC is large, $\exK(\bC)$ is not, in general, a Grothendieck
topos: it lacks a small generating set.  However, we have shown that
it is an infinitary-pretopos.  Conversely,
\autoref{thm:ex-prop}\ref{item:ep3} implies that any
infinitary-pretopos is equivalent to $\nEx_{\KA}(\bC)$ for some
\KA-ary site \bC, namely the infinitary-pretopos itself with its
\KA-canonical topology.  Thus we have a ``purely size-free'' version
of Giraud's theorem: a category is an infinitary-pretopos if and only
if it is the \KA-ary exact completion of a \KA-ary site.  (This
viewpoint also shows that there is really nothing special in this
about the case $\ka=\KA$.)

Moreover, \autoref{thm:adjoint} implies that the \KA-ary exact
completion also satisfies a suitable version of Diaconescu's theorem:
for any \KA-ary site \bC and infinitary-pretopos \bD, functors
$\nEx_\KA(\bC)\to \bD$ which preserve finite limits and small colimits
are naturally equivalent to morphisms of \KA-ary sites $\bC\to \bD$.
It is natural to think of such functors between infinitary pretoposes
as ``the inverse image parts of geometric morphisms,'' although in the
absence of smallness hypotheses, we have no adjoint functor theorem to
ensure the existence of a ``direct image part.''  In particular, if
\bC is the syntactic category of a ``large geometric theory,'' then
$\nEx_\KA(\bC)$ might naturally be considered the ``classifying
(pre)topos'' of that theory.

\begin{remark}
  In our presentation, the objects of $\nEx_\KA(\bC)$ are
  transparently seen as ``objects of \bC glued together.''  This makes
  it obvious, for instance, how to express schemes as objects of
  $\nEx_\KA(\bRing\op)$.  Namely, let $X$ be a family of rings
  covering a scheme $S$ by open affines, and for each $x_1,x_2\in X$
  let $\Phi(x_1,x_2)$ be an open affine cover of $\nSpec(x_1) \cap
  \nSpec(x_2) \subseteq S$.  Then \Phi is a \KA-ary congruence which
  presents $S$ as a small sheaf on $\bRing\op$.

  It should be possible to axiomatize further ``open map structure''
  on a \KA-ary site, along the lines of~\cite{jm:open-maps}
  and~\cite{lurie:strsp}, enabling the identification of a general
  class of ``schemes'' in $\nEx_\KA(\bC)$ as the congruences where
  gluing happens only along ``open subspaces.''
\end{remark}

\section{Postulated and lex colimits}
\label{sec:post-lex-colim}

In this section we consider how our notion of exact completion is
related to the \emph{postulated colimits} of~\cite{kock:postulated}
and the \emph{lex colimits} of~\cite{gl:lex-colimits}.  We will also
obtain cocompletions of sites with respect to weaker exactness
properties, generalizing those of~\cite{gl:lex-colimits}.

Let \bC be a (moderate) category, \bSET the category of moderate sets,
and $\by\colon \bC\to [\bC\op,\bSET]$ the Yoneda embedding.  By a
\textbf{realization} of a presheaf $\cJ\colon \bC\op \to \bSET$ we
will mean a morphism $\cJ\to \by(z)$ into a representable through
which any map from $\cJ$ into a representable factors uniquely.  In
other words, $\cJ\to \by(z)$ is a reflection of $\cJ$ into
representables.

For instance, for any functor $\bg\colon \bD\to\bC$, colimits of \bg
are equivalent to realizations of $\colim (\by\circ\bg)$.  More
generally, colimits of \bg weighted by $\cI\colon \bD\op\to\bSET$ are
equivalent to realizations of $\lan_{\bg\op} \cI$.

\begin{defn}
  Let \bC be a \ka-ary site.  A presheaf $\cJ\colon \bC\op \to \bSET$
  is \textbf{\ka-admissible} if there exists a \ka-ary congruence
  $\Phi$ in \bC such that $\cJ \cong \colim \by(\Phi)$ in
  $[\bC\op,\bSET]$.
\end{defn}

Since a \ka-ary congruence is a small diagram, any \ka-admissible
weight must be a \textbf{small presheaf}, i.e.\ a small colimit of
representables.  If $2\in \ka$, then a \ka-ary congruence is a
\ka-small diagram, and so any \ka-admissible weight is a
\textbf{\ka-small presheaf}, i.e.\ a \ka-small colimit of
representables.  Conversely, the construction used for
\autoref{thm:yimg}\ref{item:y3}$\Rightarrow$\ref{item:y2} gives:

\begin{lem}\label{thm:small-adm}
  If $\om\in\ka$, then any \ka-small presheaf on \bC is
  \ka-admissible.\qed
\end{lem}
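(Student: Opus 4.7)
The plan is to reuse, essentially verbatim, the construction from the proof of \autoref{thm:yimg}\ref{item:y3}$\Rightarrow$\ref{item:y2}, which already builds a \ka-ary congruence out of any presentation of a sheaf as a \ka-small colimit of representables, using only that $\om\in\ka$.

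First, I would note that since $\om\in\ka$ is assumed, \ka-small colimits in $[\bC\op,\bSET]$ can be built from \ka-small coproducts and coequalizers. Thus any \ka-small presheaf \cJ can be presented as a coequalizer
\[
\sum_{y\in Y} \by(y) \toto \sum_{x\in X} \by(x) \to \cJ,
\]
in which $X$ and $Y$ are \ka-small families of objects of \bC, and the two parallel maps come from functional arrays $S,T\colon Y\To X$.

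Next I would construct a \ka-ary congruence $\Phi$ on $X$ as in the proof of \autoref{thm:yimg}: for each pair $x,x'\in X$, consider all zigzags
\[
x=x_0 \leftarrow y_1 \rightarrow x_1 \leftarrow y_2 \rightarrow \cdots \rightarrow x_n = x',
\]
where each span is either $(s_{y_i},t_{y_i})$ or $(t_{y_i},s_{y_i})$. For each fixed length $n$ there are only \ka-small-many such zigzags (since $X$ and $Y$ are \ka-small and $\om\in\ka$ gives closure under finite products), and since $\om\in\ka$ there are only \ka-small-many lengths to consider; hence overall a \ka-small collection of zigzags. Because \bC is a \ka-ary site it admits finite local \ka-prelimits, so each zigzag has one; let $\Phi(x,x')$ be the disjoint union of these, which remains \ka-sourced by closure of \ka under \ka-small indexed sums. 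Verifying reflexivity, symmetry, and transitivity for $\Phi$ is routine (one uses the identity zigzag, zigzag reversal, and zigzag concatenation, each of which lifts to the local \ka-prelimits up to local refinement).

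Finally, I would check that $\colim \by(\Phi)\cong \cJ$ in $[\bC\op,\bSET]$. The coequalizer presentation shows that $\cJ(w)$ is the quotient of $\bigsqcup_{x\in X}\bC(w,x)$ under the equivalence relation generated by the spans $(s_y,t_y)$, i.e.\ by all zigzags; by construction, this is exactly the same equivalence relation that defines $(\colim\by(\Phi))(w)$. The main obstacle, if any, is bookkeeping the size estimates carefully — specifically, verifying that the set of zigzags of all lengths is \ka-small requires both closure of \ka under \ka-small indexed sums and the fact that $\om\in\ka$; if $\om\notin\ka$, only bounded-length zigzags can be assembled and the argument breaks, which is precisely why the hypothesis is needed.
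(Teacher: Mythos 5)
Your proposal is correct and is essentially the paper's own argument: the paper proves this lemma by exactly the construction from the proof of \autoref{thm:yimg}, (iii)$\Rightarrow$(ii) (coequalizer presentation, zigzag congruence, $\om\in\ka$ to bound the number of zigzags), which is why it appears with only a \qed. Your additional remarks on the size bookkeeping and on why the hypothesis $\om\in\ka$ is needed are accurate elaborations of what the paper leaves implicit.
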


In particular, every presheaf on a small site is \KA-admissible.

\begin{example}
  Let \bC have finite limits and let $R\toto X$ be an internal
  equivalence relation in \bC.  Then the quotient of $\by (R)\toto
  \by(X)$ in $[\bC\op,\bSET]$ is \un-admissible.  A realization of
  this presheaf is a quotient of $R\toto X$.
\end{example}

\begin{example}\label{thm:lext-adm}
  Given a \ka-ary family $X$ of objects in \bC, the coproduct $\sum
  \by(X)$ in $[\bC\op,\bSET]$ is \ka-small.  Moreover, it is
  \ka-admissible regardless of what \ka may be, since it is the
  colimit of the discrete congruence $\Delta_X$.  Of course,
  realizations of $\sum \by(X)$ are coproducts of $X$ in \bC.
\end{example}

\begin{example}\label{thm:adh-adm}
  Suppose \bC has finite limits and that we have a span $y
  \hookleftarrow x \to z$ in which $x\into y$ is monic.  If $\cJ$
  denotes the pushout of $\by(y) \hookleftarrow \by(x) \to \by(z)$,
  then a realization of $\cJ$ is a pushout of the given span in \bC.
  This $\cJ$ is \om-small, but as $\om\notin\om$,
  \autoref{thm:small-adm} does not apply.  Nevertheless, $\cJ$ is
  still \om-admissible.  For if we define $X = \{y,z\}$, we have a
  congruence $\Phi$ on $X$ where:
  \begin{itemize}[nolistsep]
  \item $\Phi(z,z) = \Delta_z$,
  \item $\Phi(y,z)$ and $\Phi(z,y)$ are the given span and its
    opposite, and
  \item $\Phi(y,y)$ consists of $\Delta_y$ together with the span $y
    \ot x\times_z x \to y$.
  \end{itemize}
  The fact that $x\into y$ is monic makes this a congruence, and it is
  clearly \om-ary.  It is easy to check that the colimit of
  $\by(\Phi)$ is $\cJ$.
\end{example}

If $\cJ$ is \ka-admissible, then a congruence $\Phi$ with
$\colim\by(\Phi) \cong \cJ$ is of course not uniquely determined.
However, it is determined up to a suitable sort of equivalence.

\begin{lem}\label{lem:adm-cong-uniq}
  Suppose \Phi and \Psi are \ka-ary congruences in a \ka-ary site on
  families $X$ and $Y$, with $\colim\by(\Phi) \cong \colim\by(\Psi)$.
  Then $\Phi$ and $\Psi$ are isomorphic in $\exk (\bC)$.
\end{lem}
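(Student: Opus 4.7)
The plan is to deduce the lemma from the full faithfulness of $\bytil\colon \exk(\bC) \to \nSH(\bC)$, the functor sending a $\ka$-ary congruence to the sheafification of its presheaf colimit of representables. Theorem \ref{thm:ex-sheaves} establishes precisely this full faithfulness for small $\ka$-ary sites and $\bSet$-valued sheaves, but the only ingredients of its proof, namely the one-step hypercover construction of sheafification and the ``coproduct modulo an equivalence relation'' description of the presheaf colimit in equation \eqref{eq:hypercover-pshf}, apply equally well with $\bSET$ replacing $\bSet$. Hence $\bytil$ is fully faithful for any (possibly large) $\ka$-ary site $\bC$, just as was implicitly used in \S\ref{sec:exact-compl-sheav} for the embedding $\exK(\bC)\into \nSH(\bC)$ in the large $\KA$-ary case.

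Given the hypothesized isomorphism $\colim \by(\Phi) \cong \colim \by(\Psi)$ in $[\bC\op, \bSET]$, I apply the sheafification functor to both sides. Since sheafification preserves colimits and factors $\bytil$ through the presheaf colimit (that is, $\bytil(\Phi)$ is by construction the sheafification of $\colim \by(\Phi)$), this produces an isomorphism $\bytil(\Phi) \cong \bytil(\Psi)$ in $\nSH(\bC)$. Full faithfulness of $\bytil$ then lifts this, essentially uniquely, to an isomorphism $\Phi \cong \Psi$ in $\exk(\bC)$, as required.

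The only nontrivial point is the extension of Theorem \ref{thm:ex-sheaves} from small to arbitrary $\ka$-ary sites. The proof given there compares two descriptions of $\fy(y) \to \Phi$ in $\exk(\bC)$ (by spans, via \autoref{thm:ex-concrete}) and of elements of $\bytil(\Phi)(y)$ (by the hypercover formula) object-by-object, and neither description depends on smallness of $\bC$. So the generalization is routine, and once it is in hand the lemma is a one-line consequence of colimit-preservation by sheafification combined with full faithfulness.
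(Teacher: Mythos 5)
Your proof is correct and is essentially the paper's own argument: both deduce $\bytil(\Phi)\cong\bytil(\Psi)$ from the fact that sheafification preserves colimits, and then invoke the full faithfulness of $\bytil$ from \autoref{thm:ex-sheaves} to conclude $\Phi\cong\Psi$ in $\exk(\bC)$. Your extra care in noting that \autoref{thm:ex-sheaves} extends verbatim to large sites with $\bSET$-valued sheaves is a point the paper itself only addresses implicitly (via the proposition in \S\ref{sec:exact-compl-sheav} on small sheaves on large sites), so it is a welcome clarification rather than a divergence.
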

\begin{proof}
  By \autoref{thm:ex-sheaves}, $\exk (\bC)$ is equivalent to a
  subcategory of $\nSH(\bC)$, via a functor $\bytil$ which preserves
  colimits of \ka-ary congruences and extends the sheafified Yoneda
  embedding of \bC.  Since sheafification preserves colimits,
  $\colim\by(\Phi) \cong \colim\by(\Psi)$ implies $\bytil(\Phi) \cong
  \bytil(\Psi)$, hence $\Phi \cong \Psi$ in $\exk (\bC)$.
\end{proof}

Since a collage of \Phi in $\lrelk (\bC)$ is equivalently an object of
\bC together with an isomorphism to \Phi in $\exk (\bC)$, it follows
that under the hypotheses of \autoref{lem:adm-cong-uniq}, giving a
collage of \Phi is equivalent to giving a collage of \Psi.  This
justifies considering the following definition as a property of $\cJ$
alone.

\begin{definition}
  Let \bC be a \ka-ary site and $\cJ\colon \bC\op\to\bSET$ be
  \ka-admissible.  A morphism $\cJ \to \by(z)$ is \textbf{postulated}
  if for some (hence any) \ka-ary congruence $\Phi\colon X\Rto X$ with
  an isomorphism $\colim\by(\Phi) \cong \cJ$, the induced cocone
  $F\colon X\To z$ has the property that $F\sb$ is a (loose) collage
  of $\Phi$ in $\lrelk (\bC)$.
\end{definition}

Postulated colimits are defined in~\cite{kock:postulated} for conical
colimits in a site with finite limits, using the internal logic.  The
notion was reformulated in~\cite[Prop.~6.5]{gl:lex-colimits}, also
assuming finite limits, in terms of a presentation of $\cJ$ as a
coequalizer:
\begin{equation}
  \sum \by(Y) \toto \sum \by(X) \to \cJ.\label{eq:postcoeqpres}
\end{equation}
In these definitions, there are two conditions on $\cJ\to \by(z)$ to
be postulated:
\begin{enumerate}[nolistsep]
\item The induced family $X\To z$ is covering.\label{item:pc1}
\item For each $x, x'\in X$, the induced family of maps into $x
  \times_z x'$ out of pullbacks of zigzags built out of spans with
  vertices in $Y$ (as in the proof of \autoref{thm:yimg}) is
  covering.\label{item:pc2}
\end{enumerate}
If~\eqref{eq:postcoeqpres} exhibits $\cJ$ as the colimit of a
congruence \Phi, then the pullback of any such zigzag factors locally
through $\Phi(x,x')$.  Hence, condition~\ref{item:pc2} above is
equivalent to asking that $x \times_z x'$ be covered by $\Phi(x,x')$
--- which in turn is equivalent to saying that the kernel of $X\To z$
is equivalent to \Phi.  Thus, in this case the above two conditions
reduce precisely to the two conditions in \autoref{thm:coll-char}
characterizing $z$ as a collage of \Phi.

Conversely, for an arbitrary~\eqref{eq:postcoeqpres}, the congruence
constructed in \autoref{thm:yimg} is built out of zigzags, so Kock's
notion of postulatedness reduces to being its collage.  Therefore,
modulo \ka-admissibility, our notion of postulatedness coincides with
Kock's.

We remark in passing that~\cite[Prop.~1.1]{kock:postulated} now
follows easily.

\begin{prop}
  In a subcanonical site, any postulated morphism is a realization.
\end{prop}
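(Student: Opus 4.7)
The plan is to leverage two facts about subcanonical sites that have already been established: by \autoref{thm:chordate1}, $\lrelk(\bC)$ is chordate (and in particular subchordate, so covering families in \bC are epic), and by the definition of postulatedness we have the universal property of a loose collage at our disposal. Together, these should let us mechanically convert the abstract factorization in the allegory into the desired concrete factorization through the postulated morphism in \bC.

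To begin, I would fix a \ka-ary congruence $\Phi\colon X\Rto X$ in \bC with $\colim\by(\Phi)\cong \cJ$; by \autoref{lem:adm-cong-uniq} the particular choice is immaterial. The postulated condition then says exactly that the induced cocone $F\colon X\To z$ in \bC has $F\sb$ as a loose collage of $\Phi$ in $\lrelk(\bC)$. Given a test morphism $\cJ \to \by(w)$, the isomorphism $\cJ\cong \colim\by(\Phi)$ translates it into a cocone $G\colon X\To w$ in \bC under the diagram $\Phi$. I would next verify, as in the proof of \autoref{thm:tcoll-colim}, that subchordateness makes this tantamount to the allegorical condition $\bigvee(G\sb \Phi) \le G\sb$; indeed, writing $\Phi = A\sb B\pb$ for the span presentation of $\Phi$, both conditions reduce to $G\sb A\sb = G\sb B\sb$, which by subchordateness is equivalent to $GA = GB$ in \bC.

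With this dictionary in hand, the loose-collage universal property of $F\sb$ produces a unique loose map $\chi\colon z\lto w$ satisfying $\chi F\sb = G\sb$. Chordateness then lifts $\chi$ to a unique tight morphism $h\colon z\to w$ in \bC, and subchordateness upgrades the equality $h\sb F\sb = G\sb$ to the equality $hF = G$ of cocones in \bC. Uniqueness among factorizations in \bC follows from the same mechanism: if $h'\colon z\to w$ also satisfies $h'F = G$, then $h'\sb F\sb = G\sb = h\sb F\sb$, so part~(c) of the collage universal property forces $h\sb = h'\sb$, and subchordateness gives $h=h'$.

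The main (minor) obstacle is really just the careful back-and-forth between cocones in \bC and cocones in $\lrelk(\bC)$; but, as noted, this translation is already essentially contained in the proof of \autoref{thm:tcoll-colim}, and once it is in place the remainder is a direct application of the universal property of loose collages together with chordateness. Hence the proof should be short.
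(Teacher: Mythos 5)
Your proof is correct and follows essentially the same route as the paper's: the paper simply observes that chordateness (from \autoref{thm:chordate1}) makes the loose collage a tight collage and then cites \autoref{thm:tcoll-colim}, whereas you inline that lemma's argument. The only point worth making explicit is that the morphism $\chi$ produced by the collage universal property must be checked to be a loose \emph{map} (via \autoref{thm:cauchydense}) before chordateness can tighten it, but this is exactly the step already handled inside the proof of \autoref{thm:tcoll-colim} that you invoke.
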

\begin{proof}
  According to our definition, a postulated morphism $\cJ\to\by(z)$
  exhibits $z$ as the collage of some congruence $\Phi$ with $\colim
  \by(\Phi)\cong \cJ$.  But if \bC is subcanonical, then $\lrelk
  (\bC)$ is chordate by \autoref{thm:chordate1}, so any collage is a
  tight collage.  Thus, by \autoref{thm:tcoll-colim}, any such collage
  is also a colimit of $\Phi$, hence a realization of $\cJ$.
\end{proof}

Thus, in a subcanonical site we may refer to a postulated morphism
$\cJ \to\by(z)$ as a \textbf{postulated realization}.

In~\cite{gl:lex-colimits}, Garner and Lack define a \emph{lex-weight}
to be a functor $\cJ\colon \bD\op\to \bSet$ where \bD is small and has
finite limits, and a $\cJ$-weighted \emph{lex-colimit} in a category
\bC with finite limits to be a $\cJ$-weighted colimit of a
finite-limit-preserving functor $\bg\colon \bD\to\bC$.  Let \sJ be a
class of lex-weights; by~\cite[6.4]{gl:lex-colimits}, the following
definition is equivalent to theirs.

\begin{definition}\label{def:Wexact}
  A category \bC is \textbf{\sJ-exact} if it has finite limits, and
  there exists a subcanonical topology on \bC such that for any
  $\cJ\colon \bD\op\to\bSet$ in \sJ and $\bg\colon \bD\to\bC$
  preserving finite limits, the presheaf $\lan_{\bg\op} \cJ$ on \bC
  has a postulated realization.
\end{definition}

Now let \ka be an arity class and suppose that each $\cJ\colon
\bD\op\to\bSet$ in $\sJ$ is \ka-admissible for the trivial topology on
$\bD$.  For each $\cJ\in \sJ$, let $\Phi_\cJ\colon X_\cJ \Rto X_\cJ$
be a \ka-ary congruence in \bD with $\colim \by(\Phi_\cJ) \cong \cJ$.
Note that if $\bg\colon \bD\to\bC$ preserves finite limits, then it is
a morphism of sites for any topology on $\bC$, and hence
$\bg(\Phi_\cJ)$ is a congruence in $\bC$.

Thus, \bC is \sJ-exact if and only if it admits some subcanonical
topology such that for any $\cJ\colon \bD\op\to\bSet$ in \sJ and
$\bg\colon \bD\to\bC$ preserving finite limits, this congruence
$\bg(\Phi_\cJ)$ has a collage.  (In particular, any \ka-ary exact
category is \cJ-exact.)  This is equivalent to asking that there exist
a cocone $\bg(X_\cJ) \To \by(z_{\cJ,\bg})$ under $\bg(\Phi_\cJ)$ which
is covering, and such that all the induced cocones $\bg(\Phi(x,x'))
\To \bg(x) \times_z \bg(x')$ are also covering.

In particular, if \bC is \sJ-exact, then the cocones $\bg(X_\cJ) \To
\by(z_{\cJ,\bg})$ and $\bg(\Phi(x,x')) \To \bg(x) \times_z \bg(x')$
are all universally effective-epic, so the topology that they generate
is subcanonical.  Since these cocones are \ka-ary, the topology they
generate is weakly \ka-ary, hence \ka-ary (since \bC has finite
limits).  We call it the \textbf{\sJ-exact topology} on \bC.  It makes
an implicit appearance in~\cite[\S7]{gl:lex-colimits}, where its
category of sheaves is characterized directly as those presheaves
which send ``$\sJ^*$-lex colimits'' to limits in \bSet.

\begin{definition}\label{def:Wsuperexact}
  A topology on a \sJ-exact category is \textbf{\sJ-superexact} if it
  contains the \sJ-exact topology.
\end{definition}

Thus, the topologies on \bC which exhibit it as \sJ-exact as in
\autoref{def:Wexact} are precisely the subcanonical and \sJ-superexact
ones.  For instance, the \ka-canonical topology on a \ka-ary exact
category is \sJ-superexact.  The prefix ``super-'' in
``\sJ-superexact'' is intended to dualize the prefix ``sub-'' in
``subcanonical''.

\begin{remark}
  For many familiar classes \sJ of lex-weights, the \sJ-exact topology
  is already generated by the coprojections $\bg(X_\cJ) \To
  z_{\cJ,\bg}$.  This is related to the remark
  in~\cite{gl:lex-colimits} that \sJ-lex-cocompleteness often, but not
  always, coincides with ``$\sJ^*$-lex-cocompleteness''.  In
  particular, the example in~\cite[\S5.11]{gl:lex-colimits} also gives
  a class \sJ for which the \sJ-exact topology is not generated merely
  by these coprojections.
\end{remark}

Recall from~\cite{gl:lex-colimits} that a functor \f between \sJ-exact
categories is called \textbf{\sJ-exact} if it preserves finite limits
and \sJ-lex-colimits.

\begin{lem}\label{thm:Wexfr}
  If \bC and \bD are \sJ-superexact subcanonical \ka-ary sites, then
  any morphism of sites $\f\colon \bC\to\bD$ is \sJ-exact.  The
  converse holds if \bC has the \sJ-exact topology.
\end{lem}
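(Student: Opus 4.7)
My plan for the forward direction has two parts. First I show that $\f$ preserves finite limits: since $\bC$ is $\sJ$-exact it is finitely complete, and since $\bD$ is subcanonical its covering families are universally effective-epic and hence strong-epic, so \autoref{thm:morsite-flim} applies directly to the morphism of sites $\f$ and yields the result.

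Second, I show that $\f$ preserves $\sJ$-lex-colimits. Fix $\cJ\in\sJ$ with its chosen presenting $\ka$-ary congruence $\Phi_\cJ$, and a finite-limit-preserving $\bg\colon \bE\to\bC$. Preservation of finite limits by $\bg$ ensures that $\bg_*\Phi_\cJ$ is a $\ka$-ary congruence in $\bC$, and $\sJ$-superexactness of $\bC$ supplies a postulated realization $\bg(X_\cJ)\To z$. Unpacking the two postulated-realization conditions (the cocone is covering and its pullback kernel matches $\bg_*\Phi_\cJ$), \autoref{thm:coll-char} identifies this datum with the statement that $z$ is a (loose) collage of $\bg_*\Phi_\cJ$ in $\lrelk(\bC)$. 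The induced framed allegory functor $\lrelk(\f)$ preserves meets, joins, composition, and the involution, and so transports this characterization to a collage of $\f_*\bg_*\Phi_\cJ$ at $\f(z)$ in $\lrelk(\bD)$. Since $\bD$ is subcanonical, \autoref{thm:chordate1} makes $\lrelk(\bD)$ chordate, so this collage is automatically tight, and then \autoref{thm:tcoll-colim} promotes it to an actual colimit in $\bD$. This exhibits $\f(z)$ as a realization of the $\sJ$-lex-colimit of $\f\bg$, as required.

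For the converse, suppose $\bC$ carries the $\sJ$-exact topology and $\f$ is $\sJ$-exact. Preservation of finite limits makes $\f$ representably flat and therefore covering-flat. To see that $\f$ preserves covering families it suffices to check the generators of the $\sJ$-exact topology, which are the cocones $\bg(X_\cJ)\To z_{\cJ,\bg}$ and $\bg(\Phi_\cJ(x,x'))\To \bg(x)\times_{z}\bg(x')$ arising from postulated realizations. Since $\f$ preserves both finite limits and $\sJ$-lex-colimits, it carries these to the analogous generators of the $\sJ$-exact topology on $\bD$, which are covering there because $\bD$ is $\sJ$-superexact.

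The main obstacle is the translation from the postulated-realization condition to the allegorical collage condition in $\lrelk$. Once this identification via \autoref{thm:coll-char} is in place, preservation is automatic from the functoriality of $\lrelk(-)$; the remaining steps are routine applications of \autoref{thm:morsite-flim}, \autoref{thm:chordate1}, and \autoref{thm:tcoll-colim}.
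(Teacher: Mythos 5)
Your proof is correct and follows essentially the same route as the paper: both directions come down to the identification of $\sJ$-lex-colimits with collages of the congruences $\bg(\Phi_\cJ)$ (equivalently, with the covering cocones witnessing postulatedness), together with the fact that under subcanonicity and finite completeness a morphism of sites is exactly a functor preserving finite limits and covering families. The paper's proof is just a more compressed version of the same argument, citing \autoref{thm:fc-morsite} in place of your explicit appeal to \autoref{thm:morsite-flim} and phrasing the forward direction directly in terms of preservation of the characterizing covering families rather than routing through preservation of collages by $\lrelk(\f)$.
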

\begin{proof}
  By \autoref{thm:fc-morsite}, $\f\colon \bC\to\bD$ is a morphism of
  sites if and only if it preserves finite limits and covering
  families.  On the other hand, since \bD is \sJ-superexact, \f is
  \sJ-exact if and only if it preserves finite limits and also the
  covering families that exhibit \sJ-lex-colimits as collages.  If \f
  is a morphism of sites then it clearly does this.  Conversely, since
  the \sJ-exact topology is generated by these covering families, if
  \f preserves them then it preserves all covering families in that
  topology.
\end{proof}

Let $\SITEk^\sJ$ denote the full sub-2-category of \SITEk determined
by the \sJ-exact categories equipped with \sJ-superexact subcanonical
\ka-ary topologies.  Let $\EXk^\sJ$ denote its full sub-2-category
determined by the \sJ-exact topologies.

By \autoref{thm:Wexfr}, the morphisms in $\EXk^\sJ$ are the \sJ-exact
functors, so it is equivalent to the category $\sJ$-$\mathbf{EX}$
of~\cite{gl:lex-colimits}.  In particular, $\EXk^\sJ$ is independent
of \ka, as long as \sJ is \ka-admissible (and every \sJ is
\KA-admissible).

\begin{thm}\label{thm:Wadjoint}
  The inclusion $\SITEk^\sJ \into \SITEk$ is reflective.
\end{thm}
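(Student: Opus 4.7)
The plan is to mirror the proof of \autoref{thm:adjoint}, replacing the completion $\lmodk$ by a variant $\lmodk^\sJ$ that freely adjoins only the collages of \emph{\sJ-admissible} congruences. Under the 2-equivalence $\lrelk\colon \SITEk\simeq\FALLk^{\mathrm{wt}\times}$, the sub-2-category $\SITEk^\sJ$ corresponds to those framed allegories in $\FALLk^{\mathrm{wt}\times}$ which are chordate (capturing subcanonicity, by \autoref{thm:chordate1}) and which possess tight collages of every \sJ-admissible \ka-ary congruence $\Phi$, where $\Phi$ is called \emph{\sJ-admissible} when $\colim\by(\Phi)$ is isomorphic in $[\bC\op,\bSET]$ to $\lan_{\bg\op}\cJ$ for some $\cJ\in\sJ$ and some finite-limit-preserving functor $\bg$. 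The \sJ-superexact condition is then automatic: in a subcanonical site the existence of such collages forces the covering families exhibiting them to lie in the topology, and these are precisely the generators of the \sJ-exact topology.

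Given a framed \ka-ary allegory \lA, one constructs $\lmodk^\sJ(\lA)$ as the closure inside $[\lA\op,\lF_\ka]$ of the image of \lA under tight collages of \sJ-admissible congruences. By \autoref{thm:frcoll-coll}, composites of tight collages can be reexpressed as tight collages of derived congruences, so this closure can be built by a transfinite iteration which stabilizes since \lA is moderate. Adapting the proofs of \autoref{thm:alleg-cocplt} and \autoref{thm:fall-cocplt}, one verifies that $\lmodk^\sJ(\lA)$ is a \ka-ary framed allegory, that $\fy\colon \lA\to\lmodk^\sJ(\lA)$ is a fully faithful framed allegory functor, that every \sJ-admissible congruence in $\lmodk^\sJ(\lA)$ has a tight collage, and that $\lmodk^\sJ$ enjoys the free-cocompletion universal property: restriction along $\fy$ induces an equivalence between framed allegory functors $\lmodk^\sJ(\lA)\to\lB$ preserving tight collages of \sJ-admissible congruences, and arbitrary framed allegory functors $\lA\to\lB$, whenever \lB admits the relevant collages.

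Finally, following the argument of \autoref{thm:adjoint}, one checks that $\lmodk^\sJ$ preserves the additional structure of $\FALLk^{\mathrm{wt}\times}$ (weak \ka-tabularity, local maxima, weak \ka-units); composing it with the chordate reflection of \FALLk then produces the desired reflector $r^\sJ\colon \SITEk\to\SITEk^\sJ$. Concretely, $r^\sJ(\bC)$ may be identified with the closure of $\bC$ inside $\exk(\bC)$ under finite limits and \sJ-admissible collages, equipped with the \sJ-exact topology, and when \sJ consists of all \ka-admissible lex-weights one recovers \exk itself.

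The principal obstacle is the stability of \sJ-admissibility under the closure operation: after adjoining a tight collage, new congruences can arise in the enlarged allegory, and one must ensure that those which again count as \sJ-admissible continue to admit tight collages, so that the transfinite iteration indeed stabilizes. This reduces to the fact that \sJ-admissibility is defined via finite-limit-preserving functors out of lex categories, and therefore interacts compatibly with the lax-colimit and finite-limit constructions used in the passage from \lA to $\lmodk^\sJ(\lA)$; an analysis parallel to the identification of $\exk(\bC)$ with $\shk(\bC)$ in \autoref{thm:ex-sheaves} then confirms that this closure is indeed a reflection into $\SITEk^\sJ$.
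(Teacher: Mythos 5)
Your strategy---rebuild the $\lmodk$ construction so that it freely adjoins only a restricted class of collages---is not the route the paper takes, and as written it contains gaps that the paper's argument is specifically designed to avoid. The paper never constructs a restricted free cocompletion at the allegory level. Instead it works \emph{inside} the already-available $\exk(\bC)$: after reducing to the subcanonical case by chordate reflection, it defines $\exk^\sJ(\bC)$ as the smallest full subcategory of $\exk(\bC)$ containing $\bC$ and closed under finite limits \emph{and} $\sJ$-lex-colimits, gives it the restricted $\ka$-canonical topology, and verifies the universal property directly. Essential surjectivity comes from the observation that a morphism of sites $\f\colon\bC\to\bD$ with $\bD\in\SITEk^\sJ$ induces a $\sJ$-exact $\exk(\f)\colon\exk(\bC)\to\exk(\bD)$, so that $(\exk(\f))^{-1}(\bD)$ is closed under finite limits and $\sJ$-lex-colimits (using \autoref{thm:Wexfr} for $\fy_\bD$) and hence contains $\exk^\sJ(\bC)$; fullness and faithfulness follow because every object of $\exk^\sJ(\bC)$ is still a collage of a congruence in $\bC$. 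All the delicate cocompletion work is thereby delegated to \autoref{thm:adjoint}, which is already proved.

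The concrete problems with your version are these. First, your allegory-level description of $\SITEk^\sJ$ omits finite limits: objects of $\SITEk^\sJ$ are $\sJ$-exact \emph{categories}, which have finite limits by definition. For the unrestricted completion this is automatic (chordate plus all collages yields $\ka$-ary exactness, hence finite limits via \autoref{thm:regular}), but chordate plus collages of only the $\sJ$-admissible congruences does not imply finite limits; so $\lmodk^\sJ(\lA)$ must also be closed under finite limits, and finite limits computed in $[\lA\op,\lF_\ka]$ do not remain among collages of congruences in $\lA$. Second, the universal property you assert for $\lmodk^\sJ$ does not follow from the enriched machinery: the paper already has to remark that collages of congruences are not determined by a class of $\SUPk$-weights, so \cite[5.35]{kelly:enriched} is unavailable even in the unrestricted case, and for a non-saturated class interleaved with finite limits the one-step argument via \autoref{thm:frcoll-coll} breaks down entirely. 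Your transfinite iteration then has to be shown to stabilize and to retain the stated universal property; at each stage new finite-limit-preserving functors into the current category appear, hence new $\sJ$-admissible congruences whose collages must be adjoined, and the sentence claiming this ``reduces to'' compatibility with finite-limit constructions is an assertion rather than an argument. Finally, a smaller inaccuracy: the reflection into $\SITEk^\sJ$ carries the restricted $\ka$-canonical topology, not a priori the $\sJ$-exact topology; identifying the two needs the separate universal-property argument the paper gives only afterwards, for the biadjoint to $\EXk^{\sJ_2}\to\EXk^{\sJ_1}$.
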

\begin{proof}
  By applying chordate reflection to framed allegories first, we may
  assume all our sites are subcanonical.  Let $\exk^\sJ (\bC)$ denote
  the smallest full subcategory of $\exk(\bC)$ which contains \bC and
  is closed under finite limits and \sJ-lex-colimits, and let
  $\fy^\sJ_\bC \colon \bC \to \exk^\sJ (\bC)$ be the inclusion.  Since
  $\exk^\sJ (\bC)$ is closed under finite limits in $\exk(\bC)$, the
  $\ka$-canonical topology of $\exk(\bC)$ restricts to it, making it a
  \ka-ary site such that both $\fy^\sJ_\bC$ and the inclusion
  $\exk^\sJ(\bC) \to \exk(\bC)$ are morphisms of sites.  Moreover, for
  any $\cJ\colon \bD\op\to\bSet$ in \sJ and any $\bg\colon \bD\to
  \exk(\bC)$ preserving finite limits, a postulated realization of
  $\lan_{\bg\op} \cJ$ is in particular a $\sJ$-lex-colimit.  Thus
  $\exk^\sJ (\bC)$ is closed under these, hence inherits
  $\sJ$-exactness from $\exk(\bC)$.

  Now suppose \bD is a \sJ-superexact subcanonical \ka-ary site with
  finite limits.  Then $\exk(\bD)$ is \ka-ary exact, hence also
  \sJ-superexact and subcanonical.  Moreover, $\fy_{\bD}\colon \bD \to
  \exk (\bD)$ is \sJ-exact by \autoref{thm:Wexfr}, which is to say
  that \bD is closed in $\exk (\bD)$ under finite limits and
  \sJ-lex-colimits.  We must show that
  \begin{equation}
    (-\circ \fy^\sJ_\bC) \colon \SITEk(\exk^\sJ (\bC), \bD)
    \too \SITEk (\bC,\bD)\label{eq:Wexcpltmap}
  \end{equation}
  is an equivalence.
  Firstly, any morphism of sites $\f\colon \bC\to\bD$ induces a
  morphism of sites $\exk (\f)\colon \exk (\bC) \to \exk (\bD)$.
  Since $\exk(f)$ is \sJ-exact, $(\exk (\f))^{-1}(\bD)\subseteq \exk
  (\bC)$ is closed under finite limits and \sJ-lex-colimits.
  Therefore, it contains $\exk^\sJ(\bC)$, which is to say that the
  composite morphism of sites $\exk^\sJ(\bC) \into \exk(\bC)
  \xto{\exk(f)} \exk(\bD)$ corestricts to \bD.  Since $\fy_\bD\colon
  \bD \to \exk(\bD)$ is fully faithful and creates both finite limits
  and covering families, the corestriction is again a morphism of
  sites (this might be regarded as a trivial special case of
  \autoref{thm:fy-dense} combined with \autoref{thm:dense-cancel} from
  the next section).  Thus~\eqref{eq:Wexcpltmap} is essentially
  surjective.

  Secondly, suppose $\f_1, \f_2\colon \exk^\sJ(\bC) \toto \bD$ are
  morphisms of sites and $\al\colon \f_1 \circ \fy^\sJ_\bC \to \f_2
  \circ \fy^\sJ_\bC$ is a transformation.  Then we have an induced
  transformation between functors $\exk (\bC) \toto \exk (\bD)$.
  Since $\f_1$ and $\f_2$ map $\exk^\sJ(\bC)$ into $\bD$ and the
  inclusion $\bD \into \exk(\bD)$ is fully faithful, this
  transformation restricts and corestricts to a transformation
  $\f_1\to \f_2$, which in turn also restricts to \al.
  Hence~\eqref{eq:Wexcpltmap} is full.

  Finally, every object of $\exk^\sJ(\bC)$ is a colimit of objects of
  \bC (a collage of a congruence), and morphisms of sites
  $\exk^\sJ(\bC) \to \bD$ preserve these colimits.  Thus, a
  transformation between such functors is determined by its
  restriction to \bC; hence~\eqref{eq:Wexcpltmap} is faithful.
\end{proof}

\begin{example}
  For any \ka, there is a \sJ for which \sJ-exact categories are
  exactly \ka-ary exact categories.  In fact, there are many.  One is
  the class of \emph{all} small lex-weights.  Another is the union
  of~\cite[\S5.2]{gl:lex-colimits} with the \ka-analogue
  of~\cite[\S5.3]{gl:lex-colimits}.
  
  For such a \sJ, the \sJ-exact topology on a \ka-ary exact category
  is the \ka-canonical one, so it is the \emph{only} \sJ-superexact
  subcanonical \ka-ary topology.  Thus $\SITEk^\sJ = \EXk^\sJ = \EXk$
  and hence $\exk^\sJ = \exk$.
\end{example}

The most-studied case of $\exk^\sJ$ other than $\exk$ itself is the
(\ka-ary) \emph{regular completion}.  By an evident \ka-ary
generalization of~\cite[\S5.6]{gl:lex-colimits}, there exists a \sJ
such that \sJ-exact categories coincide with \ka-ary regular
categories.  As with \ka-ary exact categories, for such a \sJ the only
subcanonical \sJ-superexact \ka-ary topology on a \ka-ary regular
category is the \ka-canonical one, so we have $\SITEk^\sJ = \EXk^\sJ =
\REGk$.  Thus we deduce:

\begin{thm}\label{thm:reg-adjt}
  The inclusion $\REGk\into \SITEk$ is reflective.\qed
\end{thm}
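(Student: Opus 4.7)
The plan is to deduce \autoref{thm:reg-adjt} directly from \autoref{thm:Wadjoint} by choosing an appropriate class $\sJ$ of lex-weights. Specifically, I would produce a class $\sJ$ of \ka-admissible lex-weights for which $\sJ$-exactness coincides with \ka-ary regularity, whose $\sJ$-exact topology on a \ka-ary regular category \bC is forced to be the \ka-canonical topology, and hence for which $\SITEk^\sJ = \EXk^\sJ = \nREG_\ka$. Then \autoref{thm:Wadjoint} immediately gives the reflection.

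First I would construct $\sJ$ as the \ka-ary analogue of the class in \cite[\S5.6]{gl:lex-colimits}: namely, lex-weights whose realizations in a finitely complete category are precisely (i) images of \ka-to-finite arrays, together with (ii) \ka-small unions of subobjects of a fixed object. Each such weight has a standard presentation as the colimit of a finite lex diagram involving kernel pairs and nerves, which in each case I would exhibit explicitly as the colimit of a small \ka-ary congruence, giving \ka-admissibility. By \autoref{thm:regular}\ref{item:reg6}, $\sJ$-exactness in this sense coincides with \ka-ary regularity.

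Next I would show that on a \ka-ary regular category, the only subcanonical $\sJ$-superexact \ka-ary topology is the \ka-canonical one. By definition, a subcanonical \ka-ary topology containing the $\sJ$-exact topology makes every covering family in the \ka-canonical topology (i.e.\ every \ka-universally effective-epic cocone) into a postulated realization. By \autoref{thm:regular}\ref{item:reg5}, extremal-epic \ka-ary cocones in \bC coincide with universally effective-epic cocones and are forced to be covering in any $\sJ$-superexact topology; conversely, subcanonicity forbids any more covers, by the argument at the end of \autoref{thm:regular}. This gives $\SITEk^\sJ = \EXk^\sJ = \nREG_\ka$.

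The main obstacle is the first step: namely, verifying that there really is a \ka-admissible class $\sJ$ whose $\sJ$-exact categories are exactly the \ka-ary regular ones. The key point is \ka-admissibility of the weights for \ka-small image factorizations, for which \autoref{thm:small-adm} does not apply when $\ka = \{1\}$ and $\{0,1\}$. In these cases I would imitate \autoref{thm:adh-adm} directly: present the weight as the colimit of a small \ka-ary congruence built from the kernel-pair data of the array to be factored, using that the legs of the kernel pair are jointly monic to verify the transitivity axiom \ref{def:cong}\ref{item:cong5}. Once this is done, the remainder is bookkeeping, and \autoref{thm:Wadjoint} finishes the proof.
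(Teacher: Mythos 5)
Your proposal is correct and follows essentially the same route as the paper: Theorem \ref{thm:reg-adjt} is deduced there too as the instance of \autoref{thm:Wadjoint} for a \ka-ary generalization of the class $\sJ$ of~\cite[\S5.6]{gl:lex-colimits}, together with the observation that for this $\sJ$ the only subcanonical $\sJ$-superexact \ka-ary topology on a \ka-ary regular category is the \ka-canonical one, so that $\SITEk^\sJ = \EXk^\sJ = \REGk$. The paper leaves the \ka-admissibility of these weights implicit (``evident''), whereas you spell out the kernel-congruence presentation needed when $\om\notin\ka$; that is a filling-in of the same argument rather than a different one.
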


Let $\regk (\bC)$ denote this left biadjoint, the \emph{\ka-ary
  regular completion}.  We can describe it more explicitly:

\begin{lem}
  The following are equivalent for $\Phi \in \exk (\bC)$:
  \begin{enumerate}[nolistsep]
  \item $\Phi$ lies in $\regk (\bC)$.
  \item \Phi is a kernel of some \ka-to-finite array in \bC.
  \item \Phi admits a monomorphism to some finite product of objects
    of \bC.
  \end{enumerate}
\end{lem}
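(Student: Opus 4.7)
The plan is to establish (ii)$\Leftrightarrow$(iii) by direct manipulations inside the \ka-ary exact completion $\exk(\bC)$, and then to sandwich $\regk(\bC)$ between the classes of objects defined by these conditions using the closure properties that characterize \ka-ary regularity.

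For (ii)$\Rightarrow$(iii), if $\Phi$ is a kernel of a \ka-to-finite array $R\colon X\To U$ in \bC, then $R$ corresponds to a morphism $\sum_{x\in X}\fy(x)\to\prod_{u\in U}\fy(u)$ in $\exk(\bC)$ whose kernel congruence is precisely $\Phi$, and image factorization in the \ka-ary exact $\exk(\bC)$ exhibits $\Phi$ as a subobject of $\prod_u\fy(u)$. For (iii)$\Rightarrow$(ii), present $\Phi$ as the collage of some \ka-ary congruence in \bC, giving an effective-epic cocone $X\To\Phi$ from a \ka-ary family $X$ of objects of \bC, and post-compose with the given monomorphism $\Phi\hookrightarrow\prod_u\fy(u)$ to obtain, for each $x\in X$ and $u\in U$, a morphism $\fy(x)\to\fy(u)$ in $\exk(\bC)$; by \autoref{thm:ex-concrete} each such morphism is represented by a span $x\ot w^x_u\to u$ in \bC whose left leg is a covering family. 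Since $U$ is finite, the covers $\{w^x_u\to x\}_{u\in U}$ admit a common refining covering family $W^x\To x$ (using the remark after \autoref{def:site}), along which the right legs compose to morphisms $W^x\to u$ in \bC for each $u\in U$; assembling these over $x\in X$ yields an honest \ka-to-finite array $R\colon W\To U$ in \bC, with $W=\bigsqcup_x W^x$. The composite cocone $W\To\Phi$ remains effective-epic, and because $\Phi\hookrightarrow\prod_u\fy(u)$ is monic, the kernel of $R$ coincides, up to equivalence, with $\Phi$.

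For (iii)$\Rightarrow$(i), the finite product $\prod_u\fy(u)$ lies in $\regk(\bC)$ by closure under finite limits, each image of a component $\fy(x)\to\prod_u\fy(u)$ lies in $\regk(\bC)$ by closure under image factorization, and $\Phi$ equals the \ka-ary join of these images in the subobject poset of $\prod_u\fy(u)$, so lies in $\regk(\bC)$ by closure under \ka-ary unions of subobjects---all three closures being among the \sJ-lex-colimits characterizing \ka-ary regularity as in the discussion preceding \autoref{thm:reg-adjt}. Conversely, for (i)$\Rightarrow$(iii), let $\sR\subseteq\exk(\bC)$ denote the full subcategory of objects admitting a monomorphism into a finite product of objects of \bC. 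It contains \bC trivially, and one checks directly that it is closed in $\exk(\bC)$ under finite limits (binary products concatenate ambient factors, while pullbacks, equalizers, and intersections stay subobjects of an ambient product, and the terminal is the empty product) and under the \sJ-lex-colimits of \ka-ary regularity (images of arrows and \ka-ary joins of subobjects of a fixed object both remain subobjects of that object). By minimality of $\regk(\bC)$, this gives $\regk(\bC)\subseteq\sR$.

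The principal obstacle lies in (iii)$\Rightarrow$(ii): converting morphisms in $\exk(\bC)$ between representables back into genuine morphisms of \bC requires the fractions description from \S\ref{sec:frac}, and the finiteness of $U$ is essential so that the finitely many resulting covers can be refined to a common covering family without leaving the \ka-ary world.
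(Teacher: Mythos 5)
Your proof is correct and follows essentially the same route as the paper's, which simply notes that (ii) and (iii) are equivalent and define a \ka-ary regular full subcategory of $\exk(\bC)$ containing \bC (hence containing $\regk(\bC)$ by minimality), while $\regk(\bC)$ conversely contains \bC and is closed under quotients of kernels of \ka-to-finite arrays. You have merely filled in the details the paper dismisses as ``easy to see,'' in particular the passage from a monomorphism $\Phi\hookrightarrow\prod_u\fy(u)$ back to an honest \ka-to-finite array in \bC via the fractions description and a common refinement of finitely many covers.
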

\begin{proof}
  It is easy to see that the second two conditions are equivalent, and
  that they define a subcategory of $\exk (\bC)$ which is \ka-ary
  regular and hence contains $\regk (\bC)$.  The converse containment
  follows since $\regk (\bC)$ contains \bC and is closed under
  quotients of kernels of \ka-to-finite arrays.
\end{proof}

We can now identify $\regk (\bC)$ with various regular completions in
the literature.

\begin{example}
  When $\ka=\un$ and \bC has (weak) finite limits and a trivial unary
  topology, the above description and universal property of $\regu(C)$
  are equivalent to those of the \emph{regular completion} of a
  category with (weak) finite limits, as in~\cite{cv:reg-exact-cplt}.
  If we instead identify $\exu(\bC)$ with a subcategory of
  $\nSh_{\un}(\bC)$, as in \S\ref{sec:exact-compl-sheav}, we obtain
  the characterization of the regular completion
  from~\cite{ht:free-regex}.
\end{example}

\begin{example}\label{eg:factsys2}
  Suppose \bC has finite limits and a pullback-stable factorization
  system $(\cE,\cM)$ which is \emph{proper} (i.e.\ \cE consists of
  epis and \cM of monos).  As in \autoref{eg:factsys}, \cE is then a
  unary topology on \bC.  Since \bC has products, any unary kernel in
  \bC is the kernel of a single morphism of \bC.  Moreover, if \Phi is
  the kernel of $f\colon x\to z$ and we factor $f$ as $x \xto{e} y
  \xto{m} z$ with $m\in \cM$ and $e\in \cE$, then \Phi is also the
  kernel of $e$ since $m$ is monic.  But since $e\in \cE$ is a cover,
  the induced tight morphism $e\colon \Phi \to \fy(y)$ in $\lRel_{\un}
  (\bC)$ is a weak equivalence, so that $\Phi\cong \fy(y)$ in
  $\exu(\bC)$.  Therefore, we can identify $\regu(\bC)$ with the full
  subcategory of $\exu(\bC)$ determined by objects of the form
  $\fy(x)$.

  Moreover, because we have $(\cE,\cM)$-factorizations, any relation
  $\fy(x) \rto \fy(y)$ between such congruences is equivalent to an
  \cM-morphism $z \to x\times y$.  Thus, the full subcategory of
  $\nMod_{\un}(\bC)$ on the objects of $\regu(\bC)$ is precisely the
  bicategory of relations defined in~\cite{kelly:rel-factsys}, and
  hence $\regu(\bC)$ is precisely the \emph{regular reflection} of \bC
  as defined there.  Its universal property is likewise the same: for
  regular \bD, regular functors $\regu(\bC) \to \bD$ correspond to
  functors $\bC\to\bD$ preserving finite limits (hence taking
  \cM-morphisms, which are monic, to monics) and taking \cE-morphisms
  to regular epis.
\end{example}

\begin{example}
  Let \bE be (unary) regular, \bC finitely complete, and $\f\colon
  \bE\to\bC$ finitely continuous.  By \autoref{eg:gentop}, the
  \f-images of all regular epis in \bE generate a smallest unary
  topology on \bC, such that \f becomes a morphism of sites.
  Moreover, by \autoref{thm:morsite-flim}, if \bD is regular, then a
  functor $\bg\colon \bC\to\bD$ is a morphism of sites if and only if
  it preserves finite limits and the composite $\bg\f$ is a regular
  functor.  Therefore, the unary regular completion $\regu(\bC)$ of
  \bC with this topology must be the \emph{relative regular
    completion} of~\cite{hofstra:relcpltn}.  Similarly, $\exu(\bC)$ is
  the relative exact completion.
\end{example}

Higher-ary regular completions are not well-studied, but one example
is worth noting.

\begin{example}
  Let $\bC_{\mathbb{T}}^{\mathrm{reg}}$ be the syntactic category of a
  regular theory $\mathbb{T}$, which is unary regular.  We can also
  regard it as an \om-ary site, with \om-ary topology generated by its
  canonical unary topology.  Then its \om-ary regular completion can
  be identified with the syntactic category of $\mathbb{T}$ regarded
  as a coherent theory.  Similarly, the \KA-ary regular completion of
  $\bC_{\mathbb{T}}^{\mathrm{reg}}$ is the syntactic category of
  $\mathbb{T}$ regarded as a geometric theory, and likewise if we
  started with a coherent theory and its coherent syntactic category.
\end{example}

We should mention a few other examples of \sJ-superexact completion.

\begin{example}\label{eg:superextensive}
  Generalizing~\cite[\S5.3]{gl:lex-colimits}, for any \ka we have a
  class \sJ for which \sJ-exactness coincides with \emph{\ka-ary
    lextensivity}~\cite{clw:ext-dist}, i.e.\ having finite limits and
  disjoint stable \ka-ary coproducts.  In \autoref{thm:lext-adm} we
  observed that this \sJ is always \ka-admissible, so any \ka-ary site
  has a \textbf{\ka-superextensive} completion.  If \bC is a trivial
  \ka-ary site with finite limits, its \ka-superextensive completion
  is its free \ka-ary coproduct completion $\nFam_\ka (\bC)$.
\end{example}

\begin{example}\label{eg:superadhesive}
  In~\cite[\S5.7]{gl:lex-colimits} is described a (singleton) class
  \sJ for which \sJ-exactness coincides with \emph{adhesivity} as
  in~\cite{ls:adhesive}.  In \autoref{thm:adh-adm} we saw that this
  class is \om-admissible, so any \om-ary site has a
  \textbf{superadhesive} completion.
\end{example}

We now intend to generalize the relative exact completions
of~\cite[\S7]{gl:lex-colimits}.  Following~\cite{gl:lex-colimits}, we
write $\sJ_1 \le \sJ_2$ if every $\sJ_2$-exact category or functor is
also $\sJ_1$-exact.

\begin{lem}\label{thm:W12superex}
  If $\sJ_1 \le \sJ_2$ and both are \ka-admissible, then any
  subcanonical $\sJ_2$-superexact \ka-ary topology is also
  $\sJ_1$-superexact.
\end{lem}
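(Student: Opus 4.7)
The strategy is to show that $\tau$ contains the $\sJ_1$-exact topology. This topology exists because, by hypothesis $\sJ_1\le\sJ_2$ and the fact that $(\bC,\tau)$ is $\sJ_2$-exact, $\bC$ is already $\sJ_1$-exact as a category. The $\sJ_1$-exact topology is generated by the canonical cocones exhibiting $\sJ_1$-lex-colimits as postulated realizations, so the problem reduces to showing that each such cocone is already $\tau$-covering.

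First I would reformulate matters using $\lrelk(\bC,\tau)$. Since $\tau$ is subcanonical and $\sJ_2$-superexact, for every $\cJ\in\sJ_2$ and every finite-limit-preserving $\bg\colon\bD\to\bC$, the \ka-ary congruence $\bg(\Phi_\cJ)$ (for a fixed choice of \ka-ary congruence $\Phi_\cJ$ on $\bD$ with $\colim\by(\Phi_\cJ)\cong\cJ$) admits a collage in $\lrelk(\bC,\tau)$, and this collage is tight by \autoref{thm:chordate1}, hence a genuine colimit in $\bC$ by \autoref{thm:tcoll-colim}. Thus every $\sJ_2$-lex-colimit of a finite-limit-preserving diagram into $\bC$ exists as an ordinary colimit, with the witnessing cocones being $\tau$-covering.

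Next I would invoke the content of $\sJ_1\le\sJ_2$ from~\cite{gl:lex-colimits}: in any $\sJ_2$-exact category, every $\sJ_1$-lex-colimit of a finite-limit-preserving diagram is constructible from finite limits and $\sJ_2$-lex-colimits via a fixed recipe. Applied to $(\bC,\tau)$, this produces, for each $\cJ\in\sJ_1$ and each finite-limit-preserving $\bg\colon\bD\to\bC$, a cocone on $\bg(\Phi_\cJ)$ whose underlying cocone and kernel-comparison families are assembled out of $\tau$-covering cocones (from the $\sJ_2$-case) and the already-existing finite limits in $\bC$. By closure of $\tau$ under the operations of a weakly \ka-ary topology (\autoref{def:site}), in particular under composition with functional arrays and pullbacks along morphisms, the resulting cocones are $\tau$-covering. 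Thus $\bg(\Phi_\cJ)$ admits a $\tau$-collage in $\lrelk(\bC,\tau)$, meaning $\tau$ witnesses each such postulated realization, and hence $\tau$ contains the $\sJ_1$-exact topology.

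The main obstacle is the precise invocation of the Garner–Lack construction in the third step: one must track their derivation of $\sJ_1$-lex-colimits from $\sJ_2$-lex-colimits carefully enough to verify that the resulting kernel-comparison cocones really are obtainable by the topology-preserving operations (passage to subfamilies, disjoint unions, composition with functional arrays, and pullbacks along tight maps) from the $\tau$-covering families witnessing the $\sJ_2$-case. Once that bookkeeping is done, the conclusion follows purely formally from the axioms of a weakly \ka-ary topology.
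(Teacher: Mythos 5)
Your first two steps are fine, and your identification of the goal --- showing that the given topology $\tau$ contains the $\sJ_1$-exact topology, i.e.\ that the cocones witnessing $\sJ_1$-lex-colimits as collages are already $\tau$-covering --- is the right reduction. The gap is in your third step. The relation $\sJ_1 \le \sJ_2$ is defined purely semantically: every $\sJ_2$-exact \emph{category or functor} is $\sJ_1$-exact. There is no ``fixed recipe'' expressing $\sJ_1$-lex-colimits in terms of finite limits and $\sJ_2$-lex-colimits that comes packaged with this definition, and extracting one --- together with the control over the kernel-comparison cocones that you would need --- is not mere bookkeeping; it amounts to an unproved syntactic strengthening of the hypothesis. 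As written, your argument only ever uses the \emph{object} half of $\sJ_1\le\sJ_2$ (that $\bC$ is $\sJ_1$-exact as a category), and that alone cannot suffice: it guarantees only that \emph{some} subcanonical topology on $\bC$ renders the $\sJ_1$-lex-colimits postulated, not that $\tau$ does.

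The missing ingredient is the \emph{functor} half of the hypothesis, and it can be applied directly rather than reconstructed. Since $\tau$ is subcanonical and $\sJ_2$-superexact, \autoref{thm:Wexfr} makes the embedding $\fy\colon \bC\into\exk(\bC)$ a $\sJ_2$-exact functor (both sites being subcanonical and $\sJ_2$-superexact); by $\sJ_1\le\sJ_2$ it is therefore $\sJ_1$-exact, i.e.\ $\bC$ is closed in $\exk(\bC)$ under $\sJ_1$-lex-colimits. In $\exk(\bC)$ these colimits are collages of the relevant congruences, and $\fy$ reflects collages (equivalently, reflects covering families, by \autoref{thm:fy-dense}), so the witnessing cocones are already $\tau$-covering in $\bC$. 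This is the paper's proof; your setup in the first two paragraphs is compatible with it, but the constructive detour through a Garner--Lack ``recipe'' should be replaced by this direct appeal to the functorial part of the definition of $\le$.
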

\begin{proof}
  Suppose \bC is a subcanonical $\sJ_2$-superexact \ka-ary site.  Then
  \bC is a $\sJ_2$-exact category (and hence in particular has finite
  limits).  By assumption, \bC is also $\sJ_1$-exact.  But $\exk(\bC)$
  is also $\sJ_2$-exact, and by \autoref{thm:Wexfr}, the embedding
  $\fy\colon \bC \into \exk(\bC)$ is $\sJ_2$-exact; hence it is also
  $\sJ_1$-exact.  In other words, \bC is closed in $\exk(\bC)$ under
  $\sJ_1$-lex-colimits.  But $\sJ_1$-lex-colimits in $\exk(\bC)$ are
  collages of congruences, and $\fy$ reflects collages of all
  congruences.  Thus, \bC is $\sJ_1$-superexact.
\end{proof}

Therefore, we have $\SITEk^{\sJ_2}\subseteq \SITEk^{\sJ_1}$ as
subcategories of $\SITEk$.  Restricting the domain of $\exk^{\sJ_2}$,
we obtain:

\begin{cor}
  If $\sJ_1 \le \sJ_2$, then the inclusion $\SITEk^{\sJ_2}\into
  \SITEk^{\sJ_1}$ is reflective.\qed
\end{cor}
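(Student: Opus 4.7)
The plan is to exploit the left biadjoint $\exk^{\sJ_2} \colon \SITEk \to \SITEk^{\sJ_2}$ already constructed in \autoref{thm:Wadjoint}, together with the inclusion $\SITEk^{\sJ_2} \subseteq \SITEk^{\sJ_1}$ provided by \autoref{thm:W12superex}. The only task is to verify that restricting $\exk^{\sJ_2}$ along the inclusion $\SITEk^{\sJ_1} \into \SITEk$ yields the desired left biadjoint to $\SITEk^{\sJ_2} \into \SITEk^{\sJ_1}$.

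Concretely, I would denote the inclusions $H\colon \SITEk^{\sJ_2} \into \SITEk^{\sJ_1}$ and $G\colon \SITEk^{\sJ_1} \into \SITEk$, so that $G\circ H$ is the inclusion of $\SITEk^{\sJ_2}$ into $\SITEk$. Both inclusions are fully faithful (they are full sub-$2$-categories by definition and by \autoref{thm:W12superex}). Define $F\colon \SITEk^{\sJ_1}\to \SITEk^{\sJ_2}$ to be the composite $\exk^{\sJ_2}\circ G$. Then for $\bC\in\SITEk^{\sJ_1}$ and $\bD\in\SITEk^{\sJ_2}$, the biadjunction $\exk^{\sJ_2}\dashv (G\circ H)$ gives
\[
\SITEk^{\sJ_2}(F\bC,\bD) \simeq \SITEk(G\bC, G H\bD),
\]
and since $G$ is fully faithful (both on $1$-cells and $2$-cells), the right-hand side is equivalent to $\SITEk^{\sJ_1}(\bC, H\bD)$. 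These equivalences are natural in $\bC$ and $\bD$, establishing $F\dashv H$.

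There is essentially no obstacle: the argument is the standard fact that a left biadjoint to a composite $G\circ H$ of fully faithful inclusions restricts to a left biadjoint to $H$ along $G$. The only thing to check (which is transparent) is fully-faithfulness of the inclusions, which comes from the fact that $\SITEk^{\sJ_1}$ and $\SITEk^{\sJ_2}$ were defined as full sub-$2$-categories of $\SITEk$. No further analysis of $\exk^{\sJ_2}$ is required, and in particular we do not need to reexamine the construction via framed allegories.
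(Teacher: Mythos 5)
Your proposal is correct and is exactly the paper's argument: the paper derives the corollary by noting $\SITEk^{\sJ_2}\subseteq\SITEk^{\sJ_1}$ (via \autoref{thm:W12superex}) and then ``restricting the domain of $\exk^{\sJ_2}$'', which is precisely the restriction-of-a-biadjoint-along-fully-faithful-inclusions fact you spell out. No discrepancy.
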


This is close to~\cite[Theorem~7.7]{gl:lex-colimits}, but not quite
there, since for general \sJ we have $\SITEk^\sJ \neq \EXk^\sJ$.  If
$\sJ_1 \le \sJ_2$ then we have an obvious forgetful functor
$\EXk^{\sJ_2}\to \EXk^{\sJ_1}$, but just as in Examples~\ref{eg:exlex}
and~\ref{eg:exwlex}, we do not have $\EXk^{\sJ_2}\subseteq
\EXk^{\sJ_1}$ as subcategories of \SITEk.  However, as in
\autoref{eg:exlex} (but not \autoref{eg:exwlex}) we do have:

\begin{thm}
  If $\sJ_1 \le \sJ_2$, the forgetful functor $\EXk^{\sJ_2}\to
  \EXk^{\sJ_1}$ has a left biadjoint.
\end{thm}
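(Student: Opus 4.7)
The plan is to construct the left biadjoint $L$ by composing the reflector $\exk^{\sJ_2}\colon\SITEk\to\SITEk^{\sJ_2}$ of \autoref{thm:Wadjoint} with the passage from an arbitrary $\sJ_2$-superexact subcanonical $\ka$-ary topology to the minimal $\sJ_2$-exact topology on the same underlying category. Given $\bC\in\EXk^{\sJ_1}$ carrying its $\sJ_1$-exact topology $\tau_{\sJ_1}$, I view $\bC$ as an object of $\SITEk$ and apply $\exk^{\sJ_2}$ to obtain $\exk^{\sJ_2}(\bC)\in\SITEk^{\sJ_2}$ with some $\sJ_2$-superexact subcanonical topology $\tau'$. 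I then let $L(\bC)$ denote the same underlying $\sJ_2$-exact category equipped instead with its $\sJ_2$-exact topology $\tau_{\sJ_2}\subseteq\tau'$, so that $L(\bC)\in\EXk^{\sJ_2}$; the unit $\eta_\bC\colon\bC\to U(L(\bC))$ in $\EXk^{\sJ_1}$ is the underlying functor of $\fy^{\sJ_2}_\bC$, which is $\sJ_2$-exact and so \emph{a fortiori} $\sJ_1$-exact.

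For the universal property, I must exhibit, for each $\bD\in\EXk^{\sJ_2}$, an equivalence $\EXk^{\sJ_2}(L(\bC),\bD)\simeq\EXk^{\sJ_1}(\bC,U(\bD))$ natural in $\bD$; I do this by assembling the chain
\[
 \EXk^{\sJ_1}(\bC,U(\bD))\;\simeq\; \SITEk\big((\bC,\tau_{\sJ_1}),(\bD,\tau_{\sJ_2,\bD})\big)\;\simeq\; \SITEk^{\sJ_2}\big(\exk^{\sJ_2}(\bC),\bD\big)\;\simeq\;\EXk^{\sJ_2}(L(\bC),\bD).
\]
The middle equivalence is the universal property of $\exk^{\sJ_2}$ from \autoref{thm:Wadjoint}. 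For the leftmost one, \autoref{thm:W12superex} guarantees that both $(\bC,\tau_{\sJ_1})$ and $(\bD,\tau_{\sJ_2,\bD})$ are subcanonical $\sJ_1$-superexact sites, and then \autoref{thm:Wexfr} (both directions, using that $\bC$ carries its $\sJ_1$-exact topology) identifies morphisms of sites between them with $\sJ_1$-exact functors on underlying categories.

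The rightmost equivalence is the only delicate step: I must check that the two topologies $\tau'$ and $\tau_{\sJ_2}$ on the common underlying category of $\exk^{\sJ_2}(\bC)$ and $L(\bC)$ yield the same morphisms of sites into $(\bD,\tau_{\sJ_2,\bD})$. The forward direction of \autoref{thm:Wexfr} at level $\sJ_2$ already shows that any such morphism of sites, at either topology, is $\sJ_2$-exact; conversely, any $\sJ_2$-exact functor $\f\colon L(\bC)\to\bD$ is a morphism of sites $(L(\bC),\tau_{\sJ_2})\to(\bD,\tau_{\sJ_2,\bD})$ by the converse of \autoref{thm:Wexfr}, since $L(\bC)$ carries the $\sJ_2$-exact topology. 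It remains to upgrade such an $\f$ to a morphism of sites at $\tau'$: since $\f\circ\eta_\bC$ is $\sJ_1$-exact, the middle and leftmost equivalences yield a morphism of sites $\f'\colon\exk^{\sJ_2}(\bC)\to(\bD,\tau_{\sJ_2,\bD})$ extending it, and $\f'$ is $\sJ_2$-exact and agrees with $\f$ on $\bC$; because $\exk^{\sJ_2}(\bC)$ is by construction the closure of $\bC$ in $\exk(\bC)$ under finite limits and $\sJ_2$-lex-colimits, any two $\sJ_2$-exact functors out of it agreeing on $\bC$ coincide, so $\f=\f'$ is a morphism of sites at $\tau'$.

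The main obstacle is precisely this reconciliation of $\tau'$ with $\tau_{\sJ_2}$: had the topology on $\exk^{\sJ_2}(\bC)$ produced by the reflector of \autoref{thm:Wadjoint} automatically been the $\sJ_2$-exact one, the result would be an immediate composition of reflectors. The argument above bridges the gap by using the uniqueness half of $\exk^{\sJ_2}$'s universal property together with density of $\bC$ in $\exk^{\sJ_2}(\bC)$ under the relevant colimits. Given this, naturality in $\bD$ is automatic from that of the three equivalences, and 2-functoriality of $L$ descends from the 2-functoriality of $\exk^{\sJ_2}$ together with the evident identity-on-underlying-categories passage to the $\sJ_2$-exact topology; since \autoref{thm:Wadjoint} delivers only a biadjunction (not a strict 2-adjunction), the same is true of $L$.
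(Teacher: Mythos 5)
Your reduction to \autoref{thm:Wadjoint} via \autoref{thm:Wexfr} and \autoref{thm:W12superex} matches the paper's, and your first two equivalences are sound. The gap is in the rightmost equivalence, specifically in the claim that ``any two $\sJ_2$-exact functors out of $\exk^{\sJ_2}(\bC)$ agreeing on $\bC$ coincide.'' This is asserted rather than proved, and it is exactly the crux of the whole theorem. The only uniqueness principle supplied by \autoref{thm:Wadjoint} is that restriction along $\fy^{\sJ_2}_\bC$ is fully faithful on \emph{morphisms of sites at $\tau'$}; applying it to your $\f$ presupposes that $\f$ is a morphism of sites at $\tau'$, which is precisely what you are trying to establish, so the argument is circular as it stands. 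Nor does ``$\exk^{\sJ_2}(\bC)$ is the closure of $\bC$ under finite limits and $\sJ_2$-lex-colimits'' yield the claim on its own: two functors preserving a class of (co)limits and agreeing on a subcategory that generates under those (co)limits need not be isomorphic in general, since an object of the closure admits many inequivalent presentations as a limit or lex-colimit of objects of $\bC$ and there is no canonical way to propagate the isomorphism across them. Some form of density or Kan-extension argument is needed, and none is given.

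The paper closes this hole by proving outright that $\tau'$ \emph{is} the $\sJ_2$-exact topology, which makes your rightmost equivalence an identity and your re-topologization step vacuous. The trick is to apply the universal property of $\exk^{\sJ_2}(\bC)$ to itself: let $\bE$ denote $\exk^{\sJ_2}(\bC)$ re-equipped with its $\sJ_2$-exact topology. By \autoref{thm:Wexfr} the embedding $\fy^{\sJ_2}$ is a morphism of sites $\bC\to\bE$, so \autoref{thm:Wadjoint} produces a morphism of sites $\f\colon (\exk^{\sJ_2}(\bC),\tau')\to\bE$ restricting to $\fy^{\sJ_2}$; in the other direction the identity functor is a morphism of sites $\bE\to(\exk^{\sJ_2}(\bC),\tau')$, again by \autoref{thm:Wexfr}, since $\bE$ carries the $\sJ_2$-exact topology. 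The composite is then an endomorphism of sites of $(\exk^{\sJ_2}(\bC),\tau')$ restricting to $\fy^{\sJ_2}$, hence isomorphic to the identity by the legitimately applicable uniqueness clause of \autoref{thm:Wadjoint}; thus $\f\cong 1$, so the identity carries $\tau'$-covers to $\tau_{\sJ_2}$-covers, giving $\tau'\subseteq\tau_{\sJ_2}$ and hence equality. You should replace your final step with this argument (or, as you yourself observe, prove the topology statement first and then simply compose reflectors).
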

\begin{proof}
  Suppose \bC is $\sJ_1$-exact with its $\sJ_1$-exact topology, and
  \bD is $\sJ_2$-exact with its $\sJ_2$-exact topology.  By
  \autoref{thm:W12superex}, \bD is also $\sJ_1$-superexact.  Thus, by
  \autoref{thm:Wexfr}, morphisms of sites $\bC\to\bD$ are the same as
  $\sJ_1$-exact functors.  Therefore, it suffices to show that
  $\exk^{\sJ_2}(\bC)$ lies in $\EXk^{\sJ_2}$, i.e.\ that its topology
  is the $\sJ_2$-exact one.

  Now the universal property of $\exk^{\sJ_2}(\bC)$ says that if \bE
  is any other subcanonical $\sJ_2$-superexact site, then morphisms of
  sites $\exk^{\sJ_2}(\bC) \to \bE$ are equivalent to morphisms of
  sites $\bC\to\bE$.  Let \bE be the category $\exk^{\sJ_2}(\bC)$ with
  its $\sJ_2$-exact topology.  Then by \autoref{thm:Wexfr}, the
  embedding $\fy^{\sJ_2}$ is a morphism of sites $\bC \to\bE$, and
  hence there is a morphism of sites $\f\colon \exk^{\sJ_2}(\bC) \to
  \bE$ such that the composite $\f\circ \fy^{\sJ_2}$ is isomorphic to
  $\fy^{\sJ_2}$.  But the identity functor $\bE \to \exk^{\sJ_2}(\bC)$
  is also a morphism of sites by \autoref{thm:Wexfr}, and so the
  universal property of $\exk^{\sJ_2}(\bC)$ implies that the composite
  $\exk^{\sJ_2}(\bC) \xto{\f} \bE \xto{1} \exk^{\sJ_2}(\bC)$ is
  isomorphic to the identity functor.  This implies that \f itself is
  isomorphic to the identity, and hence the topology of
  $\exk^{\sJ_2}(\bC)$ must be the $\sJ_2$-exact one.
\end{proof}

If \bC is small and we identify $\exk (\bC)$ with a subcategory of
$\nSh(\bC)$ as in \S\ref{sec:exact-compl-sheav}, then we can do the
same for these relative exact completions.  This is how they are
described in~\cite[\S7]{gl:lex-colimits}.  However, as in the absolute
case, our relative exact completions require no smallness hypotheses.

\section{Dense morphisms of sites}
\label{sec:dense}

The following definition is essentially standard.

\begin{defn}\label{def:dense}
  Let \bD be a site and let \bC be a category with a notion of
  ``covering family''.  We say that a functor $\f\colon \bC\to\bD$ is
  \textbf{dense} if the following hold.
  \begin{enumerate}[noitemsep]
  \item $P$ is a covering family in \bC if and only if $\f(P)$ is a
    covering family in \bD.\label{item:dense0}
  \item For every $u\in \bD$, there exists a covering family $P\colon
    V\To u$ in \bD such that each $v\in V$ is in the image of
    \f.\label{item:dense1}
  \item For every $x,y\in \bC$ and $g\colon \f(x)\to \f(y)$ in \bD,
    there exists a covering family $P\colon Z\To x$ and a cocone
    $H\colon Z\To y$ in \bC such that $g \circ \f(P) =
    \f(H)$.\label{item:dense2}
  \item For every $x,y\in \bC$ and morphisms $h,k\colon x\toto y$ in
    \bC such that $\f(h)=\f(k)$, there exists a covering family
    $P\colon Z\To x$ in \bC such that $h P = k P$.\label{item:dense3}
  \end{enumerate}
\end{defn}

Of course,~\ref{item:dense0} just means the covering families in \bC
are determined by those of \bD.  Denseness is usually defined only for
inclusions of subcategories, in which case~\ref{item:dense3} is
unnecessary, as is~\ref{item:dense2} if the subcategory is full.

\begin{thm}\label{thm:dense-redundant}
  Suppose \bD is a weakly \ka-ary site and $\f\colon \bC\to\bD$ is
  dense.  Then:
  \begin{enumerate}[label=(\alph*),nolistsep]
  \item \bC is a weakly \ka-ary site.\label{item:dr1}
  \item \f is a morphism of sites.\label{item:dr2}
  \item If \bD is \ka-ary, so is \bC.\label{item:dr3}
  \item For any family $X$ of objects in \bC, \f induces an
    equivalence between the preorders of \ka-sourced arrays over $X$
    and over $\f(X)$ (under the relation $\lle$).\label{item:dr4}
  \end{enumerate}
\end{thm}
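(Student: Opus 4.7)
The plan is to establish (d) as a master lemma that encapsulates the essence of density, then deduce (b) and (c) from it, while (a) is verified directly. For (a), I would check the four conditions of \autoref{def:site} for the topology on \bC. Conditions~\ref{item:site-sing}, \ref{item:site-comp}, and~\ref{item:site-sat} follow immediately from the corresponding conditions in \bD, using that \f preserves identities, disjoint unions of functional arrays, and the order $\le$, together with density condition~(i) which detects covers in \bC via their \f-images. The only substantive point is pullback stability~\ref{item:site-pb}: given a covering $P\colon V\To u$ in \bC and $f\colon x\to u$, I take a pullback covering $Q'\colon Y\To \f(x)$ of $\f(P)$ along $\f(f)$ in \bD, refine $Q'$ via density~(ii) so that its objects lie in the image of \f, use density~(iii) to lift the composites $q'_y$ and the factorizations into $\f(V)$ back to \bC on further covering refinements, and finally apply density~(iv) to replace any equalities holding only after \f by honest equalities on a last refinement.

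For (d), essential surjectivity is a direct application of (ii) and (iii): given a \ka-sourced array $G\colon V\To \f(X)$ in \bD, cover each $v\in V$ by objects in the image of \f using~(ii), and lift each resulting composite $g_v r_w\colon \f(\tilde w)\to \f(X)$ to a morphism in \bC on a further covering refinement using~(iii); the disjoint union of these pieces is an array in \bC over $X$ whose \f-image is locally equivalent to $G$. Order reflection proceeds similarly: if $\f(F)\lle \f(G)$ in \bD, I lift the witnessing cover and factorization by the same method, applying~(iv) to promote \f-equalities to honest equalities on a final refinement. Order preservation in the forward direction is immediate from preservation of covers and $\le$ by \f.

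With (d) in hand, (b) and (c) are nearly formal. For (b), \f preserves covers by density~(i), and covering-flatness follows because any cone over $\f\bg$ in \bD is locally equivalent to $\f(S)$ for some $S$ over $\bg$ in \bC, using (d) applied to $X = \bg(\bE)$ together with density~(iv) to ensure compatibility with the morphisms of $\bg$ on a further cover. For (c), given a finite diagram $\bg$ in \bC, the site \bD supplies a local \ka-prelimit of $\f\bg$, which lifts by the same argument to a local \ka-prelimit of $\bg$ in \bC. The main obstacle will be the delicate bookkeeping of covering refinements throughout (d): most density operations yield data that is only locally correct, and one must interleave the four density axioms carefully to promote these to the honest equalities of arrays required. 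Once this is set up cleanly in (d), the remaining parts go through without further difficulty.
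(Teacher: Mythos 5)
Your proposal follows essentially the same route as the paper: part~(d) is the master lemma (preservation of $\lle$ from preservation of covers; reflection and essential surjectivity by interleaving the density axioms with successive covering refinements and using the last axiom to promote \f-equalities to honest ones), and (b) and (c) are deduced by lifting cones and local \ka-prelimits along it exactly as you describe. The only cosmetic difference is in (a): the paper also derives pullback-stability from (d) --- the witnessing cover of $\f(x)$ in \bD is locally equivalent to the \f-image of a cover in \bC, and reflection of $\lle$ finishes in two lines --- whereas you redo that lifting by hand, which amounts to inlining a special case of the same argument.
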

\begin{proof}
  We prove~\ref{item:dr4} first.  Since \f preserves covering
  families, it preserves $\lle$.  Conversely, suppose $S\colon U\To X$
  and $T\colon V\To X$ are arrays in \bC with $\f(S)\lle \f(T)$.  Then
  we have a covering family $P\colon W\To \f(U)$ and a functional
  array $F\colon W\To \f(V)$ with $\f(S)\circ P= \f(T)\circ F$.
  By~\ref{item:dense1}, we have a covering family $Q\colon \f(Y) \To
  W$, and $\f(S)\circ P\circ Q = \f(T) \circ F\circ Q$.
  Applying~\ref{item:dense2} twice to $P\circ Q$ and $F\circ Q$ and
  passing to a common refinement, we obtain a covering family $R\colon
  Z\To Y$ and functional arrays $G\colon Z\To U$ and $H\colon Z\To X$
  such that $\f(G) = P\circ Q\circ \f(R)$ and $\f(H) = F\circ Q\circ
  \f(R)$.  Hence,
  \[\f(S \circ G) = \f(S)\circ P\circ Q\circ \f(R)
  = \f(T)\circ F\circ Q\circ \f(R) = \f(T\circ H).
  \]
  Thus, by~\ref{item:dense3} we have a covering family $N\colon M \To
  Z$ such that $S G N = T H N$.  Finally, since $\f(G) = P \circ
  Q\circ \f(R)$ and $P$, $Q$, and $R$ are covering,
  by~\ref{item:dense0} $G$ is also covering.  Therefore, the equality
  $S G N = T H N$ exhibits $S\lle T$.

  Thus, \f reflects as well as preserves $\lle$, so for it to be an
  equivalence of preorders it suffices for it to be essentially
  surjective.  But for any array $S\colon U\To \f(X)$, we can find a
  covering family $P\colon \f(Y)\To U$ in \bD, and a covering family
  $Q\colon Z\To Y$ and an array $T\colon Z\To X$ with $\f(T) = S \circ
  P \circ \f(Q)$.  Hence $S$ is locally equivalent to $\f(T)$.

  Next we prove~\ref{item:dr1}.  The only axiom of a weakly \ka-ary
  site not obviously implied by~\ref{item:dense0} is
  pullback-stability.  Suppose, therefore, that $P\colon V\To u$ is a
  covering family in \bC and $g\colon x\to u$ is a morphism; then we
  have a covering family $R\colon Z\To \f(x)$ with $\f(g) \circ R \le
  \f(P)$.  By~\ref{item:dr4}, $R$ is locally equivalent to $\f(S)$ for
  some $S$, and $\f(g S) \lle \f(P)$ implies $g S \lle P$.  By
  definition of $\lle$, we have a covering $T$ with $g S T \le P$;
  hence $S T$ is what we want.

  Now we prove~\ref{item:dr2}.  Suppose $\bg\colon \bE \to\bC$ is a
  finite diagram and $T\colon u \To \f\bg(\bE)$ a cone over $\f\bg$ in
  \bD.  Then by~\ref{item:dr4}, $T$ is locally equivalent to $\f(S)$
  for some array $S\colon V \To \bg(\bE)$ in \bC.
  Applying~\ref{item:dense3} some finite number of times (once for
  each morphism in \bE) and passing to a common refinement, we obtain
  a covering family $P\colon W\To V$ such that $S P$ is an array over
  \bg.  Then $T$ is locally equivalent to, hence factors locally
  through, $\f(S P)$.

  Finally, we prove~\ref{item:dr3}.  Let \bg, $T$, $S$, and $P$ be as
  in the previous paragraph, and suppose $R\colon x\To \bg(\bE)$ is
  some cone over \bg in \bC.  Then $\f(R) \lle T \lle \f(S P)$, hence
  $R \lle S P$.  Thus, $S P$ is a local \ka-prelimit of \bg.
\end{proof}

Because of \autoref{thm:dense-redundant}\ref{item:dr2}, we may speak
of a \textbf{dense morphism of sites}.  Note that~\ref{item:dr2} fails
for the classical notion of ``morphism of sites''.

\begin{remark}
  In particular, if \bC is a site which admits a dense morphism of
  sites $\bC\to\bD$, where \bD is a subcanonical weakly \ka-ary site
  with finite limits, then \bC has local \ka-prelimits.  Thus, the
  ``solution-set condition'' of having local \ka-prelimits is a
  \emph{necessary} condition for a site to map densely to a \ka-ary
  exact category.  In \autoref{thm:fy-dense} we will show that any
  \ka-ary site \bC is dense in $\exk(\bC)$, so that this condition is
  also \emph{sufficient}.  This may be regarded as a generalization of
  the observation in~\cite[Prop.~4]{cv:reg-exact-cplt} that any
  projective cover of an exact category must have weak finite limits
  (see also \autoref{eg:projectives}).
\end{remark}

\begin{example}\label{eg:small-presheaves-2}
  If the category $\mathcal{P}(\bC)$ of small presheaves on \bC is
  finitely complete, then its \KA-canonical topology is \KA-ary and
  induces the trivial \KA-ary topology on \bC, while every small
  presheaf is covered by a small family of representables.  Thus,
  \autoref{thm:dense-redundant} implies that \bC has finite
  \KA-prelimits.  This is the other direction of the theorem mentioned
  in \autoref{eg:small-presheaves}.
\end{example}

\begin{remark}\label{rmk:locsite-dense}
  For locally \ka-ary sites as in \autoref{rmk:locsite}, we can
  consider the analogous notion of a \emph{dense pre-morphism of
    sites}.  The analogue of \autoref{thm:dense-redundant} then holds.
\end{remark}

\begin{corollary}\label{thm:dense-cancel}
  Let $\bC_1$, $\bC_2$, and $\bC_3$ be sites and $\f_1\colon \bC_1\to
  \bC_2$ and $\f_2\colon \bC_2\to\bC_3$ functors such that $\f_2 \f_1$
  is a morphism of sites and $\f_2$ is a dense morphism of sites.
  Then $\f_1$ is a morphism of sites.
\end{corollary}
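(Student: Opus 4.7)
The plan is to verify the two defining conditions of \autoref{def:morsite} for $\f_1$: preservation of covering families, and covering-flatness.

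For preservation of covering families, I would start with a covering family $P\colon V\To u$ in $\bC_1$. Since $\f_2\f_1$ is a morphism of sites, $\f_2\f_1(P)$ is covering in $\bC_3$. Now \autoref{def:dense}\ref{item:dense0} says that $\f_2$ reflects covering families, so $\f_1(P)$ must be covering in $\bC_2$. This step is immediate and uses only the definition of denseness.

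For covering-flatness, I would take a finite diagram $\bg\colon \bE \to \bC_1$ and an arbitrary cone $T\colon u \To \f_1\bg(\bE)$ in $\bC_2$; the goal is to show that $T$ factors locally through $\f_1(S)$ for some array $S$ over $\bg$ in $\bC_1$. Applying $\f_2$ yields a cone $\f_2(T)$ over $\f_2\f_1\bg$ in $\bC_3$, and since $\f_2\f_1$ is covering-flat there exists an array $S$ over $\bg$ in $\bC_1$ with $\f_2(T)\lle \f_2\f_1(S)$ in $\bC_3$. Now I invoke the key fact \autoref{thm:dense-redundant}\ref{item:dr4}: because $\f_2$ is a dense morphism of sites, it induces an equivalence between the preorders (under $\lle$) of \ka-sourced arrays over $\f_1\bg(\bE)$ in $\bC_2$ and over $\f_2\f_1\bg(\bE)$ in $\bC_3$. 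In particular it reflects $\lle$, so $\f_2(T)\lle \f_2\f_1(S)$ gives $T\lle \f_1(S)$ in $\bC_2$, exhibiting $T$ as factoring locally through the \f_1-image of an array over $\bg$.

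The main subtlety, rather than an obstacle, is just to observe that condition~\ref{item:dr4} of \autoref{thm:dense-redundant} applies uniformly to all \ka-sourced arrays (not only cones), so the factorization in $\bC_3$ which has the required ``over $\bg$'' form transports back to an ``over $\bg$'' factorization in $\bC_2$ without any further work. With both conditions of \autoref{def:morsite} verified, $\f_1$ is a morphism of sites, completing the proof.
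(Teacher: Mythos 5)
Your proof is correct and follows essentially the same route as the paper: covering families are handled via the reflection clause of \autoref{def:dense}, and covering-flatness via the fact that \autoref{thm:dense-redundant}\ref{item:dr4} lets $\f_2$ reflect local refinement of arrays, transporting the factorization of $\f_2(T)$ back to one of $T$. No gaps.
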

\begin{proof}
  Since $\f_2 \f_1$ preserves covering families and $\f_2$ reflects
  them, $\f_1$ must preserve them.  Now suppose $\bg\colon \bE\to\bC$
  is a finite diagram and $T$ a cone over $\f_1 \bg$ with vertex $u$.
  Then $\f_2(T)$ is a cone over $\f_2 \f_1 \bg$ with vertex $\f_2(u)$,
  so it factors locally through the $\f_2\f_1$-image of some array
  over \bg.  By \autoref{thm:dense-redundant}\ref{item:dr4}, this
  implies that $T$ factors locally through the $\f_1$-image of some
  array over \bg; hence $\f_1$ is covering-flat.
\end{proof}

We now show that exact completion interacts as expected with dense
morphisms of sites.  On the one hand, dense functors become
equivalences on exact completion.

\begin{lem}\label{thm:dense-ff}
  If $\f\colon \bC\to\bD$ is a dense pre-morphism of sites, with \bC
  and \bD locally \ka-ary, then the induced functor $\lrelk(\f)\colon
  \lrelk(\bC)\to\lrelk(\bD)$ is 2-fully-faithful (an isomorphism on
  hom-posets) on underlying allegories.  Moreover, every object of
  $\lrelk(\bD)$ is the (loose) collage of the image of some congruence
  in $\lrelk(\bC)$.
\end{lem}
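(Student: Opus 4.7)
The plan is to deduce both claims from \autoref{thm:dense-redundant}\ref{item:dr4} together with the explicit construction of $\lrelk$. For the fully faithful part, recall that by definition the hom-poset $\lrelk(\bC)(x,y)$ is the poset reflection of the preorder of \ka-sourced arrays over $\{x,y\}$ under the local refinement relation $\lle$, and similarly $\lrelk(\bD)(\f(x),\f(y))$ is this preorder for $\{\f(x),\f(y)\} = \f(\{x,y\})$. By the locally \ka-ary version of \autoref{thm:dense-redundant}\ref{item:dr4} (see \autoref{rmk:locsite-dense}), $\f$ induces an equivalence of these preorders, so passing to poset reflections gives the required isomorphism of hom-posets. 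Compatibility with composition, binary meets, \ka-ary joins, and the involution is automatic, since $\lrelk(\f)$ is already a framed allegory functor and all these operations are built out of the local-equivalence class of the representing array.

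For the collage statement, I would fix $u\in\bD$ and use the denseness condition \autoref{def:dense}\ref{item:dense1} to pick a covering family $P \colon \f(V) \To u$ in \bD whose sources lie in the image of \f, say $P\colon \f(V)\To u$ for a family $V$ of objects of \bC. By \autoref{thm:cov-detect} this gives $\bigvee(P\sb P\pb) = 1_u$ in $\lrelk(\bD)$, so together with the tautology $P\pb P\sb = P\pb P\sb$, \autoref{thm:coll-char} exhibits $P\sb$ as a loose collage of the \ka-ary congruence $\Theta = P\pb P\sb$ on the family $\f(V)$ in $\lrelk(\bD)$.

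It then remains to produce a congruence $\Phi$ on $V$ in $\lrelk(\bC)$ with $\lrelk(\f)(\Phi) = \Theta$. Using the hom-poset isomorphism from the first claim, I would define $\Phi(v_1,v_2)$ as the unique element of $\lrelk(\bC)(v_1,v_2)$ mapping to $\Theta(v_1,v_2) = (P|_{v_1})\pb (P|_{v_2})\sb$. Since $\lrelk(\f)$ preserves the data entering the congruence axioms of \autoref{def:all-cong} --- namely $\bigvee 1_V$, $(-)\o$, composition, and the order --- and is injective on hom-posets, the axioms $\bigvee 1_V \le \Phi$, $\Phi\o = \Phi$, and $\bigvee(\Phi\Phi) \le \Phi$ for $\Phi$ follow at once from the corresponding statements for $\Theta$. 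By construction $\lrelk(\f)(\Phi) = \Theta$, so $u$ is the collage of the image of $\Phi$. There is no real obstacle beyond \autoref{thm:dense-redundant}\ref{item:dr4}: the only bookkeeping to watch is the distinction between equalities holding on the nose in $\bC$ and $\bD$ versus up to local equivalence in $\lrelk$, and this distinction is already absorbed into the definition of $\lrelk$ as a 2-functor.
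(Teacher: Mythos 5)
Your proposal is correct and follows essentially the same route as the paper: the isomorphism on hom-posets is exactly \autoref{thm:dense-redundant}\ref{item:dr4} passed through the poset reflection defining $\lrelk$, and the collage claim is obtained by covering $u$ with a family $P\colon \f(V)\To u$ from the image of \f, observing via \autoref{thm:coll-char} that $P\sb$ is a collage of the congruence $P\pb P\sb$, and lifting that congruence along the hom-poset isomorphism. The extra detail you supply (checking that the lifted $\Phi$ satisfies the congruence axioms because $\lrelk(\f)$ preserves and reflects the relevant structure) is exactly what the paper's phrase ``by 2-fully-faithfulness it is the image of some congruence'' compresses.
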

\begin{proof}
  The first statement follows immediately from
  \autoref{thm:dense-redundant}\ref{item:dr4}.  For the second, we
  observe that any object $u\in \bD$ admits a covering family $P\colon
  \f(V)\To u$.  Then $P\pb P\sb$ is a congruence in $\lrelk (\bD)$
  whose collage is $u$, and by 2-fully-faithfulness it is the image of
  some congruence in \bC.
\end{proof}

\begin{thm}\label{thm:dense}
  If $\f\colon \bC\to\bD$ is a dense pre-morphism of locally \ka-ary
  sites, then the induced functor $\exk(\f)\colon \exk(\bC)\to
  \exk(\bD)$ is an equivalence.
\end{thm}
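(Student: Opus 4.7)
The plan is to exploit the description $\exk(\bC) \simeq \Modk(\lrelk(\bC)^{\mathrm{ch}})$ developed in \S\ref{sec:exact-completion}, where $(-)^{\mathrm{ch}}$ denotes chordate reflection. By \autoref{thm:dense-ff}, the underlying allegory map of $\lrelk(\f)$ is 2-fully-faithful, i.e.\ an isomorphism on hom-posets; since chordate reflection only alters the framing, the same is true of $\lrelk(\f)^{\mathrm{ch}}$. Given this, I will argue separately that $\Modk$ applied to such a functor is fully faithful and that the resulting functor is also essentially surjective.

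Fully faithfulness is close to formal. Using the concrete presentation of $\Modk$ in \autoref{thm:cocompletion}, a morphism between two congruences is an array in the underlying allegory satisfying algebraic inequalities built from composition, joins, meets, and the involution — all of which are preserved and reflected by a 2-fully-faithful allegory functor. Hence $\Modk$ of such a functor induces order-isomorphisms on hom-posets, and in particular $\exk(\f)$ is fully faithful.

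For essential surjectivity, let $\Theta\colon Y\Rto Y$ be a congruence in $\lrelk(\bD)^{\mathrm{ch}}$, representing an arbitrary object of $\exk(\bD)$. Using clause \ref{def:dense}\ref{item:dense1} of denseness, for each $y\in Y$ I pick a covering family $P_y\colon \f(X_y)\To y$ in $\bD$; assembling them gives a covering functional array $P\colon \f(X)\To Y$, with $X=\bigsqcup_y X_y$. I then form the pulled-back congruence $P^*\Theta := P\pb \Theta P\sb\colon \f(X)\Rto \f(X)$ in $\lrelk(\bD)$. By the 2-fully-faithfulness recorded in \autoref{thm:dense-ff}, this matrix is the $\lrelk(\f)$-image of a unique matrix $\Phi\colon X\Rto X$ in $\lrelk(\bC)$, and since $\lrelk(\f)$ preserves and reflects the congruence axioms, $\Phi$ is itself a congruence. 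Then $\exk(\f)(\Phi) = P^*\Theta$, and it remains to identify $P^*\Theta$ with $\Theta$ inside $\exk(\bD)$; this is the main obstacle. For it, I would invoke \autoref{thm:coll-coll}: by \autoref{thm:coll-char} covering of $P_y$ says exactly that $P_y\sb$ is a loose collage of $P_y\pb P_y\sb$, while in $\Modk(\lrelk(\bD)^{\mathrm{ch}})$ the canonical Yoneda-cocone exhibits $\Theta$ as the collage of $\fy(\Theta)$. Composing these via \autoref{thm:coll-coll} shows that the same $\Theta$ is also a collage of $\fy(P^*\Theta)$, so by uniqueness $P^*\Theta \cong \Theta$ in $\exk(\bD)$, giving $\exk(\f)(\Phi)\cong\Theta$ and completing essential surjectivity.
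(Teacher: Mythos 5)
Your proposal is correct and follows essentially the same route as the paper: full faithfulness via the 2-fully-faithfulness of $\lrelk(\f)$ combined with the concrete description of $\Modk$ in \autoref{thm:cocompletion}, and essential surjectivity by exhibiting each object of $\exk(\bD)$ as a collage of (the image of) a congruence from \bC using \autoref{thm:coll-coll}. Your essential-surjectivity step is just a spelled-out version of the paper's one-line argument, which packages the covering-family construction and the identification of the pulled-back congruence into the second half of \autoref{thm:dense-ff}.
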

\begin{proof}
  Since $\lrelk(\f)$ is 2-fully-faithful on allegories, by
  \autoref{thm:cocompletion}, so is $\lrelk(\exk(\f))$.  Hence, so is
  $\exk(\f)$.  And since every object of $\lrelk(\bD)$ is a collage of
  a congruence in $\lrelk(\bC)$, by \autoref{thm:coll-coll}, so is
  every object of $\lrelk (\exk (\bD))$.  Hence $\exk (\f)$ is also
  essentially surjective.
\end{proof}

\begin{example}\label{eg:projectives}
  We say that an object $z$ of a \ka-ary exact category is
  \emph{\ka-ary projective} if every \ka-ary extremal-epic cocone with
  target $z$ contains a split epic.  If \bC has a trivial \ka-ary
  topology, then every object of the form $\fy(x)$ is \ka-ary
  projective in $\exk(\bC)$.  Moreover, in this case every object of
  $\exk(\bC)$ is covered by a \ka-ary family of \ka-ary projectives,
  namely the family of objects on which it is a congruence.

  On the other hand, if \bC is \ka-ary exact and every object of \bC
  is covered by a \ka-ary family of \ka-ary projectives, then the full
  subcategory \bP of \ka-ary projectives satisfies
  \ref{def:dense}\ref{item:dense1} (and,
  obviously,~\ref{item:dense2}--\ref{item:dense3}), so that
  $\bP\to\bC$ is a dense morphism of sites and hence $\bC \simeq
  \exk(\bC) \simeq \exk(\bP)$.  Moreover, the induced topology on \bP
  is trivial.  Thus, a \ka-ary exact category is the \ka-ary exact
  completion of a trivial \ka-ary site exactly when every object is
  covered by a \ka-ary family of \ka-ary projectives, and in this case
  the category of \ka-ary projectives has \ka-prelimits.

  For $\ka=\un$, this was observed in~\cite{cv:reg-exact-cplt}.  For
  $\ka=\KA$, we obtain a characterization of small-presheaf categories
  of categories with finite \KA-prelimits.  If we add the additional
  assumption that there is a \emph{small} generating set of \KA-ary
  projectives, we recover a well-known characterization of presheaf
  toposes.
\end{example}

On the other hand, every site is dense in its own exact completion.

\begin{thm}\label{thm:fy-dense}
  For any (locally) \ka-ary site \bC, the functor $\fy\colon \bC \to
  \exk(\bC)$ is a dense (pre-)morphism of sites.
\end{thm}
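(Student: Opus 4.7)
The plan is to verify the four conditions of \autoref{def:dense} for $\fy\colon\bC\to\exk(\bC)$. The preservation direction of condition~(i) is immediate, since $\fy$ is the unit of the reflection $\exk \dashv \text{inclusion}$ and hence a morphism of sites. Conditions~(ii)--(iv) will follow from a direct unpacking of the construction of $\exk(\bC)$; the main obstacle is the reflection direction of~(i).

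For that reflection direction, by \autoref{thm:cov-detect} applied in both sites, it suffices to show that the induced framed-allegory functor $\lrelk(\fy)\colon\lrelk(\bC)\to\lrelk(\exk(\bC))$ reflects the equation $\bigvee P\sb P\pb = 1_u$ in the underlying allegories. By \autoref{thm:mod-chordate} together with the construction of $\exk$ in the proof of \autoref{thm:adjoint}, this functor factors (up to the 2-equivalence of \autoref{thm:relk-eqv}) as the $\Fk$-enriched Yoneda embedding $\lrelk(\bC)\hookrightarrow\lmodk(\lrelk(\bC))$ followed by the chordate reflection onto $\lrelk(\exk(\bC))$. The first factor is fully faithful as an enriched functor, hence an isomorphism on hom-posets of loose morphisms that preserves joins, identities, composition, and the involution; the second alters only the tight part, leaving the underlying allegory untouched. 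So $\lrelk(\fy)$ is 2-fully-faithful on allegories, and the required reflection of covers is immediate.

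For condition~(ii), every object of $\exk(\bC)$ is represented by a \ka-ary congruence $\Phi\colon X\Rto X$ in \bC, and $\Phi$ is its own tight collage in $\lmodk(\lrelk(\bC))$. The coprojection cocone $F\colon\fy(X)\To\Phi$ therefore satisfies $\bigvee F\sb F\pb = 1_{\Phi}$ by \autoref{thm:coll-char}, so \autoref{thm:cov-detect} again shows it is a covering family in $\exk(\bC)$, with each vertex $\fy(x)$ in the image of $\fy$. Conditions~(iii) and~(iv) follow from \autoref{thm:ex-concrete} applied to singleton congruences: a morphism $g\colon\fy(x)\to\fy(y)$ in $\exk(\bC)$, viewed as $\Delta_{\{x\}}\to\Delta_{\{y\}}$, is represented by a span $\{x\}\xLeftarrow{P}W\xRightarrow{F}\{y\}$ with $P\colon W\To x$ a covering family in \bC and $F\colon W\To y$ a functional array, which is exactly the data required by~(iii); and if $\fy(h)=\fy(k)$ for parallel $h,k\colon x\toto y$, the spans $(1_x,h)$ and $(1_x,k)$ are identified by \autoref{thm:ex-concrete}\ref{item:exc3}, giving a covering family $R\colon U\To x$ with $\{h R_u, k R_u\}\le\Delta_y$ for all $u$, which forces $h R = k R$.

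The same proof works verbatim in the locally \ka-ary case, since every step goes through with the analogues from \autoref{rmk:locsite}, \autoref{rmk:premor}, and \autoref{rmk:locex}; the framed-allegory argument is unaffected because $\lrelk$ and $\lmodk$ are defined in both settings using (finite connected) local \ka-prelimits.
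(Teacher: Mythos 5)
Your proof is correct, and for conditions \ref{def:dense}\ref{item:dense0} and \ref{item:dense1} it is essentially the paper's argument: both hinge on the full faithfulness of the embedding $\fy\colon\cA\to\Modk(\cA)$ (so that the characterization $\bigvee(P\sb P\pb)=1_u$ of covering families from \autoref{thm:cov-detect} is reflected as well as preserved), and on every object of $\exk(\bC)$ being the collage of a congruence on a family of objects of \bC. Where you diverge is in conditions \ref{item:dense2} and \ref{item:dense3}: you route these through the anafunctor description of \autoref{thm:ex-concrete}, whereas the paper argues directly in $\lrelk(\bC)$ --- for \ref{item:dense2} it writes the loose map $\phi\colon x\lto y$ corresponding to $g$ as a weak \ka-tabulation $\bigvee(F\sb P\pb)$, deduces that $P$ is covering from \autoref{thm:entire-detect2}, and checks $F\sb=\phi P\sb$ using discreteness of maps; for \ref{item:dense3} it just invokes the second clause of weak \ka-tabularity. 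Your route is valid (the "if" in \autoref{thm:ex-concrete}\ref{item:exc3} is in fact an "if and only if", as the proof of \autoref{thm:frac-concrete} establishes both directions), but it buys the conclusion at the cost of depending on the entire calculus-of-fractions machinery of \S\ref{sec:frac}, which the paper's proof of this theorem deliberately avoids; the direct tabularity argument is both shorter and independent of that section. One small point you should make explicit in your treatment of \ref{item:dense2}: the span $(P,F)$ representing $g$ gives the \emph{data} of a covering family and a cocone, but the required equation $g\circ\fy(P)=\fy(F)$ still needs a sentence of justification (it follows because $g$ is by construction $\bar F\circ\bar P^{-1}$ in the localization, and precomposing with the coprojections $\fy(w)\to P^*\Delta_{\{x\}}$ recovers $\fy(p_w)$ and $\fy(f_w)$); as written you assert rather than verify it.
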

\begin{proof}
  For \autoref{def:dense}\ref{item:dense0}, since the embedding of a
  \ka-ary allegory in its cocompletion under \ka-ary congruences is
  fully faithful, it reflects as well as preserves the property
  $\bigvee(P\sb P\pb) = 1_u$ for a cocone of maps.  Since this
  characterizes covering families in \bC and $\exk (\bC)$, it follows
  that \fy reflects as well as preserves covering families.

  For~\ref{item:dense1}, since every object of $\exk (\bC)$ is a
  quotient of the image of a \ka-ary congruence in \bC, in particular
  it admits a covering family whose domains are in the image of \fy.

  For~\ref{item:dense2}, note that since the chordate reflection of
  $\lrelk(\bC)$ embeds fully faithfully in its collage cocompletion,
  to give a morphism $g\colon \fy(x)\to \fy(y)$ in $\exk(\bC)$ is the
  same as to give a loose map $\phi\colon x\lto y$ in $\lrelk(\bC)$.
  By weak \ka-tabularity, for any such $\phi$ we have $\phi = \bigvee
  (F\sb P\pb)$ for $P\colon Z\To x$ and $F\colon Z\To y$ in \bC.
  Since $\phi$ is a map, by \autoref{thm:entire-detect2} $P$ is
  covering, and we have
  \[ F\sb \le \bigvee (F\sb P\pb P\sb) = \phi P\sb. \]
  Since maps are discretely ordered, $F\sb = \phi P\sb$,
  hence $\fy(F) = g \circ \fy(P)$.

  Finally, for~\ref{item:dense3}, if $f,g\colon x\toto y$ are
  morphisms in \bC with $\fy(f)=\fy(g)$, then $f\sb = g\sb$ as loose
  maps, hence (by the familiar weak \ka-tabularity of $\lrelk(\bC)$)
  there is a covering family $P$ with $f P = g P$.
\end{proof}

\begin{example}
  Recall (e.g. from~\cite[D3.3]{ptj:elephant}) that a Grothendieck
  topos is called \emph{coherent} if it is the topos of sheaves for
  the \om-canonical topology on an (\om-ary) pretopos, or equivalently
  if it has a small, finitely complete, and \om-ary site of
  definition.  Panagis Karazeris has pointed out that in fact, the
  topos of sheaves on \emph{any} small \om-ary site is coherent.  For
  by \autoref{thm:fy-dense}, the canonical functor $\fy\colon \bC \to
  \nEx_{\om}(\bC)$ is dense, hence induces an equivalence of sheaf
  toposes (a.k.a.\ \KA-ary exact completions); but $\nEx_{\om}(\bC)$
  is a pretopos with its \om-canonical topology, so its sheaf topos is
  coherent.  Similarly, the topos of sheaves on any unary site is a
  regular topos.
\end{example}

\begin{example}\label{eg:superext-un}
  Let \bC be a \ka-ary extensive category with a \ka-superextensive
  \ka-ary topology, as in \autoref{eg:superextensive}.  (Actually, we
  can be more general here, not requiring \bC to have all finite
  limits; the \ka-extensive topology exists on any \ka-extensive
  category.)  Then a \ka-ary cocone $V\To u$ is covering if and only
  if the single morphism $\sum V \to u$ is covering.  Thus, the
  topology of \bC is uniquely determined by its singleton covers,
  which themselves form a unary topology.  Let $\bC_{\un}$ and $\exk
  (\bC)_{\un}$ denote the categories \bC and $\exk (\bC)$ equipped
  with their unary topologies of singleton covers.  Note that this
  topology on $\exk (\bC)_{\un}$ is the \un-canonical one.

  Now since \ka-ary coproducts are postulated in any
  \ka-superextensive site, they are preserved by $\fy\colon \bC \to
  \exk(\bC)$.  Thus, since we can replace any covering family in \bC
  by a singleton cover, the denseness of $\fy\colon \bC \to \exk(\bC)$
  (\autoref{thm:fy-dense}) implies that it is also dense as a functor
  $\bC_{\un} \to \exk(\bC)_{\un}$.  But since $\exk(\bC)_{\un}$ is
  unary exact with its \un-canonical topology, it is its own unary
  exact completion.  Thus, by \autoref{thm:dense}, we have an
  equivalence $\exu (\bC_{\un})\simeq \exk (\bC)_{\un}$.
\end{example}

\begin{example}\label{thm:superext-fam}
  Let \bC be any \ka-ary site, and $\nFam_\ka (\bC)$ its free
  completion under \ka-ary coproducts.  Recall that the objects of
  $\nFam_\ka (\bC)$ are \ka-ary families of objects of \bC and its
  morphisms are functional arrays.  We remarked after
  \autoref{thm:fam-pb} that $\nFam_\ka (\bC)$ inherits a weakly unary
  topology, whose covers are covering families as in
  \autoref{def:covfam}.

  In fact, it is easy to see that this topology is unary.  This
  generalizes the observation of~\cite[4.1(ii)]{carboni:free-constr}
  that $\nFam_\ka (\bC)$ has finite limits when \bC does.  Since
  $\nFam_\ka (\bC)$ is \ka-extensive by~\cite[2.4]{clw:ext-dist}, this
  unary topology corresponds to a \ka-superextensive \ka-ary topology
  on $\nFam_\ka (\bC)$.  We notate the two resulting sites as
  $\nFam_\ka (\bC)_{\un}$ and $\nFam_\ka (\bC)_{\ka}$ respectively.
  By \autoref{eg:superext-un}, we have $\exu(\nFam_\ka
  (\bC)_{\un})\simeq \exk(\nFam_\ka (\bC)_{\ka})$.

  However, it is also easy to verify that the inclusion $\bC \into
  \nFam_\ka (\bC)_{\ka}$ is dense, so that $\exk (\bC)\simeq
  \exk(\nFam_\ka (\bC)_{\ka})$, and hence $\exk (\bC)\simeq
  \exu(\nFam_\ka (\bC)_{\un})$.  This justifies our earlier comments
  that the \ka-ary exact completion can be obtained as a \ka-ary
  coproduct completion followed by a unary exact completion.
\end{example}

\autoref{thm:fy-dense} also implies a generalization
of~\cite[C2.3.8]{ptj:elephant} to arbitrary \ka-ary sites (which would
not be true for the classical notion of ``morphism of sites'').

\begin{prop}\label{thm:msinduce}
  Let \bC and \bD be \ka-ary sites and $\bg\colon
  \exk(\bC)\to\exk(\bD)$ a morphism of sites.  For a functor $\f\colon
  \bC\to\bD$, the following are equivalent.
  \begin{enumerate}[nolistsep]
  \item \f is a morphism of sites and $\bg \cong
    \exk(\f)$.\label{item:msi1}
  \item The following square commutes up to
    isomorphism:\label{item:msi2}
    \[\vcenter{\xymatrix{
        \bC\ar[r]^\f\ar[d]_{\fy_\bC} &
        \bD\ar[d]^{\fy_\bD}\\
        \exk(\bC)\ar[r]_{\bg} &
        \exk (\bD)
      }}\]
  \end{enumerate}
\end{prop}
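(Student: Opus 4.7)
The plan is to derive both directions from the universal property of $\exk$ (\autoref{thm:adjoint}) together with the denseness of the unit (\autoref{thm:fy-dense}) and the cancellation principle for dense morphisms of sites (\autoref{thm:dense-cancel}).

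For \ref{item:msi1}$\Rightarrow$\ref{item:msi2}, I would simply invoke naturality of the unit of the reflection $\exk \dashv \text{forget}$. Namely, $\exk(\f)$ is defined so that $\exk(\f) \circ \fy_\bC \cong \fy_\bD \circ \f$ as morphisms of sites, so any $\bg$ isomorphic to $\exk(\f)$ inherits the same property.

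For \ref{item:msi2}$\Rightarrow$\ref{item:msi1}, the key observation is that $\fy_\bD\colon \bD \to \exk(\bD)$ is a dense morphism of sites by \autoref{thm:fy-dense}. Assuming the square commutes up to isomorphism, the composite $\fy_\bD \circ \f$ is isomorphic to $\bg \circ \fy_\bC$, which is a morphism of sites since both $\fy_\bC$ and $\bg$ are. Hence $\fy_\bD \circ \f$ is a morphism of sites. Then \autoref{thm:dense-cancel}, applied to the factorization $\bC \xrightarrow{\f} \bD \xrightarrow{\fy_\bD} \exk(\bD)$, yields that $\f$ itself is a morphism of sites.

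It then remains to identify $\bg$ with $\exk(\f)$. Both are morphisms of sites $\exk(\bC) \to \exk(\bD)$, and upon precomposition with $\fy_\bC$ both become isomorphic to $\fy_\bD \circ \f$ (the one by hypothesis, the other by definition of $\exk(\f)$). The universal property of \autoref{thm:adjoint} asserts that precomposition with $\fy_\bC$ is an equivalence
\[ \SITEk(\exk(\bC),\exk(\bD)) \xrightarrow{\;\simeq\;} \SITEk(\bC,\exk(\bD)), \]
so this equivalence being full and faithful promotes the isomorphism of restrictions to an isomorphism $\bg \cong \exk(\f)$. There is no real obstacle here; the only subtlety is remembering that the universal property must be invoked on the \emph{non-strict} hom-categories, which is exactly what makes the adjunction a biadjunction and guarantees that isomorphic restrictions come from isomorphic extensions.
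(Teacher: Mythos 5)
Your proof is correct and follows essentially the same route as the paper's: naturality of $\fy$ for the forward direction, and for the converse the combination of \autoref{thm:fy-dense}, \autoref{thm:dense-cancel}, and the universal property of \autoref{thm:adjoint}, in exactly that order. The extra detail you supply (that being a morphism of sites is invariant under isomorphism of functors, and that the biadjunction's hom-equivalence is what promotes the isomorphism of restrictions) is just a fuller spelling-out of what the paper leaves implicit.
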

\begin{proof}
  By naturality of $\fy$, we
  have~\ref{item:msi1}$\Rightarrow$\ref{item:msi2}.  Conversely,
  if~\ref{item:msi2}, then $\bg \circ \fy_{\bC} \cong \fy_\bD \circ
  \f$ is a morphism of sites, and since $\fy_\bD$ is dense by
  \autoref{thm:fy-dense}, \autoref{thm:dense-cancel} implies that \f
  is a morphism of sites.  The isomorphism $\bg \cong \exk(\f)$ then
  follows by the universal property of exact completion.
\end{proof}

Finally, we can prove \autoref{thm:morsite-sheaves}.

\begin{prop}\label{thm:morsite-sheaves2}
  For a small category \bC and a small site \bD, a functor $\f\colon
  \bC\to\bD$ is covering-flat if and only if the composite
  \begin{equation}
    [\bC\op,\bSet] \xto{\lan_\f} [\bD\op,\bSet]
    \xto{\ba} \nSh(\bD)\label{eq:mslan2}
  \end{equation}
  preserves finite limits, where \ba denotes sheafification.  If \bC
  is moreover a site and \f a morphism of sites, then
  \[\f^*\colon [\bD\op,\bSet] \to [\bC\op,\bSet]\]
  takes $\nSh(\bD)$ into $\nSh(\bC)$, so \f induces a geometric
  morphism $\nSh(\bD) \to \nSh(\bC)$.
\end{prop}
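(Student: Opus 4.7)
The plan is to deduce both assertions from the universal property of the $\KA$-ary exact completion (\autoref{thm:adjoint}), combined with the identification $\exK(\bE) \simeq \nSh(\bE)$ for small sites $\bE$ (\autoref{thm:topos}). The essential bridging observation is that the presheaf category $[\bC\op, \bSet]$ is itself an instance of $\exK$: since $\bC$ is small it has $\KA$-prelimits, so by \autoref{eg:trivial} it carries a trivial $\KA$-ary topology, whose covering sieves are precisely the maximal sieves (the sieve generated by a split epic $s\colon v\to u$ with section $r$ contains $1_u = s r$, hence is maximal). Every presheaf on $\bC$ is therefore a sheaf for this topology, so writing ${\bC}_0$ for $\bC$ equipped with its trivial topology, we have $\exK({\bC}_0) \simeq \nSh({\bC}_0) = [\bC\op, \bSet]$. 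Moreover any functor $\f\colon \bC\to \bD$ automatically preserves maximal sieves, so $\f\colon {\bC}_0 \to \bD$ is a morphism of $\KA$-ary sites if and only if $\f$ is covering-flat.

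For the first equivalence, by \autoref{thm:adjoint} together with the remark after \autoref{thm:exact} (identifying $\KA$-ary regular functors between infinitary pretoposes as finite-limit- and small-colimit-preserving functors), morphisms of sites ${\bC}_0\to\bD$ correspond under $\exK$ to functors $[\bC\op,\bSet]\to\nSh(\bD)$ preserving finite limits and small colimits. Any such functor is determined up to isomorphism by its restriction to representables, and both $\exK(\f)$ and $\ba\circ\lan_\f$ are small-cocontinuous functors whose restriction to $\bC$ is the composite $\by_\bD\circ \f$; hence they are naturally isomorphic. Therefore $\f$ is covering-flat if and only if $\exK(\f)$ is $\KA$-ary regular, if and only if $\ba\circ\lan_\f$ preserves finite limits.

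For the second part, when $\bC$ carries its own $\KA$-ary topology and $\f\colon \bC\to\bD$ is a morphism of sites, factor $\f$ in the 2-category of sites as ${\bC}_0\xrightarrow{\mathrm{id}}\bC\xrightarrow{\f}\bD$ (the identity ${\bC}_0\to\bC$ being a morphism of sites because maximal sieves are covering in any topology). Applying the 2-functor $\exK$ yields a commutative triangle
\[
[\bC\op,\bSet]=\exK({\bC}_0)\xrightarrow{\ba_\bC} \exK(\bC)=\nSh(\bC)\xrightarrow{\exK(\f)}\exK(\bD)=\nSh(\bD),
\]
in which the first leg must be the sheafification $\ba_\bC$: as a cocontinuous functor extending the sheafified Yoneda $\bC\to\nSh(\bC)$ along the unsheafified Yoneda, it coincides with $\ba_\bC$ by uniqueness. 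By the first part the composite is $\ba_\bD\circ\lan_\f$, whose right adjoint is $\f^*\circ i_\bD$ where $i_\bD\colon \nSh(\bD)\into [\bD\op,\bSet]$ is the inclusion. On the other hand $\exK(\f)$, being cocontinuous between locally presentable categories, has a right adjoint $g_*\colon \nSh(\bD)\to\nSh(\bC)$, and the right adjoint of $\exK(\f)\circ\ba_\bC$ is $i_\bC\circ g_*$. By uniqueness of right adjoints, $\f^*\circ i_\bD\cong i_\bC\circ g_*$, which simultaneously establishes that $\f^*$ maps sheaves to sheaves and identifies the induced geometric morphism $\nSh(\bD)\to\nSh(\bC)$ as having inverse image $\exK(\f)$ and direct image $\f^*|_{\nSh(\bD)}$.

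The principal difficulty is not any single technical calculation but rather the structural recognition that the trivial topology is the correct encoding of the presheaf category as an exact completion. Once this identification is in place, both statements reduce to formal 2-functoriality of $\exK$ applied to the factorization ${\bC}_0\to\bC\to\bD$ together with uniqueness of right adjoints, sidestepping both a direct presheaf-level verification that $\ba\circ\lan_\f$ is left exact and a sieve-by-sieve check that $\f^*$ preserves the sheaf condition.
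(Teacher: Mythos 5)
Your proof is correct and follows essentially the same route as the paper: both identify $[\bC\op,\bSet]$ with $\exK$ of $\bC$ equipped with its trivial \KA-ary topology via \autoref{thm:topos}, reduce covering-flatness of \f to its being a morphism of sites out of that trivial site, use cocontinuity and agreement on representables to pin down the induced functor as $\ba\circ\lan_\f$, and obtain the second claim by a uniqueness-of-right-adjoints argument. The one place you are looser than the paper is the converse direction of the first claim: the asserted ``correspondence'' between morphisms of sites out of the trivial site and left-exact cocontinuous functors $[\bC\op,\bSet]\to\nSh(\bD)$ is not literally a bijection, and deducing covering-flatness of \f from left-exactness of $\ba\circ\lan_\f$ really rests on the density of $\fy\colon\bD\to\nSh(\bD)$ together with \autoref{thm:dense-cancel}, which the paper packages explicitly as \autoref{thm:msinduce}.
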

\begin{proof}
  We regard \bD as a \KA-ary site, and \bC as a \KA-ary site with
  trivial topology.  Then by \autoref{thm:topos} we have
  $\exK(\bC)\simeq [\bC\op,\bSet]$ and $\exK(\bD)\simeq \nSh(\bD)$.

  Moreover, \f is covering-flat just when it is a morphism of sites.
  By \autoref{thm:msinduce}, this holds exactly when there is a
  morphism of sites \bg such that
  \[\vcenter{\xymatrix{
      \bC\ar[r]^\f\ar[dd]_{\by} &
      \bD\ar[d]^{\by}\\
      & [\bD\op,\bSet] \ar[d]^\ba \\
      [\bC\op,\bSet]\ar[r]_-{\bg} &
      \nSh(\bD)
    }}\]
  commutes up to isomorphism.  Now recall that morphisms of sites
  between Grothendieck toposes are precisely finite-limit-preserving
  and cocontinuous functors.  Thus, since $[\bC\op,\bSet]$ is the free
  cocompletion of \bC, if \bg exists it must be~\eqref{eq:mslan2}.
  Since~\eqref{eq:mslan2} is always cocontinuous, this holds precisely
  when~\eqref{eq:mslan2} preserves finite limits.

  Finally, if \bC has instead a nontrivial topology for which \f is a
  morphism of sites, then $\exK(\f)\colon \nSh(\bC) \to \nSh(\bD)$ is
  cocontinuous, hence has a right adjoint.  It is easy to verify that
  this right adjoint must be $\f^*$.
\end{proof}


\begin{thebibliography}{BCSW83}

\bibitem[AR94]{ar:loc-pres}
Ji{\v{r}}{\'{\i}} Ad{\'a}mek and Ji{\v{r}}{\'{\i}} Rosick{\'y}.
\newblock {\em Locally presentable and accessible categories}, volume 189 of
  {\em London Mathematical Society Lecture Note Series}.
\newblock Cambridge University Press, Cambridge, 1994.

\bibitem[Bar06]{bartels:hgt}
Toby Bartels.
\newblock {\em Higher gauge theory I: 2-Bundles}.
\newblock PhD thesis, University of California, Riverside, 2006.
\newblock arXiv:math/0410328.

\bibitem[BCSW83]{bcsw:variation-enr}
Renato Betti, Aurelio Carboni, Ross Street, and Robert Walters.
\newblock Variation through enrichment.
\newblock {\em J. Pure Appl. Algebra}, 29(2):109--127, 1983.

\bibitem[BLS12]{bls:weak-aspects}
Gabriella B{\"o}hm, Stephen Lack, and Ross Street.
\newblock Idempotent splittings, colimit completion, and weak aspects of the
  theory of monads.
\newblock {\em J. Pure Appl. Algebra}, 216(2):385--403, 2012.

\bibitem[Car95]{carboni:free-constr}
A.~Carboni.
\newblock Some free constructions in realizability and proof theory.
\newblock {\em J. Pure Appl. Algebra}, 103(2):117--148, 1995.

\bibitem[CKW87]{ckw:axiom-mod}
Aurelio Carboni, Stefano Kasangian, and Robert Walters.
\newblock An axiomatics for bicategories of modules.
\newblock {\em J. Pure Appl. Algebra}, 45(2):127--141, 1987.

\bibitem[CLW93]{clw:ext-dist}
Aurelio Carboni, Stephen Lack, and R.F.C. Walters.
\newblock Introduction to extensive and distributive categories.
\newblock {\em J. Pure Appl. Algebra}, 84(2):145--158, 1993.

\bibitem[CM82]{cm:ex-lex}
A.~Carboni and R.~Celia Magno.
\newblock The free exact category on a left exact one.
\newblock {\em J. Austral. Math. Soc. Ser. A}, 33(3):295--301, 1982.

\bibitem[CV98]{cv:reg-exact-cplt}
A.~Carboni and E.~M. Vitale.
\newblock Regular and exact completions.
\newblock {\em J. Pure Appl. Algebra}, 125(1-3):79--116, 1998.

\bibitem[CW87]{cw:cart-bicats-i}
A.~Carboni and R.F.C. Walters.
\newblock Cartesian bicategories. {I}.
\newblock {\em J. Pure Appl. Algebra}, 49(1-2):11--32, 1987.

\bibitem[CW02]{cw:regcplt}
C.~Centazzo and R.~J. Wood.
\newblock An extension of the regular completion.
\newblock {\em J. Pure Appl. Algebra}, 175(1-3):93--108, 2002.
\newblock Special volume celebrating the 70th birthday of Professor Max Kelly.

\bibitem[CW05]{cw:factreg}
C.~Centazzo and R.~J. Wood.
\newblock A factorization of regularity.
\newblock {\em J. Pure Appl. Algebra}, 203(1-3):83--103, 2005.

\bibitem[DHI04]{dhi:hypercovers}
Daniel Dugger, Sharon Hollander, and Daniel~C. Isaksen.
\newblock Hypercovers and simplicial presheaves.
\newblock {\em Math. Proc. Cambridge Philos. Soc.}, 136(1):9--51, 2004.

\bibitem[DL07]{dl:lim-smallfr}
Brian~J. Day and Stephen Lack.
\newblock Limits of small functors.
\newblock {\em J. Pure Appl. Algebra}, 210(3):651--663, 2007.

\bibitem[Fef69]{feferman:fdns-of-ct}
Solomon Feferman.
\newblock Set-theoretical foundations of category theory.
\newblock In {\em Reports of the Midwest Category Seminar. III}, pages
  201--247. Springer, Berlin, 1969.

\bibitem[Fre12]{frey:mfpo-pca}
Jonas Frey.
\newblock Multiform preorders and partial combinatory algebras.
\newblock \url{http://www.pps.jussieu.fr/~frey/li2012.pdf}, February 2012.
\newblock Talk at \emph{Logic and interactions 2012}.

\bibitem[FS90]{fs:catall}
Peter~J. Freyd and Andre Scedrov.
\newblock {\em Categories, allegories}, volume~39 of {\em North-Holland
  Mathematical Library}.
\newblock North-Holland Publishing Co., Amsterdam, 1990.

\bibitem[GL12]{gl:lex-colimits}
Richard Garner and Stephen Lack.
\newblock Lex colimits.
\newblock {\em Journal of Pure and Applied Algebra}, 216(6):1372 -- 1396, 2012.
\newblock arXiv:1107.0778.

\bibitem[Hof04]{hofstra:relcpltn}
P.~J.~W. Hofstra.
\newblock Relative completions.
\newblock {\em J. Pure Appl. Algebra}, 192(1-3):129--148, 2004.

\bibitem[HT96]{ht:free-regex}
Hongde Hu and Walter Tholen.
\newblock A note on free regular and exact completions and their infinitary
  generalizations.
\newblock {\em Theory and Applications of Categories}, 2:113--132, 1996.

\bibitem[JM94]{jm:open-maps}
A.~Joyal and I.~Moerdijk.
\newblock A completeness theorem for open maps.
\newblock {\em Ann. Pure Appl. Logic}, 70(1):51--86, 1994.

\bibitem[Joh02]{ptj:elephant}
Peter~T. Johnstone.
\newblock {\em Sketches of an Elephant: A Topos Theory Compendium: Volumes 1
  and 2}.
\newblock Number~43 in Oxford Logic Guides. Oxford Science Publications, 2002.

\bibitem[Kar04]{karazeris:flatness}
Panagis Karazeris.
\newblock Notions of flatness relative to a {G}rothendieck topology.
\newblock {\em Theory Appl. Categ.}, 11(5):225--236, 2004.

\bibitem[Kel82]{kelly:enriched}
G.~M. Kelly.
\newblock {\em Basic concepts of enriched category theory}, volume~64 of {\em
  London Mathematical Society Lecture Note Series}.
\newblock Cambridge University Press, 1982.
\newblock Also available in Reprints in Theory and Applications of Categories,
  No. 10 (2005) pp. 1-136, at
  \url{http://www.tac.mta.ca/tac/reprints/articles/10/tr10abs.html}.

\bibitem[Kel91]{kelly:rel-factsys}
G.~M. Kelly.
\newblock A note on relations relative to a factorization system.
\newblock In {\em Category theory ({C}omo, 1990)}, volume 1488 of {\em Lecture
  Notes in Math.}, pages 249--261. Springer, Berlin, 1991.

\bibitem[Koc89]{kock:postulated}
Anders Kock.
\newblock Postulated colimits and left exactness of {K}an-extensions.
\newblock Aarhus Preprint 1989/90 no. 9, Retyped in TeX in the fall of 2003.
  Available at \url{http://home.imf.au.dk/kock/}, 1989.

\bibitem[Lac99]{lack:exreg-inf}
Stephen Lack.
\newblock A note on the exact completion of a regular category, and its
  infinitary generalizations.
\newblock {\em Theory Appl. Categ.}, 5:No.\ 3, 70--80 (electronic), 1999.

\bibitem[Law74]{lawvere:metric-spaces}
F.~William Lawvere.
\newblock Metric spaces, generalized logic, and closed categories.
\newblock {\em Rend. Sem. Mat. Fis. Milano}, 43:135--166, 1974.
\newblock Reprinted as Repr. Theory Appl. Categ. 1:1--37, 2002.

\bibitem[LS02]{ls:ftm2}
Stephen Lack and Ross Street.
\newblock The formal theory of monads. {II}.
\newblock {\em J. Pure Appl. Algebra}, 175(1-3):243--265, 2002.
\newblock Special volume celebrating the 70th birthday of Professor Max Kelly.

\bibitem[LS04]{ls:adhesive}
Stephen Lack and Pawe{\l} Soboci{\'n}ski.
\newblock Adhesive categories.
\newblock In {\em Foundations of software science and computation structures},
  volume 2987 of {\em Lecture Notes in Comput. Sci.}, pages 273--288. Springer,
  Berlin, 2004.

\bibitem[LS12]{ls:limlax}
Stephen Lack and Michael Shulman.
\newblock Enhanced 2-categories and limits for lax morphisms.
\newblock {\em Advances in Mathematics}, 229(1):294--356, 2012.
\newblock arXiv:1104.2111.

\bibitem[Lur09a]{lurie:strsp}
Jacob Lurie.
\newblock Derived {A}lgebraic {G}eometry {V}: Structured spaces.
\newblock arXiv:0905.0459, 2009.

\bibitem[Lur09b]{lurie:higher-topoi}
Jacob Lurie.
\newblock {\em Higher topos theory}.
\newblock Number 170 in Annals of Mathematics Studies. Princeton University
  Press, 2009.

\bibitem[Rob]{roberts:ana}
David~M. Roberts.
\newblock Internal categories, anafunctors and localisations.
\newblock arXiv:1101.2363.

\bibitem[RR90]{rr:colim-eff}
Edmund Robinson and Giuseppe Rosolini.
\newblock Colimit completions and the effective topos.
\newblock {\em J. Symbolic Logic}, 55(2):678--699, 1990.

\bibitem[SC75]{c:exreg}
Rosanna Succi~Cruciani.
\newblock La teoria delle relazioni nello studio di categorie regolari e di
  categorie esatte.
\newblock {\em Riv. Mat. Univ. Parma (4)}, 1:143--158, 1975.

\bibitem[Shu08]{shulman:frbi}
Michael Shulman.
\newblock Framed bicategories and monoidal fibrations.
\newblock {\em Theory and Applications of Categories}, 20(18):650--738
  (electronic), 2008.
\newblock arXiv:0706.1286.

\bibitem[Str81a]{street:cauchy-enr}
Ross Street.
\newblock Cauchy characterization of enriched categories.
\newblock {\em Rend. Sem. Mat. Fis. Milano}, 51:217--233 (1983), 1981.
\newblock Reprinted as Repr. Theory Appl. Categ. 4:1--16, 2004.

\bibitem[Str81b]{street:topos}
Ross Street.
\newblock Notions of topos.
\newblock {\em Bull. Austral. Math. Soc.}, 23(2):199--208, 1981.

\bibitem[Str83a]{street:absolute}
Ross Street.
\newblock Absolute colimits in enriched categories.
\newblock {\em Cahiers Topologie G\'eom. Diff\'erentielle}, 24(4):377--379,
  1983.

\bibitem[Str83b]{street:enr-cohom}
Ross Street.
\newblock Enriched categories and cohomology.
\newblock In {\em Proceedings of the Symposium on Categorical Algebra and
  Topology (Cape Town, 1981)}, volume~6, pages 265--283, 1983.
\newblock Reprinted as Repr. Theory Appl. Categ. 14:1--18, 2005.

\bibitem[Str84]{street:family}
Ross Street.
\newblock The family approach to total cocompleteness and toposes.
\newblock {\em Trans. Amer. Math. Soc.}, 284(1):355--369, 1984.

\bibitem[T{\etalchar{+}}11]{nlab:bicat-rel}
Todd Trimble et~al.
\newblock Bicategories of relations.
\newblock \url{http://ncatlab.org/nlab/revision/bicategory+of+relations/14},
  August 2011.

\bibitem[Wal81]{walters:sheaves-cauchy-1}
R.F.C. Walters.
\newblock Sheaves and {C}auchy-complete categories.
\newblock {\em Cahiers Topologie G\'eom. Diff\'erentielle}, 22(3):283--286,
  1981.
\newblock Third Colloquium on Categories, Part IV (Amiens, 1980).

\bibitem[Wal82]{walters:sheaves-cauchy-2}
R.F.C. Walters.
\newblock Sheaves on sites as {C}auchy-complete categories.
\newblock {\em J. Pure Appl. Algebra}, 24(1):95--102, 1982.

\bibitem[Wal09]{walters:relations}
R.F.C. Walters.
\newblock Categorical algebras of relations.
\newblock
  \url{http://rfcwalters.blogspot.com/2009/10/categorical-algebras-of-relation%
s.html}, October 2009.

\bibitem[Woo82]{wood:proarrows-i}
R.~J. Wood.
\newblock Abstract proarrows. {I}.
\newblock {\em Cahiers Topologie G\'eom. Diff\'erentielle}, 23(3):279--290,
  1982.

\bibitem[Woo85]{wood:proarrows-ii}
R.~J. Wood.
\newblock Proarrows. {II}.
\newblock {\em Cahiers Topologie G\'eom. Diff\'erentielle Cat\'eg.},
  26(2):135--168, 1985.

\end{thebibliography}

\newcommand{\etalchar}[1]{$^{#1}$}

\end{document}